\newcommand{\hlfix}[2]{\texthl{#1}\todo{#2}}
\numberwithin{equation}{section}
\newcommand{\Q}{{\mathbb{Q}}}
\newcommand{\Z}{{\mathbb{Z}}}
\newcommand{\R}{{\mathbb{R}}}
\newcommand{\C}{{\mathbb{C}}}
\newcommand{\bP}{\mathbb{P}_k}
\newcommand{\Asm}{\sA_{1L}^\times}
\def \AoneL#1#2{(\Asm)_{#2}^{#1}}
\newcommand{\BsG}{B(\sG_{1L})}
\newcommand{\BsTsG}{B(\sTt,\sG_{1L})}
\def \sToneL#1#2{{\sT_{1L}}^{#1}_{#2}}
\def \sToneLt#1#2{(\sTt)^{#1}_{#2}}
\def \sTt{\sT_{1L}^{\rm{twist}}}
\def \cZ#1#2{\mathcal{Z}^{#1}(\square^{#2})}
\def \cZoneL#1#2{\mathcal{Z}_{1L}^{#1}(\square^{#2})}
\def \cN#1#2{\mathcal{Z}^{#1}(\Spec k, #2)}
\def \cNoneL#1#2{{\mathcal{Z}_{1L}}^{#1}(\Spec k, #2)}
\newcommand{\QZ}{{\Q\sZ_{1L}}}
\def\sGoneL#1#2{{\sG_{1L}}_{#2}^{#1}}
\newcommand{\Gprg}{\Q[\sG]}
\newcommand{\fS}{\mathfrak{S}}
\newcommand{\sZ}{{\mathcal Z}}
\newcommand{\sI}{{\mathcal I}}
\newcommand{\sT}{{\mathcal T}}
\newcommand{\sH}{{\mathcal H}}
\newcommand{\sC}{{\mathcal C}}
\newcommand{\sG}{{\mathcal G}}
\newcommand{\sA}{{\mathcal A}}
\newcommand{\sO}{{\mathcal O}}
\newcommand{\G}{{\mathbb{G}}}
\newcommand{\sM}{{\mathcal M}}
\newcommand{\sP}{{\mathcal P}}
\newcommand{\id}{\textrm{id}}
\def\onem#1#2{1-\frac{#1}{#2}}
\newcommand{\sha}{{\,\amalg\hskip -3.6pt\amalg\,}}
\newcommand{\Li} {{\mathbb L}{\rm i}}
\newcommand{\td}{\textrm{tot deg}}
\newcommand{\Alt}{{\rm Alt \;}}
\newcommand{\Hom}{{\rm Hom}}
\newcommand{\sgn}{\text{sgn}}
\newcommand{\rk}{\textrm{rk }}
\newcommand{\bdeg}{\deg_B}
\newcommand{\Spec}{{\rm Spec \,}}
\newcommand{\D}{\partial}
\newcommand{\simord}{\sim_{\textrm{ord}}}
\newcommand{\simv}{\sim_{\textrm{v}}}
\newcommand{\simori}{\sim_{\textrm{ori}}}
\newcommand{\epclass}{[\varepsilon^n(a_0, \bm{a_n})]}
\def\ba #1\ea{\begin{align} #1 \end{align}}
\def\bas #1\eas{\begin{align*} #1 \end{align*}}
\def\bml #1\eml{\begin{multline} #1 \end{multline}}
\def\bmls #1\emls{\begin{multline*} #1 \end{multline*}}
\theoremstyle{plain}
\newtheorem{thm}{Theorem}[section] % reset theorem numbering for each chapter
\theoremstyle{definition}
\newtheorem{eg}[thm]{Example} % same for example numbers
\newtheorem{rem}[thm]{Remark} %and remarks
\newtheorem{lem}[thm]{Lemma} %ditto for lemmas
\newtheorem{remark}[thm]{Remark} %Notice that remarks use the same numbers AND THERE ARE 2 TYPES OF REMARK
\newtheorem{prop}[thm]{Proposition} %Let propostitions use the same numbers
\newtheorem{dfn}[thm]{Definition} % definition numbers are dependent on theorem numbers
\newtheorem{cor}[thm]{Corollary} %Therefore the correlaries are too
\def \Go#1{{\begin{xy}
(0, 0) *{\bullet}= "a",
"a"; "a" **\crv{+(-5,3)&+(5,7)&+(5,-7)}?/0pt/*{\dir{<}} +(0,3)*{^{#1}}
\end{xy}}}
\newcommand{\del}{\partial}
\begin{document}

\title{Rational mixed Tate motivic graphs}
%\author{Susama Agarwala\footnote{University of Hamburg/Oxford University} \quad Owen Patashnick\footnote{Heilbronn Institute of Mathematical Research, Bristol University}{$\>$}\footnote{research partially supported by the Heilbronn Institute for Mathematical Research, Bristol University}}
\author{Susama Agarwala}
\author{Owen Patashnick}
%\address{\quad o.patashnick@bristol.ac.uk}
\date{\today}

\begin{abstract}
In this paper, we study the combinatorics of a subcomplex of the Bloch-Kriz cycle complex \cite{BlochKriz} used to construct the category of mixed Tate motives. The algebraic cycles we consider properly contain the subalgebra of cycles that correspond to multiple logarithms (as defined in \cite{GGL05}).  We associate an algebra of graphs to our subalgebra of algebraic cycles.
%This graphical point of view yields many computational gains when working with the Bloch Kriz cycle complex.
We give a purely graphical criterion for admissibilty. We show that sums of bivalent graphs correspond to coboundary elements of the algebraic cycle complex. Finally, we compute the Hodge realization for an infinite family of algebraic cycles represented by sums of graphs that are not describable in the combinatorial language of \cite{GGL05}.
%called necklace graphs.
%The techniques developed here are a first step towards understanding the cohomology of bar construction of the Bloch Kriz cycle complex.
\end{abstract}

\maketitle

\tableofcontents
\section{Introduction}

Let $\sM_T$ denote the category of mixed Tate motives and denote its associated Galois group by $G_T$. This Galois group has been defined in the literature in at least two distinct contexts, first by Bloch and Kriz (\cite{Bloch91}, \cite{BlochKriz}) but also by Levine (\cite{Le}) in what turned out to be Voevodsky's formalism (see for example \cite{DelGonch}).  Note that Spitzweck and Levine \cite{spitzweckthesis, LevineKthy} has shown that the two definitions are equivalent.

For the purposes of this paper we will take the Bloch-Kriz construction as our definition of $M_T$ and $G_T$.
%The periods of mixed Tate motives are denoted $\sP_T$, and their formal extension by $\hat{\sP_T}=\sP_T[(2\pi i)^{-1}]$. The following conjecture is well known to experts, \cite{Andres}:

%\begin{conj}\label{Tateconj} $\hat{\sP_T}$ is freely generated as an algebra by the logarithms of rational primes $\{\log(p) : p \text{ a prime}\}$, and by the odd zeta values
%$\{\zeta(2n+1): n\in \Z\}$
%of the Riemann zeta function.
%\end{conj}
%The set of Tate periods (the analogous principal homogeneous space associated to $G_T$) is denoted $\sP_T$, and their formal extension by $\hat{\sP_T}=\sP_T[(2\pi i)^{-1}]$.

Although a significant amount of work has gone into understanding $G_T$
%and $\sP_T$
, there is still much that is unknown about Tate motives, even over the rational numbers $\Q$.  In particular, the connection between $G_T$ and the unipotent completions of $\pi^1(\bP^1-\text{n points})^{unip}$ is still of current interest.

For $N\geq 1$, let $k_N$ be the cyclotomic field over $\Q$ generated by an $N$th root of unity, and $\sO_k$ its ring of integers. Let $M_{T,N}$ denote the full Tannakian subcategory of $M_T$ generated by the motivic fundamental group of $\bP^1-\{0,\infty, \mu_N\}$, with associated motivic Galois group $G_{T,N}$ and algebra of periods $\sP_{T,N}$. Here $\mu_N$ are the $N$th roots of $1$, though geometrically it could just  be a set of $N$ distinct points of $\C^*$. A question, probably going back to Grothendieck, is how much of the motivic fundamental group $G_T$ is measured by $G_{T,N}$, in particular $G_{T,1}$. This subcategory, and its integral analogues, were studied by Deligne and Goncharov in \cite{DelGonch}.  They showed, that over a number field, $\sP_{T,N}(\sO_k)$ is generated as a $\Q$ vector space by values of multiple polylogarithms.  There is a natural categorical inclusion $M_{T,N}\hookrightarrow M_T$ which induces surjections $\phi_N:G_T(\sO_k)\twoheadrightarrow G_{T,N}(\sO_k)$ (equivalently an injection $\sP_{T,N}\hookrightarrow \sP_T$).  Brown,  in the case $N=1$, and Deligne, in the cases $N\in \{2,3,4,6,8\}$ showed that $\phi$ was an isomorphism \cite{Brown11, Deligne2010}. Conversely, and more interestingly, Goncharov \cite{Gonch01dihedral} showed that for most $N$, $\phi$ has a nontrivial kernel.  Little is known about this kernel. Even less is known about this kernel if the ground field is a cyclotomic extension of a general number field (as opposed to a cyclotomic extension of $\Q$).

%It is of significant interest to be able to say something about elements in this kernel.

In particular, all known constructions of elements of $M_T$ lie in the subcategory $M_{T,N}$.

What is sorely needed is an approach to construct more general elements of $M_T$, especially ones that do not come from the motivic fundamental groups of  $\G_m-\mu_N$.
This paper is motivated in part by the desire to find a suitable framework to study this kernel.  We do not claim to have found such a framework in this paper, but are hopeful that we have taken a first step in the right direction.

The Bloch-Kriz definition of $M_T$ relies heavily on the theory of algebraic cycles.  While general enough to capture all mixed Tate motives, traditional methods of representing algebraic cycles (such as in terms of formal linear combinations of systems of polynomial equations) are notoriously difficult to work with, so progress in capitalizing on this description of the category to illuminate outstanding conjectures in the field has been slow.  In \cite{GGL05}, Gangl, Goncharov, and Levin suggest a simpler way to understand a subcategory of $\sM_T$ by relating specific algebraic cycles to rooted, decorated, binary trees.  Note that this approach necessarily restricts focus to motives generated by the motivic fundamental groups of $\G_m-\mu_N$.  Any attempt to study the kernel of $\phi$ defined above requires a more general framework.

Soud\'{e}res \cite{Souderes12, Souderes13} extends the family of algebraic cycles studied by Gangl, Goncharov and Levin to include those over a more general base scheme, in particular giving a rigorous construction of unital values of the multiple polylogarithms ie. multiple zeta values, as periods (and not just non-unital values of the multiple logarithms).  The combinatorial properties of these algebraic cycles, however, has not yet been explored.

%In this paper, we generalize the Gangl-Goncharov-Levin construction by considering a connection between algebraic cycles and graphs.
%via a graphical description and exploration of a suitable algebra of algebraic cycles.
%exploration of the connection between algebraic cycles and graphs.
%Note that the larger class of cycles considered in this paper also seem to lie in the subcategory of motives $M_{T,N}$ determined by multiple motivic polylogarithms.  Thus we cannot yet say anything new about the kernel of $\phi$.  On the other hand,

Let $\sA$ be the differential graded algebra (DGA) of cycles introduced by Bloch and Kriz, \cite{BlochKriz}. In this paper we generalize the Gangl-Goncharov-Levin construction as follows; we define a subalgebra of cycles, $\Asm \subset \sA$, that properly contains the subalgebra associated to multiple logarithms studied in \cite{GGL05},
and reinterprets $\Asm$ in terms of graphs.
By considering graphs as opposed to trees, and by loosening the valence restriction on the vertices, we enrich the tools available to study algebraic cycles, and are able to consider a larger subcomplex of cycles.  We hope this will lead to a better understanding of the complexity and richness underlying the Bloch-Kriz cycle complex, even in the restricted subclass we consider.  In particular, in Section \ref{H0BG}, we describe several examples of classes of algebraic cycles that define motives, most of which cannot be described by trees, and compute the Hodge realization of an infinite family of such classes.  Furthermore, in Section \ref{graphs}, we present a purely graphical interpretation of admissibility for the family of algebraic cycles we consider. We also give valency requirements for which classes of algebraic cycles will always be coboundaries in $H^0(\BsG)$.
There is a lot of interesting combinatorial structure in the types of underlying graphs, and their linear combinations, that give rise to allowable classes of motives. We have barely begun to explore this structure and feel strongly that it deserves further study.

The plan for the paper is as follows. In Section \ref{subcomplex}, we review of mixed Tate motives a la Bloch and Kriz \cite{BlochKriz} and introduce the subalgebra of $\bP^1$-linear parametrizable cycles, $\Asm \subset \sA$ of the algebra of admissible cycles. This subalgebra is the focus of our attention this paper. We then define a subcomblex $B(\Asm)$ of the bar construction on admissible cycles, $B(\sA)$. The category of comodules over $H^0(B(\Asm))$ is the (sub)category of motives we wish to study.

Section \ref{graphs} introduces an algebra of graphs, $\sGoneL{}{}$, that corresponds to the algebra $\Asm$. Theorem \ref{isomorphism} shows that the two algebras are isomorphic as DGAs. Since $\Asm$ is subalgebra of $\sA$, this implies that there is an injection from the algebra of graphs developed in this paper to the full Bloch Kriz cycle complex. In the process, we show in Theorem \ref{admissible} that the conditions for an arbitrary irreducible $\bP^1$-linear cycle to be admissible, that is, a generator of $\Asm$, can be defined and computed completely graphically.
%This simplifies our understanding of $\bP^1$-linear algebraic cycles, reducing them to simple combinatorial objects. This also broadens the tools one may use on $\bP^1$-linear cycles for future analysis.

In Section \ref{H0BG}, we give examples of classes in and results about $H^0(\BsG)$.
%We say that any sum of graphs that define a class in $H^0(\BsG)$ is completely decomposable.
In addition we show in Corollary \ref{samehandle} that in any completely decomposable (sum of) graphs either each summand has a two valent vertex, or none do. We further show, Theorem \ref{twovalentgraphthm}, that if a completely decomposable (sum of) graphs has two valent vertices, it is a coboundary in $\BsG$.
%This cuts down the number of cycles that can possibly lead to mixed Tate motives.

In Section \ref{Hodgerealization},
following the algorithm as outlined in \cite{BlochKriz, GGL05} and especially Kimura \cite{Kimura}, we compute the Hodge realization of a projective system of classes whose defining cycles are not describable by trees.  (All previously known explicit computations of the Bloch-Kriz Hodge realization have  been of cycles that can be described by trees.)
%(All previously known explicit computations of the Bloch-Kriz Hodge realization have  been of cycles that can be described by trees.)

%we explicitly calculate the Hodge realization of a second family of bar elements that define classes in $H^0(\BsG)$. This is the only other family of algebraic cycles (besides the original computation on cycles leading to the polylogarithms, \cite{BlochKriz}, and multiple logarithms \cite{GGL05}) for which such a calculation is known. While the particular family of sums of graphs we consider evaluate to $0$ under the Hodge realization functor, we hope that the graphical insights that lead to this breakthrough will allow the construction of an explicit Hodge realization functor, at least on $\bP^1$-linear cycles in future work.

\section{A subcomplex of algebraic cycles\label{subcomplex}}

In this section, we define a particular subcomplex of the Bloch-Kriz cycle complex that we develop in this paper. We begin with a review of the general mixed Tate motive construction via algebraic cycles. Then we proceed to describe parametrized cycles, and finally define the subcomplex of $\mathbb{P}_{1L}$-cycles that we use in the remainder of this paper.

\subsection{A review of mixted Tate motives}

In this paper we work with the category of mixed Tate motives over a field $k$,  $\sM(T)$, as constructed by Bloch and Kriz \cite{Bloch91, BlochKriz}.  When $k$ is a number field, this construction does not depend on any conjectures. In \cite{BlochKriz}, two conjectures are stated; that $gr_rK_n(F)\otimes \Q\cong CH^r(\Spec(F), n)\otimes \Q$, and that a certain algebra is quasi-isomorphic to its Sullivan 1-model. The first conjecture was subsequently proved more generally for all varieties $X$ independently by Bloch \cite{Bloch94, Bloch86}, Levine \cite{Levine94} and Spivakowsky (unpublished). The second conjecture, which is a strengthening of the Beilinson-Soul\'e conjecture for fields, is known for number fields by the work of Borel and Yang on the rank conjecture \cite{BorelYang}.  (The Beilinson-Soul\'e conjecture was already known to be true for number fields by the work of Borel from the early 1970s \cite{Borel74}).

In the rest of this section we review some details of their construction, following \cite{BlochKriz} closely.

We assume the reader is familiar with he concepts of
%Traditional approaches to this material require knowledge of
algebraic cycles, higher Chow groups,
minimal models, 1-minimal models and the bar construction for a commutative
differential graded algebra (DGA)
$A$.  For the reader who wishes to refresh her memory:
%However, we do not require familiarity with these concepts.
%In this section, we present a few necessary details about the bar construction. We leave the technical details to existing references in the literature.
The concept of a generalized minimal model is due originally
to Quillen (see for example \cite{Q}).  In the form used in this paper
(extensions by free one dimensional models)
it is due originally to Sullivan (\cite{Su}, see discussion starting p.\ 316).
A good reference for the applications of minimal models we have in mind is
the treatment in \cite{KM}, Part IV.  The bar
construction is due originally to Eilenberg and Mac Lane.  Good references for the use of the bar construction in this paper are \cite{Chen76} and (\cite{BlochKriz}, section 2).

In order to define the category of mixed Tate motives, $\sM(T)$, it suffices to define its motivic Galois group $G_T$ \cite{BlochKriz}. Equivalently, one may work with its dual Hopf algebra, $\sH_T$. This is defined from the DGA, $\sA$, of admissible algebraic cycles.

%\begin{dfn}
%The category of mixed Tate motives over a  number field, $\sM(T)$, is the category of comodules over $H_T$. The Hopf algebra $H_T$, in turn, is defined \bas H_T = H^0(B(\sA)) \;.\eas
%\label{MTMdfn}\end{dfn}

Below, following loc. cit., we define how to derive a  Hopf algebra from a commutative graded DGA $A$ which is cohomologically connected. That is, $H^0(A) = \Q$ and $H^{-n}(A) = 0$ for $n > 0$. Our DGA $A$ is not a Hopf algebra in general as the differential does not decompose.  The strategy, therefore, is to ``linearize'' $A$, i.e. form the minimal model $\sH(A)$ of $A$, which by construction is a Hopf algebra which is quasi-isomorphic to $A$.  The minimal model can be constructed quite explicitly via the bar construction. We start with a few definitions.

\begin{dfn}
\begin{enumerate}
\item Consider the commutative DGA $A = \oplus_i A_i$. Here, we refer to the grading on $A$ by degree: $\deg(a) = i \Leftrightarrow a \in A_i$. The tensor algebra, $T(A) = \oplus_{n} A^{\otimes n}$ is a commutative algebra under the shuffle product, $\sha$.
\item Let $D(A)$ be the ideal in $T(A)$ of degenerate tensor products, defined by \bas \{ a_1 \otimes \ldots \otimes a_n | a_i \in A\; ; \;  a_j \in k \textrm{ for some } j \} \;.\eas
\item The bar construction on $A$ is defined \bas B(A) = T(A) / D(A) \;.\eas It is a bi-graded algebra, with grading given by tensor degree and algebraic degree. The total degree of a monomial $a_1 \otimes \ldots \otimes a_n \in B(A)$ is defined by a shift in the degree of the tensor components in $A$. That is, \bas \td (a_1 \otimes \ldots \otimes a_n) = \sum_{i=1}^n (\deg(a_i)-1) \;. \eas Hence, the total degree of an element of the bar construction is the difference between the degree and the tensor degree. Write the bar construction as $ B(A) = \bigoplus_{i,j} B(A)^i_j $, where  \bas B(A)^i_j = \bigoplus_{\sum_1^i j_k-1 = j} A_{j_1} \otimes  \cdots \otimes A_{j_i} \eas has total degree $j$.
\end{enumerate}\label{Bardfn}
\end{dfn}

Since $A$ is a DGA, it is endowed with a differential structure (given by $\D : A \rightarrow A$) and a product structure, (given by $\mu : A \otimes A \rightarrow A$). These both extend to define differential structures on the bar construction $B(A)$, called the algebraic and multiplicative differentials, respectively.  Thus $(B(A), \D+\mu)$ is the following bicomplex. Further details and calculations involving the bar complex can be found in Section \ref{H0BG}.

\ba \xymatrix{ & \vdots & \vdots & \vdots & \\ \cdots \ar[r]^\mu &
  B(A)^3_0 \ar[r]^\mu \ar[u]^d & B(A)^2_1 \ar[r]^\mu
  \ar[u]^\D & B(A)^1_2 \ar[r]^\varepsilon \ar[u]^\D & 0
  \\ \cdots \ar[r]^\mu & B(A)^3_{-1} \ar[r]^\mu \ar[u]^\D &
  B(A)^2_0 \ar[r]^\mu \ar[u]^\D & B(A)^1_1 \ar[r]^\varepsilon
  \ar[u]^\D & 0 \\ \cdots \ar[r]^\mu & B(A)^3_{-2} \ar[r]^\mu
  \ar[u]^\D & B(A)^2_{-1} \ar[r]^\mu \ar[u]^\D & B(A)^1_0
  \ar[r]^\varepsilon \ar[u]^\D & \Q \\ \cdots \ar[r]^\mu &
  B(A)^3_{-3} \ar[r]^\mu \ar[u]^\D & B(A)^2_{-2} \ar[r]^\mu
  \ar[u]^\D & B(A)^1_{-1} \ar[r]^\varepsilon \ar[u]^\D & 0 \\ &
  \vdots \ar[u]^\D & \vdots \ar[u]^\D & \vdots
  \ar[u]^\D & }  \label{barbit}\ea

 When A is connected, cohomologically connected, and generated in degree one (a "$K(\pi,1)$ in the sense of Sullivan"), then it's minimal model is isomorphic to $\sH(A) : = H^0(B(A))$, where the cohomology is taken under the total derivative $\D + \mu$. Note that $B(A)$ is a Hopf algebra, with a product structure given by shuffle product, and a coproduct structure given by deconcatenation, which satisfy all the axioms for a Hopf algebra. This induces a well-defined product, coproduct, and Hopf algebra structure on $\sH(A)$.

Bloch and Kriz study a bar construction of a DGA of admissible cycles, $\sA=\oplus_i \sA_i$, defined below. The Hopf algebra dual to the motivic Galois group $G_T$, $H_T$, from definition \ref{MTMdfn} is exactly the Hopf algebra defined above, for the algebra of admissible cycles.

\begin{dfn}\label{higherChow}
\begin{enumerate}
\item Let $\square = \bP^1 \setminus \{1\} $. Write $\square^n = (\bP^1 \setminus \{1\})^n$. The boundary of this space is defined when one of the coordinates is set to $0$ or $\infty$.
\item For $I, J \subset \{1, \ldots n\}$, two disjoint subsets, write $F_{I, J}$ to indicate the codimension $|I \cup J|$ face of $\square^n$ with the coordinates in $I$ set to $0$, and the coordinates in $J$ set to $\infty$. Write $F_{\emptyset, \emptyset} = \square^n$ to indicate the entire space.
\item As usual, let $\cZ{p}{n}$ be the free abelian group generated by algebraic cycles of codimension $p$ in $\square^{n}$. These are the elements of weight $p$.
\item  Write $\cN{p}{n} \subset \cZ{p}{n}$ to be the free abelian subgroup generated by \emph{admisssible} algebraic cycles. A cycle $\mathcal{Z} \in \cN{p}{n}$ is one that intersects each face $F_{I, J}$ of $\square^n$ in codimension $p$, or not at all.
\item Let $\Alt$ be the alternating projection with respect to the action of the group $\fS_n\rtimes (\Z/2\Z)^n$ on $\cN{p}{n}$.  Here the symmetric group $\fS_n$ acts by permutation of coordinates, and the $i$-th copy of $(\Z/2\Z)^n$ acts by taking a coordinate to its multiplicative inverse.
\item Write \bas \sA_i^n = \Alt\cN{n}{2n-i}\otimes \Q \;, \eas where $i$ is the degree of the algebraic cycle and $n$ the codimension. This is a bigraded algebra, by weight and degree. The weight graded pieces, $\sA^n:=\oplus_i \Alt \cN{n}{2n-i}\otimes \Q$, define a complex, by the differential operator defined in equation \eqref{cyclediff}. Each degree graded piece is $\sA_i:=\oplus_n \Alt\cN{n}{2n-i}\otimes \Q$.
\end{enumerate}
\end{dfn}

\begin{rem} The main result of Section \ref{graphDGA}, is to identify which cycles are elements in $\sA$. In order to determine \emph{which} algebraic cycles are admissible, we must consider the space of \emph{all} algebraic cycles, including those that are not admissible. Therefore, in this paper, when we write $\mathcal{Z}^p(\square^n)$, we mean the entire space of algebraic cycles. We denote admissible cycles by the notation $\cN{p}{n}$. \end{rem}

We now define the DGA structure of $\sA$. Consider two admissible cycles, \bas \sZ_i \in \cN{n}{i} \textrm{ and } \sZ_j \in \cN{m}{j}\;.\eas Write $(\sZ_i, \sZ_j) \in \cN{n+m}{2(n+m)-(i+j)}$ to indicate the admissible cycle defined by $\sZ_i$ on the first $2n-i$ coordinates and $\sZ_j$ on the last $2m-j$ coordinates. The product on the associated elements in $\sA$ is given by \bas \mu (\Alt\mathcal{Z}_i \otimes \Alt \mathcal{Z}_j) = \Alt (\mathcal{Z}_i, \mathcal{Z}_j) = (-1)^{ij}\Alt (\mathcal{Z}_j, \mathcal{Z}_i)\;, \eas Where we drop the $\otimes \Q$ notation for simplicity. The last inequality comes from the properties of $\Alt$, and defines a graded commutative structure on $\sA$.

\begin{dfn}
An element $\sZ \in \sA$ is decomposable if it can be expressed as the product of two non-trivial cycles.
\end{dfn}

Next, we define the differential structure on $\sA$. Consider $\sZ \in \sA$. Let $\D_{j,0} \mathcal{Z}$ indicate the intersection of $\mathcal{Z}$ with the face $F_{j,\emptyset}$. Similarly, let $\D_{j,\infty} \mathcal{Z}$ indicate the intersection of $\mathcal{Z}$ with the face $F_{\emptyset, j}$. These two operators define the differential $\D$ on $\sA$: \ba \D \mathcal{Z} = \sum_{j=1}^{2n-i} (-1)^{j-1}(\D_{j, 0} - \D_{j, \infty}) \mathcal{Z} \;.\label{cyclediff}\ea

\begin{rem} It is worth noting at this point the difficulty of finding elements of $\sA$, in particular, in finding cycles that satisfy the condition of admissibility. One of the achievements of this paper is to give a clear, simple condition for identifying admissible cycles for a large subclass of cycles, called $\bP^1$-linear cycles. In particular, see Theorem \ref{admissible}. \end{rem}

For an element $\varepsilon \in \bigoplus_n B(\sA)^n_{i}$ to define a class in $H^i (B(\sA))$, each graded component must have decomposable algebraic boundary. This comes from the fact that $(\D + \mu )(\varepsilon) = 0. $ In order to define what it means for a cycle to have decomposable boundary, let $\pi_m$ be the projection of $\epsilon$ onto the $m^{th}$ tensor component. That is, $\pi_m(\varepsilon) \in B(\sA)^m_{i}$. Then, for each $m$, $\D(\pi_m \varepsilon)$ is a decomposable element. In particular, \bas \D(\pi_m \varepsilon) = -\mu (\pi_{m+1} \varepsilon) \;.\eas

\begin{dfn} Consider an element $\varepsilon  \in B(\sA)$.
\begin{enumerate}
\item
The projection $\pi_i(\varepsilon) \in B(\sA)_i^n$ is decomposable if it has a decomposable algebraic boundary. That is, if there
exists an $\varepsilon' \in B(\sA)_{i+1}^{n+1}$ such that $\D(\pi_i(\varepsilon)) =
- \mu (\varepsilon')$. That is, the coboundary of the projection $\pi_i(\varepsilon)$ is in
the image of the product map $\mu$.
\item
An element $\varepsilon \in B(\sA)$ is completely decomposable if, for all $i$, $\pi_{i}(\varepsilon)$
is decomposable, with \bas \D\pi_{i}(\varepsilon) = -\mu \pi_{i+1}(\varepsilon) .\eas
\end{enumerate}
\label{decompdfn}\end{dfn}

\begin{dfn} We say that the element $\varepsilon \in \bigoplus_n B(\sA)^n_{i}$ is minimally decomposable if it is completely decomposable, and cannot be written as a sum of two non-trivial completely decomposable elements. That is one cannot write  $\varepsilon = \varepsilon_1 + \varepsilon_2$, where each $\varepsilon_i \neq 0$ and is completely decomposable. \label{mindecompdfn} \end{dfn}

\begin{rem}Notice that if $\varepsilon$ is minimally decomposable, it is determined by $\pi_{n_0} (\epsilon)$, where $n_0$ is the smallest integer for which $\pi_n(\varepsilon) \neq 0$. Therefore, by abuse of notation, we say that $\pi_{n_0} (\varepsilon)$ defines a class in $H^i(B(\sA))$. In all examples in this paper, $n_0=1$.
%Furthermore, if $\varepsilon$ is minimally decomposable, $n_0=1$.
%For the purposes of this paper, we may assume that $n_0 = 1$.
\label{mindecomprmk}\end{rem}

Next we give an example of an admissible cycle that defines a class in $H^0(B(\sA))$.

\begin{eg} \label{Totaroeg}
Consider the cycle $\sZ_T(a) = \Alt (t,1-t,1-\frac{a}{t}) \in \sA^2_1$. This is a parametric representation of the algebraic cycle determined by the set of system of equations $\{x+y=1, xz=x+a\}$. This, the Torato cycle \cite{Totaro92}, has codimension $2$ in $\square^3$ and is a degree one element in $A$, $\sZ_T(a) \in \sA^2_1$,

We check that $\sZ_T(a)$ has a completely decomposable boundary. Therefore, it defines a class in $H^0(B(\sA))$. To see this, compute $\D \sZ_T(a)$. The intersections $\D_{\infty, i} \sZ_T(a)$ give the empty cycles for $i \in \{1, 2, 3\}$. This is because setting one of the coordinates of $\sZ_T(a)$ to $\infty$ sets a different coordinate to $1$. The same holds for $\D_{0, 1} \sZ_T(a)$ and $\D_{0, 2} \sZ_T(a)$. Therefore, \bas \D \sZ_T(a) =  \D_{0, 3} \sZ_T(a) = \Alt (a, 1-a) =  \mu [\Alt (a) | \Alt (1-a)]\;. \eas The last equality comes from the product structure on $A$. Since $(a)$ and $(1-a)$ are constant cycles, $\D [\Alt (a) | \Alt (1-a)] = 0$ by the Lebnitz rule. Therefore, $\sZ_T(a) \oplus -[\Alt (a) | \Alt (1-a)] \in \ker (\D + \mu)$. Since $\sZ_T(a)$ has total degree $0$ in $B(\sA)$, it defines a class in $H^0(B(\sA))$.

The Hodge realization functor associates the period $\Li_2(a)$ to the cycle $\sZ_T(a)$ \cite{BlochKriz}. To do this, consider the $\sA$ module, $\sT$, defined by maps from $n$ simplices, $\Delta_n$, to $\square^n$. There is an element $\zeta(a)$ in the circular bar construction $B(\sT, \sA)$ such that $\zeta(a) + 1 \otimes \sZ_T(a)$ defines a class in $H^0(B(\sT, \sA))$. The summands of $\zeta(a)$ that are supported completely on $\Delta_2$ defines the integrand of the associated period.
\end{eg}

This example hints at another shortcoming of the current state of technology surrounding algebraic cycles. We are interested in defining elements of $B(\sA)$ that define classes of $H^0(B(\sA))$. In particular, we are interested in cycles with boundaries that can be written as products of other cycles, as is the case for the Torato cycle in example \ref{Totaroeg}. In Section \ref{examples}, we provide several examples of such sums of cycles in weight $4$. However, we have not yet addressed this issue of how to find such sums in general. However, we hope that the graphical point of view presented in this paper will shed light on the problem of identifying cycles with completely decomposable boundaries. We leave this for future work.

%This paper, has, however, taken a step forward in identifying $0$ classes of $H^0(B(\sA))$. There are two major advances in this area. The graphical point of view taken in this paper shows that any admissible cycle corresponding to a graph with a two valent vertex leads to a $0$ cohomology class, see Theorem \ref{twovalentthm}. Furthermore, we are interested in associating periods to the classes in $H^0(B(\sA))$, by identifying the appropriate element of $H^0(B(\sT, \sA))$. We do not have a complete algorithm for the latter. However, in Section \ref{Hodgerealization} we do this calculation for an infinite family of algebraic cycles. This calculation shows that the family of cycles we consider are $0$ classes in $H^0(B(\sA))$.

\subsection{A subalgebra of $\sA$}

%\cite{T}, \cite{Bloch91}, eg \cite{GGL07}

%What does $(t,1-t,1-\frac at)$ really mean?  What the notation says without explanation.  What we want it to mean.  More accurately described as $(\frac tu, 1-\frac tu, 1-\frac{au}t)$ without $1$ in any coordinate.  $\A^1\to \A^1-\{1\}$.  making a consistent choice of projectivization.

Unfortunately, the standard parametric notation for Tate cycles is rather misleading.  For example, consider the usual form for the Totaro cycle $\sZ_T(a) = \Alt (t,1-t,1-\frac{a}{t})\in \sA^2_1$, defined in example \ref{Totaroeg}, and in the literature \cite{Totaro92, GGL05}. It is technically defined on $\square_k^3=(\bP^1-\{1\})^3$, but is written as if it is defined on $\sA^3=(\bP^1-\{\infty\})^3$. In actuality, the Totaro cycle (for $a\in k^*$) is an algebraic cycle defined by the system of equations $\{x+y=1, xz=x+a: (x,y,z)\in (\bP^1-\{1\})^3\}$ together with a parametrization map $\bP^1\rightarrow (\bP^1-\{1\})^3$. However, when manipulated in practice, the cycle is understood a) to come equipped with a parametrization map and b) to be defined at the hyperplanes with one coordinate equal to $\infty$, and not defined at the hyperplanes with one coordinate equal to $1$.  This is unnecessarily obtuse. It can be described as the intersection of the image of
\bas
\bP^1 &\rightarrow  (\bP^1)^3 \\
(T:U) &\mapsto   (\frac TU, \frac{U-T}{U}, \frac{T-aU}{T})
\eas
with the complement of the hyperplanes of $(\bP^1)^3$ defined by setting some coordinate equal to 1.

In light of this example, we work with parametrized cycles.

\begin{dfn} \label{parammap}
A parametrized cycle is a pair, $(Z, \phi)$, consisting of an algebraic cycle $Z \in \cZ{p}{n}$ and a parametrization $\phi:\bP^{n-p} \rightarrow (\bP^1)^n $ satisfying the following: $\phi$ induces a map on the group of algebraic cycles, \bas \phi_* : \sZ^0(\bP^{n-p}) \rightarrow \sZ^p((\bP^1)^n) \;. \eas  Then given the inclusion $i : \square^n \hookrightarrow (\bP^1)^n$, we have
 \bas Z = i^*\phi_*(\bP^{n-p});, \eas where $\bP^{n-p}$ is the generator of  $\sZ^0(\bP^{n-p})$.
\end{dfn}

For $\sZ \in \cZ{p}{n}$, write the parameterizing map $\phi = (\phi_1, \ldots , \phi_n)$, where each $\phi_i$ corresponds to the image in a coordinate of $\square^n$. There are, of course, multiple possible parameterizations of any cycle $\sZ \in \cZ{p}{n}$. In this paper, we are interested in the algebraic cycles themselves, not the particular parameterizations. If the same cycle $Z$ can be represented by two different parametrizations, $(Z, \phi)$ and $(Z, \phi')$, we say that $\phi$ and $\phi'$ are equivalent parameterizations. In this paper, we are interested in cycles that \emph{can} be endowed with a $\bP^1$-linear parametrization.

\begin{dfn}A cycle $\sZ \in \cZ{p}{n}$ is $\bP^1$-linear if it can be parameterized by a $\phi$ such that each component can be written as $\phi_i \in  \{(\onem{t_1}{a_it_2})^\zeta, (\onem{t_2}{a_it_1})^\zeta,
\frac{t_1}{a_it_2}^\zeta \, \}$, with $a_i \in k^\times$, $\zeta \in \{\pm 1\}$. Such a $\phi$ is called a $\bP^1$-linear parametrization, and can be written as a map on $\bP^1$ via the following commutative diagram:
\bas
\xymatrix{ \bP^1 \ar[r] \ar[d]^{\phi_j} & \bP^{n-p} \ar[d]^{\phi} \\ \bP^1 \ar@{^(->}[r]_{i_j} & (\bP^1)^{n}}
 \;. \eas
The top arrow is given by a map \bas(t_1:t_2)\mapsto (0:\ldots :0:t_1:0:\ldots :0:t_2:0:\ldots :0),\eas
and the bottom arrow is given by inclusion into the $j^{th}$ coordinate. \end{dfn}

\begin{dfn} Denote the free abelian groups of $\bP^1$-linear cycles by $\cZoneL{n}{m}$. Write $\cNoneL{n}{m}$ to denote the free abelian group of $\bP^1$-linear \emph{admissible} cycles. \end{dfn}

The goal of this section is to define a subDGA of $\sA$, the algebra of admissible cycles, that is generated by $\cNoneL{n}{2n-i}$. Call it \bas \sA_{1L}= \bigoplus_i \sA_{1L, i} = \bigoplus_{n, i}\Alt\cNoneL{n}{2n-i} \otimes \Q\;.\eas %This is generated by elements of the form $(f_1, \ldots , f_n)$. The form of these functions come from the fact that the parametertization map $f$ is the restriction of a map to $(\bP^1)^k$. Therefore, each coordinate function can be written as the product of fractional linear transformations \bas  f_m:=\prod_{n=1}^{r_{m}}\frac{a_nt_{m_1}-b_nt_{m_2}}{c_nt_{m_1}-d_nt_{m_2}}\eas where $a_n,b_n,c_n,d_n\in k$.

The graded commutative structure on $\sA_{1L}$ comes from the product structure on $\sA$, along with the fact that the product of two parameterizable cycles is still parameterizable. It remains to check that the differential structure on $\sA$ is well defined on $\sA_{1L}$. The differential on $\sA$ comes from intersecting each coordinate of an element $\sZ \in \sA^n_i$ with the appropriate $0$ and $\infty$ face of $\square_k^{2n-i}$. Consider $Z \in \cNoneL{n}{2n-i}$. Let $\phi$ be a parametrization on $Z$. Then, the intersection of $Z$ with a particular face corresponds to the pullback of $\phi$ by the appropriate face map. Therefore, the differential of $Z$ is also a $\bP^1$-linear parametrizable cycle.

If $\Alt Z \in \sA_{1L}$ is a decomposable cycle of codimension $i$, write $\Alt Z = \Alt (Z_1, \ldots Z_r)$ as above. The Leibnitz rule and properties of $\Alt$ show that $\D \Alt (Z_1, \ldots Z_r)$ is also parametrizable.

The subalgebra $\sA_{1L}$ naturally defines a subcomplex, $B(\sA_{1L})$, that defines a subcategory of $\sM(T)$ from \cite{BlochKriz}. This is exactly the subcategory of mixed Tate motives consisting of elements with either a prescribed pole or zero at the points $0$, $\infty$, or $1$ in each coordinate.

\begin{comment}
\begin{remark}
In the following proposition we will need all components of our parametrizing maps to split
completely.  For simplicity, we could take
%we take
our ground field to be algebraically closed.  This assumption is not the best possible.  If we pass to the splitting field $K$ of all the components of all parametrizing maps (note that, since the components are all defined on $\P^1$, they factor completely into liners over some finite extension of the ground field $k$), we suspect the following proposition can be extended to the general case if care is taken to properly take into account the natural action of Gal($\bar{K}/K$).
\end{remark}
\end{comment}

The algebra $\sA_{1L}$ contains all the Totaro cycles. Moreover, it contain a large class of cycles which correspond to the multiple logarithms \cite{GGL05}. Therefore, conjecturally, it contains all the cycles necessary to define the full category of mixed Tate motives. There has been some effort to understand subalgebras of $\sA_{1L}$ in terms of polylogarithms and multiple logarithms \cite{GGL05, Souderes12}. In this paper, we study a subalgebra $\Asm \subset \sA_{1L}$ that specifically excludes the Totaro cycles. The combinatorics of the cycles in $\Asm$ are studied in Section \ref{graphs}. The graphs introduced in Section \ref{graphs} correspond to the subalgebra $\Asm$, which excludes cycles with coordinates of the form $a_i \frac{t_i}{t_j}$.

\begin{dfn}
Let $\Asm$ be the algebra of $\bP^1$-linear cycles, where $\phi_i \in  \{(\onem{t_1}{a_it_2})^\zeta, (\onem{t_2}{a_it_1})^\zeta\}$, with $a_i \in k^\times$, $\zeta \in \{\pm 1\}$
\end{dfn}

\section{Motivic Graphs\label{graphs}}

The first graphical description of some of the algebraic cycles that
arise in the category $\sM(T)$ of mixed Tate motives was given by
Herbert Gangl and his collaborators in \cite{GGL07} and \cite{GGL05} in
their description of $R$-deco trees. These provide a description of a
particular proper sub DGA of $\Asm$.

In particular, they represent a subalgebra of cycles by labelled oriented
trees.  For example,

\bas
\begin{xy}
(0,10) *{}="r" +(1,0)*{1},
(0,0) *{\bullet}= "u" +(2,0)*{u},
(5, -5) *{\bullet}= "v"+(2,0)*{v},
(10, -10) *{}= "c" +(1,0)*{c},
(0, -10) *{}= "b" +(-1,0)*{b},
(-10, -10) *{}= "a" +(-1.5,0)*{a},
"r"; "u" **{\dir{-}}?/0pt/*{\dir{>}},
"u"; "v" **{\dir{-}}?/0pt/*{\dir{>}},
"v"; "c" **{\dir{-}}?/0pt/*{\dir{>}},
"v"; "b" **{\dir{-}}?/0pt/*{\dir{>}},
"u"; "a" **{\dir{-}}?/0pt/*{\dir{>}},
\end{xy} \rightarrow [\onem{1}{u},\onem{u}{a}, \onem{u}{v}, \onem{v}{b},
\onem{v}{c}]\;.
\eas

Note that this assignment depends on several arbitrary choices, such
as a choice of orientation as well as a choice of affine patch.

In this section we give a more general graphical depiction
that encapsulates all $\Asm$ cycles, using decorated oriented,
non-simply connected graphs.

For example, the tree and cycle above come from the labeled
oriented graph
\bas
\begin{xy}
(-12, 10) *{\bullet}= "u",
(12, 10) *{\bullet}= "v",
(0, -7) *{\bullet}= "z",
"u"; "v" **{\dir{-}}?/0pt/*{\dir{>}}+(1,1.5)*{1},
"u"; "z" **\crv{+(1,12)}?/0pt/*{\dir{>}} +(-2,-1)*{1},
"z"; "u" **\crv{+(-1,-12)}?/0pt/*{\dir{>}}+(-2,1)*{\frac{1}{a}} ,
"z"; "v" **\crv{+(1,-12)}?/0pt/*{\dir{<}} +(3,0)*{\frac{1}{c}},
"v"; "z" **\crv{+(-1,12)}?/0pt/*{\dir{>}} +(2,-1)*{\frac{1}{b}},
\end{xy} \rightarrow [\onem{z}{u},\onem{u}{az}, \onem{u}{v}, \onem{v}{bz},
\onem{v}{cz}] \eas by taking the affine patch at $z= 1$, which
graphically amounts to removing the vertex labeled $z$ and changing
the labels from the edges of the graph to the vertices of the tree.

The approach of this paper produces far more algebraic cycles that are not seen via the approach given
in \cite{GGL07} and \cite{GGL05}. In particular, we can study cycles
represented by graphs that cannot be represented by a tree in any
affine patch.  For example, the graph
\bas {\begin{xy}
(-7,7) *{\bullet}="x",
(7,7) *{\bullet}= "y",
(-7, -7) *{\bullet}= "z",
(7, -7) *{\bullet}= "w",
"y"; "x" **{\dir{-}}?/0pt/*{\dir{>}}+(0,2)*{_{a_0}},
"x"; "z" **\crv{+(-3,7)}?/0pt/*{\dir{>}}+(-2,0)*{_{a_1}},
"x"; "z" **\crv{+(3,7)}?/0pt/*{\dir{<}}+(1, 0)*{_{1}},
"y"; "w" **\crv{+(-3,7)}?/0pt/*{\dir{>}}+(-2,0)*{_{1}},
"y"; "w" **\crv{+(3,7)}?/0pt/*{\dir{<}}+(2, 0)*{_{a_4}},
"z"; "w" **\crv{+(-7,3)}?/0pt/*{\dir{<}}+(0,2)*{_{1}},
"z"; "w" **\crv{+(-7,-3)}?/0pt/*{\dir{.}}+(0, -2)*{_{a_3}},
\end{xy}} \eas
in this paper corresponds to the algebraic cycle \bas \Alt
[\onem{z}{x}, \onem{x}{a_1 z}, \onem{w}{z}, \onem{z}{a_2w},
  \onem{y}{w}, \onem{w}{a_3y}, \onem{y}{a_0x}] \;.\eas Yet there is no
affine patch one can take (i.e. a vertex one can remove) that will
result in a tree of the form studied in \cite{GGL05}.

The aim of Section \ref{graphs} is to construct an algebra of graphs,
$\sGoneL{}{} = \bigoplus_{\bullet, \star}\sGoneL{\bullet}{\star}$ that is isomorphic to the algebra of
admissible cycles $\Asm$ as DGAs. The definition of
this algebra is given at the end of Section
\ref{admissiblegraphs}. Most of Sections \ref{algofgraphs} and
\ref{admissiblegraphs} are devoted to building up $\sGoneL{}{}$
step by step. We begin with a general set of oriented graphs with labeled
and ordered edges, $\sG(k^\times)$. This corresponds to the set of generators of the free abelian group $\cZoneL{\bullet}{2\bullet-\star}$. We define a monoid structure on the set, such that $\sG(k^\times)$ generates an algebra, $\Gprg$.
Then we consider the alternating representation on the graphs, by imposing an equivalence relation on them by the ordering of this edges. This gives an algebra homomorphism from $\Gprg^\bullet_\star/\simord$ to the
algebra of cycles $\Alt \cZoneL{\star}{2\bullet-\star}$.

However, we wish for a DGA homomorphism to the algebra of admissible, $\bP^1$-linear
cycles, $\Asm \subset \cN{\star}{2\bullet-\star}$. To do this, we define a subset of
$\sG_{ad}(k^\times)\subset \sG(k^\times)$ which we show corresponds
to admissible graphs in Theorem \ref{admissible}. We write $\Q[\sG_{ad}]$ to indicate the algebra generated by $\sG_{ad}(k^\times)$. In order to establish a DGA isomorphism between $\Asm$ and $\Q[\sG_{ad}]$, we must define a differential operator on graphs. To do this, we need two further equivalence relations among graphs, which we call $\simv$ and $\simori$. In Section \ref{graphDGA}, we show that $\Gprg/(\simord,
\simv)$ is a DGA of graphs.

In Section \ref{admissiblegraphs}, we show one of the main findings of
this paper, that \emph{admissibility of $\bP^1$-linear cycles can be
  encoded purely by labeled oriented graphs.} In particular, there is no further
algebraic input necessary. Imposing the third equivalence relation gives the desired isomorphism \bas \sGoneL{}{} =
\Q[\sG_{ad}(k^\times)]/(\simord,\simv,\simori) \simeq
\Asm \;.\eas

%Note that set algebra of graphs we work with, $\sGoneL{\bullet}{\star}$ is rather
%different from the set allowed by Kontsevich? in his description of a
%Hopf algebra of graphs. In particular we rely on loops and multiple
%edges which he specifically forbids. Also, our associated Hopf algebra
%(constructed out of the DGA defined in section \ref{DGA} using the
%construction given in section \ref{MTMdef}), and in particular its
%coproduct structure, does not appear to be related to the one(s) given
%by Kreimer and Connes-Kreimer in [citation], at least in any obvious
%way. {\color{red} Does this need to be said twice? I'm tempted to nix
%  it here and leave it earlier.}

\subsection{An interesting algebra of graphs\label{algofgraphs}}

In this section, we introduce a general set of biconnected graphs with
oriented, labeled, and ordered edges. We impose a product structure on
it. This defines an algebra of graphs that corresponds to the
algebra of general (not necessarily admissible)
algebraic cycles.

We work over a number field $k$.

\begin{dfn}
  Let $\sG(k^\times)$ be the set of graphs with biconnected connected
  components, with oriented and ordered edges, each labeled by an
  element of $k^\times \times \Z/2\Z$.
\end{dfn}

In practise, we say that the edges of $G$ are labeled by a non-zero
number and a sign.

For a graph $G \in \sG(k^\times)$,
let $V(G)$ be the set of vertices of $G$, and
$E(G)$ be the unordered set of edges of the graph. However, we are working with graphs with ordered edges. Therefore we must consider the ordered set of edges.

\begin{dfn}
Let $\omega(G)$ be
  the ordered set of edges of $G$, where $\omega(e)$ expresses the
  ordinality of the edge $e \in E(G)$ in $\omega(G)$. Write
  $\sgn_{\omega(e)}$ to indicate the sign associated the edge $e$.
\end{dfn}

The loop number, or first Betti number, of a graph $G \in
\sG(k^\times)$ is \ba h^1(G) = |E(G)| - | V(G)| + h^0(G) \label{bettidef}\;,\ea
where $h^0(G)$ counts the number of connected components of the
graph. The vector space $H^1(G)$ is spanned by graphical cycles of the
unoriented graph underlying $G$.

\begin{rem}
  There are multiple conventions regarding the definition of cycles in
  graphs in the literature. In this paper, $L \subset E(G)$, together
  with an orientation, (possibly different from the orientation on the
  individual edges in $E(L)$) is a graphical cycle of the graph $G$ if
  it defines a path in $G$ that starts and ends at the same
  vertex. Specifically, the path in $G$ defined by the edges of $L$
  does not need to respect the orientation of the edges in $L$. A
  graphical \emph{loop} is a graphical cycle that does not intersect
  itself until the final vertex. \end{rem}

In this paper, we will concern ourselves only with graphical loops of $G
\in \sG(k^\times)$.

\begin{eg}
\label{samplegraph}
 Consider the graph
\bas G = {\begin{xy}
(0, 0) *{\bullet}= "a",
(20, 0) *{\bullet}= "b",
(35,10) *{\bullet}="t",
(35,-10) *{\bullet}= "u",
(55, 0) *{\bullet}= "z",
"t"; "z" **\crv{+(-15,0)}?/0pt/*{\dir{>}} +(1,-2)*{_{b_2,-}},
"z"; "t" **\crv{+(15,0)}?/0pt/*{\dir{>}}+(2,2)*{_{a_1,+}},
"t"; "u" **\crv{+(-5,10)}?/0pt/*{\dir{>}} +(-3,0)*{_{c_3, +}},
"u"; "t" **\crv{+(5,-10)}?/0pt/*{\dir{>}} +(3.5,-2.5)*{_{e_7, +}},
"u"; "z" **{\dir{-}}?/0pt/*{\dir{>}}+(1,-2.5)*{_{d_5,-}},
"a"; "b" **\crv{+(-10,7)}?/0pt/*{\dir{>}} +(-2,1)*{_{g_8, -}},
"a"; "b" **\crv{+(-10,-7)}?/0pt/*{\dir{<}} +(-2,-1)*{_{h_6, -}},
"a"; "b" **{\dir{-}}?/0pt/*{\dir{>}}+(1,-1.5)*{_{f_4, +}},
\end{xy}} \;.\eas These are in $\sG(k^\times)$, assuming $a_1 \ldots g_8$ are all in $k^\times$.  The subscripts on the coefficients indicate
the ordering of the edges, the signs
on the edges are as indicated.
\end{eg}

We impose a product structure on the set $\sG(k^\times)$. For $G,
G' \in \sG(k^\times)$, let $G \amalg G'$ be the disjoint union of the
graphs, without an overall ordering imposed on the union of the
edges. The product of two graphs $G \cdot G'$ is the graph $G \amalg
G'$, with the edges of $G$ appearing before the edges
of $G'$. In particular, this is a non-commutative product, \bas G
\cdot G' \neq G' \cdot G\;, \eas as the ordering of the edge set, $E(G
\amalg G')$, in the two cases is not the same.

\begin{eg} \label{skewprodeg}
In this example, we concern ourselves primarily with the ordering of
the edges in the product. Therefore, we write label the edges with elements of $k^\times$ and the ordering, and neglect to indicate the sign. One may
assume, without loss of generality, that the signs are all positive
in the graphs below.

Consider the graphs
\bas G_1 = {\begin{xy}
(0,10) *{\bullet}="t",
(0,-10) *{\bullet}= "u",
(20, 0) *{\bullet}= "z",
"t"; "z" **\crv{+(-15,0)}?/0pt/*{\dir{>}} +(-2,-1)*{_{b,2}},
"z"; "t" **\crv{+(15,0)}?/0pt/*{\dir{>}}+(2,1)*{_{a,1}},
"t"; "u" **\crv{+(-5,10)}?/0pt/*{\dir{>}} +(-2,0)*{_{c,3}},
"u"; "t" **\crv{+(5,-10)}?/0pt/*{\dir{>}} +(-2,0)*{_{e,5}},
"u"; "z" **{\dir{-}}?/0pt/*{\dir{>}}+(1,-1.5)*{_{d,4}},
\end{xy}}
\eas
and
\bas G_2 = {\begin{xy}
(30, 0) *{\bullet}= "a",
(50, 0) *{\bullet}= "b",
"a"; "b" **\crv{+(-10,7)}?/0pt/*{\dir{>}} +(2,1.5)*{_{g,1}},
"a"; "b" **\crv{+(-10,-7)}?/0pt/*{\dir{<}} +(-2,-1.5)*{_{h,2}},
"a"; "b" **{\dir{-}}?/0pt/*{\dir{>}}+(1,-1.5)*{_{f,3}},
\end{xy}} \;. \eas

First, notice that the graph in example \ref{samplegraph} cannot be
written as the product of $G_1$ and $G_2$, since the edges of one
connected component do not precede the edges of the other, as written.

The product \bas G_1\cdot G_2 = {\begin{xy}
(0,10) *{\bullet}="t",
(0,-10) *{\bullet}= "u",
(20, 0) *{\bullet}= "z",
(30, 0) *{\bullet}= "a",
(50, 0) *{\bullet}= "b",
"t"; "z" **\crv{+(-15,0)}?/0pt/*{\dir{>}} +(-2,-1)*{_{b,2}},
"z"; "t" **\crv{+(15,0)}?/0pt/*{\dir{>}}+(2,1)*{_{a,1}},
"t"; "u" **\crv{+(-5,10)}?/0pt/*{\dir{>}} +(-2,0)*{_{c,3}},
"u"; "t" **\crv{+(5,-10)}?/0pt/*{\dir{>}} +(-2,0)*{_{e,5}},
"u"; "z" **{\dir{-}}?/0pt/*{\dir{>}}+(1,-1.5)*{_{d,4}},
"a"; "b" **\crv{+(-10,7)}?/0pt/*{\dir{>}} +(2,1.5)*{_{g,6}},
"a"; "b" **\crv{+(-10,-7)}?/0pt/*{\dir{<}} +(-2,-1)*{_{h,7}},
"a"; "b" **{\dir{-}}?/0pt/*{\dir{>}}+(1,-1.5)*{_{f,8}},
\end{xy}} \; \eas
while the product in the other order is
\bas G_2 \cdot G_1 = {\begin{xy}
(30, 0) *{\bullet}= "a",
(50, 0) *{\bullet}= "b",
(0,10) *{\bullet}="t",
(0,-10) *{\bullet}= "u",
(20, 0) *{\bullet}= "z",
"t"; "z" **\crv{+(-15,0)}?/0pt/*{\dir{>}} +(-2,-1)*{_{b,5}},
"z"; "t" **\crv{+(15,0)}?/0pt/*{\dir{>}}+(2,1)*{_{a,4}},
"t"; "u" **\crv{+(-5,10)}?/0pt/*{\dir{>}} +(-2,0)*{_{c,6}},
"u"; "t" **\crv{+(5,-10)}?/0pt/*{\dir{>}} +(-2,0)*{_{e,8}},
"u"; "z" **{\dir{-}}?/0pt/*{\dir{>}}+(1,-1.5)*{_{d,7}},
"a"; "b" **\crv{+(-10,7)}?/0pt/*{\dir{>}} +(2,1.5)*{_{g,1}},
"a"; "b" **\crv{+(-10,-7)}?/0pt/*{\dir{<}} +(-2,-1)*{_{h,2}},
"a"; "b" **{\dir{-}}?/0pt/*{\dir{>}}+(1,-1.5)*{_{f,3}},
\end{xy}} \; .\eas

It is the ordering on the two graphs that distinguishes the two
products. Everything else about the labeled oriented graphs $G \cdot
G'$ and $G' \cdot G$ is the same.
\end{eg}

This non-commutative product gives $(\sG(k^\times), \cdot)$ a free
monoidal structure. The unit in the monoid is given by the empty
graph, which has no loops and no edges, and therefore no labels.

\begin{dfn}Let $\Gprg$ be the free algebra generated by the
  monoid $(\sG(k^\times), \cdot)$. \end{dfn}

Just as with the cycles, we are not interested in the order of the coordinates, but their image under $\rm{Alt}$. Therefore, we are also only interested in an alternating projection on the edges of the graphs. There is a $\fS_{n}\rtimes (\Z/2\Z)^n$ action on the edges of a graph $G \in \sG(k^\times)$. This action permutes the order of the edges in
the graph, and changes the assigned signs. An element $g \in
\fS_{n}\rtimes (\Z/2\Z)^n$ is of the form $g = (\sigma, \vec{\sgn})$,
where $\sigma \in \fS_{|E(G)|}$, and $\vec{sgn} \in (\Z/2\Z)^n$ is
  an ordered set of signs. Write $\vec{\sgn}_j$ for the $j^{th}$
  entry of the ordered set. Furthermore, write \bas \sgn(g) = \sgn(\sigma) \prod_j
  \vec{\sgn}_j \;, \eas where $\sgn(\sigma)$ indicates the sign of the permutation $\sigma  \in \fS_{|E(G)|}$.

The action of $\fS_{n}\rtimes (\Z/2\Z)^n$ on the algebra of graphs is as follows: \bas gG = \begin{cases} 0 & \textrm{if } |E(g)| \neq n \\ \begin{cases}\omega(gG) := \sigma(\omega(G)) & \\ \sgn_i(gG) = \vec{\sgn}_i
  \sgn_i (G) & \end{cases} & \textrm{else} \end{cases} \;. \eas That is, if $|E(G)| = n$ then
The ordering and signs of the edges in $gG$, for $g =
  (\sigma, \vec{\sgn})$, are determined by $\sigma$ and
  $\vec{\sgn}$ respectively.

The action of $\fS_{n}\rtimes (\Z/2\Z)^n$ defines an equivalence
class on $\Gprg$.

\begin{lem}Letting $n$ vary, any two monomials $G$ and $G' \in \Gprg$ are equivalent if
  and only if there is an element $g \in \fS_{n}\rtimes (\Z/2\Z)^n$
  relating the two: \bas G \simord \sgn(g) gG \;
  . \eas \label{orderequiv}\end{lem}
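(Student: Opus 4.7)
\bigskip

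\noindent\emph{Proof proposal.}

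The plan is to show that the binary relation
\[
R \;=\; \bigl\{\,(G,G') : \exists\, n, \exists\, g\in\fS_{n}\rtimes(\Z/2\Z)^{n}\ \text{with } G' = \sgn(g)\,gG\,\bigr\}
\]
is already an equivalence relation on monomials of $\Gprg$; once that is done, $R$ must agree with $\simord$, because $\simord$ was defined to be the equivalence closure of precisely the generating relations $G \sim \sgn(g)gG$. The ``if'' direction of the lemma is immediate from this definition, so all the content lives in the ``only if'' direction, which is exactly the assertion that the closure step adds nothing.

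First I would reduce to the case where $|E(G)|=|E(G')|=n$. The group $\fS_{n}\rtimes(\Z/2\Z)^{n}$ acts by $0$ on any monomial whose edge cardinality differs from $n$, so no generating relation of $\simord$ can connect graphs with different numbers of edges; thus $G\simord G'$ forces $|E(G)|=|E(G')|$, and I can fix $n$ throughout. Reflexivity of $R$ then comes from the identity element $e$ with $\sgn(e)=1$.

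The main work is symmetry and transitivity, both of which reduce to the statement that
\[
\sgn:\ \fS_{n}\rtimes(\Z/2\Z)^{n}\longrightarrow \{\pm 1\},\qquad (\sigma,\vec{\sgn})\mapsto \sgn(\sigma)\prod_{j}\vec{\sgn}_{j},
\]
is a group homomorphism. Granting this, symmetry follows from $\sgn(g^{-1})gg^{-1} = \sgn(g^{-1})\sgn(g)=1$, so $G'=\sgn(g)gG$ gives $G = \sgn(g^{-1})g^{-1}G'$; transitivity follows by composing $G'=\sgn(g_1)g_1G$ and $G''=\sgn(g_2)g_2G'$ to obtain $G''=\sgn(g_2)\sgn(g_1)\,g_2g_1 G = \sgn(g_2g_1)(g_2g_1)G$. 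So the proof really turns on verifying that $\sgn$ is a homomorphism.

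To check this, I would use that in the semidirect product the product has the form $(\sigma_1,\vec{s}_1)(\sigma_2,\vec{s}_2)=(\sigma_1\sigma_2,\ \sigma_2^{-1}(\vec{s}_1)\cdot\vec{s}_2)$, where $\sigma$ acts on $(\Z/2\Z)^{n}$ by permuting the coordinates. Since $\prod_j$ over $(\Z/2\Z)^{n}$ is invariant under permutation of the entries, one gets
\[
\prod_{j}\bigl(\sigma_{2}^{-1}(\vec{s}_{1})\cdot\vec{s}_{2}\bigr)_{j} \;=\; \Bigl(\prod_{j}\vec{s}_{1,j}\Bigr)\Bigl(\prod_{j}\vec{s}_{2,j}\Bigr),
\]
and combining with multiplicativity of $\sgn$ on $\fS_{n}$ yields $\sgn(g_1g_2)=\sgn(g_1)\sgn(g_2)$. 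The main (minor) obstacle is bookkeeping: being careful that the permutation action of $\fS_n$ on the ordered set of edges is the same action used implicitly in the formula for $\sgn_i(gG)=\vec{\sgn}_i\sgn_i(G)$, so that the two compositions $(g_2g_1)G$ and $g_2(g_1G)$ genuinely yield the same labeled ordered graph. Once this compatibility is in place, $R$ is an equivalence relation containing every generating relation of $\simord$, so $R=\simord$, completing the proof.
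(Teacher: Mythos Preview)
Your proposal is correct and follows exactly the approach the paper indicates: the paper simply says the proof ``comes from the identity, inverse and composition laws of the group $\fS_{|E(G)|}\rtimes (\Z/2\Z)^n$, and we omit it.'' You have supplied precisely those details, including the key point that $\sgn$ is a group homomorphism (via permutation-invariance of the product $\prod_j \vec{s}_j$), which is what makes symmetry and transitivity go through.
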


The proof comes from the identity, inverse and composition laws of the
group $\fS_{|E(G)|}\rtimes (\Z/2\Z)^n$, and we omit it.

In Lemma \ref{skewproduct}, we show that $\Gprg/\simord$ is generated as an algebra by connected graphs. In other words, under the equivalence $\simord$, any disconnected element of $\Gprg$ is no longer primitive.

First we give an example.

\begin{eg}

To illustrate the equivalence classes from Lemma \ref{orderequiv},
  consider the graph $G$ in example \ref{samplegraph} as a monomial in $\Gprg$.
\bas G = {\begin{xy}
(0, 0) *{\bullet}= "a",
(20, 0) *{\bullet}= "b",
(35,10) *{\bullet}="t",
(35,-10) *{\bullet}= "u",
(55, 0) *{\bullet}= "z",
"t"; "z" **\crv{+(-15,0)}?/0pt/*{\dir{>}} +(1,-2)*{_{b_2,-}},
"z"; "t" **\crv{+(15,0)}?/0pt/*{\dir{>}}+(2,2)*{_{a_1,+}},
"t"; "u" **\crv{+(-5,10)}?/0pt/*{\dir{>}} +(-3,0)*{_{c_3, +}},
"u"; "t" **\crv{+(5,-10)}?/0pt/*{\dir{>}} +(3.5,-2.5)*{_{e_7, +}},
"u"; "z" **{\dir{-}}?/0pt/*{\dir{>}}+(1,-2.5)*{_{d_5,-}},
"a"; "b" **\crv{+(-10,7)}?/0pt/*{\dir{>}} +(-2,1)*{_{g_8, -}},
"a"; "b" **\crv{+(-10,-7)}?/0pt/*{\dir{<}} +(-2,-1)*{_{h_6, -}},
"a"; "b" **{\dir{-}}?/0pt/*{\dir{>}}+(1,-1.5)*{_{f_4, +}},
\end{xy}} \;, \eas with the edges ordered as indicated by the
subscripts, as usual. This graph is a primitive element of $\Gprg$.

However, in the ring quotiented by the equivalence class, $\Gprg/\simord$, we see that $G \simord G_1 \cdot G_2$, where $G_1$ and $G_2$ are the graphs defined in example \ref{skewprodeg},
\bas G \simord  G_1 \cdot G_2 = {\begin{xy}
(0,10) *{\bullet}="t",
(0,-10) *{\bullet}= "u",
(20, 0) *{\bullet}= "z",
(30, 0) *{\bullet}= "a",
(50, 0) *{\bullet}= "b",
"t"; "z" **\crv{+(-15,0)}?/0pt/*{\dir{>}} +(-2,-1)*{b_2},
"z"; "t" **\crv{+(15,0)}?/0pt/*{\dir{>}}+(2,1)*{a_1},
"t"; "u" **\crv{+(-5,10)}?/0pt/*{\dir{>}} +(-2,0)*{c_3},
"u"; "t" **\crv{+(5,-10)}?/0pt/*{\dir{>}} +(-2,0)*{e_5},
"u"; "z" **{\dir{-}}?/0pt/*{\dir{>}}+(1,-1.5)*{d_4},
"a"; "b" **\crv{+(-10,7)}?/0pt/*{\dir{>}} +(2,1.5)*{g_6},
"a"; "b" **\crv{+(-10,-7)}?/0pt/*{\dir{<}} +(-2,-1)*{h_7},
"a"; "b" **{\dir{-}}?/0pt/*{\dir{>}}+(1,-1.5)*{f_8},
\end{xy}} \; ,
\eas
which is not primitive. Notice that both signs and orderings have been
changed in this example.
\label{prodeg}
\end{eg}

As an algebra, $\Gprg$ is bigraded by first Betti number, or weight, and degree
of the graphs. That is, if $G \in \Gprg_\bullet^\star$, then $h_1(G)
= \bullet$, while $\star = h_1(G) - V(G) + h_0(G)$. From the formula for the first Betti number of a graphs in \eqref{bettidef},  if $G \in \Gprg_\bullet^\star$, \ba |E(G)| = 2\bullet - \star \label{euler}\;. \ea

As the equivalence relation $\simord$ does not affect the underlying
topology of the graph, $\Gprg/\simord$ is also bigraded by weight and degree of the graphs.

\begin{rem}
The unit of this algebra is in $\Q[\sG]_0^0$. It is represented by the empty graph.
\end{rem}

\begin{eg}
For instance, consider the graph in examples \ref{samplegraph} and \ref{prodeg}.
\bas G ={\begin{xy}
(0, 0) *{\bullet}= "a",
(20, 0) *{\bullet}= "b",
(35,10) *{\bullet}="t",
(35,-10) *{\bullet}= "u",
(55, 0) *{\bullet}= "z",
"t"; "z" **\crv{+(-15,0)}?/0pt/*{\dir{>}} +(1,-2)*{_{b_2,-}},
"z"; "t" **\crv{+(15,0)}?/0pt/*{\dir{>}}+(2,2)*{_{a_1,+}},
"t"; "u" **\crv{+(-5,10)}?/0pt/*{\dir{>}} +(-3,0)*{_{c_3, +}},
"u"; "t" **\crv{+(5,-10)}?/0pt/*{\dir{>}} +(3.5,-2.5)*{_{e_7, +}},
"u"; "z" **{\dir{-}}?/0pt/*{\dir{>}}+(1,-2.5)*{_{d_5,-}},
"a"; "b" **\crv{+(-10,7)}?/0pt/*{\dir{>}} +(-2,1)*{_{g_8, -}},
"a"; "b" **\crv{+(-10,-7)}?/0pt/*{\dir{<}} +(-2,-1)*{_{h_6, -}},
"a"; "b" **{\dir{-}}?/0pt/*{\dir{>}}+(1,-1.5)*{_{f_4, +}},
\end{xy}}
\eas

This graph has five loops, five vertices and two connected
components. Therefore, it is in $\Gprg_5^2/\simord$.
\label{gradingeg} \end{eg}

\begin{dfn}
Let $\sG_0(k^\times) \subset \sG(k^\times)$ be the subset of
biconnected graphs with ordered, labeled, oriented edges. That is, there are no disconnected graphs in $\sG_0(k^\times)$.
\end{dfn}

\begin{lem}
The algebra $\Gprg/\simord$ is generated by the set $\sG_0(k^\times)/\simord$ as a skew symmetric
bigraded algebra.\label{skewproduct}
\end{lem}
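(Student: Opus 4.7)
The plan is to show that any graph $G \in \sG(k^\times)$ can be rewritten modulo $\simord$ as a signed product of its connected components, each of which lies in $\sG_0(k^\times)$, and then to verify that the resulting product satisfies the expected graded-commutative relation.

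Given $G \in \sG(k^\times)$, I would first decompose $G$ into its connected components $G_1 \sqcup \cdots \sqcup G_r$; by hypothesis each $G_i$ is biconnected, hence lies in $\sG_0(k^\times)$. Each $G_i$ inherits its edge labels, signs, and orientations from $G$, along with the relative ordering on $E(G_i)$ induced by $\omega(G)$. Next I would consider the unique shuffle permutation $\sigma \in \fS_{|E(G)|}$ that sorts $\omega(G)$ so that the edges of $G_1$ appear first (in their inherited relative order), followed by the edges of $G_2$, and so on. By the definition of the non-commutative product on $\sG(k^\times)$, the reordered graph is precisely $G_1 \cdot G_2 \cdots G_r$. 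Applying Lemma \ref{orderequiv} with $g = (\sigma, (+, \ldots, +))$ yields
\bas
G \simord \sgn(\sigma)\, G_1 \cdot G_2 \cdots G_r,
\eas
showing that $\sG_0(k^\times)/\simord$ generates $\Gprg/\simord$ as an algebra. Since all edge ordinals in $\omega(G)$ are distinct, the sorting permutation is uniquely determined, so the sign is well-defined.

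For the skew-symmetric bigraded structure, I would take $G, G' \in \sG_0(k^\times)$ and note that the shuffle sending $G \cdot G'$ to $G' \cdot G$ moves a block of $|E(G)|$ edges past a block of $|E(G')|$ edges, hence has sign $(-1)^{|E(G)|\,|E(G')|}$. Using the grading identity $|E(H)| = 2 h_1(H) - \deg(H)$ from equation \eqref{euler}, this exponent is congruent modulo $2$ to $\deg(G)\deg(G')$, so Lemma \ref{orderequiv} gives
\bas
G \cdot G' \simord (-1)^{\deg(G)\deg(G')}\, G' \cdot G,
\eas
which is the graded-commutativity relation for the degree grading. Additivity of $h_1$ and of $\deg$ under disjoint union confirms that $G_1 \cdot G_2$ lies in the expected bigraded piece, so the generation respects both gradings.

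The most delicate point is the sign bookkeeping: one must verify that the shuffle used in the decomposition is unambiguously determined by the inherited orderings, and that the sign of swapping two blocks converts correctly from edge count to degree parity via \eqref{euler}. Both are straightforward combinatorial identities, so no substantial obstacle is anticipated beyond a careful application of Lemma \ref{orderequiv}.
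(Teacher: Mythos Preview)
Your proposal is correct and follows essentially the same approach as the paper: reorder the edges of a disconnected graph by a permutation (with trivial sign component) so that connected components appear consecutively, apply Lemma \ref{orderequiv} to express the graph as a signed product of elements of $\sG_0(k^\times)$, and verify skew-symmetry by computing the sign of the block-swap permutation and reducing $|E(G)||E(G')|$ to $\deg(G)\deg(G')$ modulo $2$ via equation \eqref{euler}. Your version is slightly more explicit about the uniqueness of the sorting permutation and the additivity of the bigrading, but the argument is the same.
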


\begin{proof}
For any disconnected graph $G \in
\Gprg_n^i$, there is an element $g
= (\sigma, \id) \in \fS_{2n-i} \rtimes (\Z/2\Z)^n$ that rearranges the
order of the edges of each connected component consecutively. Since $\sgn(g) =
\sgn(\sigma)$, by Lemma \ref{orderequiv}, \bas G \simord \sgn(g) (gG) =
\sgn(g) G_1\cdot G_2 \cdot \ldots G_m \;, \eas
with each $G_i \in \sG_0(k^\times)$.

The product preserves the bigrading as the $0^{th}$ and first Betti numbers are additive under disjoint union, as are the sizes of the edge and vertex sets. For $G\in \Q[\sG]_{n}^{i}/\simord$ and $G'\in \Q[\sG]_{n'}^{i'}/\simord$, we have \bas G \cdot G' \in \Q[\sG]_{n+n'}^{i+i'}/\simord \;.\eas

To see that this is skew symmetric, as above, write
\bas G\cdot G' \simord (-1)^{|E(G)||E(G')|} G'\cdot G = (-1)^{ii'} G' \cdot G\;.\eas The last equality comes from the fact that
$|E(G)| = 2n - i$ and $|E(G')| = 2n' - i'$.
\end{proof}

Since \bas \Gprg/\simord = \Q[\sG_0(k^\times)] /\simord\;,\eas
for the rest of this paper, we consider only elements of $\sG_0(k^\times)$.

\subsection{A brief interlude on algebraic cycles\label{homomorphism}}

In this section we introduce the relationship between the graphs defined above and algebraic cycles generating $\cZoneL{p}{n}$. As of yet, we make no claims on admissibility of cycles.

\begin{dfn} Define $\QZ$ to be the group ring generated by the free abelian group of $\bP^1$-linear cycles \bas \QZ =\bigoplus_{p, i}\Alt \cZoneL{p}{2p-i} \otimes \Q \;.\eas This is a skew symmetric algebra. Write $\QZ_p^i = \Alt \cZoneL{n}{2p-i} \otimes \Q$. \end{dfn}

There is a homomorphism, $Z$, from $\Gprg_\bullet^\star/\simord$ to $\QZ$. Note that $\Asm \subset \QZ$. In Section \ref{graphstocycles}, we show that $Z$ is a DGA homomorphism onto $\Asm$, that becomes an isomorphism of DGAs when $\Gprg_\bullet^\star$ is subjected to more equivalence relations. That is the isomorphism we seek in this paper. In this section, we show that elements of $\Gprg$ correspond to parameterizations of $\bP^1$-linear algebraic cycles on $\square^{|E(G)|}_k$.

\begin{dfn}
Each connected graph, $G \in \sG(k^\times)$, with loop number $p$ and $n$ edges defines a parametrization, $\phi: \bP^{|V(G)|-1} \rightarrow (\bP^1)^n$, of an algebraic cycle $Z(G) \in \cZoneL{p}{n}$.
The $\omega(e)^{th}$ coordinate of the cycle $Z(G)$ is
\bas \phi_{\omega (e)} = \left( 1 - \frac{x_{s(e)}}{a_e
  x_{t(e)}}\right)^{\sgn_{\omega(e)}} \;, \eas where $x_{s(e)}$ and
$x_{t(e)}$ are variables assigned to the vertices at the source and
target of the edge $e \in E(G)$, and $a_e$ is the label of edge
$e$.  \label{maptocycle}
\end{dfn}

Thus we have, for $\bullet = h_1(G)$ and $\star = h_1(G) - |V(G)| + h_0(G)$, a set map \ba Z: \sG(k^\times) & \rightarrow
\bigoplus_{\bullet, \star}\cZoneL{\bullet}{2\star-\bullet} \nonumber \\ G & \rightarrow [\phi_1 | \ldots |
  \phi_{|E(G)|}] \; \label{cyclemap} \ea from graphs to parametrized $\bP$-linear cycles.

To make this map concrete, we explicitly derive the system of polynomials defined by a graph $G$. First we introduce a function that relates edges of a graph to the loops of
$G$.

\begin{dfn}
For $e \in E(G)$, and $L$ a loop of $G$, define \bas \epsilon(e, L)
= \begin{cases} 1 & \textrm{if $e \in E(L)$ oriented as $L$ is,} \\ 0
  & \textrm{if $e \not \in E(L)$,} \\ -1 & \textrm{if $e \in E(L)$,
    oriented opposite to $L$ .} \end{cases}
\eas \label{loopcharacteristic}
\end{dfn}

Given this notation, we are ready to define the system of polynomials defined by a graph $G \in \sG_0(k^\times)$.

\begin{thm}
 For a graph $G \in \sG_0(k^\times)$ indicate the label of the edge $e \in E(G)$ as $a_e \in k^\times$. Suppose $h_1(G) = p$, and $|E(G)|= n$. Let $\beta = \{L_1 \ldots L_p\}$ be a loop basis of $H_1(G)$. The algebraic cycle $Z(G)$ is defined by the
  system of $p$ polynomial equations, each associated to an element of
  the loop basis as follows: \ba 1 = \prod_{e\in E(G)} (a_e (1-
  \phi_{\omega(e)}))^{\epsilon(e, L_i)} \;. \label{systemeqeg} \ea
\label{systemeqthm}\end{thm}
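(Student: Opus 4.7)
The proof naturally splits into two parts: first, verify that the parametrization $\phi$ of Definition \ref{maptocycle} identically satisfies each of the $p$ loop equations in \eqref{systemeqeg}; second, use a dimension count together with irreducibility to conclude that $Z(G)$ is precisely the vanishing locus of this system.

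For the first part, the key observation is a telescoping identity. When $\sgn_{\omega(e)} = +$ one computes directly that $a_e(1-\phi_{\omega(e)}) = x_{s(e)}/x_{t(e)}$ (the signed case requires absorbing $\sgn_{\omega(e)}$ into the exponent $\epsilon(e, L_i)$, or equivalently rewriting the inverted factor in terms of its source and target). For a fixed loop $L_i$ in the basis, since $\epsilon(e, L_i) = 0$ for edges not in $L_i$, the right-hand side of \eqref{systemeqeg} reduces to
\bas
\prod_{e \in E(L_i)} \left(\frac{x_{s(e)}}{x_{t(e)}}\right)^{\epsilon(e, L_i)}.
\eas
An edge $e$ with $\epsilon(e, L_i) = +1$ contributes $x_{s(e)}/x_{t(e)}$, i.e. $x_{\text{from}}/x_{\text{to}}$ in the direction the loop traverses it; an edge with $\epsilon(e, L_i) = -1$ contributes the reciprocal, which is again $x_{\text{from}}/x_{\text{to}}$ in the loop's direction. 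Since $L_i$ is a closed loop, the product telescopes to $1$.

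For the second part, note that the parametrization $\phi$ depends only on the ratios of the $x_v$'s, so it factors through $\bP^{|V(G)|-1}$. Hence $Z(G)$ is irreducible of dimension $|V(G)| - 1$, i.e. codimension $n - |V(G)| + 1 = h_1(G) = p$ by \eqref{bettidef} and \eqref{euler}. The system \eqref{systemeqeg} cuts out a subvariety of codimension at most $p$ containing $Z(G)$; to conclude equality, I would argue that the zero locus in $(\G_m)^n$ is already irreducible of codimension exactly $p$, because the loop basis spans the cycle space of $G$, which is a saturated rank-$p$ sublattice of $\Z^{E(G)}$, and so the zero locus is a single torus coset. Taking closures in $\square^n$ then gives $Z(G)$ exactly.

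The main obstacle is this last irreducibility/saturation step, which is where the combinatorics of the graph actually enter. A more hands-on alternative, which I would probably favour to keep the proof graphical, is to invert the parametrization directly: given a point on the zero locus, the ratios $x_{s(e)}/x_{t(e)}$ are read off from the coordinates, the graph connectivity determines the ratios $x_v/x_w$ for all pairs of vertices up to a global scaling, and the loop equations are precisely the consistency conditions guaranteeing that these ratios are well-defined. This recovers a point of $\bP^{|V(G)|-1}$ mapping to the given point, bypassing the torus/lattice machinery and handling the signed edges by the same case analysis used in part one.
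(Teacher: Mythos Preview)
Your proof is correct and follows essentially the same approach as the paper. For part one, the paper carries out the identical telescoping argument, phrased as walking along the loop and substituting $x = y\,(a_e(1-\phi_{\omega(e)}))^{\epsilon(e,L_1)}$ at each successive edge until returning to the starting vertex; this is exactly your observation that $a_e(1-\phi_{\omega(e)}) = x_{s(e)}/x_{t(e)}$ and the product collapses around a closed loop. For part two, the paper is considerably more terse than you are: it simply notes that because $\beta$ is a loop basis, every coordinate $\phi_{\omega(e)}$ appears in the system and ``the functions thus derived are independent of each other,'' without spelling out the codimension count, the irreducibility, or the inversion of the parametrization via connectivity of $G$. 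Your hands-on inversion argument (recovering the vertex ratios from the coordinates, with the loop equations as consistency conditions) is a clean way to make that independence claim precise, and is in the same spirit as what the paper leaves implicit.
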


\begin{proof}
Given $\beta$, a loop basis for $H_1(G)$, begin with a loop, $L_1$. Subsequent elements of the
system of equations are similarly defined.

Consider an edge, $e \in E(L_1)$. The $\omega(e)^{th}$ coordinate of
the cycle $Z(G)$ is defined by the function
$\phi_{\omega(e)}(x,y)$, where $x$ and $y$ are the variables associated
to the vertices at the endpoints of $e \in E(G)$. Suppose that in the
orientation inherent in $L_1$ as an element of a loop basis, $L_i$
flows from the vertex associated to $x$ directly to the vertex
associated to $y$. This is not necessarily the orientation of the edge
connecting the vertices associated to $x$ and $y$, but the second
orientation on the edges induced by the orientation of $L_1$. Then one
can associate to the edge $e \in E(G)$, the equation \ba x = y
(a_e(1-\phi_{\omega(e)}))^{\epsilon(e, L_1)} \;. \label{coordeq} \ea

There is an unique edge in $L_1$, $e' \neq e$, with an endpoint at the
vertex associated to the variable $y$. As above, associate to the edge
$e'$ the equation \bas y = z (a_{e'}(1-\phi_{\omega(e')}))^{\epsilon(e',
  L_1)} \;.  \eas Substituting this into \eqref{coordeq} gives \bas x
= z(a_e(1-\phi_{\omega(e)}))^{\epsilon(e,
  L_1)}(a_{e'}(1-\phi_{\omega(e')}))^{\epsilon(e', L_1)}\;. \eas
Continuing along the entire loop in this manner gives \bas x= \prod_{e
  \in E(G)} (a_e(1-\phi_{\omega(e)}))^{\epsilon(e, L_1)} x \;, \eas which
simplifies to an expression of the form \eqref{systemeqeg} \bas 1 =
\prod_{e \in E(G)} (a_e(1-\phi_{\omega(e)}))^{\epsilon(e, L_1)} \;. \eas

Since $\beta$ is a loop basis, the function $\phi_{\omega(e)}$,
associated to each edge of $G$ is used in the system of equations
defined in \eqref{systemeqeg}, and the functions thus derived are
independent of each other.
\end{proof}

Notice that the specific form of this system of equations depends on the loop basis for $H_1(G)$. However, a different loop basis will give an equivalent system of polynomials.

\begin{eg}
Recall the graph in example \ref{samplegraph}:

\bas {\begin{xy}
(0,10) *{\bullet}="t",
(0,-10) *{\bullet}= "u",
(20, 0) *{\bullet}= "z",
"t"; "z" **\crv{+(-15,0)}?/0pt/*{\dir{>}} +(-2,-1)*{r_2},
"z"; "t" **\crv{+(15,0)}?/0pt/*{\dir{>}}+(2,1)*{r_1} ,
"t"; "u" **\crv{+(-5,10)}?/0pt/*{\dir{>}} +(-2,0)*{r_3},
"u"; "t" **\crv{+(5,-10)}?/0pt/*{\dir{>}} +(-2,0)*{r_5},
"u"; "z" **{\dir{-}}?/0pt/*{\dir{>}}+(1,-1.5)*{r_4},
\end{xy}} \;.\eas

Define a basis \bas \beta = \{ {\begin{xy}
(0,10) *{\bullet}="t",
(0,-10) *{\bullet}= "u",
"t"; "u" **\crv{+(-5,10)}?/0pt/*{\dir{>}} +(-2,0)*{r_1},
"u"; "t" **\crv{+(5,-10)}?/0pt/*{\dir{>}} +(-2,0)*{r_2},
\end{xy}}, {\begin{xy}
(0,10) *{\bullet}="t",
(0,-10) *{\bullet}= "u",
(20, 0) *{\bullet}= "z",
"t"; "z" **\crv{+(-15,0)}?/0pt/*{\dir{>}} +(-2,-1)*{r_2},
"t"; "u" **\crv{+(-5,10)}?/0pt/*{\dir{>}} +(-2,0)*{r_3},
"u"; "z" **{\dir{-}}?/0pt/*{\dir{>}}+(1,-1.5)*{r_4},
\end{xy}}, {\begin{xy}
(0,10) *{\bullet}="t",
(0,-10) *{\bullet}= "u",
(20, 0) *{\bullet}= "z",
"z"; "t" **\crv{+(15,0)}?/0pt/*{\dir{>}}+(2,1)*{r_1} ,
"u"; "t" **\crv{+(5,-10)}?/0pt/*{\dir{>}} +(-2,0)*{r_5},
"u"; "z" **{\dir{-}}?/0pt/*{\dir{>}}+(1,-1.5)*{r_4},
\end{xy}}
\}\;,\eas where all the loops in are oriented counterclockwise.

A system of equations for this graph is given by the polynomials \bas 1  & = r_1 r_2 (1-f_1) (1-f_2)  \\ 1  & =  \frac{r_3 r_4}{r_2} \frac{(1-f_3)(1-f_4)}{(1-f_2)} \\  1  & =  \frac{r_1 r_4}{r_5} \frac{(1-f_1)(1-f_4)}{(1-f_5)} \;. \eas

\end{eg}

This brings us to an important invariant of the graphs in $\Gprg_\bullet^\star/\simord$, the loop coefficient.

\begin{dfn}
Given a loop of $G$, the loop coefficient of a loop $L$ of $G$ is defined \ba \chi_G(L) = \prod_{E(G)} r_e^{\epsilon(e, L)} \; .
\ea
\label{loopcoef} \end{dfn}

In this notation, we can restate the image of the map $Z$. For $G \in \Gprg_n^p/\simord$ with basis $\beta = \{L_1, \ldots L_p\}$ a basis of $H^1(G)$, the cycle $Z(G)$ is defined by \ba \{1 =   \chi_G(L_i ) \prod_{e \in E(L_i)}(1-
  \phi_{\omega(e)})^{\epsilon(e, L_i)} \}_{L_i \in \beta} \;.\label{systemeqloopcoef}\ea

We can extend the set map $Z$ thus defined to the algebra $\Gprg_\bullet^\star/\simord$, where $Z(G)$ maps a graphs to an algebraic cycle
under the alternating projection.

\begin{thm}The set map $Z$ in \eqref{cyclemap} induces a grading preserving algebra homomorphism \bas Z: \Gprg/\simord & \rightarrow
\QZ \nonumber \\ G & \rightarrow \Alt[\phi_1 | \ldots |
  \phi_{|E(G)|}] \;.\eas \label{Zhomo}\end{thm}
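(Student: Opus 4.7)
The plan is to verify three things in turn: that $Z$ descends to the quotient by $\simord$, that it respects the product, and that it preserves the bigrading. Of these, the first is by far the most delicate, since it is the only part that really uses how the sign $\sgn(g)$ for $g = (\sigma, \vec{\sgn}) \in \fS_n \rtimes (\Z/2\Z)^n$ is built from the permutation sign and the entries of $\vec{\sgn}$.

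For the well-definedness, I would first interpret the $\fS_n \rtimes (\Z/2\Z)^n$ action on graphs ``edge by edge'' at the level of the parametrizing maps. The $\sigma$-component simply permutes the coordinates of $[\phi_1|\cdots|\phi_n]$, matching the action of $\fS_n$ by permutation of coordinates on $\square^n$. For the $\Z/2\Z$-component, one observes that flipping the sign at the edge $e$ changes
\bas
\phi_{\omega(e)} = \left(1 - \tfrac{x_{s(e)}}{a_e x_{t(e)}}\right)^{\sgn_{\omega(e)}}
\eas
to its multiplicative inverse, which is precisely the involution $x \mapsto x^{-1}$ on the $\omega(e)$-th coordinate of $\square^n$. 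Thus $[\phi_1(gG)|\cdots|\phi_n(gG)]$ is obtained from $[\phi_1(G)|\cdots|\phi_n(G)]$ by the semidirect product action of $g$ on $\square^n$. Since the alternating projection satisfies $\Alt(g\cdot Z) = \sgn(g)\,\Alt(Z)$ by its defining formula, one gets $Z(gG) = \sgn(g)\,Z(G)$, and hence $Z(\sgn(g)\,gG) = Z(G)$. By Lemma \ref{orderequiv} this is exactly what is needed for $Z$ to descend to $\Gprg/\simord$.

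For multiplicativity, the key point is that the product $G\cdot G'$ was defined precisely by disjoint union with the edges of $G$ preceding those of $G'$, so concatenation of parametrizations matches the juxtaposition of coordinates that defines $\mu$ on $\sA$:
\bas
Z(G\cdot G') = \Alt[\phi_1(G)|\cdots|\phi_{|E(G)|}(G)|\phi_1(G')|\cdots|\phi_{|E(G')|}(G')] = Z(G)\cdot Z(G').
\eas
The graded commutativity observed in Lemma \ref{skewproduct} on the graph side ($G \cdot G' \simord (-1)^{ii'} G' \cdot G$) corresponds term by term to the sign $(-1)^{ij}$ produced by $\Alt$ on the cycle side, so no additional check is needed.

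Finally, grading is immediate from \eqref{euler}: a graph in $\Gprg_\bullet^\star/\simord$ has $h_1(G) = \bullet$ and $|E(G)| = 2\bullet - \star$, so Definition \ref{maptocycle} places $Z(G)$ in $\cZoneL{\bullet}{2\bullet - \star}$, i.e.\ in $\QZ_\bullet^\star$. The main obstacle in executing the above is really just careful sign-bookkeeping: one must confirm that the sign homomorphism $\sgn : \fS_n \rtimes (\Z/2\Z)^n \to \{\pm 1\}$ implicit in the definition of $\Alt$ agrees with the combinatorial sign appearing in Lemma \ref{orderequiv}, and that multiplicative inversion on a coordinate matches the edge-sign flip; once this dictionary is set up, the remaining verifications are formal.
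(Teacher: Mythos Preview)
Your proposal is correct and follows essentially the same approach as the paper: check well-definedness under $\simord$, multiplicativity, and preservation of the bigrading. Your treatment of well-definedness is in fact more detailed than the paper's, which simply observes that $\simord$ and $\Alt$ are built from the same $\fS_n\rtimes(\Z/2\Z)^n$ action; your explicit identification of the edge-sign flip with coordinate inversion is exactly the content behind that sentence. For the grading, the paper computes the codimension directly for a general (possibly disconnected) $G$ via the parametrization $\prod_i \bP^{|V(G_i)|-1}\to\square^{|E(G)|}$ and the formula $|E(G)|-|V(G)|+h_0(G)=h_1(G)$, whereas you implicitly rely on Definition~\ref{maptocycle} (stated only for connected $G$) together with multiplicativity; both routes yield the same conclusion.
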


\begin{proof}
The equivalence relation $\simord$ equates different orderings of edges of graphs as $\Alt$ combines different orderings of coordinates into a single generator of $\QZ$. Therefore, $Z$ maps generators of $\Gprg/\simord$ to generators of $\QZ$. Lemma \ref{skewproduct} shows that the algebra structure of
$\Gprg/\simord$ matches the algebra structure of $\QZ$.

It remains to check that if $G \in \Q[\sG]_p^i/\simord$, then $Z(G) \in \QZ_p^i$. First notice that by the parametrization given in definition \ref{maptocycle}, Writing $G = G_1\cdot \ldots \cdot G_m$ in terms of its connected
components, the cycle $Z(G)$ is
parameterized by the map \bas \phi : \prod_{i=1}^m\bP^{|V(G_i)|-1}
 \rightarrow \square_k^{|E(G)|} \;.\eas Therefore, the cycle $Z(G)$ has codimension \bas E(G) - V(G) + h_0(G) = h_1(G) = p \eas in $\square_k^{|E(G)|}$. By equation \eqref{euler} this implies that $Z(G) \in \cZoneL{p}{2p-i}$.
\end{proof}

Finally, in conjuction with Theorem \ref{systemeqthm}, this allows for a statement about reducible cycles.

\begin{cor}
If $G$ is a disconnected graph in $\Gprg/ \simord$, then $Z(G)$ is a reducible cycle. \label{reduciblecor}
\end{cor}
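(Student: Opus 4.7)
The plan is to derive the statement directly by combining the monoidal decomposition of Lemma \ref{skewproduct} with the homomorphism property of $Z$ established in Theorem \ref{Zhomo}. Suppose $G \in \Gprg/\simord$ is disconnected with connected components $G_1, \ldots, G_m$ where $m \geq 2$. By Lemma \ref{skewproduct}, there is an element $g \in \fS_{|E(G)|} \rtimes (\Z/2\Z)^{|E(G)|}$ that reorders the edges of $G$ so that those belonging to $G_i$ precede those belonging to $G_{i+1}$; in the quotient this yields
\bas
G \simord \sgn(g)\, G_1 \cdot G_2 \cdots G_m,
\eas
with each $G_i \in \sG_0(k^\times)$.

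Applying the grading-preserving algebra homomorphism $Z$ of Theorem \ref{Zhomo} to this equation then gives
\bas
Z(G) = \sgn(g)\, Z(G_1) \cdot Z(G_2) \cdots Z(G_m) \in \QZ,
\eas
so $Z(G)$ is exhibited as a product of $m \geq 2$ cycles in $\QZ$. It only remains to verify that none of the factors is the unit of $\QZ$ (the class of the empty graph in $\Gprg/\simord$). This is immediate from our generating set $\sG_0(k^\times)$ of biconnected graphs: each $G_i$ has at least one edge, so by Definition \ref{maptocycle} the cycle $Z(G_i)$ has positive codimension $h_1(G_i) \geq 1$ in $\square_k^{|E(G_i)|}$, and in particular is a non-trivial element of $\QZ$. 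Hence $Z(G)$ is reducible in the sense of Definition \ref{decompdfn}.

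As a reassuring cross-check (not needed for the argument), one can see the reducibility directly at the level of equations: choose a loop basis of $H_1(G)$ adapted to the decomposition, $\beta = \bigsqcup_i \beta_i$ with $\beta_i$ a loop basis of $H_1(G_i)$. Because loops in distinct components share no edges and no vertex-variables, the defining system \eqref{systemeqloopcoef} partitions into $m$ subsystems in pairwise disjoint sets of variables and $\phi$-coordinates, which is precisely a factorization of $Z(G)$ as a product of cycles on complementary sets of factors of $\square_k^{|E(G)|}$. The only issue to be careful about is the bookkeeping of signs, orderings, and the interaction of concatenation with $\Alt$, but this is exactly what Lemmas \ref{orderequiv} and \ref{skewproduct} together with Theorem \ref{Zhomo} have already packaged, so there is no real obstacle.
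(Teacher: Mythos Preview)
Your proof is correct and follows essentially the same approach the paper intends: the corollary is stated immediately after Theorem \ref{Zhomo} with no explicit proof, as it is meant to follow directly from the homomorphism property of $Z$ (Theorem \ref{Zhomo}) together with the factorization of disconnected graphs into connected components (Lemma \ref{skewproduct}) and the description of the defining equations (Theorem \ref{systemeqthm}). One small correction: your reference to Definition \ref{decompdfn} is misplaced, since that definition concerns decomposability in the bar construction $B(\sA)$; the relevant notion here is the unnamed definition of a decomposable cycle in $\sA$ (a cycle expressible as a product of two non-trivial cycles), which appears just before Definition \ref{decompdfn}.
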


\subsection{The DGA structure on graphs\label{graphDGA}}

In this section, we define a differential structure on the algebra of graphs. In order to do this, we need to define an additional equivalence relation on $\sG_0(k^\times)$.

In particular, we consider graphs that differ only by a rescaling of the labels of the edges attached to a vertex.

\begin{dfn}
Consider $\alpha \in k^\times$, and $v \in V(G)$. The vertex rescaled graph, $v_\alpha(G)$, is the labeled oriented graph $G$ with labels
changed as follows: for each edge $e$ of $G$, if an edge terminates
(starts) at $v$, multiply (divide) its label by $\alpha$ to get the
label of the edge in $v_\alpha(G)$; otherwise, keep the same label for
$e$.  The signs associated to, and the ordering of the edges of $G$ by
$\omega$ do not change. \label{rescaledfn}
\end{dfn}

Vertex rescaling a graph corresponds to rescaling all instances of a variable in the parametrized $\bP$-linear cycle $Z(G)$ by a constant multiple. This does not effect the cycle at all. In otherwords, $G$ and $v_\alpha(G)$ correspond to two different parameterizations of $Z(G)$. We call this procedure label rescaling with respect to a vertex, or label rescaling at $v$.

\begin{eg}
For the graph $G$ in example \ref{samplegraph}, one can rescale the
rightmost vertex by $\alpha$ to obtain the following graph
\bas v_\alpha(G) = {\begin{xy}
(0,10) *{\bullet}="t",
(0,-10) *{\bullet}= "u" ,
(20, 0) *{\bullet}= "z" +(0,-2) *{v},
"t"; "z" **\crv{+(-15,0)}?/0pt/*{\dir{>}} +(0,-3)*{\alpha r_2},
"z"; "t" **\crv{+(15,0)}?/0pt/*{\dir{>}}+(2.5,1)*{\frac{r_1}{\alpha}} ,
"t"; "u" **\crv{+(-5,10)}?/0pt/*{\dir{>}} +(-2,0)*{r_3},
"u"; "t" **\crv{+(5,-10)}?/0pt/*{\dir{>}} +(-2,0)*{r_5},
"u"; "z" **{\dir{-}}?/0pt/*{\dir{>}}+(1,-2)*{\alpha r_4},
\end{xy}} \;,
 \eas where the ordering of the edges is given by the subscripts.

\end{eg}

\begin{rem}
Vertex rescaling is an equivalence relation on the set
$\sG_0(k^\times)$. We write it $\simv$. \end{rem}

In the sequel, we consider the algebra of graphs up to this equivalence set. We are interested in graphs only as a tool to understand their corresponding algebraic cycles. We work with graphs up to this rescaling since two graphs that differ by a vertex rescaling correspond, under the homomorphism $Z$ defined in Section \ref{homomorphism}, to different parameterizations of the same cycle.

To see this, notice that vertex rescaling does not change the loop coefficient of the graph.

\begin{lem}
Loop coefficients are invariant under rescaling at vertices.
\label{rescalinginvariance}\end{lem}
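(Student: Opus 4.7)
The plan is to show that for any loop $L$ of $G$ and any vertex $v \in V(G)$, the ratio $\chi_{v_\alpha(G)}(L)/\chi_G(L)$ equals $1$, by tracking the factors of $\alpha^{\pm 1}$ that appear when one passes from $G$ to $v_\alpha(G)$. Since Definition~\ref{rescaledfn} only alters the labels $r_e$ for edges $e$ incident to $v$, every edge not incident to $v$ contributes an identical factor $r_e^{\epsilon(e,L)}$ to both $\chi_G(L)$ and $\chi_{v_\alpha(G)}(L)$. Hence it suffices to analyze the contributions from edges incident to $v$.

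First I would dispose of two easy cases. If $L$ does not pass through $v$, then $\epsilon(e,L)=0$ for every $e$ incident to $v$, so the product is manifestly unchanged. If $e$ is a self-loop at $v$ (i.e.\ $s(e)=t(e)=v$), then by Definition~\ref{rescaledfn} its label is multiplied by $\alpha$ (for the target end) and divided by $\alpha$ (for the source end), so the label is unchanged and $e$ contributes no net factor of $\alpha$.

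The main case is when $L$ passes through $v$. Unwinding Definition~\ref{rescaledfn} and the definition of $\chi_G(L)$, an edge $e$ incident to $v$ with $s(e)\neq t(e)$ contributes the factor $\alpha^{\epsilon(e,L)}$ to the ratio if $t(e)=v$, and the factor $\alpha^{-\epsilon(e,L)}$ if $s(e)=v$. Because $L$ is a graphical loop that does not revisit any vertex, exactly two edges of $L$ are incident to $v$: one edge $e_{\mathrm{in}}$ through which $L$ enters $v$ and one edge $e_{\mathrm{out}}$ through which $L$ exits $v$, relative to the orientation of $L$. The key combinatorial observation is the following dictionary, immediate from Definition~\ref{loopcharacteristic}: $e_{\mathrm{in}}$ satisfies either $t(e_{\mathrm{in}})=v$ with $\epsilon(e_{\mathrm{in}},L)=+1$, or $s(e_{\mathrm{in}})=v$ with $\epsilon(e_{\mathrm{in}},L)=-1$, and in either subcase its contribution to the ratio is $\alpha^{+1}$. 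Symmetrically, $e_{\mathrm{out}}$ contributes $\alpha^{-1}$ in both of its subcases. The two factors cancel, and combining with the easy cases above we conclude $\chi_{v_\alpha(G)}(L)=\chi_G(L)$.

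The substance of the argument is the global fact that a graphical loop through $v$ has exactly one incoming and one outgoing edge at $v$ with respect to the loop orientation; the only mild obstacle is keeping the four sign cases straight when relating ``enters/exits $v$ along $L$'' to ``$s(e)=v$ vs.\ $t(e)=v$'' and to ``$\epsilon(e,L)=\pm 1$'', but this is pure bookkeeping once the dictionary is written out.
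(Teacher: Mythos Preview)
Your proof is correct and follows essentially the same approach as the paper: both arguments reduce to the two edges of $L$ incident to $v$ and check that the factors of $\alpha^{\pm 1}$ cancel. Your organization via the ``entering/exiting'' dichotomy along the loop orientation is a clean repackaging of the paper's explicit three-case split on which of $e_1,e_2$ has $v$ as source or target, and you additionally dispose of the trivial cases ($v\notin L$, self-loops) that the paper leaves implicit.
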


\begin{proof}
Let $L$ be a loop in $G$, with $G \in \sG_0(k^\times)$. For $v \in
V(L)$, a vertex in $L$, $v$ is attached to exactly two of edges of
$L$, $e_1$ and $e_2$. We compare $\chi_G(L)$ and $\chi_{v_\alpha(G)}(L)$.

If $v$ is the terminal vertex of $e_1$ and the source vertex of $e_2$,
then the respective coefficients are $r_1$ and $r_2$ in $G$, and
$r_1 \alpha$ and $\frac{r_2}{\alpha}$ in $v_\alpha(G)$. Both
numbers either appear in the numerator or the denominator of the
coefficient of $L$. Thus the contributions of $\alpha$ cancel in
$\chi_{v_\alpha(G)}(L)$.

The other two cases are as follows. The vertex $v$ is either the
source (target) vertex of both $e_1$ and $e_2$. Then the coefficients
are $\frac{r_1}{\alpha}$ and $\frac{r_2}{\alpha}$ ($r_1
\alpha$ and $r_2 \alpha$). One label appears in the numerator of
the loop, the other in the denominator, so the contribution of
$\alpha$ cancels $\chi_{v_\alpha(G)}(L)$.

Therefore \bas \chi_G(L) = \chi_{v_\alpha(G)}(L)\eas as desired.
\end{proof}

Therefore, given form of the system of polynomials defined by each of these graphs, $Z(G) = Z(v_\alpha(G))$.

\begin{thm} The parametrized cycles $Z(G)$ and $Z(v_\alpha(G))$ correspond to the same cycle, under different parametrizations \bas Z(G) = Z(v_\alpha(G)) \in \cZoneL{h_1(G)}{|E(G)|} \;.\eas \label{vertexscalesamecycle}\end{thm}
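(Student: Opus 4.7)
The plan is to apply Theorem \ref{systemeqthm} to both $G$ and $v_\alpha(G)$ with a common loop basis, and then observe that the only label-dependent ingredients of the resulting defining systems of polynomial equations are the loop coefficients, which Lemma \ref{rescalinginvariance} tells us are identical.

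First I would note that vertex rescaling affects only the edge labels: the underlying (oriented, ordered, signed) graph is unchanged. Consequently $H_1(G) = H_1(v_\alpha(G))$, the edge-loop incidence signs $\epsilon(e, L)$, the ordering $\omega$, and the edge signs $\sgn_{\omega(e)}$ coincide for the two graphs. So I may choose a single loop basis $\beta = \{L_1, \dots, L_p\}$ that works for both.

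Using the form of the defining equations given in \eqref{systemeqloopcoef}, the cycles $Z(G)$ and $Z(v_\alpha(G))$ are cut out in $\square_k^{|E(G)|}$ by the systems
\bas
1 \;=\; \chi_G(L_i) \prod_{e \in E(L_i)} (1-\phi_{\omega(e)})^{\epsilon(e, L_i)}, \qquad i = 1, \dots, p,
\eas
and the analogous system with $\chi_{v_\alpha(G)}$ in place of $\chi_G$. By Lemma \ref{rescalinginvariance}, $\chi_G(L_i) = \chi_{v_\alpha(G)}(L_i)$ for every $i$, so the two systems are identical. Because the parametrization $\phi$ of Definition \ref{maptocycle} gives an irreducible image of codimension $h_1(G)$ in $\square_k^{|E(G)|}$ of the appropriate dimension, this common system pins down the same algebraic cycle, and hence $Z(G) = Z(v_\alpha(G))$ as elements of $\cZoneL{h_1(G)}{|E(G)|}$.

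Conceptually, the reason this works is that the label change in Definition \ref{rescaledfn} corresponds precisely to precomposing the parametrization $\phi: \bP^{|V(G)|-1} \to (\bP^1)^{|E(G)|}$ with the projective linear automorphism that scales the $v$-coordinate by $\alpha$; since this is an isomorphism of the source, the image cycle is unchanged, and only the parametrizing map differs. I expect no real obstacle: Lemma \ref{rescalinginvariance} already supplies the computational content, so the remaining work is just bookkeeping. The only pitfall worth noting is ensuring that the defining system does cut out the cycle exactly and not merely up to extra components, which is why it is important to appeal to the parametric description provided by Theorem \ref{systemeqthm} rather than simply comparing ideals.
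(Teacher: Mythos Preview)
Your proposal is correct and follows essentially the same route as the paper: both arguments invoke Theorem \ref{systemeqthm} (in the form \eqref{systemeqloopcoef}) together with Lemma \ref{rescalinginvariance} to conclude that the defining system of polynomial equations is unchanged under vertex rescaling. The paper additionally writes out the coordinates of $\phi(v_\alpha(G))$ explicitly to make the substitution $v \mapsto \alpha v$ visible, but this is just a restatement of your conceptual remark about precomposing $\phi$ with a projective automorphism of the source.
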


\begin{proof}
Let $\phi(G)$ be the paremetrization defined by the graph $G$ of the cycle $Z(G)$, and $\phi(v_\alpha (G))$ be the paremetrization defined by the graph $v_\alpha (G)$ of the cycle $Z(v_\alpha (G))$.

From Definition \eqref{maptocycle}, we see that the $\omega(e)^{th}$ coordinate of $\phi(v_\alpha (G))$ is \bas \phi_{\omega(e)}(v_\alpha (G)) = \begin{cases} \onem{s_e}{a_et_e} &  \textrm{if }v \not \in \{s_e, t_e\} \\ \onem{\alpha s_e}{a_e t_e} &  \textrm{if } v =s_e  \\\onem{ s_e}{\alpha a_e t_e} &  \textrm{if } v =t_e  \end{cases} \;.\eas  Since the loop coefficients are the same, and the only change in the coordinates is to replace each occurance of $v$ with $\alpha v$, the procedure in Theorem \ref{systemeqthm} gives the same system of equations for both $Z(G)$ and $Z(v_\alpha(G))$.
\end{proof}

Therefore, the algebra homorphism $Z$ defined in Section \ref{homomorphism} passes to an alegebra homomorphism under the quotient $\simv$: \bas Z: \Gprg_\bullet^\star/ (\simord,\simv) \rightarrow \Alt \cZoneL{\bullet}{2\bullet-\star} \;. \eas

As we mentioned before, the algebra $\Alt \cZoneL{\bullet}{2\bullet-\star}$ does not have a DGA structure. However, the algebra $\Q[\sG]/ (\simord,\simv)$ does. On individual graphs, this is defined by a modified contraction of the edges. We devote the rest of this section to develop this differential.

\begin{dfn}
Consider $G \in \sG_0(k^\times)$. For $e \in E(G)$, define the graph $G / e$ to be that formed by contracting the edge $e$ and identifying the
  vertices $s_e$ and $t_e$ as a new vertex $v$. If the edge $e \in
  E(G)$ has the same source and target vertex, then $G/ e = 0$. If
  contracting the edge $e$ leads to a one connected graph, split the
  graph into its biconnected components at the articulation vertex. \label{contractdef}
\end{dfn}

The above definition is not the standard definition of an edge
contraction in graphs. The standard definition has been modified to
fit the algebraic properties of the graphs we need in this paper, namely the splitting of graphs at the articulation vertex. Furthermore, the ordering of $G/e \in \Gprg$ is induced from the ordering of $G$.

\begin{dfn}
  Let $\omega(G)$ be the ordering of the edges of the graph $G$. Then $\hat\omega_e(G\setminus e)$ is the ordering of the graph $G \setminus e$ which is the same as $\omega(G)$ with the $\omega(e)^{th}$ element removed.
\end{dfn}

We are now ready to define a differential operator on $G \in \Q[\sG]_\bullet^\star/ (\simord,\simv)$.

\begin{thm}
  Consider a monomial $G \in \Q[\sG]_\bullet^\star/ (\simord,\simv)$. Let $e \in E(G)$ be an edge with label $r_e$ and source vertex $s_e$. There is a degree $1$ differential operator \bas \D: \Q[\sG]_\bullet^\star/ (\simord,\simv) &
  \rightarrow \Q[\sG]_\bullet^{\star+1}/ (\simord,\simv) \\ G & \rightarrow \sum_{e\in E(G)}
  (-1)^{\omega(e)-1} (\hat\omega_e, ((s_{e})_{r_e}(G))/e) \;. \eas \label{diffdef} \end{thm}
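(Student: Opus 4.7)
The plan is to verify four properties of the formula: (i) it raises the degree $\star$ by one and preserves the weight $\bullet$; (ii) it descends through the ordering equivalence $\simord$; (iii) it descends through the vertex-rescaling equivalence $\simv$; and (iv) $\D^2 = 0$.

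For (i), since the formula is defined edge by edge, I analyse the effect of a single contraction. If $e$ is a self-loop, then $G/e = 0$ by Definition \ref{contractdef}, so only non-loop edges contribute. Otherwise $|E(G/e)| = |E(G)| - 1$, and merging $s_e$ with $t_e$ decreases $|V|$ by one unless this creates an articulation vertex, in which case the subsequent splitting increases $h_0$ by one while restoring $|V|$. Using $\bullet = |E| - |V| + h_0$ and $\star = \bullet - |V| + h_0$, one checks that $\bullet$ is preserved and $\star$ increases by $1$ in both sub-cases. The rescaling $(s_e)_{r_e}$ does not alter the underlying combinatorial graph, so the claim on bigrading follows.

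For (ii), I check $\D(\sgn(g)\, gG) \simord \D G$ for $g = (\sigma, \vec\sgn) \in \fS_n \rtimes (\Z/2\Z)^n$. Reindexing the sum by $\sigma^{-1}$ matches each term with its counterpart: the sign $(-1)^{\omega(e)-1}$ becomes $(-1)^{\sigma(\omega(e))-1}$, and the induced orderings $\hat\omega_e$ shift consistently; the resulting sign discrepancy is absorbed by $\sgn(\sigma)$, while the $\vec\sgn$ flips attached to edges are carried intact through contraction. For (iii), I verify $\D(v_\alpha G) \simv \D G$ edge by edge, for any $v \in V(G)$ and $\alpha \in k^\times$. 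The case $v \notin \{s_e, t_e\}$ is immediate since rescaling at a disjoint vertex commutes with both normalization and contraction. When $v = s_e$, the label of $e$ in $v_\alpha G$ is $r_e / \alpha$, so the normalization applied is $(s_e)_{r_e/\alpha}$; a direct calculation from Definition \ref{rescaledfn} shows $(s_e)_{r_e/\alpha}(v_\alpha G) = (s_e)_{r_e}(G)$ exactly, so the two terms agree on the nose. When $v = t_e$, the two candidate outputs differ only by rescaling at the merged vertex by $\alpha$, and hence are $\simv$-equivalent.

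For (iv), I expand $\D^2 G$ as a double sum over unordered pairs of distinct edges $\{e_1, e_2\}$ of $G$ and pair up the two orderings. Contracting $e_1$ first contributes a sign $(-1)^{\omega(e_1)-1}(-1)^{\omega(e_2)-2}$ when $\omega(e_1) < \omega(e_2)$ (since $e_2$'s position shifts down after removing $e_1$), while contracting $e_2$ first contributes $(-1)^{\omega(e_2)-1}(-1)^{\omega(e_1)-1}$; these have opposite parity. The iterated normalizations commute modulo $\simv$ by the same analysis as in (iii), so the underlying doubly-contracted graphs agree, and paired terms cancel. If $e_1$ and $e_2$ share both endpoints, contracting one turns the other into a self-loop, annihilating both terms. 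The main obstacle is the well-definedness under $\simv$ in the case $v = t_e$, where one must track how the two separate rescalings $(s_e)_{r_e}$ at the source and $v_\alpha$ at the target combine once these vertices are identified by the contraction, and recognise the combined effect as a single rescaling at the new merged vertex. The sign bookkeeping in (iv) is similarly delicate in the presence of articulation-vertex splitting.
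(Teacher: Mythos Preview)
Your proof is correct and follows essentially the same structure as the paper's: well-definedness under $\simord$ and $\simv$ (the paper's Lemma \ref{diffreparam}, argued by the same case split on whether $v$ is $s_e$, $t_e$, or neither), commutativity of single-edge contractions (Lemma \ref{commutingderivatives}), and the resulting sign cancellation for $\D^2=0$ together with the bigrading check. The one place the paper is sharper is the commutativity step in (iv): rather than appealing to ``the same analysis as in (iii)'', it invokes the spanning-tree normalization (Lemma \ref{rescaletree}) to choose a representative in which both $e$ and $e'$ carry label $1$, so that $\D_e\D_{e'}G = (G/e)/e' = G/\{e,e'\} = \D_{e'}\D_e G$ on the nose, which cleanly handles the interaction of the two source-vertex rescalings that you flag as delicate.
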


By direct calculation, one sees that the Leibnitz rule for this operator is
 \ba \D(G \cdot G') = \D(G) \cdot G' + (-1)^{\star} G \cdot \D (G') \; . \label{Leibniz}\ea

We prove this theorem in steps. Before starting the proof, we give an
example of the action of $\D$. Recall that the notation $(s_{e})_{r_e}$ in Theorem \ref{diffdef} is the vertex rescaling from definition \ref{rescaledfn}.

\begin{eg}
For example, for the graph in example \ref{samplegraph}, with
  $\omega$ ordered according to the numbering of the labels,
\bas \D \; {\begin{xy}
(0,10) *{\bullet}="t",
(0,-10) *{\bullet}= "u",
(20, 0) *{\bullet}= "z",
"t"; "z" **\crv{+(-15,0)}?/0pt/*{\dir{>}} +(-2,-1)*{r_2},
"z"; "t" **\crv{+(15,0)}?/0pt/*{\dir{>}}+(2,1)*{r_1} ,
"t"; "u" **\crv{+(-5,10)}?/0pt/*{\dir{>}} +(-2,0)*{r_3},
"u"; "t" **\crv{+(5,-10)}?/0pt/*{\dir{>}} +(-2,0)*{r_5},
"u"; "z" **{\dir{-}}?/0pt/*{\dir{>}}+(1,-1.5)*{r_4},
\end{xy}}=
 {\xy
(0,10) *{\bullet}="t",
(0,-10) *{\bullet}= "u",
(20, 0) *{\bullet}= "z",
"t"; "u" **\crv{+(-15,10)}?/0pt/*{\dir{>}} +(-2,1)*{r_3},
"u"; "t" **\crv{+(15,-10)}?/0pt/*{\dir{>}}+(2,-1)*{r_5},
"t"; "u" **{\dir{-}}?/0pt/*{\dir{<}} +(4,1)*{r_4r_1},
"z"; "z" **\crv{+(-7,5)&+(7,9)&+(7,-9)}?/0pt/*{\dir{>}}+(0,2)*{r_1r_2},
\endxy} -
 {\xy
(0,10) *{\bullet}="t",
(0,-10) *{\bullet}= "u",
(20, 0) *{\bullet}= "z",
"t"; "u" **\crv{+(-13,10)}?/0pt/*{\dir{>}} +(-2,1)*{r_3},
"u"; "t" **\crv{+(13,-10)}?/0pt/*{\dir{>}}+(2,-1)*{r_5},
"t"; "u" **{\dir{-}}?/0pt/*{\dir{<}} +(2,1)*{\frac{r_4}{r_2}},
"z"; "z" **\crv{+(-7,5)&+(7,9)&+(7,-9)}?/0pt/*{\dir{>}}+(0,2)*{r_1r_2},
\endxy}
 \\ +
 {\xy
(0,10) *{\bullet}="t",
(0,-10) *{\bullet}= "u",
(20, 0) *{\bullet}= "z",
"t"; "u" **\crv{+(-15,10)}?/0pt/*{\dir{>}} +(-2,1)*{r_1},
"u"; "t" **\crv{+(15,-10)}?/0pt/*{\dir{>}}+(2,-1)*{r_2},
"t"; "u" **{\dir{-}}?/0pt/*{\dir{<}} +(3,1)*{r_4r_3},
"z"; "z" **\crv{+(-7,5)&+(7,9)&+(7,-9)}?/0pt/*{\dir{>}}+(0,2)*{r_3r_5},
\endxy}
 -{\xy
(0,10) *{\bullet}="t",
(0,-10) *{\bullet}= "u",
"t"; "u" **\crv{+(-7,10)}?/0pt/*{\dir{>}} +(0,-2)*{r_1r_4},
"u"; "t" **\crv{+(7,-10)}?/0pt/*{\dir{>}}+(2,-1)*{\frac{r_2}{r_4}},
"t"; "u" **\crv{+(-20,10)}?/0pt/*{\dir{>}} +(-2,1)*{r_3},
"u"; "t" **\crv{+(20,-10)}?/0pt/*{\dir{>}}+(2,-1)*{r_5},
\endxy}
 +
 {\xy
(0,10) *{\bullet}="t",
(0,-10) *{\bullet}= "u",
(20, 0) *{\bullet}= "z",
"t"; "u" **\crv{+(-13,10)}?/0pt/*{\dir{>}} +(-2,1)*{r_1},
"u"; "t" **\crv{+(13,-10)}?/0pt/*{\dir{>}}+(2,-1)*{r_2},
"t"; "u" **{\dir{-}}?/0pt/*{\dir{<}} +(2,1)*{\frac{r_4}{r_5}},
"z"; "z" **\crv{+(-7,5)&+(7,9)&+(7,-9)}?/0pt/*{\dir{>}}+(0,2)*{r_3r_5},
\endxy} \;.
\eas

\end{eg}

First, we define a contraction operator on graphs with labeled edges.

\begin{dfn}
For $e \in E(G)$, we write the contraction of an edge $\D_e (G) = (s_e)_{r_e}(G)/ e$.
\label{Desourcedef}\end{dfn}

In this notation, the operator defined in Theorem
\ref{diffdef} can be rewritten \bas \D (G)= \sum_{e \in E(G)} (-1)^{\omega(e)-1}\D_e (G) \;.\eas
Notice that if $r_e= 1$ then $\D_e (G) = (G/ e)$. This further implies that the loop coefficient is invariant under contraction.

\begin{lem}
Consider $G \in \Gprg/(\simord, \simv)$. Let $L$ be a loop in $G$ with more than one edge, and $e \in E(L)$. Then \bas \chi_G(L) = \chi_{\D_e G}(L/e) \;.\eas
\end{lem}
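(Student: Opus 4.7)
The plan is to reduce the claim to an immediate computation by composing Lemma \ref{rescalinginvariance} (invariance of loop coefficients under vertex rescaling) with a simple observation about what contraction does to a loop coefficient once we have arranged the label on the contracted edge to equal $1$.

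First I would apply Lemma \ref{rescalinginvariance} at the vertex $s_e$ with parameter $r_e$ to obtain
\bas
\chi_G(L) = \chi_{(s_e)_{r_e}(G)}(L).
\eas
The key observation is that in $(s_e)_{r_e}(G)$ the edge $e$ itself has been relabeled: since $e$ has source $s_e$, its label becomes $r_e / r_e = 1$. Hence $e$ contributes the factor $1^{\epsilon(e,L)} = 1$ to $\chi_{(s_e)_{r_e}(G)}(L)$.

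Next I would note that under the contraction $G \mapsto \D_e G = (s_e)_{r_e}(G)/e$, the cycle $L$ becomes the cycle $L/e$ on one fewer edge, with all the other edges in $E(L) \setminus \{e\}$ retaining both their labels (the contraction only merges the vertices $s_e,t_e$; it does not alter any remaining label) and their orientations relative to the loop. Consequently $\epsilon(e',L/e) = \epsilon(e',L)$ for every $e' \in E(L) \setminus \{e\}$, and $\chi_{\D_e G}(L/e)$ is exactly the same product as $\chi_{(s_e)_{r_e}(G)}(L)$ except that the trivial factor coming from $e$ has been dropped. Putting this together,
\bas
\chi_G(L) \;=\; \chi_{(s_e)_{r_e}(G)}(L) \;=\; 1 \cdot \prod_{e' \in E(L)\setminus\{e\}} (r_{e'}^{\mathrm{new}})^{\epsilon(e',L)} \;=\; \chi_{\D_e G}(L/e).
\eas

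The only subtle point, and the step I would be most careful about, is the hypothesis that $L$ has more than one edge: this ensures that after contraction $L/e$ is still a genuine loop in $\D_e G$ (rather than collapsing to a single vertex) and that definition \ref{contractdef} produces a nonzero graph --- i.e.\ that $s_e \neq t_e$, so contracting $e$ does not turn $G$ into $0$. Under this hypothesis the identification of $\chi_{\D_e G}(L/e)$ with the truncated product above is well defined and the equality follows.
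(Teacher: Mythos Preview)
Your argument is correct and follows essentially the same strategy as the paper: rescale at the source vertex so that the edge $e$ carries label $1$, invoke Lemma~\ref{rescalinginvariance}, and then observe that contracting an edge labeled $1$ simply drops a trivial factor from the product defining $\chi$. Your version is in fact slightly more direct than the paper's, since you use the very definition $\D_e G = (s_e)_{r_e}(G)/e$ from Definition~\ref{Desourcedef}, whereas the paper takes a detour through a forward reference to Lemma~\ref{diffreparam} to relate $\D_e(G)$ and $\D_e(s_r(G))$ --- a step that, as your argument shows, is not actually needed here.
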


\begin{proof}
It is sufficient to consider $G$ connected. If $s$ is the source vertex of $e$, and $r$ the label, the equivalent graph $s_r(G)$ is such that the label of $e = 1$.

In Lemma \ref{diffreparam} below, we show that contraction is well defined on  $\Gprg/(\simord, \simv)$. Therefore, $\D_e(G) \simv \D_e(s_r(G))$. Since the label of $e$ is $1$, the contraction $ \D_e(s_r(G)) = s_r(G) / e$, and
\bas \chi_G(L) = \chi_{s_r(G)}(L) = \chi_{\D_e s_r(G)}(L\setminus e) = \chi_{\D_e G}(L\setminus e)\;.\eas  The first equality comes from Lemma \ref{rescalinginvariance}. The second equality comes from the form of $\D_e(s_r(G))$. Finally, the third equality comes from the equivalence of the two contractions (Lemma \ref{diffreparam}).
\end{proof}

Working under the equivalence relations $\simv$ gives an important representation of graphs that simplifies the calculation of the derivatives.

\begin{lem}
For any given $G \in \sG_0(k^\times)$, and any subtree $T \subset G$, there is a tree $G_T$ such that the labels of the edges in $T$ are $1$ and $G \simv G_T$.  In particular, any
monomial $G \in \Q[\sG]/(\simord, \simv)$ can be rescaled such that any
spanning forest of $G$ is labeled by
$1$. \label{rescaletree}
\end{lem}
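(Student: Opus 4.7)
The plan is to prove this by induction on $|E(T)|$ via a leaf-stripping argument, leveraging the key locality property of vertex rescaling: by Definition \ref{rescaledfn}, rescaling at a vertex $v$ changes only the labels of those edges of $G$ incident to $v$.

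For the base case $|E(T)| = 1$, let $e$ be the unique edge of $T$, with label $a_e$ and source vertex $s_e$. Rescaling at $s_e$ by $\alpha = a_e$ divides the label of $e$ by $a_e$, so the resulting graph, equivalent to $G$ under $\simv$, has the edge $e$ relabeled by $1$.

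For the inductive step, I choose a leaf $\ell$ of $T$ and let $e$ be the unique edge of $T$ incident to $\ell$. I rescale at $\ell$ by the appropriate factor (namely $a_e$ or $a_e^{-1}$, according to whether $\ell$ is the source or the target of $e$) to produce an equivalent graph in which $e$ carries the label $1$. I then apply the inductive hypothesis to this new graph together with the smaller subtree obtained by deleting $\ell$ and $e$ from $T$, obtaining a further equivalent graph in which every edge of $T \setminus \{e\}$ has label $1$. The point which makes the induction go through is that all the subsequent rescalings take place at vertices of $T \setminus \{\ell\}$, none of which is $\ell$; since $\ell$ is a leaf of $T$, the edge $e$ is the only $T$-edge incident to $\ell$, so its label $1$ is preserved by these later rescalings. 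Composing the sequence of rescalings yields the desired $G_T$.

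For the "In particular" statement, a monomial $G \in \Q[\sG]/(\simord,\simv)$ decomposes as a product of its connected components $G_1 \cdots G_m$, and a spanning forest of $G$ is a disjoint union of spanning trees, one in each $G_i$. Since vertex rescalings on a component $G_i$ affect only edges incident to vertices of $G_i$, I apply the tree argument independently in each component, and the resulting rescalings commute. The main obstacle, which is essentially bookkeeping rather than a deep point, is verifying in the inductive step that the rescalings used to handle $T \setminus \{e\}$ do not disturb the label of $e$; as noted above, this is immediate from $\ell$ being a leaf of $T$ together with the locality of vertex rescaling.
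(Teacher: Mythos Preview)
Your leaf-stripping induction is the right idea, and in spirit it matches the paper's argument, but there is a gap in the inductive step. After you rescale at the leaf $\ell$ to give $e$ the label $1$, you invoke the inductive hypothesis on $T' = T \setminus \{e,\ell\}$ and then claim that the subsequent rescalings preserve the label of $e$ because they all take place at vertices of $T \setminus \{\ell\}$. That is not enough: the edge $e$ has a second endpoint $w$, and $w$ is a vertex of $T'$, so nothing prevents the inductive hypothesis from rescaling at $w$ and thereby altering the label of $e$. Concretely, let $T$ be a path on three vertices $u,v,w$ with edges $uv$ and $vw$: stripping the leaf $u$ and rescaling there fixes the label of $uv$; but now $v$ is a leaf of $T'=\{vw\}$, and if the recursive step rescales at $v$ to fix $vw$, it multiplies the label of $uv$ by a nontrivial factor.

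The repair is easy: reverse the order inside the inductive step. First apply the inductive hypothesis to $T'$, obtaining a $\simv$-equivalent graph in which every $T'$-edge is labelled $1$; only then rescale at $\ell$ to normalise $e$. This final rescaling touches only edges incident to $\ell$, and since $\ell$ is a leaf of $T$ the edge $e$ is the unique $T$-edge there, so no $T'$-label is disturbed. Unwound, this is exactly the paper's sweep: fix a starting leaf $v_1$, order the remaining vertices so that each $v_i$ has a unique earlier $T$-neighbour, and rescale at $v_2, v_3, \ldots$ in turn to normalise the edge back to that earlier neighbour; later rescalings then never revisit either endpoint of an already-fixed edge.
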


\begin{proof}
Without loss of generality, assume that the graph $G \in \sG_0(k^\times)$
is a connected graph. Otherwise, the following arguments apply to each
connected component of $G$.

Let $T$ be a spanning tree of $G$. Label the vertices $\{v_1, \ldots
v_{|V(G)|}\} \in V(G)$ such that $v_1$ has valence $1$ in $T$. Let $\{r_2
\ldots r_{|V(T)|}\}$ be the labels of the edges in $E(T)$, where
$r_i$ labels the edge connected to $v_i$.

Rescale the graph $G$ at the vertex $v_2$ by $r_2$ (resp. $1/r_2$) if
$v_2$ is a source (resp. target) vertex of the edge labeled by
$r_2$. In the rescaled graph $(v_2)_{r_2}(G)$
(resp. $(v_2)_{1/r_2}(G)$) the edge connecting $v_1$ and $v_2$ is
labeled by 1. By similar logic, there is a series of rescaling'
coefficients, $\{ \alpha_1, \ldots , \alpha_{|V(G)-1}\}$, where each
$\alpha_i$ is a rational function of the $r_js$ such that edges of the
spanning tree $T$ in $(v_{|V(G)|-1})_{\alpha_{|V(G)-1}}( \ldots
((v_1)_{\alpha_1}(G)) \ldots )$ are all labeled by $1$.
\end{proof}

\begin{eg}
Consider again, the graph in example \ref{samplegraph}. The loop coefficient of the loop defined by the inner triangle of legs, oriented clockwise is $\frac{r_2r_5}{r_4}$. The same graph can be relabeled to have a spanning tree labeled with ones as follows.

\bmls G = {\begin{xy}
(0,10) *{\bullet}="t" +(2,2)*{t},
(0,-10) *{\bullet}= "u" +(-2,-2)*{u},
(20, 0) *{\bullet}= "z" +(0,-2)*{z},
"t"; "z" **\crv{+(-15,0)}?/0pt/*{\dir{>}} +(-2,-1)*{r_2},
"z"; "t" **\crv{+(15,0)}?/0pt/*{\dir{>}}+(2,1)*{r_1} ,
"t"; "u" **\crv{+(-5,10)}?/0pt/*{\dir{>}} +(-2,0)*{r_3},
"u"; "t" **\crv{+(5,-10)}?/0pt/*{\dir{>}} +(-2,0)*{r_5},
"u"; "z" **{\dir{-}}?/0pt/*{\dir{>}}+(1,-1.5)*{r_4},
\end{xy}}  \quad ;  \\ \quad z_{\frac{1}{r_4}}(G) =
{\begin{xy}
(0,10) *{\bullet}="t" +(2,2) *{t},
(0,-10) *{\bullet}= "u" +(-2,-2) *{u},
(20, 0) *{\bullet}= "z" +(0,-2) *{z},
"t"; "z" **\crv{+(-15,0)}?/0pt/*{\dir{>}} +(0,-3)*{\frac{r_2}{r_4}},
"z"; "t" **\crv{+(15,0)}?/0pt/*{\dir{>}}+(2,1)*{r_1 r_4} ,
"t"; "u" **\crv{+(-5,10)}?/0pt/*{\dir{>}} +(-2,0)*{r_3},
"u"; "t" **\crv{+(5,-10)}?/0pt/*{\dir{>}} +(-2,0)*{r_6},
"u"; "z" **{\dir{-}}?/0pt/*{\dir{>}}+(1,-1.5)*{1},
\end{xy}} \quad ; \\  t_{\frac{r_2}{r_4}}(z_{\frac{1}{r_4}}(G)) =
{\begin{xy}
(0,10) *{\bullet}="t" +(2,2) *{t},
(0,-10) *{\bullet}= "u" +(-2,-2) *{u},
(20, 0) *{\bullet}= "z" +(0,-2) *{z},
"t"; "z" **\crv{+(-15,0)}?/0pt/*{\dir{>}} +(2,1)*{ 1},
"z"; "t" **\crv{+(15,0)}?/0pt/*{\dir{>}}+(2,1)*{r_1 r_2} ,
"t"; "u" **\crv{+(-5,10)}?/0pt/*{\dir{>}} +(-3,1)*{\frac{r_3 r_4}{r_2}},
"u"; "t" **\crv{+(5,-10)}?/0pt/*{\dir{>}} +(2.5,-1)*{\frac{r_2r_5}{r_4}},
"u"; "z" **{\dir{-}}?/0pt/*{\dir{>}}+(1,-1.5)*{1},
\end{xy}} \;.
\emls

\label{spanningtreeexample}
\end{eg}

Contrary to appearance, we have made no choice in our definition of the differential operator
$\D_e$. We could just as easily have written \bas \D_e (G) =
(-1)^{\omega(e) -1}(t_{e})_{1/r_e}(G)/e \;, \eas where $t_e$ is the target vertex of the edge $e$. This is because
the two graphs are equivalent under under vertex
rescaling.

\begin{lem}
For $G \in \Q[\sG]$, let $t$ and $s$ be the target and source vertices, respectively, of the edge $e \in E(G)$. Then \bas t_{1/a}(G)/ e  =
 s_{a}(G)/ e \eas in $\Q[\sG]/ (\simord,\simv)$. \label{contractvertindep}
\end{lem}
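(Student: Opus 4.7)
The plan is to exhibit an explicit vertex rescaling relating the two graphs. First, I would observe that both constructions produce graphs with the same underlying unlabeled oriented graph: the contraction $/e$ identifies the vertices $s$ and $t$ into a single vertex $v$, and the set of edges, together with their orientations and the induced ordering $\hat\omega_e$, is identical on both sides. Moreover, by Definition \ref{rescaledfn}, vertex rescaling preserves orientations, signs, and the ordering $\omega$. Therefore the only thing to check is that the labels on corresponding edges of $t_{1/a}(G)/e$ and $s_{a}(G)/e$ agree up to a single rescaling at the merged vertex $v$. As a preliminary sanity check, note that the rescaling $s_a$ divides the label of $e$ (which leaves $s$) by $a$, giving $a/a=1$, and $t_{1/a}$ multiplies the label of $e$ (which enters $t$) by $1/a$, again giving $1$; so in both cases the contraction $/e$ is an ordinary contraction of an edge labeled $1$.

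Next, I would verify that $v_{a}\bigl(t_{1/a}(G)/e\bigr) = s_{a}(G)/e$ as labeled graphs by a case analysis on the position of an arbitrary edge $e'\neq e$:
\begin{enumerate}
\item If $e'$ is incident to neither $s$ nor $t$, then neither $t_{1/a}$, $s_a$, nor $v_a$ affects its label, so both graphs carry the original label on $e'$.
\item If $e'$ is incident to $t$ only, then $s_a$ leaves its label unchanged. On the other side, $t_{1/a}$ multiplies (resp.\ divides) its label by $1/a$ according to whether $e'$ enters or leaves $t$, and subsequently $v_a$ undoes this modification at $v$, recovering the original label.
\item If $e'$ is incident to $s$ only, a symmetric argument applies: $t_{1/a}$ is inactive, and the single rescaling $v_a$ reproduces exactly the effect of $s_a$ on $e'$.
\item If $e'$ is incident to both $s$ and $t$ (so becomes a loop at $v$), then both $s_a$ and $t_{1/a}$ rescale its label by the same factor, and on the $v_a$ side the contributions at the two endpoints cancel, so the two labels again agree.
\end{enumerate}

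This case analysis shows that the two labeled ordered oriented graphs $v_a(t_{1/a}(G)/e)$ and $s_a(G)/e$ are identical, which gives $t_{1/a}(G)/e \simv s_a(G)/e$ and hence equality in $\Q[\sG]/(\simord,\simv)$. The main subtlety to handle carefully is the bookkeeping of orientations in cases (2)--(4): one must distinguish whether $e'$ enters or leaves the rescaled vertex to ensure the $a$ factors invert correctly, and in particular the loop case (4) requires noting that the ``enters'' and ``leaves'' contributions of $v_a$ cancel for a loop at $v$. Since everything else (the sign assignment, the ordering $\hat\omega_e$, and the orientations) is manifestly untouched by vertex rescaling and by contraction of an edge with label $1$, no further work is needed.
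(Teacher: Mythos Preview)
Your proof is correct and follows essentially the same approach as the paper: both produce the explicit rescaling $v_a\bigl(t_{1/a}(G)/e\bigr) = s_a(G)/e$ at the merged vertex, and your edge-by-edge case analysis is just a more detailed version of the paper's description of how the rescalings affect edges at $s$ versus edges at $t$. One small slip: your ``sanity check'' writes the post-rescaling label of $e$ as $a/a=1$, implicitly assuming the label of $e$ is $a$, whereas the lemma is stated for arbitrary $a\in k^\times$; this is harmless since $e$ is removed by the contraction anyway and your cases (1)--(4) only concern edges $e'\neq e$.
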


\begin{proof}
We show that there is a vertex rescaling such that \bas t_{1/a}
(G)/ e \sim s_{a}(G)/ e\; .\eas By construction, the edge $e
\not \in E(G/ e)$, and the vertices $t_e, s_e \in V(G)$ are are replaced
by a single vertex $v \in V(G /e )$.

In the graph $t_{1/a}(G)$, the label of $e$ is multiplied by
$\frac{1}{a}$, as are all the edges terminating on $t_e$. All edges
starting at $t_e$ are multiplied by $a$. The edges attached to $s_e$
and not $t_e$ are unaffected. Similarly, in the graph
$s_{a}(G)$, the label of $e$ is multiplied by $\frac{1}{a}$, as
are all the edges starting at $s_e$. All edges terminating at $s_e$
are multiplied by $a$. The edges attached to $t_e$ and not $s_e$ are
unaffected.

Therefore, contracting $e$ and identifying $s$ with $t$ at the new vertex in the contracted graph, $v = V(G /e) \setminus V(G)$, \bas
v_{1/a} (s_{a}(G)/ e) = t_{1/a}(G)/ e \;.\eas Similarly,
one may also write \bas s_{a}(G)/ e =v_a(t_{1/a}(G)/ e)
\;.\eas
\end{proof}

Choosing $a= r_e$, the label of the edge $e$, shows
that, in $\Q[\sG]/ (\simord,\simv)$, it does not
matter if $\D_e$ is defined according to the source vertex of $e$ or
the target vertex.

Next we show that the contraction operator $\D_e$ is well defined under
vertex rescaling.

\begin{lem}
The operator $\D$ defined above is well defined on $\Q[\sG]/ (\simord,\simv)$. \label{diffreparam}\end{lem}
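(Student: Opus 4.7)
The plan is to reduce well-definedness to a pointwise check: it suffices to show that for every vertex $v \in V(G)$, every $\alpha \in k^\times$, and every edge $e \in E(G)$, we have $\D_e(G) \simv \D_e(v_\alpha(G))$ in $\Q[\sG]/(\simord,\simv)$. (The sign $(-1)^{\omega(e)-1}$ and the induced ordering $\hat\omega_e$ are unaffected by $\simv$ since vertex rescaling does not touch $\omega$ or the signs on edges, by Definition \ref{rescaledfn}.) Summing over $e$ then yields $\D(G) \simv \D(v_\alpha(G))$.

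I would split into three cases according to how $v$ meets the edge $e$. \textbf{Case 1: $v \notin \{s_e, t_e\}$.} Here the label $r_e$ is unchanged, the source vertex $s_e$ is unchanged, and rescalings at distinct vertices commute, so $\D_e(v_\alpha(G)) = (s_e)_{r_e}(v_\alpha(G))/e = v_\alpha((s_e)_{r_e}(G))/e$. Since $v$ is not an endpoint of $e$, the vertex $v$ survives the contraction, and the rescaling at $v$ commutes with deleting $e$, giving $v_\alpha(\D_e(G)) \simv \D_e(G)$. \textbf{Case 2: $v = s_e$.} Then the new label of $e$ in $v_\alpha(G)$ is $r_e/\alpha$, and composing two rescalings at the same vertex adds exponents, so $(s_e)_{r_e/\alpha}\circ (s_e)_{\alpha} = (s_e)_{r_e}$ as operations on labels. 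Hence $\D_e(v_\alpha(G)) = (s_e)_{r_e/\alpha}((s_e)_\alpha(G))/e = (s_e)_{r_e}(G)/e = \D_e(G)$ on the nose.

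\textbf{Case 3: $v = t_e$.} This is the subtle case and is where I expect the main obstacle. The new label of $e$ is $r_e\alpha$, but the rescaling happens at $t_e$, not $s_e$, so one cannot directly cancel as in Case 2. The key move is to switch from a source-based to a target-based contraction via Lemma \ref{contractvertindep}: $(s_e)_{r_e}(G)/e \simv (t_e)_{1/r_e}(G)/e$. Applying this on both sides,
\[
\D_e(v_\alpha(G)) = (s_e)_{r_e\alpha}((t_e)_\alpha(G))/e \simv (t_e)_{1/(r_e\alpha)}((t_e)_\alpha(G))/e = (t_e)_{1/r_e}(G)/e \simv \D_e(G),
\]
where the middle equality again uses that two rescalings at the same vertex compose additively in the exponent.

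Having verified the three cases, I would conclude by noting that the Leibniz rule \eqref{Leibniz} together with the equivalence established above implies $\D$ descends to an operator on the monomial algebra. The extension to $\Q$-linear combinations is automatic, and compatibility with $\simord$ follows because a reordering $g = (\sigma,\vec\sgn)$ sends $\D_e(G)$ to $\sgn(g)\D_{\sigma(e)}(gG)$ with the shifted ordinal label, so the signed sum defining $\D$ transforms correctly. Thus $\D$ is well defined on $\Q[\sG]/(\simord,\simv)$.
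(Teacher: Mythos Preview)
Your proof is correct and follows essentially the same approach as the paper's. Both arguments verify compatibility with $\simord$ via the signed sum over edges, and establish compatibility with $\simv$ by checking $\D_e(v_\alpha(G)) \simv \D_e(G)$ edge by edge, splitting according to whether $v$ is an endpoint of $e$ and invoking Lemma \ref{contractvertindep} together with the multiplicativity \eqref{vertexmult} of vertex rescaling in the incident cases; the only cosmetic differences are the order of presentation and that you make the source/target distinction slightly more explicit than the paper does.
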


\begin{proof}
Since $\D = \sum_{e \in E(G)}(-1)^{\omega(e)}\D_e$, for any $g \in \fS{|E(G)|} \rtimes \Z/2\Z^{|E(G)|}$, \bas \D G = \D gG \eas for all $G \in \sG_0(k^\times)$.

It remains to check that, for $G \in \sG_0(k^\times)$, \ba \D
(G) \sim \D (v_\alpha(G)) \label{diffsim} \ea for any
$v \in V(G)$. Before proceeding, we note that vertex rescaling is multiplicative. That is, for $v \in V(G)$, \ba v_\alpha (v_\beta (G)) = v_{\alpha\beta} (G) \label{vertexmult} \;. \ea

Fix $v \in V(G)$. For any edge $e$ not incident upon $v$, \bas
\D_e(v_\alpha G)= v_\alpha \D_e (G) \sim \D_e (G) \;.\eas Therefore consider only the edges $e \in E(G)$ that are incident upon $v$. They are labeled by $r_e$. By Definition \ref{Desourcedef} and Lemma
\ref{contractvertindep}, \bas \D_e(G)
\simv \begin{cases}v_{r_e}(G)/e  & v \textrm{ source of
  } e \\ v_{1/r_e}(G)/e & v \textrm{
    target of } e \;. \end{cases} \eas Recall by the definition of $\D_e$ that if $v$ is the source of $e$, the above equivalence is an exact equality.

Similarly, \bas \D_e(v_\alpha (G))
\simv \begin{cases}v_{r_e/\alpha}(v_\alpha(G))/e & v
  \textrm{ source of } e \\ v_{1/r_e\alpha}(v_\alpha(G))/e & v \textrm{ target
    of } e \;. \end{cases} \eas

By the multiplicativity of vertex rescaling, \eqref{vertexmult}, we rewrite this \bas \D_e(v_\alpha (G))
\simv \begin{cases}v_{r_e}(G)/e & v
  \textrm{ source of } e \\v_{1/r_e}(G)/e & v \textrm{ target
    of } e  \end{cases} \simv \D_e(G) \;.\eas Therefore, $\D(G) \simv \D(v_\alpha (G))$ for any $G \in G_0(k^\times)$ and $v \in V(G)$.

\end{proof}

Thus far, we have shown that the operator $\D$ is well defined on $\Q[\sG]/ (\simord,\simv)$. Next, we show that the operators $\D_e$ commute.

%To simplify notation in this proof, we denote, for two edges $e, e'
%\in E(G)$ of a graph $G \in \sG_0(k^\times)$, \bas
%\widehat{(\hat{\omega}_e)}_{e'} = \hat{\omega}_{\{e, e'\}} \;. \eas

\begin{lem} Contractions along different edges commute in $\Q[\sG]/(\simord,\simv)$ : $\D_e \circ \D_{e'} = \D_{e'}
  \circ \D_e.$ \label{commutingderivatives}\end{lem}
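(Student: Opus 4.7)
The plan is to proceed by a case analysis on whether the set $\{e,e'\}$ contains a cycle of $G$. Since the operator $\D_e$ decomposes as a single rescaling-then-contract, the key observation is that once the edge to be contracted already carries the label $1$, the rescaling step is trivial and $\D_e$ reduces to the plain topological contraction of the edge $e$. Topological edge contractions on multigraphs are well known to commute as long as no self-loop is created in between, so most of the work is in arranging for the labels to cooperate.

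Concretely, suppose first that $\{e,e'\}$ does not contain any graphical cycle. Then $\{e,e'\}$ is a subforest of $G$, so it can be extended to a spanning forest of $G$. I would invoke Lemma \ref{rescaletree} (applied componentwise to $G$) to replace $G$ by a $\simv$-equivalent representative $G'$ in which every edge of this spanning forest---in particular both $e$ and $e'$---carries the label $1$. Then $\D_e(G') = G'/e$ and, since contraction does not alter any remaining edge-label, $e'$ still carries label $1$ in $G'/e$, giving $\D_{e'}\D_e(G') = (G'/e)/e'$; symmetrically $\D_e\D_{e'}(G') = (G'/e')/e$. The topological identity $(G'/e)/e' = (G'/e')/e$ then closes this case. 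The induced ordering $\hat{\omega}_{e'}(\hat{\omega}_e(G))$ is obtained by deleting the positions $\omega(e)$ and $\omega(e')$ from $\omega(G)$, which does not depend on the order of deletion.

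Now suppose that $\{e,e'\}$ does contain a cycle. Since it has only two elements, this forces either $e$ or $e'$ to be a self-loop, or $e$ and $e'$ to be parallel edges between the same pair of distinct vertices. If either is already a self-loop, the corresponding $\D$ vanishes by Definition \ref{contractdef}, and since contraction of the other edge preserves the self-loop property, the opposite composition also vanishes. In the parallel case, after performing $\D_e$ the two endpoints of $e$ are identified and $e'$ becomes a self-loop at the merged vertex, so $\D_{e'}$ of the result is zero by Definition \ref{contractdef}; symmetrically $\D_e \D_{e'}(G) = 0$. Both sides are $0$, so commutativity holds trivially.

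The main obstacle I anticipate is making the first case watertight despite the possibility that contracting $e$ introduces an articulation vertex, at which $G/e$ is split into biconnected components (as required by Definition \ref{contractdef}). One must check that this splitting operation commutes with subsequent edge contractions, which follows from the fact that the articulation structure is determined by the underlying unlabelled topological graph and is compatible with further contractions of edges not passing through the newly created articulation vertex. A second mild bookkeeping issue is confirming that the $\hat\omega$-induced orderings, the sign data on edges, and the biconnected decomposition all descend compatibly through either sequence of contractions; these are straightforward but need to be spelled out.
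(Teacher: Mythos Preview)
Your proof is correct and follows essentially the same approach as the paper: the same two-case split on whether $\{e,e'\}$ contains a cycle, the same reduction via Lemma \ref{rescaletree} to plain topological contractions when it does not, and the same observation that both compositions vanish when it does. If anything, you are more careful than the paper, which glosses over the articulation-vertex splitting and the ordering bookkeeping that you flag.
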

\begin{proof}

There are two cases to consider: when the edges  $e$ and $e'$ form a cycle in $G$, and when they do not.

If $e \cup e'$ is a union of loops in $G$, then by definition \ref{contractdef}, $\D_e G = \D_{e'} G = 0$. If $e \cup e'$ is a loop in $G$, then $e'$ defines a loop in $\D_e G $, and $e$ a loop in $\D_{e'} G$. Therefore, $\D_e \circ \D_{e'} G =  \D_{e'} \circ \D_e  G = 0$.

If $e \cup e'$ is not a cycle in $G$, there is a spanning tree $T$ such that $e, e' \subset E(T)$. By Lemma \ref{rescaletree}, write $G$ such that the edges of $T$ are labeled by $1$. In this case, $\D_e \circ \D_{e'} G = (G/e')/e = G/\{e' \cup e\} = \D_{e'} \circ \D_{e} G$.
\end{proof}

We are now ready to prove Theorem \ref{diffdef}.

\begin{proof}[Proof of Theorem \ref{diffdef}]
  Lemma \ref{diffreparam} shows that the operator \bas \D : \Q[\sG]/(\simord,\simv) \rightarrow  \Q[\sG]/(\simord,\simv) \eas is well defined.

  To see that $\D \circ \D = 0$, write \bas \D \circ \D = \sum_{e \in
    E(G/e)} (-1)^{\omega(e)}\D_e(\sum_{e' \in E(G)}
  (-1)^{\omega(e')}\D_e'(G) )\;.\eas Assume without loss of
  generality that $\omega(e) < \omega (e')$. Then the term $\D_e
\circ   \D_{e'}$ appears in $\D \circ \D$ with sign
  $(-1)^{\omega(e)}(-1)^{\omega(e')}$ while $\D_{e'} \circ \D_e$ appears
  with sign $(-1)^{\omega(e)}(-1)^{\omega(e')-1}$.  By Lemma
  \ref{commutingderivatives}, $\D_e \circ \D_{e'} = \D_{e'}\circ \D_e$. Thus the
  two contributions cancel.

  To see that $\D$ is a degree one operator, note that if $G / e$ is
  not $0$, then \bas h_1(G) = h_1(G/e)\;. \eas However, \bas |V(G/e)|
  = |V(G)| -1 + (h_0(G/e) - h_0(G)) \;.\eas Recall from equation \ref{euler}
  that if $G \in \sGoneL{\bullet}{i}$, the degree \bas i =
  h_1(G) - |V(G)| + h_0(G) \;.\eas Similarly, the degree of $(\hat
  \omega_e, G/e)$ is given by \bas h_1(G/e) - |V(G/e)| + h_0(G/e)
  =h_1(G) - (|V(G)| -1 + (h_0(G/e) - h_0(G))) + h_0(G/e) \\ = h_1(G) -
  |V(G)| + h_0(G) +1 = i + 1\eas
\end{proof}

So far, we have shown that $\Gprg/(\simord, \simv)$ is a bigraded DGA, and that $Z$ is a homorphism of algebras from $\Gprg_\bullet^\star/(\simord, \simv)$ to $\Alt \cZoneL{\bullet}{2\bullet-\star}$.  However, we are ultimately interested in graphs $\sGoneL{}{}$ that correspond to the algebra $\Asm$ under the algebra homomorphism $Z$ defined in Section \ref{homomorphism}. In Section \ref{admissiblegraphs}, we define the algebra of admissible graphs, and show that $\sGoneL{}{}$ is a DGA under the differential defined in this section. In Section \ref{graphstocycles}, we show that $\sGoneL{}{}$ is isomorphic to $\Asm$ as a DGA.

\subsection{Admissible graphs\label{admissiblegraphs}}

So far, we have said nothing about admissible cycles. By the arguments presented in Sections \ref{homomorphism} and \ref{graphDGA}, there is an algebra homomorphism
\bas Z: \Gprg/(\simord, \simv) \rightarrow \Alt \cZoneL{\bullet}{2\bullet-\star} \;, \eas Theorem \ref{vertexscalesamecycle} shows that generators of
$\Gprg/(\simord, \simv)$ map to generic $\bP^1$-linear
cycles, under $Z$, not necessarily to admissible ones. In this section, we define a
subalgebra of admissible graphs, which, in Section \ref{graphstocycles}, we show corresponds to admissible cycles.

There is a compact way of reading off loop coefficients for graphs if the graph is parametrized as in Lemma \ref{rescaletree}, by setting the each label of a spanning tree to $1$.

\begin{lem}
Consider a connected graph $G \in \Gprg/\simord, \simv$. Each $T$, a spanning tree of $G$ defines a loop basis of $H^1(G)$, the loop coefficients of which are the labels of the edges $E(G) \setminus E(T)$. \label{spanningtreeloopbasis}
\end{lem}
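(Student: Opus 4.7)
The plan is to reduce the statement to two standard facts: a structural fact about spanning trees in graph theory, and the invariance result already established in Lemma \ref{rescalinginvariance}. The standard fact is that for a connected graph $G$ with spanning tree $T$, adding any non-tree edge $e \in E(G) \setminus E(T)$ to $T$ produces a unique graphical loop $L_e$ contained in $E(T) \cup \{e\}$, and the collection $\beta_T = \{L_e : e \in E(G) \setminus E(T)\}$ is a basis of $H^1(G)$. Because $|E(G)| - |V(G)| + 1 = h^1(G)$ by \eqref{bettidef} and this is exactly the number of non-tree edges, a dimension count plus the observation that each $L_e$ is the unique element of $\beta_T$ containing $e$ (so the $L_e$ are linearly independent) proves the basis claim.

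Once the basis $\beta_T$ is identified, I would compute $\chi_G(L_e)$ using Lemma \ref{rescaletree}. That lemma gives a representative $G_T \simv G$ in which every edge of the spanning tree $T$ carries the label $1$. By Lemma \ref{rescalinginvariance}, loop coefficients are invariant under vertex rescaling, so $\chi_G(L_e) = \chi_{G_T}(L_e)$. Now, by definition \ref{loopcoef},
\[
\chi_{G_T}(L_e) = \prod_{e'\in E(G)} r_{e'}^{\,\epsilon(e',L_e)}.
\]
Since $L_e \subset E(T)\cup\{e\}$, the factor $\epsilon(e',L_e)$ is zero whenever $e'\notin E(T)\cup\{e\}$, and for $e'\in E(T)$ the label in $G_T$ is $1$ and contributes nothing. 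Orienting $L_e$ in the direction of $e$ (so that $\epsilon(e,L_e)=1$) yields $\chi_G(L_e)=r_e$, which is precisely the label of the non-tree edge, as claimed.

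The only point that needs care is the orientation convention for the basis loops $L_e$: one must fix the orientation of $L_e$ to agree with that of $e$ so that $\epsilon(e,L_e)=+1$ rather than $-1$ (otherwise one would get $r_e^{-1}$). With that convention in place, the argument above is clean. I do not foresee a real technical obstacle; the proof is essentially an unpacking of Lemmas \ref{rescaletree} and \ref{rescalinginvariance} combined with the elementary fundamental-cycle construction for spanning trees.
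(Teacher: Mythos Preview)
Your proposal is correct and follows essentially the same approach as the paper: construct the fundamental cycle $L_e$ for each non-tree edge, count dimensions to see there are $h^1(G)$ of them, and then use the rescaling of Lemma \ref{rescaletree} (the paper phrases this as ``choosing a parametrization where $T$ is labeled by ones'') to read off the loop coefficients. Your write-up is slightly more explicit than the paper's in invoking Lemma \ref{rescalinginvariance} and in flagging the orientation convention needed to obtain $r_e$ rather than $r_e^{-1}$; the paper leaves both implicit.
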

\begin{proof}
Each spanning tree of a connected graph defines a set of loops in $G$ as follows. For a spanning tree $T$, each oriented edge, $e \in E(G) \setminus E(T)$
defines a graphical loop, $L_e$, in conjunction with a subset of $E(T)$. The
orientation of the graphical loop is determined by the orientation of $e$. The rank of the loop space of $G$ is $\rk H^1(G) = |E(G)| - |V(G)| + 1$. Since $|E(T)| = |V(G)| - 1$, we see that $\rk H^1(G) = |E(G) \setminus E(T)|$. Furthermore,
$\cup_{e \in E(G) \setminus E(T)} E(L_e) = E(G)$. Therefore, the set $\{L_e\}_{e \in E(G) \setminus E(T)}$ defines a basis of $H^1(G)$.

By choosing a parametrization where $T$ is labeled by ones, the graphical loop coefficient $L_e$ is exactly the label of $e$.\end{proof}

We are now ready to define a class of graphs called admissible graphs. We show in Section \ref{graphstocycles} that these correspond to admissible cycles under the homomorphism $Z$ defined in Section \ref{homomorphism}.

\begin{dfn}
A graph $G \in \sG_0(k^\times)$ is admissible if: \begin{enumerate}
\item The connected components of $G$ are strongly connected.
\item There is no graphical loop in $G$ that has loop coefficient $1$.
\end{enumerate}
\label{admisgraphdfn}\end{dfn}

We recall the definition of a strongly connected graph in the first condition.

\begin{dfn}
An oriented graph is strongly connected if for any two vertices $v, w
\in V(G)$, there is a path from $v$ to $w$ and one from $w$ to $v$
which respect the orientation of the edges of $G$. \end{dfn}

By Lemma {\ref{spanningtreeloopbasis}}, definition \ref{admisgraphdfn} implies that if a graph $G \in G_0(k^\times)$ can be parametrized such that there exists a loop with all edged labeled by ones, then $G$ is not admissible.

Finally we add one more equivalence relation among graphs that is useful in Section \ref{graphstocycles}.

\begin{dfn}
For $G \in \sG$, let $\bar G\in \sG$ be the graph with the same underlying labled unoriented graph structure, but with the orientations of each edge switched. Define an equivalence relation $\simori$ that relates graphs with all orientations switched: $G \simori \bar{G}$.
\label{changeoridfn}\end{dfn}

\begin{eg}
For instance, if
\bas G = {\begin{xy}
(0,10) *{\bullet}="t",
(0,-10) *{\bullet}= "u",
(20, 0) *{\bullet}= "z",
"t"; "z" **\crv{+(-15,0)}?/0pt/*{\dir{>}} +(2,1)*{ 1},
"z"; "t" **\crv{+(15,0)}?/0pt/*{\dir{>}}+(2,1)*{r_1 r_2} ,
"t"; "u" **\crv{+(-5,10)}?/0pt/*{\dir{>}} +(-3,1)*{\frac{r_3 r_4}{r_2}},
"u"; "t" **\crv{+(5,-10)}?/0pt/*{\dir{>}} +(2.5,-1)*{\frac{r_2r_5}{r_4}},
"u"; "z" **{\dir{-}}?/0pt/*{\dir{>}}+(1,-1.5)*{1},
\end{xy}} \quad \textrm{ then } \quad  \bar G = {\begin{xy}
(0,10) *{\bullet}="t",
(0,-10) *{\bullet}= "u",
(20, 0) *{\bullet}= "z",
"t"; "z" **\crv{+(-15,0)}?/0pt/*{\dir{<}} +(2,1)*{ 1},
"z"; "t" **\crv{+(15,0)}?/0pt/*{\dir{<}}+(2,1)*{r_1 r_2} ,
"t"; "u" **\crv{+(-5,10)}?/0pt/*{\dir{<}} +(-3,1)*{\frac{r_3 r_4}{r_2}},
"u"; "t" **\crv{+(5,-10)}?/0pt/*{\dir{<}} +(2.5,-1)*{\frac{r_2r_5}{r_4}},
"u"; "z" **{\dir{-}}?/0pt/*{\dir{<}}+(1,-1.5)*{1},
\end{xy}} \;. \eas

\end{eg}

Switching the orientation of all the edges of a graph corresponds to a reparametrization of $Z(G)$. If the $\omega(e)^{th}$ edge of $G$ corresponds to the parameterization $\phi_{\omega(e)} = \onem{t_i}{a_{\omega(e)}t_j}$, then the $\omega(e)^{th}$ edge of $\bar{G}$ corresponds to the parameterization $\bar{\phi}_{\omega(e)} = \onem{t_j}{a_{\omega(e)}t_i}$, which differs from $\phi_{\omega(e)}$ by the change of variables $t_i \rightarrow 1/ t_i$. We show that these two are both parameterizations of the same cycle in Section \ref{graphstocycles}, Corrollary \ref{equivcycles}.

\begin{dfn}
There is a subalgebra \bas \sGoneL{}{} \subset \Q[\sG] / (\simord,\simv,\simori) \eas generated over $\Q$ by admissible graphs. \bas \sGoneL{}{} = \Q[G | G \in \sG, \textrm{ admissible}]/ (\simord,\simv,\simori) \;.\eas
\end{dfn}

\begin{lem}
The differential operator, $\D$, restricts to a differential operator on $\sGoneL{}{}$.
\label{derivsindomain}\end{lem}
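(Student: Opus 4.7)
The plan is to check two things separately: (1) the operator $\D$, known to be well-defined on $\Q[\sG]/(\simord,\simv)$ by Lemma \ref{diffreparam}, descends further to the quotient by $\simori$; and (2) for an admissible $G \in \sG_0(k^\times)$, each non-zero summand of $\D G$ is again admissible in the sense of Definition \ref{admisgraphdfn}. The desired statement follows by combining these: admissibility is preserved on generators and $\simori$ is respected.

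For (1), the orderings and labels of edges of $G$ and $\bar G$ agree, so it suffices to verify $\D_e G \simori \D_e \bar G$ modulo $\simv$ for every edge $e$. The source of $\bar e$ in $\bar G$ is $t_e$ with label still $r_e$, so $\D_e \bar G = (t_e)_{r_e}(\bar G)/\bar e$. Tracing the rescaling rule of Definition \ref{rescaledfn} on the edges incident to $t_e$ and comparing it with the rescaling $(t_e)_{1/r_e}$ of $G$, I would check that the two produce exactly the same edge labels: the asymmetry between incoming and outgoing edges compensates the edge reversal, so $(t_e)_{r_e}(\bar G) = \overline{(t_e)_{1/r_e}(G)}$. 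Contracting $e$ then gives $\D_e \bar G = \overline{(t_e)_{1/r_e}(G)/e} \simv \overline{(s_e)_{r_e}(G)/e} = \overline{\D_e G} \simori \D_e G$, where the $\simv$ step invokes Lemma \ref{contractvertindep}.

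For (2), suppose $G$ is admissible and $\D_e G \neq 0$. Strong connectivity: a directed walk in $G$ from $x$ to $y$ descends to a directed walk in $G/e$ by collapsing any traversal of $e$ to the merged vertex, so $G/e$ is strongly connected. When $G/e$ has an articulation vertex $v$ and is split into biconnected components, any directed walk in $G/e$ between two vertices of a biconnected component $B$ must leave and re-enter $B$ through $v$, and deleting such excursions yields a directed walk inside $B$; hence each biconnected component is strongly connected. Absence of unit loops: every graphical loop $L'$ in $(s_e)_{r_e}(G)/e$ lifts to a loop $L$ in $(s_e)_{r_e}(G)$ of the same coefficient — either $L'$ itself when it avoids the identified vertex, or $L'$ with $e$ reinserted, whose label in the rescaled graph equals $r_e/r_e = 1$ and so contributes trivially. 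By Lemma \ref{rescalinginvariance}, the coefficient of $L$ in $(s_e)_{r_e}(G)$ matches that of the corresponding loop in $G$; thus a unit loop in $\D_e G$ would force one in $G$, contradicting admissibility.

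The main obstacle I anticipate is the orientation-reversal bookkeeping in step (1): because vertex rescaling treats incoming and outgoing edges asymmetrically, reversing all orientations and rescaling do not commute naively, and one must invoke Lemma \ref{contractvertindep} to reconcile the two differentials only after contracting $e$. Part (2) is essentially a lifting argument, with the splitting at articulation vertices being the only minor subtlety.
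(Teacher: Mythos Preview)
Your argument is correct and, for the admissibility part (step 2), follows essentially the paper's line: strong connectivity survives because directed paths are shortened or unaffected under contraction, and no unit loop can appear because loop coefficients are invariant (the paper simply invokes its earlier lemma $\chi_G(L)=\chi_{\D_e G}(L/e)$ rather than your explicit lifting argument, and does not spell out the articulation-vertex splitting as you do).

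The genuine difference is your step (1). You verify that $\D$ is compatible with $\simori$, which the paper's proof omits entirely---Lemma \ref{diffreparam} only establishes well-definedness modulo $\simord$ and $\simv$, yet $\sGoneL{}{}$ is the quotient by all three relations. Your computation $(t_e)_{r_e}(\bar G)=\overline{(t_e)_{1/r_e}(G)}$ followed by Lemma \ref{contractvertindep} is the right way to close this gap, and is a welcome addition.

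One small imprecision in your loop-lifting: a loop $L'$ in $G/e$ through the merged vertex need not require inserting $e$---if both edges of $L'$ at that vertex lie on the same original endpoint ($s_e$ or $t_e$), then $L'$ already closes up in $G$. The correct dichotomy is whether or not the lift of $L'$ needs $e$ as a bridge between $s_e$ and $t_e$; since $e$ has label $1$ after rescaling, the coefficient is preserved either way, so your conclusion is unaffected.
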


\begin{proof}
By the Leibnitz rule, it is sufficient to consider connected graphs. We show that if $G$ is a connected admissible graph, then so is
  $\D_e(G)$ for any $e \in E(G)$.

First, we check that if $G$ is strongly connected, then
  $G/ e$ is as well. If $v$ and $w \in G$, and in the same connected
  component of $G/e$, then the paths between $v$ and $w$ are either
  shortened by the contraction of the edge $e$, or
  unaffected. Therefore, the connected components of $G/e$ are
  strongly connected, as desired.

As taking the derivative along any edge does not affect the loop
  coefficient of any loop in $G$, for $G \in
  \sGoneL{\bullet}{\star}$, $\D_e(G)\in \sGoneL{\bullet}{\star
    +1}$.
\end{proof}

Therefore, $\sGoneL{}{}$ is a sub DGA of $\Gprg/(\simord, \simv, \simori)$. We show that the homomorphism $Z$ defined in Section \ref{homomorphism} is well defined on $\Gprg/(\simord, \simv, \simori)$.

\begin{thm}
Let $\bar{G}$ be as in Definition \ref{changeoridfn}. The graphs $G, \bar{G} \in \Gprg_\bullet^\star$ map to the same algebraic cycle in $\Alt \cZoneL{\bullet}{2\bullet - \star}$ under $Z$.
\label{orihomo}
\end{thm}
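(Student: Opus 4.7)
The plan is to exhibit a birational self-map of the source $\bP^{|V(G)|-1}$ of the parametrization that intertwines the parametrizations associated to $G$ and $\bar G$, and then to observe that precomposing a parametrization with a birational automorphism of the source leaves the underlying algebraic cycle unchanged.

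First I would write the two parametrizations explicitly using Definition \ref{maptocycle}. For $G$, the $\omega(e)^{th}$ coordinate is
\bas
\phi_{\omega(e)} = \left(1 - \frac{x_{s_e}}{a_e\, x_{t_e}}\right)^{\sgn_{\omega(e)}}.
\eas
Reversing every orientation exchanges $s_e \leftrightarrow t_e$ for every $e \in E(G)$, while leaving the labels $a_e$, the ordering $\omega$, and the signs $\sgn_{\omega(e)}$ unchanged (this is the content of Definition \ref{changeoridfn}). Hence the parametrization $\bar\phi$ associated to $\bar G$ has $\omega(e)^{th}$ coordinate
\bas
\bar\phi_{\omega(e)} = \left(1 - \frac{x_{t_e}}{a_e\, x_{s_e}}\right)^{\sgn_{\omega(e)}}.
\eas

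Next I would introduce the coordinatewise inversion
\bas
\iota \colon \bP^{|V(G)|-1} \dashrightarrow \bP^{|V(G)|-1}, \qquad [x_{v_1}:\ldots:x_{v_{|V(G)|}}] \mapsto [x_{v_1}^{-1}:\ldots:x_{v_{|V(G)|}}^{-1}],
\eas
which is a birational automorphism of $\bP^{|V(G)|-1}$. The key calculation is that $\iota$ sends the ratio $x_{s_e}/x_{t_e}$ to $x_{t_e}/x_{s_e}$, so a term-by-term check shows $\phi \circ \iota = \bar\phi$ as maps to $(\bP^1)^{|E(G)|}$. (If $G$ is disconnected, one applies $\iota$ separately on each projective factor parametrizing each connected component, using that $\phi$ factors as a product over the connected components.)

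Finally, by Definition \ref{parammap}, $Z(G) = i^* \phi_*(\bP^{|V(G)|-1})$ and similarly $Z(\bar G) = i^* \bar\phi_*(\bP^{|V(G)|-1})$. Since $\iota$ is a birational automorphism of $\bP^{|V(G)|-1}$ of degree one, $\iota_*(\bP^{|V(G)|-1}) = \bP^{|V(G)|-1}$ in $\sZ^0(\bP^{|V(G)|-1})$, and so by functoriality of pushforward, $\bar\phi_*(\bP^{|V(G)|-1}) = \phi_*(\iota_*(\bP^{|V(G)|-1})) = \phi_*(\bP^{|V(G)|-1})$. Restricting to $\square^{|E(G)|}$ and applying $\Alt$ yields the desired equality in $\Alt\, \cZoneL{\bullet}{2\bullet-\star}$. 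The only point requiring some care is verifying that $\iota$ is defined along the generic point of the image (and so induces an equality of cycles, not merely of function-field images); this is immediate because the parametrization $\phi$ has image in $\square^{|E(G)|} \subset (\bP^1\setminus\{1\})^{|E(G)|}$, away from the indeterminacy locus of $\iota$.
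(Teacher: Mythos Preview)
Your argument is correct and is essentially the reparametrization approach the paper alludes to in the discussion just before Definition \ref{changeoridfn} (the change of variables $t_i\mapsto 1/t_i$) and again in Corollary \ref{equivcycles}. The paper's formal proof, however, proceeds differently: it invokes the implicit description of $Z(G)$ from Theorem \ref{systemeqthm} and equation \eqref{systemeqloopcoef}, observing that a loop basis $\beta$ of $H^1(G)$ is also a loop basis of $H^1(\bar G)$, that $\chi_{\bar G}(L_i)=\chi_G(L_i)^{-1}$, and that the function $\epsilon(e,L_i)$ changes sign, so the defining equations for $Z(\bar G)$ are exactly the inverses of those for $Z(G)$ and hence cut out the same variety. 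Your route via the explicit parametrization and the Cremona inversion $\iota$ is arguably more direct and avoids the loop-basis machinery; the paper's route has the advantage of making the statement an immediate corollary of the intrinsic equation description already established.

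One small remark: your closing sentence about the indeterminacy locus conflates source and target. The indeterminacy locus of $\iota$ lives in the source $\bP^{|V(G)|-1}$ (where some $x_v=0$), not in $(\bP^1)^{|E(G)|}$, so the reference to $\square^{|E(G)|}$ is beside the point. What you actually need is only that $\iota$ is birational of degree one, hence defined at the generic point of $\bP^{|V(G)|-1}$, which is automatic since its indeterminacy locus has positive codimension. This is a cosmetic issue and does not affect the validity of your argument.
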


\begin{proof}
Recall from Theorem \ref{systemeqthm} and equation \eqref{systemeqloopcoef} that, given a basis of $H^1(G)$, $\beta = \{L_1, \ldots, L_\bullet\}$, the cycle $Z(G)$ is defined by the set of equations \bas \{1 =   \chi_G(L_i ) \prod_{e \in E(L_i)}(1-
  \phi_{\omega(e)})^{\epsilon(e, L_i)} \}_{L_i \in \beta} \;.\eas Note that the set $\beta$ also defines a basis of $H^1(\bar G)$, and that  for each $L_i \in \beta$, $\chi_{\bar{G}}(L_i ) = (\chi_G(L_i ))^-1$, as the only difference between $G$ and $\bar G$ is the orientation of the edges. Similarly, the function $\epsilon(e, L_i)$ defined on $G$ is the negative of the same defined on $\bar{G}$. Therefore, the cycle $Z(\bar G)$ is defined by the set of equations \bas \{1 =   (\chi_G(L_i ))^{-1} \prod_{e \in E(L_i)}(1-
  \phi_{\omega(e)})^{-\epsilon(e, L_i)} \}_{L_i \in \beta} \;.\eas

  That is, $Z(G)$ and $Z(\bar{G})$ are defined by the same algebraic cycles.
\end{proof}

Therefore, $Z: \Gprg_\bullet^\star/(\simord, \simv, \simori) \rightarrow \Alt \cZoneL{\bullet}{2\bullet - \star}$ is a well defined algebra homomorphism.
In the following section, we show that this sub DGA is isomorphic to $\Asm$.

\subsection{From graphs to admissible cycles\label{graphstocycles}}

We now return to the homomorphism defined in Section \ref{homomorphism}. In this section, we show that the map $Z$ defined in \eqref{cyclemap}, restricts to an isomorphism of DGAs between $\sGoneL{}{}$ and
$\AoneL{}{}$.

To compare the DGA of admissible cycles to the DGA of admissible
graphs, we show that the homomorphism $Z$, when restricted to $\sGoneL{}{}$ is compatible with both the differential on $\AoneL{}{}$, defined in equation \eqref{cyclediff}, and the differential on $\sGoneL{\bullet}{\star}$, defined in Theorem \ref{diffdef}.

Recall from definition \ref{higherChow} the faces $F_{I,J}$ of $\square^n$.

\begin{lem}
For $G \in \sGoneL{}{}$,
 then the derivative \bas Z(\D_e (G)) = \begin{cases} Z(G) \cap
  F_{\omega(e), \emptyset} & \textrm{ if $\sgn_{\omega(e)} = +$ }\\ Z(G)
  \cap F_{\emptyset, \omega(e)} & \textrm{ if $\sgn_{\omega(e)} = -$
    .} \end{cases} \eas
\label{equivderivs}
\end{lem}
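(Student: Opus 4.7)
The plan is to track, coordinate by coordinate, what happens to the parametrization $\phi$ of $Z(G)$ when one imposes the defining equation of the appropriate face, and to identify the result with the parametrization of $Z(\D_e(G))$ coming from the vertex rescaling plus contraction.

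First I would unpack the face intersection using Definition \ref{maptocycle}. The $\omega(e)$-th coordinate of $Z(G)$ is $\phi_{\omega(e)} = (1 - x_{s_e}/(a_e x_{t_e}))^{\sgn_{\omega(e)}}$. When $\sgn_{\omega(e)} = +$, intersecting with $F_{\omega(e),\emptyset}$ forces $\phi_{\omega(e)} = 0$, i.e.\ $x_{s_e} = a_e x_{t_e}$; when $\sgn_{\omega(e)} = -$, intersecting with $F_{\emptyset,\omega(e)}$ forces $\phi_{\omega(e)} = \infty$, which again yields $x_{s_e} = a_e x_{t_e}$. In both cases the intersection is described by eliminating the $\omega(e)$-th coordinate and substituting $x_{s_e} = a_e x_{t_e}$ into the remaining $|E(G)|-1$ parametrizing functions.

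Next I would compare this substitution with the vertex rescaling $(s_e)_{a_e}$ followed by contraction of $e$. For an edge $e' \ne e$ starting at $s_e$, the coordinate $1 - x_{s_e}/(a_{e'} x_{t_{e'}})$ becomes $1 - x_{t_e}/((a_{e'}/a_e) x_{t_{e'}})$, matching Definition \ref{rescaledfn}'s rule of dividing the label at a starting edge; for $e'$ terminating at $s_e$, the coordinate $1 - x_{s_{e'}}/(a_{e'} x_{s_e})$ becomes $1 - x_{s_{e'}}/((a_{e'} a_e) x_{t_e})$, matching the multiplication rule; edges not incident to $s_e$ are unchanged, as in the rescaling. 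The edge $e$ has its coordinate deleted on the cycle side, while after rescaling its label becomes $a_e/a_e = 1$, so contracting $e$ removes it cleanly, identifying $s_e$ and $t_e$ in perfect agreement with the identification $x_{s_e} = a_e x_{t_e}$. Thus the remaining parametrization is exactly that of $Z((s_e)_{a_e}(G)/e) = Z(\D_e(G))$.

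Finally I would dispose of the degenerate cases. If $e$ is a self-loop ($s_e = t_e$), then $\phi_{\omega(e)} = (1 - 1/a_e)^{\sgn_{\omega(e)}}$ is a constant; admissibility of $G$ (the self-loop $e$ is itself a graphical loop with loop coefficient $a_e$, so $a_e \ne 1$) makes this value neither $0$ nor $\infty$, and the face intersection is empty, matching $\D_e(G) = 0$ by Definition \ref{contractdef}. If contracting $e$ produces an articulation vertex, Definition \ref{contractdef} splits the resulting component at that vertex; on the cycle side, the same substitution decouples the remaining parametrization into a product over the two biconnected pieces, and this product matches the face intersection by the multiplicativity of $Z$ established in Theorem \ref{Zhomo}. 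The main obstacle is precisely this bookkeeping: verifying that the rescaling rule of Definition \ref{rescaledfn} is exactly the coordinate substitution $x_{s_e} \mapsto a_e x_{t_e}$, and that the biconnected splitting at any articulation vertex arising from contraction corresponds to the decomposable structure of the face intersection on the cycle side.
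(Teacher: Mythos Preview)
Your approach is essentially the paper's: unwind the face condition to the substitution $x_{s_e}=a_e x_{t_e}$, observe this is exactly the vertex rescaling $(s_e)_{a_e}$ followed by contraction, and treat the self-loop and articulation-vertex cases separately. Your coordinate-by-coordinate verification of the rescaling rule is in fact more explicit than the paper's.

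There is one point where you are too quick and the paper is more careful: the disconnected case. You write that the substitution ``decouples the remaining parametrization into a product over the two biconnected pieces,'' but this is not literally true. After the substitution the face intersection is still parametrized by a single $\bP^{|V(G)|-2}$, whereas $Z(\D_e(G))$ for a split graph is parametrized by a product $\prod_i \bP^{|V(G_i)|-1}$. These are genuinely different source spaces, and multiplicativity of $Z$ (Theorem \ref{Zhomo}) alone does not show the resulting cycles in $\square^{|E(G)|-1}$ coincide. The paper's argument is that the two parametrizations agree on the affine patch where the articulation-vertex variable is set to $1$, and that on the complementary hyperplanes at infinity some parametrizing variable is $0$; since $G$ is strongly connected (hence so is $\D_e(G)$), this forces some coordinate to equal $1$, which is excluded from $\square^{|E(G)|-1}$. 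So the two parametrizations define the same cycle after pulling back along $i:\square^{|E(G)|-1}\hookrightarrow(\bP^1)^{|E(G)|-1}$. Your final sentence flags this as bookkeeping, but the specific use of strong connectivity here is the substance, not mere bookkeeping.
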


\begin{proof}
Consider $G$ to be a connected graph. We consider two cases, when $\D_e(G)$ is connected, and when it is a disconnected graph.

The cycle $Z(G)$ is equipped with a parametrization \bas \phi : \bP^{|V(G)|-1} \rightarrow
(\bP^1)^{|E(G)|} \;,\eas
where the coordinate of $Z(G)$ corresponding to the $\omega(e)^{th}$ edge is $\phi_{\omega(e)} =(
\onem{x}{a_{\omega(e)}y})^{\sgn_{\omega(e)}}$. Recall from definition \ref{parammap} that $Z(G)$ is the cycle defined by intersecting the image of $\phi$ with $\square^{|E(G)|}$. In otherwords, $Z(G) = i^*\phi_*$, where $i: \square^{|E(G)|} \hookrightarrow (\bP^1)^{|E(G)|}$.

Let $\iota_{I,J}: F_{I,J} \rightarrow \square^n$ be the injection into the appropriate face of codimension $|I\cup J|$.  If $\sgn_{\omega(e)} = +$, (resp. $\sgn_{\omega(e)} = -$), the
intersection $Z(G) \cap F_{{\omega(e)},\emptyset}$ (resp.$Z(G)
\cap F_{\emptyset, {\omega(e)}}$) is the further pullback $\iota_{\omega(e), \emptyset}^*(i^*\phi_*)$ (resp. $\iota^*_{\emptyset, \omega(e)}(i^*\phi_*)$).

For the remainder of this proof, we assume that $\sgn_{\omega(e)} = +$. The calculation for $\sgn_{\omega(e)} = -$ is similar, and left to the reader.

The intersection $Z(G) \cap F_{{\omega(e)},\emptyset}$ imposes the restriction $x = a_e y$. Therefore, it can be parameterized by \ba \phi_{\D_e}: \bP^{|V(G)|-2} \rightarrow (\bP^1)^{|E(G)|-1} \;,\label{diffparam}\ea formed by removing the $\omega(e)^{th}$ coordinate of $\phi$ and replacing each instance of $x$ with $a_e y$. If $\D_e(G)$ is connected, this is exactly the parametrization defined by the contracted graph. Therefore, the Lemma holds when $\D_eG$ is connected.

If $\D_e(G) = \prod_{i=1}^k G_i$ is disconnected, then the parametrization defined by this disconnected graph, \bas \phi': \prod_{i=1}^k \bP^{|V(G_i)|-1} \rightarrow (\bP^1)^{|E(G)|-1} \;,\eas is different from the parametrization $\phi_{\D_e}$ defined by the contraction $\D_e$ in equation \eqref{diffparam}. However, consider the affine space $\mathbb{A}_k^{|V(G)|-2}$ defined by setting $x=a_e y = 1$ in $\bP^{|V(G)|}$. Then there is a product of corresponding affine spaces, $\prod_{i=1}^k\mathbb{A}_k^{|V(G_i)|-1}$ associated to the disconnected parametrization, each formed by setting the variable of the new vertex defined by the contraction to $1$. The two parametrizations $\phi_{\D_e}$ and $\phi'$ agree on these affine spaces. On the hyperplanes at infinity, at least one of the parameterizing variables is $0$. Since $G$ is strongly connected, none of the coordinates correspond to purely sink vertices in either $G$ or $\D_e(G)$. Therefore, setting a parameterization variable to $0$ corresponds to setting a coordinate of the image of $\phi'$ or $\phi_{\D_e}$ to $1$. However, $\square ^{|E(G)|-1}$ omits precisely the points of $\bP^{|E(G)|-1}$ where one of the coordinates is set to $1$. Therefore, the parameterized cycles $Z(\D_eG) = (i^*\phi'_*)$ and $\D_eZ(G) =i^*\phi_{\D_e\;*}$ agree on the pullback to $\bP^{|E(G)|-1}$, as desired.
\end{proof}

This is the key step to understanding the relationship between the differential on graphs and the differential on cycles.

\begin{thm}
If $G \in \sGoneL{}{}$, then \bas \D Z(G) = Z (\D(G)) \;. \eas
\label{graphcyclehomo} \end{thm}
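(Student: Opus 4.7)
The plan is to combine Lemma \ref{equivderivs} (which pinpoints each graph contraction $Z(\D_e G)$ as a single face of $Z(G)$) with the $\fS_n\rtimes(\Z/2\Z)^n$-equivariance of the alternating projection $\Alt$, which swaps the $0$-face and the $\infty$-face of a given coordinate at the cost of a sign.

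First, I would reduce to the case that $G$ is a connected admissible graph. The graph differential satisfies the Leibniz rule \eqref{Leibniz} and the cycle differential $\D$ on $\sA$ satisfies a graded Leibniz rule with respect to the product $\mu$, while $Z$ is an algebra homomorphism (Theorem \ref{Zhomo}). Lemma \ref{skewproduct} shows that $\sGoneL{}{}$ is generated by connected admissible graphs, so it suffices to check $\D Z(G)=Z(\D G)$ on a single such $G$.

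Second, for connected $G \in \sGoneL{\bullet}{\star}$ with $n=|E(G)|=2\bullet-\star$, I would expand the left-hand side using Theorem \ref{diffdef} and the fact that $Z$ is $\Q$-linear:
\[
Z(\D G)\ =\ \sum_{e\in E(G)}(-1)^{\omega(e)-1}\,Z(\D_e G).
\]
Lemma \ref{equivderivs} then rewrites the $e$-th summand as $Z(G)\cap F_{\omega(e),\emptyset}=\D_{\omega(e),0}Z(G)$ when $\sgn_{\omega(e)}=+$, and as $Z(G)\cap F_{\emptyset,\omega(e)}=\D_{\omega(e),\infty}Z(G)$ when $\sgn_{\omega(e)}=-$. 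In parallel, the cycle differential \eqref{cyclediff} gives
\[
\D Z(G)\ =\ \sum_{j=1}^{n}(-1)^{j-1}\bigl(\D_{j,0}-\D_{j,\infty}\bigr)Z(G).
\]

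Third, and this is the main obstacle, I would reconcile the fact that each edge $e$ contributes only a single face to $Z(\D G)$, while the cycle differential picks up both $\D_{j,0}$ and $\D_{j,\infty}$ with opposite signs. The bridge is the $(\Z/2\Z)^n$-action built into $\Alt$: the generator $\tau_j$ that inverts the $j$-th coordinate satisfies $\sgn(\tau_j)=-1$ and sends $F_{j,\emptyset}$ to $F_{\emptyset,j}$. On $\Alt$-cycles this forces $\D_{j,0}\Alt W=-\D_{j,\infty}\Alt W$, so that the edge in $G$ with a given sign and the edge in the sign-flipped representative of the same $\Alt$-class contribute to opposite faces. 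Adding the two representatives in the $\Alt$-sum exactly produces the $(\D_{j,0}-\D_{j,\infty})$ combination with the correct sign. The careful bookkeeping then identifies the sum over $e$ (with its signs from $\omega(e)$ and $\sgn_{\omega(e)}$) with the sum over $j$ (with its signs from $(-1)^{j-1}$ and from the $\D_{j,0}-\D_{j,\infty}$ structure), matching the two expressions term by term.

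The delicate point throughout will be this last step, verifying that the normalization of $\Alt$ absorbs any factor of two coming from the identification of $\D_{j,0}$ and $-\D_{j,\infty}$ on $\Alt$-cycles, and that the permutation sign from reordering edges of $\D_e G$ (as prescribed by $\hat\omega_e$) matches the sign $(-1)^{j-1}$ appearing in \eqref{cyclediff}. Once these signs align on each term of the two sums, Theorem \ref{graphcyclehomo} follows immediately.
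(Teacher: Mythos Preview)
Your approach diverges from the paper at the crucial third step, and the gap you flag as ``delicate'' is in fact fatal as stated. You try to use the $(\Z/2\Z)^n$-symmetry built into $\Alt$ to relate $\D_{j,0}$ and $\D_{j,\infty}$, obtaining $\D_{j,0}\Alt W = -\D_{j,\infty}\Alt W$ and hence $(\D_{j,0}-\D_{j,\infty})\Alt W = 2\,\D_{j,0}\Alt W$. But your right-hand side is $Z(\D G) = \sum_e (-1)^{\omega(e)-1} Z(\D_e G)$, and Lemma~\ref{equivderivs} identifies each $Z(\D_e G)$ with a \emph{single} face. There is no mechanism for the normalization of $\Alt$ to absorb this factor of two: $\Alt$ is an idempotent projector, and once you are working with $\Alt$-ed cycles on both sides there is no further rescaling available. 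The bookkeeping you propose cannot close this gap.

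The paper's proof avoids this entirely by never invoking the $(\Z/2\Z)^n$-symmetry. Instead it shows directly, at the level of the parametrized cycle, that for each edge $e$ the \emph{other} face is the empty cycle: if $\sgn_{\omega(e)}=+$ then $\D_{\emptyset,\omega(e)}Z(G)=\emptyset$, and if $\sgn_{\omega(e)}=-$ then $\D_{\omega(e),\emptyset}Z(G)=\emptyset$. The argument uses the strong connectedness of $G$ (part of admissibility, Definition~\ref{admisgraphdfn}): setting $\phi_{\omega(e)}=1-\tfrac{x}{a_e y}$ to $\infty$ forces $y/x=0$, and since $G$ is strongly connected there is an edge $e'$ with $s_{e'}=t_e$, whose coordinate $\phi_{\omega(e')}=1-\tfrac{y}{a_{e'}z}$ is then forced to equal $1$; this lies outside $\square^{n}$, so the intersection is empty. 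With one of the two faces empty for every $j$, the expression $(\D_{j,0}-\D_{j,\infty})Z(G)$ reduces to a single face with no factor of two, and Lemma~\ref{equivderivs} then matches it term-by-term with $Z(\D_e G)$. This geometric vanishing, driven by strong connectivity, is precisely the idea missing from your proposal.
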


\begin{proof}
Recall from equation \eqref{cyclediff} that \bas \D Z(G) = \sum_{e\in E(G)}(-1)^{\omega(e)-1} (\D_{\omega(e), \emptyset} - \D_{\emptyset, \omega(e)})Z(G) \;.\eas From Lemma \ref{equivderivs}, \bmls \D Z(G) = \sum_{e ;\; \sgn(e) = +} (-1)^{\omega(e)-1}( Z(\D_e G) -  \D_{\emptyset, \omega(e)} Z(G)) \\ + \sum_{e; \; \sgn(e) = -} (-1)^{\omega(e)-1}( Z(\D_e G) -  \D_{\omega(e), \emptyset} Z(G)) \;. \emls The theorem follows from the fact that $\D_{\emptyset, \omega(e)} Z(G)$ is empty if $\sgn(e) = +$ and $\D_{\omega(e), \emptyset} Z(G) = \emptyset$ if $\sgn(e) = -$.

As above, we only do the calculation for $\sgn(e) = +$, as the calculation for $\sgn(e) = -$ is similar. By definition, \bas \D_{\emptyset, \omega(e)} Z(G) = Z(G) \cap F_{\emptyset, \omega(e)} \;.\eas That is, the coordinate $\phi_{\omega(e)} = \onem{x}{a_e y} = \infty$. This implies that $\frac{x}{y} = \infty$. Since $G$ is strongly connected, there is another edge $e'$ such that $t_e = s_{e'}$. Then $\phi_{\omega(e')} = \onem{y}{a_{e'} x} = 1$. Therefore, $\D_{\emptyset, \omega(e')} Z(G) = \emptyset$.
\end{proof}

For any two edges $e, e' \in E(G)$, with $G \in
\sGoneL{}{}$, the derivatives $\D_e$ and $\D_{e'}$ commute, by Lemma \ref{commutingderivatives}.  Therefore, we can talk about contracting a
subgraph of another graph, without noting the order in which the edges
are contracted.

\begin{dfn}
  Let $G' \subset G$, with $E(G') = \{e_1, \ldots, e_n\}$. We write
  \bas \D_{G'} (G) =\D_{e_n} ( \ldots ( \D_{e_1}(G))\ldots )
  \;,\eas where $e_i \in E(G')$. \end{dfn}

Notice that if the contracted graph $G'$ is not a subtree of $G$, then $\D_{G'}(G) = 0$.

We use this shorthand to show that the graphs in $\sGoneL{\bullet}{\star}$ correspond
exactly to admissible cycles in $\AoneL{\bullet}{\star}$.  Recall
that an algebraic cycle in $\cN{\bullet}{\star}$ is admissible if it
intersects all faces of $\square^{2\bullet - \star}$ in codimension $\bullet$ or
not at all.

\begin{thm}
For $G \in \Q[\sG]/(\simord,\simv,\simori)$, the cycle $Z(G)$ is admissible if and only if $G \in \sGoneL{}{}$.
\label{admissible}\end{thm}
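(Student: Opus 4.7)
The plan is to analyze intersections $Z(G) \cap F_{I,J}$ by extending Lemma \ref{equivderivs}, which handles codimension-one faces, to arbitrary faces via the commutativity of contractions (Lemma \ref{commutingderivatives}). The key observation is that, for sign-compatible pairs $(I,J)$, the intersection is exactly $Z(\D_{I\cup J}G)$.

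For the forward direction, suppose $G \in \sGoneL{}{}$. First, if every $e \in I$ satisfies $\sgn_{\omega(e)} = +$ and every $e \in J$ satisfies $\sgn_{\omega(e)} = -$, then iterating Lemma \ref{equivderivs} yields $Z(G) \cap F_{I,J} = Z(\D_{I\cup J}G)$; by Lemma \ref{derivsindomain} the contracted graph is either admissible (so $Z$ of it has codimension $\bullet = h^1(G)$ in $F_{I,J} \cong \square^{|E(G)|-|I\cup J|}$), or it collapses to zero because $I \cup J$ contains a graphical loop, in which case the intersection is empty. Second, if some $e \in I$ has $\sgn_{\omega(e)} = -$ or some $e \in J$ has $\sgn_{\omega(e)} = +$, then setting the corresponding coordinate to $0$ or $\infty$ forces a vertex variable to zero; by strong connectivity that vertex has an outgoing edge $e'$ whose coordinate $\phi_{\omega(e')}$ evaluates to $1 \notin \square$, so the intersection is empty. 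Admissibility of $Z(G)$ follows in either case.

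For the reverse direction (contrapositive), suppose $G \notin \sGoneL{}{}$. If some connected component fails to be strongly connected, there is a proper nonempty $S \subsetneq V(G)$ with every edge between $S$ and its complement oriented into $S$; choosing $I$ and $J$ according to edge signs so as to send all such crossing edges to the ``infinite'' coordinate forces $x_v = 0$ for every $v \in S$ without creating any forbidden $1$-coordinates, so the intersection on $F_{I,J}$ is parametrized by the free variables $\{x_w : w \notin S\}$ and has codimension strictly less than $\bullet$. If every component is strongly connected but some loop $L$ has $\chi_G(L) = 1$, I invoke Lemma \ref{rescaletree} to normalize so that all edges of a spanning tree are labeled $1$ and the unit loop coefficient is carried by a single non-tree edge $e_0 \in E(L)$; setting $\phi_{\omega(e)} = 0$ for every $e \in E(L) \setminus \{e_0\}$ with $\epsilon(e,L) = 1$ (and $= \infty$ for $\epsilon(e,L) = -1$, with matching sign choices) then forces $\phi_{\omega(e_0)} = 0$ via the defining equation $1 = \chi_G(L)\prod_{e \in E(L)}(1-\phi_{\omega(e)})^{\epsilon(e,L)}$, so $Z(G)$ meets the chosen codimension-$(|E(L)|-1)$ face only inside the smaller codimension-$|E(L)|$ face, producing wrong codimension.

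The principal obstacle is the reverse direction: verifying that these constructed faces exhibit genuinely wrong codimension, rather than merely being empty, requires careful bookkeeping of signs $\sgn_{\omega(e)}$, loop orientations $\epsilon(e,L)$, and how the forced relations interact with the remaining loops of $G$ (which are unaffected by the degeneracy on $L$ or by the cut-set $S$).
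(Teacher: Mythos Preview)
Your forward direction is sound and parallels the paper. The reverse direction, however, has a genuine gap in the non-strongly-connected case. Sending the crossing edges to the face that forces their target variables to zero only kills $x_v$ for the \emph{targets} of crossing edges, not for all $v \in S$; and any such target with an outgoing edge $v \to v'$ inside $S$ (the generic situation, since $S$ is closed under following orientations) then produces a coordinate $1 - 0/(a x_{v'}) = 1$, so your intersection is empty rather than of wrong codimension. This is not merely a bookkeeping issue --- the face you construct simply does not witness inadmissibility. The paper's remedy is to first contract a subtree $T \subset G_1$ spanning all targets of the incoming edges, which merges them into a single articulation vertex $v^*$; after the biconnected-component split (Definition~\ref{contractdef}) the component $G'$ containing $G_2$ has $v^*$ as a genuine sink (all its edges in $G'$ are incoming, while its outgoing edges live in the other component with an independent parametrizing variable). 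Only then does the ``one relation $y=0$ forces two coordinates'' count go through without producing forbidden $1$-coordinates.

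The loop-coefficient-one case has a smaller confusion: you select faces according to $\epsilon(e,L)$, but this orientation-relative-to-$L$ datum is irrelevant to which face realizes the contraction $\D_e$ --- that is governed by $\sgn_{\omega(e)}$ (Lemma~\ref{equivderivs}). Setting $\phi_{\omega(e)} = \infty$ for an $\epsilon(e,L)=-1$ edge drives a vertex variable to zero and again creates a $1$-coordinate. With the correct sign bookkeeping your idea of contracting $|E(L)|-1$ edges does work, and is essentially the paper's argument (which contracts $|E(L)|-2$ edges to leave a unit-coefficient $2$-cycle, so that setting one of the two remaining coordinates to $0$ forces the other).
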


\begin{proof}
It is sufficient to look at connected graphs.

Consider a $G \in \Q[\sG]/(\simord,\simv,\simori)$ such that there exists a loop, $L$ with
loop coefficient $1$ in $G$. Specifically, chose a graph $G \not \in
\sGoneL{\bullet}{\star}$. By Lemma \ref{rescaletree}, we can label the
edges of any spanning tree of $L$ by ones. Since rescaling does not
change the loop coefficient, by Lemma \ref{rescalinginvariance}, all
edges of $L$ can be labeled by ones. Let $T \subset L$ be a subgraph
of the loop $L$ consisting of all but two of the edge of $L$. Label $E(L \setminus T) =  \{e_1,e_2\}$.
Let $I = \{e \in E(T) | \sgn_e = +\}$ and $J = \{e \in E(T) |
\sgn_e = -\}$. The graph $\D_T (G)$, formed by taking the
derivative of $G$ along the edges in $T$, corresponds to
intersecting $Z(G)$ with the face $F_{I,J}$. The
$\omega(e_1)^{th}$ and $\omega(e_2)^{th}$ coordinate of
$Z(\D_T(G))$ are of the form
$\sgn(e_i)(\onem{x}{y})^{\sgn(e_i)}$, for $i \in \{1,2\}$. This cycle
is not admissible.

To see this, notice that the intersection of $Z(\D_T(G))$ with the
face $F_{\omega(e_1), \emptyset}$ (if $\sgn(e_1) = +$) or
$F_{\emptyset, \omega(e_1)}$ (if $\sgn(e_1) = -$) also sets the
$\omega(e_2)^{th}$ coordinate to $0$, giving it the wrong codimension.

Conversely, suppose $G \in
\sGoneL{\bullet}{\star}$. Specifically, $G$ is strongly connected. Let
$G'$ be a (not necessarily connected) subgraph of $G$. Then by Lemma
\ref{derivsindomain} $D_{G'}(G)$, is also in
$\sGoneL{\bullet}{\star}$. If $G'$ is not a forest, then $D_{G'}(G) = 0$. Therefore, we only consider the case when $G'$ is a
forest. Let $I = \{e \in E(G') | \sgn_e = +\}$ and $J = \{e \in E(G')
| \sgn_e = -\}$.  By Lemma \ref{equivderivs} $D_{G'}(G)$
amounts to intersecting $Z(G)$ with the face $F_{I,J}$. Since
$G'$ is a forest, $h_1(G') = 0$, and $h_1(G) = h_1(D_{G'}(G))$. Therefore $Z(G) \cap F_{I,J}$ has codimension $\star$ in
$F$, making it admissible.

Finally, if $G$ is not strongly connected, then there exists two
vertices $v_1$ and $v_2$ such that there is not an orientation
respecting path in $G$ from $v_1$ to $v_2$. Let $G_1$ be the largest subgraph
of $G$ defined by the vertices that can be reached by orientation
respecting paths from $v_1$. Let $G_2$ be the largest subgraph of $G$ defined
by the vertices that can reach $v_2$ by orientation preserving paths in
$G$. By construction, $G_1$ and $G_2$ are disjoint subgraphs.

\bas
G = {\xy (0,14)*{}; (0,14)*{} **\crv{(0,20)&(16,30)&(22,15)&(15,10)
      &(22,5)&(15,0)&(10,3)&(5,0)&(0,3)&(0,13)};
(10,10) *{\bullet} +(0,12)*{v_1};
(0,25) *{G_1};
(50,14)*{}; (50,14)*{} **\crv{(50,20)&(66,30)&(72,15)&(65,10)
      &(72,5)&(65,0)&(60,3)&(55,0)&(50,3)&(50,13)};
(60,10) *{\bullet} +(0,12)*{v_2};
(75,22) *{G_2};
(30,-16)*{}; (30,-16)*{} **\crv{(30,-10)&(46,0)&(52,-15)&(45,-20)
      &(52,-25)&(45,-30)&(40,-27)&(35,-30)&(30,-27)&(30,-17)};
(19,20); (53, 20) **{\dir{-}}?/0pt/*{\dir{<}};
(19.5,15); (50, 15) **{\dir{-}}?/0pt/*{\dir{<}};
(19,4); (35, -8) **{\dir{-}}?/0pt/*{\dir{<}};
(49, -9); (62,2) **{\dir{-}}?/0pt/*{\dir{<}};
(50, -14); (67,2) **{\dir{-}}?/0pt/*{\dir{<}};
(47, -7); (57,1) **{\dir{-}}?/0pt/*{\dir{<}};
(19,20) *{\bullet};
(19.5,15) *{\bullet};
(19,4) *{\bullet};
(17.5,10) *{\bullet} +(5,2)*{T};
\endxy}
\eas

In particular, the subgraph $G_1$ has $i$ edges flowing into its
vertices from the rest of the graph, $G \setminus G_1$. Let $T$ be a
subtree of $G_1$ connecting all the sink vertices of these incoming
edges. The derivative $\D_T(G)$ has at least two connected
components.  Write \bas \D_T(G) = \pm G'
\D_T(G_1)\;, \eas with $G'$ the (possibly
disconnected) subgraph of $\D_T(G)$ that contains $G_2$ as a subgraph. The graph $G'$ has a sink vertex in
the connected component containing $(G_2)$. Therefore, the
cycle $Z(G')$ has at least two coordinates of the form
$f_i= (\onem{x}{ay})^{\sgn_i}$ and $f_j =
(\onem{z}{by})^{\sgn_j}$. Setting the coordinate $f_i = 0 (\infty)$
sets the coordinate $f_j = 0 (\infty)$ by the arguments above. Since
the derivative $\D_T(G)$ has the wrong codimension
intersecting the face $F_{i,\emptyset (\emptyset, i)}$, the cycle
$Z(G)$ is not admissible.
\end{proof}

It follows from Theorems \ref{Zhomo}, \ref{vertexscalesamecycle}, \ref{orihomo} and \ref{admissible}, that the homomorphism $Z$ is surjective.
\begin{cor}
The homomorphism \bas Z:\sGoneL{\bullet}{\star} \rightarrow \AoneL{\bullet}{\star}\eas is a surjection of DGAs.
\label{surjectioncor} \end{cor}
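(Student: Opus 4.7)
The statement is a compilation of what has been established, plus the surjectivity claim. The plan is to show that $Z$ factors through the equivalence relations, lands in $\AoneL{}{}$, commutes with the differential, and hits every admissible $\bP^1$-linear cycle.

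First I would observe that the four cited theorems assemble as follows. Theorem \ref{Zhomo} gives that $Z$ is a grading-preserving algebra homomorphism from $\Gprg/\simord$ to $\QZ$; Theorem \ref{vertexscalesamecycle} and Theorem \ref{orihomo} respectively guarantee that the further quotients by $\simv$ and $\simori$ are compatible with $Z$, so $Z$ descends to a well-defined algebra map on $\Gprg/(\simord,\simv,\simori)$. Restricting to the subalgebra $\sGoneL{}{}$ generated by admissible graphs, Theorem \ref{admissible} shows the image lies in $\AoneL{\bullet}{\star} \subset \QZ_\bullet^\star$. Compatibility with the differential is precisely Theorem \ref{graphcyclehomo}. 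Together these establish that $Z \colon \sGoneL{\bullet}{\star} \to \AoneL{\bullet}{\star}$ is a homomorphism of DGAs.

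For surjectivity I would argue as follows. A generator of $\AoneL{\bullet}{\star}$ is of the form $\Alt(Z, \phi)$ for some $\bP^1$-linear parametrized admissible cycle $(Z, \phi)$, where $\phi = (\phi_1, \ldots, \phi_n)$ with each $\phi_i \in \{(\onem{t_a}{c_i t_b})^{\pm 1}\}$ for some $c_i \in k^\times$. From such a parametrization on $\bP^{|V|-1}$, construct a graph $G_\phi$ whose vertex set is indexed by the parameter variables $t_1, \ldots, t_{|V|}$ and whose $i$-th edge (in the ordering $\omega(e_i) = i$) goes from the vertex corresponding to the numerator variable of $\phi_i$ to the vertex of the denominator variable, with label $c_i$ and sign $\sgn_{\omega(e_i)} = \pm$ according to the exponent. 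By Definition \ref{maptocycle} the parametrization attached to $G_\phi$ is precisely $\phi$, so $Z(G_\phi) = \Alt(Z,\phi)$ in $\Alt \cZoneL{\bullet}{2\bullet-\star}$.

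The remaining point is that $G_\phi$ actually lies in $\sGoneL{}{}$, that is, that $G_\phi$ is admissible in the sense of Definition \ref{admisgraphdfn}. But this is immediate from Theorem \ref{admissible}: since $Z(G_\phi) = \Alt(Z,\phi)$ is admissible by assumption, the graph $G_\phi$ must be admissible. I anticipate that the main (small) obstacle is the bookkeeping in this last step, namely verifying that a parametrization of a $\bP^1$-linear cycle genuinely produces a biconnected graph component by component after passing through $\simv$ and $\simori$, so that the construction lands in $\sG_0(k^\times)/\!\simv$ and not merely in a larger set of labeled oriented graphs; this is handled by noting that the loop structure read off from the parametrization depends only on the equivalence class of $\phi$, which is exactly what the relations $\simv$ and $\simori$ encode.
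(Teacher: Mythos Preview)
Your proof is correct and follows essentially the same approach as the paper: assemble the cited theorems (including Theorem~\ref{graphcyclehomo} for compatibility with the differential, which you rightly make explicit) to get the DGA homomorphism, then for surjectivity read a graph off any $\bP^1$-linear parametrization and invoke Theorem~\ref{admissible} to conclude the resulting graph is admissible. The paper's argument is slightly terser and additionally cites Corollary~\ref{reduciblecor} to connect irreducibility of the cycle with connectedness of the graph, but the substance is the same.
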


\begin{proof}
By Theorems \ref{Zhomo}, \ref{vertexscalesamecycle}, \ref{orihomo} and \ref{admissible}, we see that $Z$ is a homomorphism of DGA's with image contained in $\AoneL{\bullet}{\star}$. We check surjection of this map. By definition, if $Z \in  \AoneL{\bullet}{\star}$, there is a parametrization $\phi: \bP^{\bullet - \star} \rightarrow (\bP^1)^{2\bullet - \star}$, with $\phi_i = \onem{x_i}{a_iy_i}$. Assuming that $Z$ is reducible, Corrollary \ref{reduciblecor} states that this defines a connected graph $G$ with $2\bullet - \star$ edges and $\bullet - \star +1$ vertices. Since $Z$ is admissible, by Theorem \ref{admissible}, $G \in \sGoneL{}{}$.
\end{proof}

It remains to show that $Z$ is an isomorphism.

\begin{cor}
Any cycle in $\AoneL{}{}$ remains invariant under inverting all the
parameterizing variables, or scaling some of them by a constant
multiple.
\label{equivcycles}\end{cor}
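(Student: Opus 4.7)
The plan is to reduce the two claimed invariances to the graphical equivalences $\simv$ and $\simori$ already incorporated into the DGA of admissible graphs. By Corollary~\ref{surjectioncor} any admissible cycle $Z \in \AoneL{}{}$ can be written as $Z = Z(G)$ for some admissible graph $G \in \sGoneL{}{}$, so it suffices to verify both claims on parametrizations coming from graphs. The parametrizing variables of $Z(G)$ are then naturally identified with the vertex variables $\{x_v\}_{v \in V(G)}$ appearing in Definition~\ref{maptocycle}.

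For the scaling invariance, I would fix a vertex $v \in V(G)$ and $\alpha \in k^\times$ and observe that the substitution $x_v \mapsto \alpha x_v$ in the parametrization $(\phi_1, \ldots, \phi_{|E(G)|})$ produces precisely the parametrization associated by Definition~\ref{maptocycle} to the vertex-rescaled graph $v_\alpha(G)$ of Definition~\ref{rescaledfn}: on an edge $e$ incident to $v$ the label $a_e$ is accordingly multiplied or divided by $\alpha$ (depending on whether $v$ is the source or target of $e$), and on all other edges nothing changes. Theorem~\ref{vertexscalesamecycle} then gives $Z(v_\alpha(G)) = Z(G)$, and iterating this argument independently over any desired subset $S \subset V(G)$ with independent scalars handles the general ``scale some of them'' case.

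For the inversion invariance, the key computation is that for every $e \in E(G)$,
\bas
\left.\left(\onem{x_{s(e)}}{a_e x_{t(e)}}\right)^{\sgn_{\omega(e)}}\right|_{x_v \mapsto 1/x_v,\; v \in V(G)} = \left(\onem{x_{t(e)}}{a_e x_{s(e)}}\right)^{\sgn_{\omega(e)}},
\eas
which is exactly the $\omega(e)$-th coordinate of the parametrization assigned by Definition~\ref{maptocycle} to the orientation-reversed graph $\bar G$ of Definition~\ref{changeoridfn}, since reversing all orientations exchanges the source and target roles of every edge. Theorem~\ref{orihomo} then yields $Z(G) = Z(\bar G)$, proving that the cycle is invariant under simultaneous inversion of all parametrizing variables.

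There is no real obstacle to this argument; the only thing to be careful about is the bookkeeping of the dictionary between analytic manipulations of the parametrization (scaling a single variable, inverting all variables) and the graphical operations (vertex rescaling $v_\alpha$, global orientation reversal $\bar{G}$). Once that dictionary is made explicit, the corollary is an immediate consequence of Theorems~\ref{vertexscalesamecycle} and~\ref{orihomo}, together with the surjectivity statement of Corollary~\ref{surjectioncor}.
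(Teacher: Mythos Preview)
Your argument is correct, but it takes a different route from the paper's own proof. The paper argues directly from Theorem~\ref{systemeqthm}: it writes the cycle via the system of loop equations \eqref{systemeqeg} and checks by algebra that scaling any parametrizing variable by a constant, or inverting all of them, leaves that system unchanged. You instead factor the argument through the graphical machinery: pull the cycle back to a graph via Corollary~\ref{surjectioncor}, identify the analytic operations with the equivalences $\simv$ and $\simori$, and then invoke Theorems~\ref{vertexscalesamecycle} and~\ref{orihomo}. Since those two theorems are themselves proved from Theorem~\ref{systemeqthm}, the underlying content is the same; your version simply reuses results already on the shelf rather than redoing the algebra. The paper's direct approach is more self-contained and does not require passing through surjectivity, while yours makes explicit that the corollary is nothing more than the cycle-side translation of the graphical equivalences already built into $\sGoneL{}{}$.
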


\begin{proof}
This follows from Theorem \ref{systemeqthm}.

Let $\mathcal{Z} \in \AoneL{}{}$ be the cycle parametrized by the
variables $\{v_1, \ldots, v_n\}$, such that each coordinate is of the
form $\sgn_if_i^{\sgn_i}$ with \bas f_i = \onem{v_{i_s}}{a_i
  v_{i_t}}\;,\eas and $v_{i_s} ,\; v_{i_t} \in \{v_1, \ldots,
v_n\}$. Let $\mathcal{Z}' \in \AoneL{}{}$ be the cycle with
coordinates \bas f_i = \onem{b_{i_s} v_{i_s}}{a_i b_{i_t} v_{i_t}}\; ,
\eas for $b_{i_j} \in k^\times$, and $\hat {\mathcal{Z}} \in
\AoneL{}{}$ be the cycle with coordinates \bas f_i =
\onem{v_{i_t}}{a_i v_{i_s}}\;.\eas The claim of this Corrollary is
that \ba \mathcal{Z}' = \mathcal{Z} = \hat {\mathcal{Z}}\;
.\label{cycleequalities}\ea

Algebra and writing the cycles out in the form of \ref{systemeqeg}
shows that these equalities hold.
\end{proof}

In terms of graphs, the first equality in \eqref{cycleequalities}
corresponds to rescaling at vertices to pass from $G$ to $v_{1\;b_1}(
\ldots v_{n\;b_n}(G) \ldots)$ . The second equality corresponds to
changing the orientations of all the edges in the graph.

We are now ready to show that the two algebras
$\sGoneL{\bullet}{\star}$ and $\AoneL{\bullet}{\star}$ are isomorphic.

\begin{thm}
The map $Z :\sGoneL{\bullet}{\star} \rightarrow
\AoneL{\bullet}{\star}$ defined in\eqref{cyclemap} is an isomorphism
of DGAs. \label{isomorphism}
\end{thm}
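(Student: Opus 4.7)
The plan is to reduce the problem to injectivity, since Corollary \ref{surjectioncor} already establishes that $Z$ is a surjective homomorphism of DGAs. So the task is to show that if $G, G' \in \sGoneL{\bullet}{\star}$ are admissible graphs with $Z(G) = Z(G')$ in $\AoneL{\bullet}{\star}$, then $G$ and $G'$ are equivalent under $(\simord, \simv, \simori)$.

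First I would reduce to the case of a single connected graph by using that both $\sGoneL{}{}$ and $\AoneL{}{}$ are generated as algebras by their indecomposable pieces (connected graphs on the one side, irreducible admissible cycles on the other, by Corollary \ref{reduciblecor} and its converse direction). Then, given a connected admissible $\bP^1$-linear cycle $\mathcal{Z}$, I would reconstruct a graph directly from any of its parametrizations $\phi: \bP^{|V|-1} \rightarrow (\bP^1)^{|E|}$: the parametrizing variables give a vertex set, each coordinate $\phi_i = (1-\frac{x_{s}}{a_i x_{t}})^{\zeta_i}$ gives an oriented edge $s \to t$ labeled $a_i$ with sign $\zeta_i$, and the ordering of coordinates gives the edge ordering. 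This assignment is a one-sided inverse to $Z$ on the level of parametrized cycles.

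The main obstacle is the rigidity step: I must show that any two $\bP^1$-linear parametrizations producing the same cycle $\mathcal{Z} \in \AoneL{}{}$ differ only by the three operations encoded by the equivalences. Since the action of $\Alt$ identifies parametrizations differing by permutation of coordinates and simultaneous sign changes, the equivalence $\simord$ handles this. For the remaining freedom, I would argue as follows: the system of polynomial equations \eqref{systemeqloopcoef} determines the cycle intrinsically via its loop coefficients $\chi(L)$ for $L$ in a loop basis of $H^1(G)$, which by Lemma \ref{rescalinginvariance} are invariants of the equivalence class. Two connected parametrizations of the same irreducible cycle with the same underlying edge/vertex incidence therefore differ by (i) a rescaling of the variables $x_v \mapsto \alpha_v x_v$, which is exactly the composition of vertex rescalings (hence $\simv$), or (ii) the involution $x_v \mapsto 1/x_v$ applied to every variable, which by Corollary \ref{equivcycles} corresponds exactly to reversing all orientations (hence $\simori$).

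To close the argument I would verify that the edge/vertex incidence data itself is recovered uniquely from the cycle: the set of variables appearing in the coordinates is the vertex set, and each coordinate singles out the pair $(s_i, t_i)$ unambiguously up to the swap $s_i \leftrightarrow t_i$ combined with inversion of $x_{s_i}, x_{t_i}$ — but this swap, when applied compatibly across the graph, is precisely $\simori$. Admissibility (strong connectedness plus no loop with trivial loop coefficient, by Definition \ref{admisgraphdfn} and Theorem \ref{admissible}) is what prevents any other reparametrization ambiguities, since a nontrivial automorphism of the cycle that was not of types (i) or (ii) would produce either a new face intersection of wrong codimension or a vanishing loop coefficient. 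Combining this rigidity with the surjectivity and the DGA-compatibility already established yields the isomorphism.
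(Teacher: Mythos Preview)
Your approach is essentially the paper's: invoke Corollary \ref{surjectioncor} for surjectivity and the DGA homomorphism property, then argue injectivity by reconstructing a graph from any $\bP^1$-linear parametrization of the cycle and checking that the ambiguity in doing so is exactly $\simord$, $\simv$, $\simori$. The only difference is packaging: the paper writes down the explicit inverse $G:\AoneL{\bullet}{\star}\to\sGoneL{\bullet}{\star}$ (parametrizing variables become vertices, each coordinate $1-\tfrac{x}{a_iy}$ becomes a labeled oriented edge, constants become self-loops) and cites Corollary \ref{equivcycles}, whereas you frame the same content as a rigidity statement about parametrizations. Your closing sentence---that admissibility would force any further reparametrization ambiguity to produce a wrong-codimension face intersection or a unit loop coefficient---is not in the paper and is not really substantiated; you can drop it, since the explicit left inverse already does the work at the level of detail the paper provides.
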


\begin{proof}
Lemma \ref{graphcyclehomo} shows that $Z$ is a homomorphism of
DGAs. Corrollary \ref{surjectioncor} shows that this map is is surjective.

Rescaling a vertex on a graph $G$, that is passing from $G$ to
$v_\alpha(G)$, corresponds to rescaling the corresponding parameterizing
variable in $Z(G)$. Similarly, inverting the orientations of
all the edges, passing from $G$ to $\bar G$, corresponds to inverting
all the parameterizing variables in $Z(G)$. Since, by
corollary \ref{equivcycles}, neither of these reparameterizations
changes the underlying cycle, the map $Z$ is one to one.

Explicitly, define a map \bas G: \AoneL{\bullet}{\star} \rightarrow \sGoneL{\bullet}{\star} \eas that is a left inverse of $Z$. For any cycle parameterized in $\bP^1$-linear form, $G(\Alt [f_1^{\sgn_1}, \ldots, f_n^{\sgn_n}])$ is a graph constructed as follows. Write each $f_i$ as $1- \frac{x}{a_iy}$. If $f_i$ is a constant, write it $1- \frac{1}{a_i}$. Each independent variable in $\Alt [f_1, \ldots, f_n]$ corresponds to a vertex. For each $f_i$, draw an oriented edge of $G$, oriented from the numerator variable to the denominator variable, labeld by $a_i$. In this scheme, constant coordinates correspond to one edge loops. The term $\omega$ is defined by the ordering and signs of the $f_i$s.

\end{proof}

\section{Elements of $H^0(\BsG)$ \label{H0BG}}

In the previous section, we establish an isomorphism between the DGA
of $\bP^1$-linear cycles, $\AoneL{\bullet}{\star}$ and the DGA of
admissible graphs $\sGoneL{\bullet}{\star}$. We use this to establish
that everything that needs to be done for $\AoneL{\bullet}{\star}$
cycles can be done on the algebra of graphs
$\sGoneL{\bullet}{\star}$. For the rest of this paper, we restrict our
attention to the DGA of graphs.

In particular, to define the category of motives, we are interested in
studying the Hopf algebra, \bas H_0(\BsG) \simeq H_0(B(\Asm))
\;.\eas We maintain the definition of the bar construction $\BsG$ as in definition \ref{Bardfn}, with $A = \sGoneL{}{}$.  Following convention, we
indicate the tensor product in the bar construction by $|$.

As in definition \ref{Bardfn},
write the degree and tensor graded components of $\BsG$ as \ba \BsG^n_m =
\bigoplus_{\sum_1^n (w_i-1) = m } [\sGoneL{\bullet}{w_1} | \ldots | \sGoneL{\bullet}{w_n}]
\;. \label{barcomplexgrading} \ea Note that, as in definition {\ref{Bardfn}}, the degree of a graph in the bar construction is shifted from the degree of a graph in the algebra. I.e. if, $G \in \sGoneL{\bullet}{j}$, the $G \in \BsG_{j-1}^1$.

\begin{dfn}
Due to the multiple degrees assigned to graphs in an algebraic and bar construction context, we $\bdeg$ (as opposed so simply $\deg$) to be the shifted degree of a graph as it contributes to the total degree in the bar construction.
\end{dfn}

Explicitly $G \in \sGoneL{\bullet}{j}$, $\deg (G) = j$ and $\bdeg (G) = j-1$ for the graph above.

To set notation, we define differentials that make the bar complex $(\BsG, \D +\mu)$ as a bi-complex. Write $\D_\sG$ and $\mu_\sG$ for the derivatives and product on the graphs. Then $\D$ and $\mu$ are the degree one operators on $\BsG$ induced by $\D_\sG$ and $\mu_\sG$, calculated by the degree of graphs in the bar construction under the Leibnitz rule. Let $\D_j$ be the differential operator that acts by $(-1)^{\bdeg G_i} \id$ on the first $j-1$ tensor components, by $\D_{\sG}$ on the $j^{th}$ tensor component, and by $\id$ on the remaining tensor components. Then
for $[ G_1 | \ldots |  G_n] \in \BsG_m^n$, write \ba \D [ G_1 | \ldots |  G_n] : = \sum_{j=1} \D_j [ G_1 | \ldots |  G_n] = \sum_{j=1}^n (-1)^{\sum_{k=1}^{j-1} \bdeg(G_k)} [ G_1
  | \ldots | \D_{\sG} ( G_j)| \ldots |  G_n] \;, \label{bardiff}\ea is a degree one differential operator $\D: \BsG_m^n \rightarrow \BsG_{m+1}^n$.
Similarly, let $\mu_j$ to be the differential operator that acts by $(-1)^{\bdeg G_i}\id$ on the first $j-1$ tensor components, by $(-1)^{\bdeg G_j}\mu$ on the $j^{th}$ and $j+1^{th}$ components, and as $\id$ on the remaining components. Then \ba \mu [ G_1 | \ldots |  G_n]  :=  \sum_{j=1} \mu_j & [ G_1 | \ldots |  G_n] = \sum_{j=1}^{n-1}  (-1)^{\sum_{i=1}^j \deg G_i} [ G_1 | \ldots
  | G_j \cdot G_{j+1}| \ldots |
  G_n] \; .\label{barprod} \ea This is a degree $1$ differential operator, as for $G_i \in \sGoneL{r_i}{m_i}$, $[ G_1 | G_2] \in \BsG^2_{m_1+m_2-2}$, while $\mu [ G_1 | G_2] = [G_1G_2]\in \BsG^1_{m_1+m_2-1}$.

In order to study elements of $H^i(\BsG)$, identify elements in the
kernel of \bas D+ \mu: \bigoplus_{n \geq 1} \BsG^i_n \rightarrow
\bigoplus_{n \geq 1} \BsG^{i+1}_{n} \;. \eas

By definition \ref{decompdfn}, we see that elements of this kernel are exactly the elements with completely decomposable boundaries.

\begin{rem}Very few generators of $\sGoneL{\bullet}{\star}$ as an algebra have a
decomposable boundary. The completely decomposable objects in $\BsG$
correspond to linear combinations of tensor products of graphs. \label{decompsums}\end{rem}

In this paper, we wish to study $H^0(\BsG)$. Therefore, we study
completely decomposable elements of $\bigoplus_{i\geq 1} \BsG_i^0$
defined by completely decomposable elements of $\BsG_1^0$. From definition \ref{decompdfn}, a completely decomposable element of $\BsG_0^1$, $\varepsilon$, defines a trivial cycle in $H^0(\BsG)$ if it can be written as the coboundary of another sum of graphs $\sum_i  G_i \in \sG_2^1$, \bas \D
\sum_i G_i = \varepsilon \;,\eas or if it can be written as
the sum of a product of graphs, \bas \mu \sum_i [G_{1, i} | G_{2, i}] =
\varepsilon \;. \eas

In this section, we first give a result that greatly reduces the number of algebraic cycles in ${\Asm}_1^\bullet$ one needs to consider to construct $H^0(\BsG)$.

\begin{thm}
If $\varepsilon \in {\Asm}^\bullet_i$ is a completely decomposable algebraic cycle which can be written as $Z(\sum G_j)$, where each $G_j \in \sGoneL{\bullet}{i}$, and some $G_j$ have two valent vertices, then $\varepsilon$ defines a coboundary element of $B(\Asm)$. \label{twovalentthm}\end{thm}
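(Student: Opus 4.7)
The plan is to exhibit $\varepsilon$ as a coboundary by constructing an explicit cochain $\eta \in B(\Asm)$ with $(\D+\mu)\eta = \varepsilon$. Via Theorem \ref{isomorphism}, we work at the graph level throughout. By Corollary \ref{samehandle}, each summand $G_j$ of $\sum_j G_j$ has at least one two-valent vertex; select one, $v_j$, with incident edges $e_1^j : u_j \to v_j$ (label $a_j$) and $e_2^j : v_j \to w_j$ (label $b_j$), the orientations forced by strong connectivity. Using $\simv$ via Lemma \ref{rescaletree} we normalize one of the labels, say $a_j$, to $1$.

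The idea is that a graph with a two-valent vertex arises naturally as an ``un-contraction'': the smoothing $G_j^{\mathrm{sm}}$, obtained by deleting $v_j$ and replacing $e_1^j,e_2^j$ with a single edge $u_j\to w_j$ of label $a_jb_j$, sits one degree higher and satisfies $\D_{e_1^j}G_j\simv \D_{e_2^j}G_j\simv G_j^{\mathrm{sm}}$ by Theorem \ref{diffdef} and Lemma \ref{rescalinginvariance}. What we want is a lift $\tilde G_j\in\sGoneL{\bullet}{i-1}$ (one extra vertex) with $\D\tilde G_j = G_j + (\text{terms in the image of }\mu)$. The naive candidate---subdividing one of $e_1^j,e_2^j$ with an auxiliary vertex $v_j'$ and labels $\alpha_j,\beta_j$ satisfying $\alpha_j\beta_j=a_j$---fails on its own: a direct computation shows that the two contractions along the newly created edges both recover $G_j$ up to $\simv$ but with opposite signs from the ordering, so they cancel in $\D\tilde G_j$. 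The correct construction requires breaking this symmetry, for instance by adjoining an auxiliary loop or bigon at $v_j'$ rather than a bare subdivision, or by combining multiple asymmetric subdivisions with compensating coefficients so that exactly one copy of $G_j$ survives and the remaining contractions along the original edges of $G_j$ produce disconnected graphs (i.e., reducible cycles under $Z$, hence in the image of the shuffle product $\mu$ at the level of $B(\Asm)$).

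Assembling $\eta := \sum_j \tilde G_j + \eta_2 + \eta_3 + \cdots$, the higher-tensor components $\eta_n\in B(\Asm)^n$ are determined inductively: $\mu\eta_{n+1}$ is chosen to cancel $\D\eta_n$ in the appropriate bigraded piece. The hypothesis that $\varepsilon$ is completely decomposable---that is, $\D\pi_n(\varepsilon) = -\mu\pi_{n+1}(\varepsilon)$ for all $n$---guarantees that the disconnected residual terms produced by $\D\tilde G_j$ in the construction of $\eta_1$, once combined across the summands $j$, are precisely of the shape needed to be cancelled by $\mu$ applied to the next tensor level, so that the inductive procedure is consistent and terminates with $(\D+\mu)\eta = \varepsilon$.

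The main obstacle is the explicit construction of the asymmetric lift $\tilde G_j$: one must arrange that exactly one copy of $G_j$ survives in $\D\tilde G_j$, that all remaining boundary contributions are genuinely decomposable after passing to the equivalence classes under $\simord,\simv,\simori$, and that admissibility (no graphical loop with coefficient $1$) is preserved by the modification for a generic choice of auxiliary labels. The uniform two-valent structure across the summands guaranteed by Corollary \ref{samehandle} is essential here, since it ensures that the residual decomposable terms arising from different $G_j$ share a common local form at their respective $v_j$'s and assemble coherently in the sum rather than producing uncontrolled remainders.
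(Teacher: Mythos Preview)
Your proposal has a genuine gap: the lift $\tilde G_j$ is never actually constructed, and the two escape routes you suggest both fail.

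First, your analysis of why naive subdivision fails is mistaken. If $G_j$ has a handle containing $e_1^j,e_2^j$ and you subdivide $e_1^j$ into $e_1^{j,a},e_1^{j,b}$, the new handle in $\tilde G_j$ has \emph{three} edges, not two. All three contractions $\D_{e_1^{j,a}}\tilde G_j$, $\D_{e_1^{j,b}}\tilde G_j$, $\D_{e_2^j}\tilde G_j$ recover $G_j$ up to $\simv$ (this is exactly Lemma~\ref{evenzero}), and by the alternating signs one copy \emph{survives} because $3$ is odd. You only counted two of the three. So subdivision is in fact the right move---and it is precisely what the paper does---but you discarded it.

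Second, your proposed alternatives are not available. Adjoining a loop or a bigon at $v_j'$ increases $h_1(G)$ by one, hence changes the weight; the differential on $\sGoneL{}{}$ preserves weight, so no such $\tilde G_j$ can have $\D\tilde G_j$ land in the correct graded piece. The only way to pass from degree $i$ to degree $i-1$ at fixed weight is to add one vertex and one edge, i.e.\ to subdivide.

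Third, and most seriously, your assertion that ``the remaining contractions along the original edges of $G_j$ produce disconnected graphs'' is false in general. For an interior edge $e\in E(\mathring G_j)$, the contraction $\D_e\tilde G_j$ is typically a connected, indecomposable graph (still carrying the lengthened handle), and there is no reason for these terms to lie in the image of $\mu$. This is where the complete decomposability hypothesis must do real work, and your inductive construction of $\eta_2,\eta_3,\ldots$ never engages with it.

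The paper's proof (Theorem~\ref{twovalentgraphthm}) handles exactly this obstruction. It does not lift each $G_j$ separately. Instead it stratifies $\varepsilon=\sum_{i=0}^n\varepsilon_i$ by the number $i$ of \emph{odd}-length handles, and decomposes $\D=\D|_H+\D|_{\mathring G}$. Complete decomposability then yields the cascade $\D|_{\mathring G}\varepsilon_n\doteq 0$, $\D|_H\varepsilon_i+\D|_{\mathring G}\varepsilon_{i-1}\doteq 0$, which says precisely that the interior contractions at one level are controlled by the handle contractions at the next. The primitive $\eta$ is then built by extending an \emph{even}-length handle in each summand of $\varepsilon_{i-1}$ by one edge (your subdivision), producing $\eta_i$ with $\D|_{\mathring G}\eta_i$ matching $\varepsilon_i$ (or the previous remainder) and $\D|_H\eta_i$ producing $\varepsilon_{i-1}$ plus a new remainder with one fewer odd handle. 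The cascade terminates, giving $-\D\sum\eta_i\doteq\varepsilon$.
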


In particular, taking $i = 0$, we see that sums of graphs involving two valent vertices have trivial motivic contributions. This is a major calculational aid in that it identifies a large class of cycles that we need not consider for motivic content. The proof of this theorem is the subject of Section \ref{twovalent}. See Theorem \ref{twovalentgraphthm} for the graphical version of this statement. In Section \ref{examples} we give examples of some completely decomposable graphs.

Since we are only interested in the $0^{th}$ cohomology henceforth, for the remainder of this paper, we only consider graphs in $\sGoneL{\bullet}{1}$, that is, cycles in ${\Asm}_1^\bullet$.

\subsection{Valence two vertices\label{twovalent}}

In this section we show that there is a large class of graphs in $\sGoneL{}{}$ that correspond to the trivial cycles in $H^i(\BsG)$. Namely, we show that completely decomposable sums of graphs with two valent vertices can be written as the coboundary of an element of $\sGoneL{\bullet}{i-1}$.
We start by studying the properties of decomposable graphs in $\sGoneL{}{}$
with two valent vertices.

\begin{dfn}
A handle of length $n>1$ is a linear subgraph $h \in G$ defined by $n$
edges and $n+1$ vertices $\{v_0, \ldots , v_n\}$ labelled as follows: the vertex $v_i$ is
two valent if $1\leq i <n$, and $v_0$ and $v_n$ have valence
$1$. Write $E(h) = \{e^1, \ldots e^n\}$, with $e^i$
the edge in the $h$ connecting vertex $v_{i-1}$ and
$v_{i}$. Write $H(G)$ to the be set of handles of a graph $G$.
\end{dfn}

Minimally decomposable sums of graphs can be classified
by the number of handles they have.

\begin{lem}
Consider $G \in \sGoneL{}{}$ a connected graph such that there is a handle, $h \in H(G)$, of length $n$. Then \bas \sum_{e\in E(h)} (-1)^{\omega(e)} \D_{e} G
= \begin{cases}0 & \textrm{if $n$ even} \\ (-1)^{\omega(e^1)} \D_{e^1}
  G & \textrm{if $n$ odd} \;.\end{cases} \eas \label{evenzero}
\end{lem}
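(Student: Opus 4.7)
The plan is to normalize $G$ so that all handle edges carry label $1$ and occupy the positions $1,2,\ldots,n$ in $\omega$, and then to verify that the contractions $\D_{e^1}G,\ldots,\D_{e^n}G$ coincide as elements of $\sGoneL{}{}$; the alternating sum then collapses by an elementary calculation.

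First I would apply Lemma~\ref{rescaletree} to rescale at vertices so that every edge in the handle $h$ is labeled by $1$; this is possible because $h$ is a subtree of $G$, and the equivalence $\simv$ changes neither side of the identity. Next I would use the $\simord$-equivalence of Lemma~\ref{orderequiv} to apply a permutation that places the handle edges into positions $1,\ldots,n$ with $e^i$ at position $i$, at the cost of an overall sign that multiplies the claimed identity on both sides equally. Along the way I would record that the admissibility of $G$ forces the handle to be a coherently oriented directed path from $v_0$ to $v_n$: the internal vertices $v_1,\ldots,v_{n-1}$ have valence two and $G$ is strongly connected, so each is met by exactly one incoming and one outgoing handle edge.

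In this normalized setup $r_{e^i}=1$, so $\D_{e^i}G = G/e^i$ without any further vertex rescaling. I would then check directly that the graphs $G/e^i$ for $i=1,\ldots,n$ all represent the same element $\tilde G \in \sGoneL{}{}$: each is obtained by collapsing one edge of the coherently oriented, unit-labeled handle, which produces a handle of length $n-1$ with the same labels and orientations on the surviving edges, and the induced ordering $\hat\omega_{e^i}$ places the surviving handle edges at positions $1,\ldots,n-1$ and the edges outside $h$ at positions $n,n+1,\ldots$, independently of $i$. Granted this equality, the sum becomes $\tilde G\cdot\sum_{i=1}^n(-1)^i$, which vanishes when $n$ is even and equals $-\tilde G = (-1)^{\omega(e^1)}\D_{e^1}G$ when $n$ is odd, since $\omega(e^1)=1$ in the normalized form; undoing the $\simord$-sign recovers the identity for the original $G$.

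The main obstacle I foresee is the verification in the preceding step that $G/e^i$ and $G/e^{i+1}$ genuinely coincide in $\sGoneL{}{}$, which requires careful bookkeeping of the induced ordering $\hat\omega_{e^i}$ on the surviving edges and of the orientation and label data around the merged vertex. Both points rely crucially on the two-valence of the internal handle vertices, which ensures that contracting a handle edge cannot interact with any edge of $G$ outside $h$, and on the preliminary normalization of labels to $1$, which makes the vertex-rescaling factor $(s_{e^i})_{r_{e^i}}$ in the definition of $\D_{e^i}$ act trivially.
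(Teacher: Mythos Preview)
Your proposal is correct and follows essentially the same strategy as the paper: normalize the handle labels to $1$ via Lemma~\ref{rescaletree}, observe that the contracted graphs $G/e^i$ and $G/e^{i+1}$ coincide, and then let the alternating signs do the work. The paper proves the pairwise relation $(-1)^{\omega(e^i)}\D_{e^i}G = -(-1)^{\omega(e^{i+1})}\D_{e^{i+1}}G$ directly for arbitrary positions $\omega(e^i),\omega(e^{i+1})$ by exhibiting the cyclic permutation $c(e^i,e^{i+1})$ of sign $(-1)^{\omega(e^{i+1})-\omega(e^i)+1}$ that carries $\hat\omega_{e^i}$ to $\hat\omega_{e^{i+1}}$; you instead pre-normalize via $\simord$ so the handle edges sit at positions $1,\ldots,n$, which trivializes the sign computation. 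This is a harmless simplification. Your additional remark that strong connectivity forces the handle to be coherently oriented is correct and makes the identification $G/e^i = G/e^{i+1}$ more transparent, though the paper leaves this point implicit.
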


\begin{proof}
The essence of this proof comes from showing the following relation:  \ba
(-1)^{\omega(e^i)} \D_{e^i} G = -(-1)^{\omega(e^{i+1})}
\D_{e^{i+1}}  G\;. \label{keyrel}\ea

To see this, choose a representation of $G$ such that the edges of
$h$ are labeled by ones.

Write $c(e^i, e^{i+1}) \in \fS_{|E(G)|}$ as the cyclic element of order
$|\omega(e^{i+1})-\omega(e^{i})|$.  Write this \bas c(e^i, e^{i+1}) :=
(\omega(e^{i}))(\omega(e^{i})+1) \ldots (\omega(e^{i+1})-1)
(\omega(e^{i+1})) \;. \eas The sign of this permutation is given by $\sgn(c(e^i, e^{i+1})) =
(-1)^{\omega(e^{i+1})-\omega(e^{i})  + 1}$.  In this notation,
the orderings of the contracted graphs can be related by \bas \hat \omega_{e^{i+1}} = \begin{cases} c(e^i, e^{i+1})
  \hat \omega_{e^{i}} & \textrm{ if }\omega(e^{i})<
  \omega(e^{i+1}) \\ c(e^i, e^{i+1})^{-1} \hat \omega_{e^{i}} &
  \textrm{ if } \omega(e^{i})> \omega(e^{i+1})\;.\end{cases}\eas Since
 the underlying contracted graphs, $G/ e^{i} = G/ e^{i+1}$, are the same, we have, by Lemma \ref{orderequiv} \bas
 (-1)^{\omega(e^{i+1})-\omega(e^{i}) + 1} \D_{e^{i+1}}( G) =   \D_{e^{i}}G \;, \eas which is equivalent to \eqref{keyrel}.

Summing over all edges in a fixed handle $h$ gives \bas \sum_{e
  \in E(h)} (-1)^{\omega(e)} \D_{e} ( G)
= \begin{cases}0 & \textrm{if $n$ even} \\ (-1)^{\omega(e^{1})} \D_{e^{1}}
  (G) & \textrm{if $n$ odd} \;. \end{cases} \eas
\end{proof}

Call edges of $G$ that are not handles, interior edges of $G$.

\begin{dfn}
By abuse of notation, write $\mathring{G}$ to indicate the interior
graph of $G$. This is the $G$ with all its handles removed (not
contracted). More precisely, \bas \mathring{G} = G \setminus \{e | e
\in E(h); h \in H(G)\}\;. \eas \end{dfn}

In this section, we write \ba \D|_H (\omega, G) = \sum_{e \in H(G)}
(-1)^{\omega(e)}\D_e(G) \label{Dhandle}\ea and \ba \D|_{\mathring{G}} ( G) =
\sum_{e \in \mathring{G}} (-1)^{\omega(e)}\D_e(G)\;, \label{Dint}\ea such
that $\D = \D|_H + \D|_{\mathring{G}}$. This allows for a neat reorganizing of the terms in the derivative $\D G$ by interior edges and edges with two valent endpoints.

\begin{cor}
The derivative \bas \D ( G) = \sum_{e \in E(\mathring{G})} (-1)^{\omega(e)} \D_e
( G) + \sum_{\begin{subarray}{c}e \in h \; ; h \in H(G) \\ h \textrm{ odd
      length}\end{subarray}} (-1)^{\omega(e_1(h))} \D_{e_1(h)}
( G) \;.\eas
\label{handlederivs}\end{cor}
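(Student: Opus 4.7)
The plan is to combine the decomposition $\D = \D|_H + \D|_{\mathring{G}}$ introduced just before the corollary with Lemma \ref{evenzero} applied handle by handle. Since every edge of $G$ is either an interior edge (contributing to $\D|_{\mathring{G}}$) or lies in exactly one handle $h \in H(G)$ (contributing to $\D|_H$), the sum $\D G = \sum_{e \in E(G)} (-1)^{\omega(e)} \D_e G$ splits cleanly as
\bas
\D G \;=\; \sum_{e \in E(\mathring{G})} (-1)^{\omega(e)} \D_e G \;+\; \sum_{h \in H(G)} \Bigl( \sum_{e \in E(h)} (-1)^{\omega(e)} \D_e G \Bigr).
\eas

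Next, I would apply Lemma \ref{evenzero} to each inner parenthesized sum. That lemma says precisely that the contribution of a single handle of even length vanishes, while a handle of odd length contributes the single term $(-1)^{\omega(e^1(h))} \D_{e^1(h)} G$, where $e^1(h)$ is the first edge of $h$ in the chosen linear ordering of its edges. Thus the second sum collapses to a sum taken only over odd-length handles, indexed by their first edge, which is exactly the second term in the statement of the corollary.

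Putting these two substitutions together yields the stated identity. The only thing that could conceivably require extra justification is that the choice of $e^1(h)$ is well-defined — but this is just the ordering of edges inside a handle as fixed in the definition of a handle ($e^i$ connects $v_{i-1}$ to $v_i$), so no further work is needed. The main obstacle, such as it is, is purely bookkeeping: ensuring that the partition of $E(G)$ into $E(\mathring{G}) \sqcup \bigsqcup_{h \in H(G)} E(h)$ is indeed a disjoint decomposition, which is immediate from the definition of $\mathring{G}$. Hence this is really a one-line consequence of Lemma \ref{evenzero} and the definitions \eqref{Dhandle}, \eqref{Dint}.
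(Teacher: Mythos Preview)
Your proposal is correct and matches the paper's approach exactly: the paper states this corollary without proof, treating it as an immediate consequence of Lemma \ref{evenzero} together with the decomposition $\D = \D|_H + \D|_{\mathring{G}}$ from \eqref{Dhandle} and \eqref{Dint}. Your write-up simply makes explicit the one-line argument the paper leaves implicit.
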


As a direct corollary, we see that graphs with two valent vertices form a separate class of graphs in themselves. If $\varepsilon \in \BsG$ is a minimally decomposable sum of graphs, then either all the summands involve a two valent vertex, or none of them do. In fact, one can be more specific than this.

\begin{cor}
Consider a minimally decomposable sum of graphs $\varepsilon = \sum_j G_j \in
\sGoneL{\bullet}{i}$ of fixed degree. The summand graph
$G_j$ has a valence two vertex if and only if the graphs
in each of the summand have the same number of handles: \bas |H(G_j)|
= |H(G_{j'})| \; \forall j \neq j' \;.\eas \label{samehandle}
\end{cor}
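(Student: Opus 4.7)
The approach is to introduce a grading on admissible graphs by the number of handles $|H(G)|$, show that both $\D$ and $\mu$ respect this grading, and then invoke minimal decomposability. Writing $\varepsilon = \sum_k \varepsilon_k$, where $\varepsilon_k$ collects those $G_j$ with exactly $k$ handles, the target is to prove that each $\varepsilon_k$ is itself completely decomposable; minimal decomposability then forces only one $\varepsilon_k$ to be non-zero, which, combined with the fact that a graph in $\sGoneL{}{}$ contains a two-valent vertex precisely when $|H(G)| \geq 1$, yields both claims of the corollary.

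The first step is to verify that every summand appearing in $\D G$ has total handle count equal to $|H(G)|$. For an interior edge $e \in E(\mathring{G})$, both endpoints have valence $\geq 3$, so the merged vertex in $\D_e G$ has valence $\geq 4$ and no handle is created or destroyed; even if $\D_e G$ must be split into biconnected components at a new articulation vertex, the handles of $G$ distribute across the factors without being broken up, because the 2-valent interior vertex of a handle can never be an articulation vertex of $G/e$. For handle edges, Corollary \ref{handlederivs} already eliminates all even-length handles and leaves exactly one contraction per odd-length handle of length $n \geq 3$, which replaces it by a handle of length $n-1 \geq 2$ and preserves the count.

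The second step is the additivity $|H(G_1 \cdot G_2)| = |H(G_1)| + |H(G_2)|$, which is immediate from the disjoint-union nature of the product. Grading $\varepsilon' \in \BsG^2$ by the pair $(|H(G_1)|, |H(G_2)|)$ and matching total handle counts on either side of $\D \varepsilon = -\mu \varepsilon'$ gives $\D \varepsilon_k = -\mu \bigl( \sum_{k_1+k_2=k} \varepsilon'_{k_1,k_2} \bigr)$ for each $k$, so each $\varepsilon_k$ is decomposable. Iterating the same handle-count grading at each higher tensor level shows that complete decomposability of $\varepsilon$ descends to each $\varepsilon_k$.

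The main obstacle will be the biconnected-splitting step in the contraction operator: one must verify carefully that contracting an interior edge never splits a handle of $G$ between two biconnected pieces of $G/e$, so that handle count is genuinely additive under the resulting product decomposition. This rests on the observation that interior vertices of handles are 2-valent in $G$ and so cannot be articulation vertices in $G/e$. Once that is established, minimal decomposability of $\varepsilon$ forces $\varepsilon = \varepsilon_k$ for a single $k$, and the equivalence between having a 2-valent vertex and having a handle finishes the proof.
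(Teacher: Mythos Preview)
Your approach is essentially the same as the paper's: both rely on the observation that the indecomposable terms of $\D G$ have the same number of handles as $G$ (via Lemma~\ref{evenzero} and Corollary~\ref{handlederivs}), and then use minimal decomposability to conclude all summands share this invariant. Your version is more carefully structured---packaging the argument as a handle-count grading on the bar complex and explicitly treating the biconnected-splitting step---while the paper's proof simply asserts that cancelling indecomposable terms in $\D\varepsilon$ link all the $G_j$ together.
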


\begin{proof}
If $\D_e( G_j)$ is not decomposable, then it must cancel with
a sum of another derivative $\D_{e'}( G_{j'})$. By Lemma \ref{evenzero} and Corollary \ref{handlederivs}, applying $\D$ does not change the number of
handles on a graph. Since $\varepsilon$ has a minimally completely decomposable boundary, there are no summands that do not contribute to the cancellation of the terms in $\D_e$.  Therefore, $G_j$ and $G_j'$ must have the same
number of handles.
\end{proof}

Finally, we show that sums of graphs with decomposable boundaries and
two valent vertices characterize trivial classes in $H^0(\BsG)$. In the proof of this theorem, we work up to products of graphs. For this, we establish some notation. For $G$ a connected graph in $\sGoneL{}{}$, if $\D_e G$ is decomposable, we write \bas \D_e G \doteq 0 \;. \eas In general, we write $\D G \doteq G'$, where $G'$ is a linear sum of connected graphs, that is, $G'$ is the sum of graphs corresponding to edge differentials that do not split the graph into two connected components.

\begin{thm}
If $\varepsilon = \sum_j (G_j) \in \sGoneL{\bullet}{i}$ has a
minimally completely decomposable boundary, there exists a sum of
graphs $\eta \in \sGoneL{\bullet}{i-1}$ such that $[\D \eta] =
[\varepsilon]$. In otherwords $[\varepsilon]$ is exact. \label{twovalentgraphthm}
\end{thm}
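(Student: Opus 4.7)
The plan is to construct an explicit primitive $\eta \in \sGoneL{\bullet}{i-1}$ by extending a canonical handle of each summand $G_j$ of $\varepsilon$ by a single subdividing edge, and then to verify $\D\eta = \varepsilon$ modulo the image of $\mu$.

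First I would apply Corollary \ref{samehandle}: since $\varepsilon$ is minimally decomposable and some $G_j$ has a two-valent vertex, every summand $G_j$ has the same positive number $k$ of handles. For each $G_j$, canonically fix one handle $h_j$ (say the one containing the lowest-ordered edge) and a canonical edge $e^\star_j$ within $h_j$. Let $\hat G_j$ be the graph obtained from $G_j$ by subdividing $e^\star_j$ with a new valence-two vertex, so $\hat h_j$ has length $|h_j|+1$. Subdivision adds one edge and one vertex while preserving the loop number, so $\hat G_j \in \sGoneL{\bullet}{i-1}$.

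Next I would set $\eta = \sum_j c_j \hat G_j$ with signs $c_j\in\{\pm 1\}$ to be chosen, and compute $\D\eta$ using the splitting of Corollary \ref{handlederivs} into an interior-edge part and a handle-edge part. On the handle $\hat h_j$, Lemma \ref{evenzero} collapses the alternating sum of contractions to a single surviving term $\pm \D_{e^1(\hat h_j)}\hat G_j$ when $|\hat h_j|$ is odd, and contracting the subdividing edge restores $G_j$ itself (up to $\simord$). On the interior, contraction commutes with the subdivision: for $e \in \mathring{G_j} = \mathring{\hat G_j}$, one has $\D_e \hat G_j = \widehat{\D_e G_j}$ up to a sign coming from $\omega$, since subdividing an edge of a handle commutes with contracting a disjoint interior edge.

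Summing over $j$, after choosing the $c_j$ to absorb the signs, the handle contributions of $\D\eta$ total $\varepsilon$, while the interior contributions assemble into the hat of $\sum_j c_j \D|_{\mathring{G_j}} G_j$. Because $\sum_j \D G_j$ is decomposable by hypothesis, and the handle portion $\sum_j \D|_{H(G_j)} G_j$ is controlled by Lemma \ref{evenzero}, the interior portion is itself decomposable modulo handle terms. The hat construction sends products in $B(\sGoneL{}{})$ to products (by extending the chosen handle in whichever tensor factor contains it), so the hat of this decomposable expression lies in the image of $\mu$. This yields $\D\eta - \varepsilon \in \operatorname{im}(\mu)$, i.e., $[\D\eta] = [\varepsilon]$ in $H^i(B(\sGoneL{}{}))$.

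The main obstacle is the parity sensitivity of Lemma \ref{evenzero}: the single-subdivision construction only recovers $G_j$ as a handle contribution when the extended handle has odd length, and vanishes in the even case. Handling this uniformly requires either a case split by parity (subdividing twice when the extended handle would otherwise have even length, adjusting degrees by a compensating move on another handle) or a more refined construction in which $\hat G_j$ is a signed combination of two subdivisions at different positions, arranged so that the handle contribution is always $\pm G_j$. A secondary technical point is verifying that the identity $\D_e \hat G_j = \widehat{\D_e G_j}$ respects the ordering-and-vertex-rescaling equivalences $\simord$ and $\simv$, and that the product structure in $B(\sGoneL{}{})$ interacts cleanly with the hat extension; both are bookkeeping on $\omega$ and on the labels along the chosen handle.
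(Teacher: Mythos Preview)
Your basic move---subdivide a handle of each $G_j$ and hope that contracting the new edge returns $G_j$---is the same geometric operation the paper uses, but as you yourself flag, it only recovers $G_j$ when the \emph{extended} handle $\hat h_j$ has odd length, i.e.\ when the chosen $h_j$ was even. Your proposed fixes do not close this gap: subdividing twice lands you in $\sGoneL{\bullet}{i-2}$, not $\sGoneL{\bullet}{i-1}$; and a signed combination of two single subdivisions at different positions on the same handle still has its handle contribution governed by Lemma~\ref{evenzero}, so you cannot force a nonzero output when the extended handle is even. There is a second, independent gap: for your $\widehat{\D\varepsilon}$ term to lie in the image of $\mu$ you need the hats of cancelling pairs $\D_e G_j$ and $\D_{e'} G_{j'}$ to be taken with respect to the \emph{same} handle in the common quotient graph, and your ``canonical'' choice (lowest-ordered edge) gives no such compatibility.

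The paper sidesteps both problems by stratifying $\varepsilon = \sum_{i=0}^n \varepsilon_i$ according to the number $i$ of \emph{odd} handles in each summand. The key structural observation is that $\D|_{\mathring G}$ preserves the odd-handle count while $\D|_H$ lowers it by one, so the decomposability condition becomes a triangular system $\D|_H\varepsilon_{i+1} + \D|_{\mathring G}\varepsilon_i \doteq 0$. Crucially, the primitive $\eta_{i+1}$ is built not from $\varepsilon_{i+1}$ but from $\varepsilon_i$: one takes $\mathring\eta_{i+1} = \mathring\varepsilon_i$ and extends one of its \emph{even} handles by an edge (so every summand of $\eta_{i+1}$ automatically has an even handle available to extend). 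Then $\varepsilon_{i+1}$ is recovered not from the handle part of $\D\eta_{i+1}$ but from the \emph{interior} part, via the relation $\D\mathring\varepsilon_i \doteq -\mathring\varepsilon_{i+1}$; the handle parts produce remainder terms $R_i$ that telescope across $i$. This iterative, cross-stratum construction is what your single-step approach is missing: you are trying to recover each $G_j$ from its own primitive, whereas the paper recovers the top stratum $\varepsilon_n$ from the interior derivative of a primitive built out of $\varepsilon_{n-1}$, and so on down.
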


Translated into the language of algebraic cycles, instead of graphs, this theorem gives Theorem \ref{twovalentthm}.

\begin{proof}
Write $\varepsilon$ as a sum of terms defined by the number of odd
handles the summands have: \bas \varepsilon = \sum_{i=0}^n
\varepsilon_i \eas where $\varepsilon_i$ is a sum of ordered signed
graphs with $i$ handles of odd length. (By Corollary \ref{samehandle}, each summand has the same number of total handles. Call this number $m \geq n$.) It suffices to work with sums of connected graphs.

Recalling the decomposition of the differential operator $\D = \D|_H +
\D|_{\mathring{G}}$ from equations \eqref{Dhandle} and \eqref{Dint}, the sum $\D(\varepsilon)$ decomposes into $n+1$ sums that
evaluate to $0$, up to decomposable elements. By collecting terms
according to the number of odd handles are present in the graph: \ba \D|_{\mathring{G}} \varepsilon_n \doteq 0 \nonumber \\ \D|_H
  \varepsilon_n + \D|_{\mathring{G}} \varepsilon_{n-1} \doteq 0 \nonumber \\ \vdots
\nonumber\\ \D|_H \varepsilon_1 + \D|_{\mathring{G}} \varepsilon_0 \doteq
0 \label{lineqstwovalence} \ea

First we show that for all $0 \leq i \leq n$,  $\varepsilon_i \neq 0$. Otherwise, $\varepsilon$ would not be a minimally decomposable sum of graphs. Namely, if there existed some $i$ such that $\varepsilon_i = 0$, i.e. that no summand of $\varepsilon$ had $i$ odd handles, then $\D|_{\mathring{G}} \varepsilon_i \doteq 0$ implying
that $\D|_H \varepsilon_{i+1} \doteq  0$. However, $\D|_H
\varepsilon_{i+1}$ is completely decomposable if and only if
$\varepsilon_{i+1}\doteq 0$. But this is not possible since $\varepsilon$ is a minimally decomposable sum.

By abuse of notation, for $\varepsilon_n = \sum_i G_i$, write $\mathring{\varepsilon}_n = \sum_i \mathring{G}_i$. The set of equations \eqref{lineqstwovalence} imply that $\D \mathring{\varepsilon}_n \doteq 0$, and $\D \mathring{\varepsilon}_i \doteq - \mathring{\varepsilon}_{i+1}$. This gives rise to a statement about coboundary relations between the interiors of the graphs involved: \bas \sum_{j=0}^n \D \mathring{\varepsilon}_j \doteq - \sum_{i=1}^n \mathring{\varepsilon}_i \;.\eas

To give a full statement about the graphs, we first construct an element $\eta_n$ such that $\mathring{\eta}_n = \mathring{\varepsilon}_{n-1}$ with $n$ handles of odd length such that $-\D|_H \eta_n = \varepsilon_{n-1} + R_{n-1}$, where $R_{n-1}$ is a sum of graphs with $n-1$ handles of odd length that we refer to as the remainder for now. As with the summands of $\varepsilon_n$, we construct $\eta_n$ such that each summand has $m \geq n$ handles. We impose a handle structure on the summands of $\eta_n$ to satisfy \bas - \D|_{\mathring{G}} \eta_n =  \varepsilon_n \;.\eas

Since $\mathring{\varepsilon}_{n-1} = \mathring{R}_{n-1}$, and $\D \mathring{\varepsilon}_{n-2} = - \mathring{\varepsilon}_{n-1}$, we can build an $\eta_{n-1}$ such that $\mathring{\eta}_{n-1} = \mathring{\varepsilon}_{n-2}$, with handle structure such that $\D|_{\mathring{G}} \eta_{n-1} = R_{n-1}$ and $-\D|_H \eta_{n-1} \doteq \varepsilon_{n-2} + R_{n-2}$. We can construct such $\eta_i$ for all $0\leq i \leq n$. As with the summands of $\varepsilon_i$, each summand of $\eta_i$ has $m \geq n$ handles.

These $\eta_i$ are constructed using the fact that since $\D \mathring{\varepsilon}_i \doteq - \mathring{\varepsilon}_{i+1}$, graphs that cancel in the equation $\D|_{\mathring{G}}\varepsilon_i +\D|_H \varepsilon_{i+1} = 0$ have the same handle structure. Build $\eta_{i+1}$ out of $\varepsilon_i$ by extending a handle of even length by one edge, making it a handle of odd length. Which handle of even length is extended is chosen according to the external handle structure of $R_{n-1}$, and $\varepsilon_{i+1}$. If this new edge is the first edge of $\eta_{i+1}$, while the relative ordering of the remaining edges of $\eta_{i+1}$ and $\varepsilon{i}$ are the same, this gives the required alternating signs on the remainder terms $R_i$.

Under this construction, \bas - \D \sum_{i=1}^n \eta_i \doteq \sum_{j=0}^n \varepsilon_j \;.\eas Since we work up to decomposable terms, we may assume that all graphs in the sum $\sum_{i=0}^{n-1} \eta_i$ are connected. Therefore, we may write \bas -(\D+\mu) \sum_{i=0}^{n-1} \D \eta_i = \varepsilon\;,\eas i.e. $\varepsilon$ is exact, as desired.
\end{proof}

So far we have shown a class of minimally decomposable sums of graphs (algebraic cycles) that give rise to trivial motives. We have said nothing about how to find such minimally decomposable sums. In the next section we give some examples of minimally decomposable sums in degree $4$, only one of which has been previously studied \cite{GGL05}. There is little technology developed to identify minimally decomposable cycles, which define classes in $H^0(\BsG)$, let alone in understanding relations between such. What little progress there has been \cite{GGL05, GGL07, Souderes12} has been on a case by case basis. We hope to revisit this question in the future in a more systematic manner.
%We hope that in future work, and a reliance on new graph theoretic tools will develop a more systematic approach to this problem.

\subsection{Examples of elements in $H^0(\BsG)$}

In this section, we give several examples of classes of $H^0(\BsG)$. Generally speaking, it is nontrivial to find linear combinations of graphs which define classes in $H^0(\BsG)$.
%there are  distinct algebraic cycles (graphs), but only a small number of cohomological generators (motives).
Individual graphs do not have decomposable boundaries. It is only when summed with appropriate graphs with whom the boundaries cancel does one find classes in $H^0(\BsG)$.

In the following subsection, we give examples of several sums in weight four.

\begin{rem} In all of these examples in this section, we write only a sum of graphs in $\sGoneL{4}{1}$, and not the full representative in $\BsG$. We can do this since the indecomposable graphs in a completely decomposable sum of graphs determines its class in $\BsG$ (see remark \ref{mindecomprmk}). \end{rem}
%, up to products of graphs.
%Disjoint graphs, or non-primitive elements of
%$\sGoneL{\bullet}{\star}$, are in the image of product map $\mu$ and
%therefore define a trivial class in $H^0(\BsG)$. Therefore, in the
%following, we work up to products of graphs.

After giving examples in weight 4, we turn our attention to an particularly nice infinite family of graphs for which we compute the Hodge realization functor in Section \ref{Hodgerealization}.

\subsubsection{Some minimally decomposable examples in degree 4 \label{examples}}

In this section we give several examples of minimally decomposable sums of graphs in weight four. One of these, example \ref{Herbert4}, corresponds exactly to the decomposable cycles identified in \cite{GGL05} that correspond with $\Li_{1,1,1,1}(\frac{b}{a}, \frac{c}{b}, \frac{d}{c}, \frac{1}{d})$. We also find a different minimally decomposable sum of graphs that involves the same unoriented graphs, but with different coefficients and orientations on the edges.  In example \ref{slashedbox} we give two minimally decomposable sums that involve a different underlying graph, though closely related to the underlying graph of the previous example. Example \ref{necklace} gives the degree four example of the family of graphs studied in detail in Section \ref{necklacegraph}. (In section \ref{necklacehodge} we calculate the Hodge realization of these graphs.) Finally, %examples \ref{sauron2} and \ref{sauron} are
example \ref{sauron} gives a more complicated minimally decomposable sum in degree four involving several distinct underlying graphs.

The reader is encouraged to play with these examples and construct others.  There seems to be
a lot of variety as to the type and number of underlying graphs in a sum that is decomposable.  It would be very interesting to understand this structure better.
%The existence of two such elements of $H^0(\BsG)$, in combination with Theorem \ref{systemeqthm}, offers hope that this graphical depiction of motives, which allows one to study a large number of algebraic cycles, will lead to relations between mixed Tate Motives their realizations.

\begin{eg} \label{Herbert4}
In \cite{GGL05}, the authors define a family of five binary graphs that correspond to $Li_{1,1,1,1}(\frac{b}{a}, \frac{c}{b}, \frac{d}{c}, \frac{1}{d})$. In the notation developed in this paper, we depict this same minimally decomposable sum of trees as

\bmls {\begin{xy}
(-10,10) *{\bullet}="x",
(10,10) *{\bullet}= "y",
(-10, -10) *{\bullet}= "z",
(10, -10) *{\bullet}= "w",
"x"; "y" **{\dir{-}}?/0pt/*{\dir{>}}+(1,1.5)*{1},
"x"; "z" **{\dir{-}}?/0pt/*{\dir{>}}+(-1,1.5)*{1},
"x"; "w" **{\dir{-}}?/0pt/*{\dir{<}}+(-2,-1.5)*{1},
"y"; "w" **{\dir{-}}?/0pt/*{\dir{>}}+(1,1.5)*{c},
"y"; "w" **\crv{+(10,10)}?/0pt/*{\dir{>}}+(1,1.5)*{d},
"z"; "w" **{\dir{-}}?/0pt/*{\dir{>}}+(1,-1.5)*{b},
"z"; "w" **\crv{+ (-10, -10)}?/0pt/*{\dir{>}}+(-1,-2)*{a},
\end{xy}} +{\begin{xy}
(-10,10) *{\bullet}="x",
(10,10) *{\bullet}= "y",
(-10, -10) *{\bullet}= "z",
(10, -10) *{\bullet}= "w",
"x"; "y" **{\dir{-}}?/0pt/*{\dir{>}}+(1,1.5)*{1},
"x"; "z" **{\dir{-}}?/0pt/*{\dir{<}}+(-1,1.5)*{1},
"x"; "w" **{\dir{-}}?/0pt/*{\dir{>}}+(-2,-1.5)*{b},
"y"; "w" **{\dir{-}}?/0pt/*{\dir{>}}+(1,1.5)*{c},
"y"; "w" **\crv{+(10,10)}?/0pt/*{\dir{>}}+(1,1.5)*{d},
"z"; "w" **{\dir{-}}?/0pt/*{\dir{<}}+(1,-1.5)*{1},
"z"; "w" **\crv{+ (-10, -10)}?/0pt/*{\dir{>}}+(-1,-2)*{a},
\end{xy}} +{\begin{xy}
(-10,10) *{\bullet}="x",
(10,10) *{\bullet}= "y",
(-10, -10) *{\bullet}= "z",
(10, -10) *{\bullet}= "w",
"x"; "y" **{\dir{-}}?/0pt/*{\dir{>}}+(1,1.5)*{1},
"x"; "z" **{\dir{-}}?/0pt/*{\dir{<}}+(-1,1.5)*{1},
"x"; "w" **{\dir{-}}?/0pt/*{\dir{>}}+(-2,-1.5)*{d},
"y"; "w" **{\dir{-}}?/0pt/*{\dir{>}}+(1,1.5)*{c},
"y"; "w" **\crv{+(10,10)}?/0pt/*{\dir{>}}+(1,1.5)*{b},
"z"; "w" **{\dir{-}}?/0pt/*{\dir{<}}+(1,-1.5)*{1},
"z"; "w" **\crv{+ (-10, -10)}?/0pt/*{\dir{>}}+(-1,-2)*{a},
\end{xy}} \\ +{\begin{xy}
(-10,10) *{\bullet}="x",
(10,10) *{\bullet}= "y",
(-10, -10) *{\bullet}= "z",
(10, -10) *{\bullet}= "w",
"x"; "y" **{\dir{-}}?/0pt/*{\dir{<}}+(1,1.5)*{1},
"x"; "z" **{\dir{-}}?/0pt/*{\dir{>}}+(-1,1.5)*{1},
"x"; "w" **{\dir{-}}?/0pt/*{\dir{>}}+(-2,-1.5)*{a},
"y"; "w" **{\dir{-}}?/0pt/*{\dir{<}}+(1,1.5)*{1},
"y"; "w" **\crv{+(10,10)}?/0pt/*{\dir{>}}+(1,1.5)*{d},
"z"; "w" **{\dir{-}}?/0pt/*{\dir{>}}+(1,-1.5)*{b},
"z"; "w" **\crv{+ (-10, -10)}?/0pt/*{\dir{>}}+(-1,-2)*{c},
\end{xy}} +{\begin{xy}
(-10,10) *{\bullet}="x",
(10,10) *{\bullet}= "y",
(-10, -10) *{\bullet}= "z",
(10, -10) *{\bullet}= "w",
"x"; "y" **{\dir{-}}?/0pt/*{\dir{<}}+(1,1.5)*{1},
"x"; "z" **{\dir{-}}?/0pt/*{\dir{>}}+(-1,1.5)*{1},
"x"; "w" **{\dir{-}}?/0pt/*{\dir{>}}+(-2,-1.5)*{c},
"y"; "w" **{\dir{-}}?/0pt/*{\dir{<}}+(1,1.5)*{1},
"y"; "w" **\crv{+(10,10)}?/0pt/*{\dir{>}}+(1,1.5)*{d},
"z"; "w" **{\dir{-}}?/0pt/*{\dir{>}}+(1,-1.5)*{b},
"z"; "w" **\crv{+ (-10, -10)}?/0pt/*{\dir{>}}+(-1,-2)*{a},
\end{xy}} \emls
\end{eg}

There is another decomposable sum of graphs involving the same underlying unoriented graphs:
\begin{eg}
\bmls {\begin{xy}
(-10,10) *{\bullet}="x",
(10,10) *{\bullet}= "y",
(-10, -10) *{\bullet}= "z",
(10, -10) *{\bullet}= "w",
"x"; "y" **{\dir{-}}?/0pt/*{\dir{>}}+(1,1.5)*{1},
"x"; "z" **{\dir{-}}?/0pt/*{\dir{>}}+(-1,1.5)*{1},
"x"; "w" **{\dir{-}}?/0pt/*{\dir{<}}+(-2,-1.5)*{1},
"y"; "w" **{\dir{-}}?/0pt/*{\dir{>}}+(1,1.5)*{c},
"y"; "w" **\crv{+(10,10)}?/0pt/*{\dir{<}}+(1,1.5)*{d},
"z"; "w" **{\dir{-}}?/0pt/*{\dir{<}}+(1,-1.5)*{b},
"z"; "w" **\crv{+ (-10, -10)}?/0pt/*{\dir{>}}+(-1,-2)*{a},
\end{xy}} + {\begin{xy}
(-10,10) *{\bullet}="x",
(10,10) *{\bullet}= "y",
(-10, -10) *{\bullet}= "z",
(10, -10) *{\bullet}= "w",
"x"; "y" **{\dir{-}}?/0pt/*{\dir{<}}+(1,1.5)*{1},
"x"; "z" **{\dir{-}}?/0pt/*{\dir{>}}+(-1,1.5)*{1},
"x"; "w" **{\dir{-}}?/0pt/*{\dir{>}}+(-2,-1.5)*{c},
"y"; "w" **{\dir{-}}?/0pt/*{\dir{<}}+(1,1.5)*{d},
"y"; "w" **\crv{+(10,10)}?/0pt/*{\dir{<}}+(1,1.5)*{1},
"z"; "w" **{\dir{-}}?/0pt/*{\dir{<}}+(1,-1.5)*{b},
"z"; "w" **\crv{+ (-10, -10)}?/0pt/*{\dir{>}}+(-1,-2)*{a},
\end{xy}} + {\begin{xy}
(-10,10) *{\bullet}="x",
(10,10) *{\bullet}= "y",
(-10, -10) *{\bullet}= "z",
(10, -10) *{\bullet}= "w",
"x"; "y" **{\dir{-}}?/0pt/*{\dir{<}}+(1,1.5)*{1},
"x"; "z" **{\dir{-}}?/0pt/*{\dir{>}}+(-1,1.5)*{1},
"x"; "w" **{\dir{-}}?/0pt/*{\dir{<}}+(-2,-1.5)*{b},
"y"; "w" **{\dir{-}}?/0pt/*{\dir{<}}+(1,1.5)*{1},
"y"; "w" **\crv{+(10,10)}?/0pt/*{\dir{<}}+(1,1.5)*{d},
"z"; "w" **{\dir{-}}?/0pt/*{\dir{>}}+(1,-1.5)*{a},
"z"; "w" **\crv{+ (-10, -10)}?/0pt/*{\dir{>}}+(-1,-2)*{d},
\end{xy}} \\  + {\begin{xy}
(-10,10) *{\bullet}="x",
(10,10) *{\bullet}= "y",
(-10, -10) *{\bullet}= "z",
(10, -10) *{\bullet}= "w",
"x"; "y" **{\dir{-}}?/0pt/*{\dir{<}}+(1,1.5)*{1},
"x"; "z" **{\dir{-}}?/0pt/*{\dir{>}}+(-1,1.5)*{1},
"x"; "w" **{\dir{-}}?/0pt/*{\dir{<}}+(-2,-1.5)*{1},
"y"; "w" **{\dir{-}}?/0pt/*{\dir{<}}+(1,1.5)*{d},
"y"; "w" **\crv{+(10,10)}?/0pt/*{\dir{<}}+(1,1.5)*{b},
"z"; "w" **{\dir{-}}?/0pt/*{\dir{>}}+(1,-1.5)*{c},
"z"; "w" **\crv{+ (-10, -10)}?/0pt/*{\dir{>}}+(-1,-2)*{a},
\end{xy}}
+ {\begin{xy}
(-10,10) *{\bullet}="x",
(10,10) *{\bullet}= "y",
(-10, -10) *{\bullet}= "z",
(10, -10) *{\bullet}= "w",
"x"; "y" **{\dir{-}}?/0pt/*{\dir{<}}+(1,1.5)*{1},
"x"; "z" **{\dir{-}}?/0pt/*{\dir{>}}+(-1,1.5)*{1},
"x"; "w" **{\dir{-}}?/0pt/*{\dir{<}}+(-2,-1.5)*{b},
"y"; "w" **{\dir{-}}?/0pt/*{\dir{>}}+(1,1.5)*{c},
"y"; "w" **\crv{+(10,10)}?/0pt/*{\dir{<}}+(1,1.5)*{d},
"z"; "w" **{\dir{-}}?/0pt/*{\dir{>}}+(1,-1.5)*{a},
"z"; "w" **\crv{+ (-10, -10)}?/0pt/*{\dir{<}}+(-1,-2)*{1},
\end{xy}} + {\begin{xy}
(-10,10) *{\bullet}="x",
(10,10) *{\bullet}= "y",
(-10, -10) *{\bullet}= "z",
(10, -10) *{\bullet}= "w",
"x"; "y" **{\dir{-}}?/0pt/*{\dir{<}}+(1,1.5)*{1},
"x"; "z" **{\dir{-}}?/0pt/*{\dir{>}}+(-1,1.5)*{1},
"x"; "w" **{\dir{-}}?/0pt/*{\dir{>}}+(-2,-1.5)*{c},
"y"; "w" **{\dir{-}}?/0pt/*{\dir{<}}+(1,1.5)*{d},
"y"; "w" **\crv{+(10,10)}?/0pt/*{\dir{<}}+(1,1.5)*{b},
"z"; "w" **{\dir{-}}?/0pt/*{\dir{>}}+(1,-1.5)*{a},
"z"; "w" **\crv{+ (-10, -10)}?/0pt/*{\dir{<}}+(-1,-2)*{1},
\end{xy}} \;.
\emls
\end{eg}

For $G \in \sGoneL{\bullet}{\star}$, a connected graph, and $\beta= \{L_1 \ldots L_\bullet\}$, a loop basis of $H_1(G)$, let $\beta$ index the system of polynomial equations $f_{L_i}$ that define the admissible cycle $Z(G)$ in Theorem \ref{systemeqthm}. Namely, $f_L$ is the equation \bas 1 = \prod_{e \in E(L)} a_e (1 - \phi_{w(e)})^{\epsilon(e, L)}
\; .\eas Then reversing the orientation of an edge $e$ in graph $G$ without changing its label replaces every factor of $a_e (1 - \phi_{w(e)})$ with $(a_e (1 - \phi_{w(e)}))^{-1}$. In other words, such graphs represent closely related algebraic cycles. For instance, in the above example, the first graph in the five term sum and the first graph in the second term sum differ by changing the orientations of the edge labeled $b$ and the edge labeled $d$. This is also true of the last graph in the first sum and the second graph in the second sum. The second graph in the first sum and the fifth graph in the second sum differ by the orientation of the edges labeled $b$ and $d$, along with the orientation of two of the edges labeled $1$. Presumably these two sums of graphs give rise to closely related sums of algebraic cycles.

While the motive associated to the first sum has been studied (see e.g. \cite{GGL05}) the other appears to be new.   We suspect that they define dependent classes.
It would be interesting to use the Hodge realization techniques developed in this paper (Section \ref{Hodgerealization}) and/or other graphical tools to analyze the motives they represent.

There is a related family of graphs, defined by changing the labelings and orientations of \bas {\begin{xy}
(-10,10) *{\bullet}="x",
(10,10) *{\bullet}= "y",
(-10, -10) *{\bullet}= "z",
(10, -10) *{\bullet}= "w",
"z"; "x" **{\dir{-}}?/0pt/*{\dir{>}}+(-1,1.5)*{1},
"w"; "y" **{\dir{-}}?/0pt/*{\dir{>}}+(1,1.5)*{1},
"x"; "y" **\crv{+(-10,10)}?/0pt/*{\dir{>}}+(1,1.5)*{a},
"y"; "x" **{\dir{-}}?/0pt/*{\dir{>}}+(1,1.5)*{b},
"y"; "z" **{\dir{-}}?/0pt/*{\dir{>}}+(-2,1.5)*{1},
"z"; "w" **{\dir{-}}?/0pt/*{\dir{>}}+(1,-1.5)*{c},
"w"; "z" **\crv{+ (10, -10)}?/0pt/*{\dir{>}}+(-1,-2)*{d},
\end{xy}} \; \;. \eas
%These graphs have not been as carefully studied as the sum in the first example \ref{Herbert4}.
%However, due to the graphical symmetries we are about to write a similar pair of minimally decomposable sums based on this set of graphs (algebraic cycles).

\begin{eg} \label{slashedbox} The following sum of six diagrams is minimally decomposable,
\bmls
{\begin{xy}
(-10,10) *{\bullet}="x",
(10,10) *{\bullet}= "y",
(-10, -10) *{\bullet}= "z",
(10, -10) *{\bullet}= "w",
"z"; "x" **{\dir{-}}?/0pt/*{\dir{>}}+(-1,1.5)*{1},
"w"; "y" **{\dir{-}}?/0pt/*{\dir{>}}+(1,1.5)*{1},
"x"; "y" **\crv{+(-10,10)}?/0pt/*{\dir{>}}+(1,1.5)*{a},
"y"; "x" **{\dir{-}}?/0pt/*{\dir{>}}+(1,1.5)*{b},
"y"; "z" **{\dir{-}}?/0pt/*{\dir{>}}+(-2,1.5)*{1},
"z"; "w" **{\dir{-}}?/0pt/*{\dir{>}}+(1,-1.5)*{c},
"w"; "z" **\crv{+ (10, -10)}?/0pt/*{\dir{>}}+(-1,-2)*{d},
\end{xy}} +
{\begin{xy}
(-10,10) *{\bullet}="x",
(10,10) *{\bullet}= "y",
(-10, -10) *{\bullet}= "z",
(10, -10) *{\bullet}= "w",
"z"; "x" **{\dir{-}}?/0pt/*{\dir{>}}+(-1,1.5)*{1},
"w"; "y" **{\dir{-}}?/0pt/*{\dir{>}}+(1,1.5)*{1},
"x"; "y" **\crv{+(-10,10)}?/0pt/*{\dir{>}}+(1,1.5)*{a},
"y"; "x" **{\dir{-}}?/0pt/*{\dir{>}}+(1,1.5)*{b},
"z"; "y" **{\dir{-}}?/0pt/*{\dir{>}}+(-2,1.5)*{c},
"w"; "z" **{\dir{-}}?/0pt/*{\dir{>}}+(1,-1.5)*{1},
"w"; "z" **\crv{+ (10, -10)}?/0pt/*{\dir{>}}+(-1,-2)*{d},
\end{xy}} +
{\begin{xy}
(-10,10) *{\bullet}="x",
(10,10) *{\bullet}= "y",
(-10, -10) *{\bullet}= "z",
(10, -10) *{\bullet}= "w",
"z"; "x" **{\dir{-}}?/0pt/*{\dir{>}}+(-1,1.5)*{1},
"w"; "y" **{\dir{-}}?/0pt/*{\dir{>}}+(1,1.5)*{1},
"x"; "y" **{\dir{-}}?/0pt/*{\dir{>}}+(1,1.5)*{a},
"y"; "x" **\crv{+(10,10)}?/0pt/*{\dir{>}}+(1,1.5)*{1},
"y"; "z" **{\dir{-}}?/0pt/*{\dir{>}}+(-2,1.5)*{b},
"z"; "w" **{\dir{-}}?/0pt/*{\dir{>}}+(1,-1.5)*{c},
"w"; "z" **\crv{+ (10, -10)}?/0pt/*{\dir{>}}+(-1,-2)*{d},
\end{xy}} + \\
{\begin{xy}
(-10,10) *{\bullet}="x",
(10,10) *{\bullet}= "y",
(-10, -10) *{\bullet}= "z",
(10, -10) *{\bullet}= "w",
"z"; "x" **{\dir{-}}?/0pt/*{\dir{>}}+(-1,1.5)*{1},
"w"; "y" **{\dir{-}}?/0pt/*{\dir{>}}+(1,1.5)*{1},
"x"; "y" **\crv{+(-10,10)}?/0pt/*{\dir{>}}+(1,1.5)*{c},
"x"; "y" **{\dir{-}}?/0pt/*{\dir{>}}+(1,1.5)*{a},
"y"; "z" **{\dir{-}}?/0pt/*{\dir{>}}+(-2,1.5)*{b},
"w"; "z" **{\dir{-}}?/0pt/*{\dir{>}}+(1,-1.5)*{d},
"w"; "z" **\crv{+ (10, -10)}?/0pt/*{\dir{>}}+(-1,-2)*{1},
\end{xy}} +
{\begin{xy}
(-10,10) *{\bullet}="x",
(10,10) *{\bullet}= "y",
(-10, -10) *{\bullet}= "z",
(10, -10) *{\bullet}= "w",
"z"; "x" **{\dir{-}}?/0pt/*{\dir{>}}+(-1,1.5)*{1},
"w"; "y" **{\dir{-}}?/0pt/*{\dir{>}}+(1,1.5)*{1},
"x"; "y" **{\dir{-}}?/0pt/*{\dir{>}}+(1,1.5)*{a},
"y"; "x" **\crv{+(10,10)}?/0pt/*{\dir{>}}+(1,1.5)*{1},
"z"; "y" **{\dir{-}}?/0pt/*{\dir{>}}+(-2,1.5)*{c},
"w"; "z" **{\dir{-}}?/0pt/*{\dir{>}}+(1,-1.5)*{d},
"w"; "z" **\crv{+ (10, -10)}?/0pt/*{\dir{>}}+(-1,-2)*{b},
\end{xy}} +
{\begin{xy}
(-10,10) *{\bullet}="x",
(10,10) *{\bullet}= "y",
(-10, -10) *{\bullet}= "z",
(10, -10) *{\bullet}= "w",
"z"; "x" **{\dir{-}}?/0pt/*{\dir{>}}+(-1,1.5)*{1},
"w"; "y" **{\dir{-}}?/0pt/*{\dir{>}}+(1,1.5)*{1},
"x"; "y" **{\dir{-}}?/0pt/*{\dir{>}}+(1,1.5)*{a},
"x"; "y" **\crv{+(-10,10)}?/0pt/*{\dir{>}}+(1,1.5)*{c},
"y"; "z" **{\dir{-}}?/0pt/*{\dir{>}}+(-2,1.5)*{1},
"w"; "z" **{\dir{-}}?/0pt/*{\dir{>}}+(1,-1.5)*{d},
"w"; "z" **\crv{+ (10, -10)}?/0pt/*{\dir{>}}+(-1,-2)*{b},
\end{xy}} \; ,
\emls

as is this sum of five related diagrams

\bas
{\begin{xy}
(-10,10) *{\bullet}="x",
(10,10) *{\bullet}= "y",
(-10, -10) *{\bullet}= "z",
(10, -10) *{\bullet}= "w",
"x"; "y" **\crv{+(-10,10)}?/0pt/*{\dir{>}}+(1,1.5)*{a},
"x"; "y" **{\dir{-}}?/0pt/*{\dir{<}}+(1,1.5)*{1},
"x"; "z" **{\dir{-}}?/0pt/*{\dir{>}}+(-1,1.5)*{1},
"z"; "y" **{\dir{-}}?/0pt/*{\dir{>}}+(-2,1.5)*{d},
"y"; "w" **{\dir{-}}?/0pt/*{\dir{<}}+(1,1.5)*{1},
"z"; "w" **{\dir{-}}?/0pt/*{\dir{>}}+(1,-1.5)*{c},
"z"; "w" **\crv{+ (-10, -10)}?/0pt/*{\dir{>}}+(-1,-2)*{b},
\end{xy}} + {\begin{xy}
(-10,10) *{\bullet}="x",
(10,10) *{\bullet}= "y",
(-10, -10) *{\bullet}= "z",
(10, -10) *{\bullet}= "w",
"x"; "y" **\crv{+(-10,10)}?/0pt/*{\dir{>}}+(1,1.5)*{a},
"x"; "y" **{\dir{-}}?/0pt/*{\dir{<}}+(1,1.5)*{1},
"x"; "z" **{\dir{-}}?/0pt/*{\dir{>}}+(-1,1.5)*{1},
"z"; "y" **{\dir{-}}?/0pt/*{\dir{>}}+(-2,1.5)*{b},
"y"; "w" **{\dir{-}}?/0pt/*{\dir{<}}+(1,1.5)*{1},
"z"; "w" **{\dir{-}}?/0pt/*{\dir{>}}+(1,-1.5)*{c},
"z"; "w" **\crv{+ (-10, -10)}?/0pt/*{\dir{>}}+(-1,-2)*{d},
\end{xy}} + {\begin{xy}
(-10,10) *{\bullet}="x",
(10,10) *{\bullet}= "y",
(-10, -10) *{\bullet}= "z",
(10, -10) *{\bullet}= "w",
"x"; "y" **\crv{+(-10,10)}?/0pt/*{\dir{>}}+(1,1.5)*{a},
"x"; "y" **{\dir{-}}?/0pt/*{\dir{>}}+(1,1.5)*{b},
"x"; "z" **{\dir{-}}?/0pt/*{\dir{<}}+(-1,1.5)*{1},
"z"; "y" **{\dir{-}}?/0pt/*{\dir{<}}+(-2,1.5)*{1},
"y"; "w" **{\dir{-}}?/0pt/*{\dir{<}}+(1,1.5)*{1},
"z"; "w" **{\dir{-}}?/0pt/*{\dir{>}}+(1,-1.5)*{c},
"z"; "w" **\crv{+ (-10, -10)}?/0pt/*{\dir{>}}+(-1,-2)*{d},
\end{xy}} + {\begin{xy}
(-10,10) *{\bullet}="x",
(10,10) *{\bullet}= "y",
(-10, -10) *{\bullet}= "z",
(10, -10) *{\bullet}= "w",
"x"; "y" **\crv{+(-10,10)}?/0pt/*{\dir{>}}+(1,1.5)*{a},
"x"; "y" **{\dir{-}}?/0pt/*{\dir{>}}+(1,1.5)*{b},
"x"; "z" **{\dir{-}}?/0pt/*{\dir{<}}+(-1,1.5)*{1},
"z"; "y" **{\dir{-}}?/0pt/*{\dir{>}}+(-2,1.5)*{c},
"y"; "w" **{\dir{-}}?/0pt/*{\dir{>}}+(1,1.5)*{1},
"z"; "w" **{\dir{-}}?/0pt/*{\dir{<}}+(1,-1.5)*{1},
"z"; "w" **\crv{+ (-10, -10)}?/0pt/*{\dir{>}}+(-1,-2)*{d},
\end{xy}} + {\begin{xy}
(-10,10) *{\bullet}="x",
(10,10) *{\bullet}= "y",
(-10, -10) *{\bullet}= "z",
(10, -10) *{\bullet}= "w",
"x"; "y" **\crv{+(-10,10)}?/0pt/*{\dir{>}}+(1,1.5)*{c},
"x"; "y" **{\dir{-}}?/0pt/*{\dir{>}}+(1,1.5)*{b},
"x"; "z" **{\dir{-}}?/0pt/*{\dir{<}}+(-1,1.5)*{1},
"z"; "y" **{\dir{-}}?/0pt/*{\dir{>}}+(-2,1.5)*{a},
"y"; "w" **{\dir{-}}?/0pt/*{\dir{>}}+(1,1.5)*{1},
"z"; "w" **{\dir{-}}?/0pt/*{\dir{<}}+(1,-1.5)*{1},
"z"; "w" **\crv{+ (-10, -10)}?/0pt/*{\dir{>}}+(-1,-2)*{d},
\end{xy}}
\eas
\end{eg}

Next we present the weight four example of the necklace graphs that are the subject of Section \ref{necklacegraph}.

\begin{eg}
The following sum of graphs is minimally decomposable.
\bas
 {\begin{xy}
(-10,10) *{\bullet}="x",
(10,10) *{\bullet}= "y",
(-10, -10) *{\bullet}= "z",
(10, -10) *{\bullet}= "w",
"x"; "y" **{\dir{-}}?/0pt/*{\dir{>}}+(-1,2)*{1},
"y"; "w" **{\dir{-}}?/0pt/*{\dir{<}}+(-1.5,1)*{a} ,
"y"; "w"**\crv{+(5,10)}?/0pt/*{\dir{>}}+(1.5,1)*{b},
"x"; "z" **{\dir{-}}?/0pt/*{\dir{<}}+(2.5,1)*{1},
"x"; "z" **\crv{+(-5,10)}?/0pt/*{\dir{>}}+(-2.5,1)*{c},
"z"; "w" **{\dir{-}}?/0pt/*{\dir{>}}+(-1,1.5)*{1},
"z"; "w" **\crv{+(-10,-5)}?/0pt/*{\dir{<}}+(-1,-1.5)*{d},
\end{xy}}  -
{\begin{xy}
(-10,10) *{\bullet}="x",
(10,10) *{\bullet}= "y",
(-10, -10) *{\bullet}= "z",
(10, -10) *{\bullet}= "w",
"x"; "y" **{\dir{-}}?/0pt/*{\dir{<}}+(-1,2)*{1},
"y"; "w" **{\dir{-}}?/0pt/*{\dir{<}}+(-1.5,1)*{a} ,
"y"; "w"**\crv{+(5,10)}?/0pt/*{\dir{>}}+(1.5,1)*{b},
"x"; "z" **{\dir{-}}?/0pt/*{\dir{<}}+(2.5,1)*{1},
"x"; "z" **\crv{+(-5,10)}?/0pt/*{\dir{>}}+(-2.5,1)*{c},
"z"; "w" **{\dir{-}}?/0pt/*{\dir{>}}+(-1,1.5)*{1},
"z"; "w" **\crv{+(-10,-5)}?/0pt/*{\dir{<}}+(-1,-1.5)*{d},
\end{xy}}
 \eas
\label{necklace}
\end{eg}

We end this section with two complicated minimally decomposable sum that, unlike the previous examples, involves several different types of unoriented graphs.

\begin{eg} \label{sauron}
\bmls
{\begin{xy}
(-10,10) *{\bullet}="x",
(10,10) *{\bullet}= "y",
(-10, -10) *{\bullet}= "z",
(10, -10) *{\bullet}= "w",
"y"; "x" **{\dir{-}}?/0pt/*{\dir{>}}+(1,1.5)*{c},
"w"; "y" **{\dir{-}}?/0pt/*{\dir{>}}+(1,1.5)*{1},
"x"; "z" **{\dir{-}}?/0pt/*{\dir{>}}+(-1.5,1)*{1},
"z"; "w" **{\dir{-}}?/0pt/*{\dir{>}}+(1,-1.5)*{1},
"x"; "w" **\crv{+(-10,15)}?/0pt/*{\dir{>}}+(1,1.5)*{a},
"x"; "w" **\crv{+(-10,5)}?/0pt/*{\dir{<}}+(-1,-1.5)*{b},
"z"; "y" **\crv{(-16, -5)&(-20,20) & (5, 16)}?/0pt/*{\dir{<}}+(-1,2)*{d},
\end{xy}}
+ {\begin{xy}
(-10,10) *{\bullet}="x",
(10,10) *{\bullet}= "y",
(-10, -10) *{\bullet}= "z",
(10, -10) *{\bullet}= "w",
"y"; "x" **{\dir{-}}?/0pt/*{\dir{>}}+(1,1.5)*{1},
"w"; "y" **{\dir{-}}?/0pt/*{\dir{>}}+(1,1.5)*{1},
"x"; "z" **{\dir{-}}?/0pt/*{\dir{>}}+(-1.5,1)*{c},
"z"; "w" **{\dir{-}}?/0pt/*{\dir{>}}+(1,-1.5)*{1},
"x"; "w" **\crv{+(-10,15)}?/0pt/*{\dir{<}}+(1,1.5)*{a},
"x"; "w" **\crv{+(-10,5)}?/0pt/*{\dir{>}}+(-1,-1.5)*{b},
"z"; "y" **\crv{(-16, -5)&(-20,20) & (5, 16)}?/0pt/*{\dir{<}}+(-1,2)*{d},
\end{xy}}
 +  \\ {\begin{xy}
(-10,10) *{\bullet}="x",
(10,10) *{\bullet}= "y",
(-10, -10) *{\bullet}= "z",
(10, -10) *{\bullet}= "w",
"x"; "y" **{\dir{-}}?/0pt/*{\dir{>}}+(1,-1.5)*{1},
"y";"x"  **\crv{+(10,5)}?/0pt/*{\dir{>}}+(-1,1.5)*{d},
"y"; "w" **{\dir{-}}?/0pt/*{\dir{>}}+(1,-1.5)*{c} ,
"x"; "z" **{\dir{-}}?/0pt/*{\dir{<}}+(-1,1.5)*{1},
"z"; "w" **{\dir{-}}?/0pt/*{\dir{>}}+(-1,1.5)*{b},
"z"; "w" **\crv{+(-10,-5)}?/0pt/*{\dir{<}}+(-1,-1.5)*{1},
"z"; "w" **\crv{+(-10,5)}?/0pt/*{\dir{<}}+(-1,1.5)*{a}
\end{xy}} + {\begin{xy}
(-10,10) *{\bullet}="x",
(10,10) *{\bullet}= "y",
(-10, -10) *{\bullet}= "z",
(10, -10) *{\bullet}= "w",
"x"; "y" **{\dir{-}}?/0pt/*{\dir{>}}+(1,-1.5)*{1},
"y";"x"  **\crv{+(10,5)}?/0pt/*{\dir{>}}+(-1,1.5)*{d},
"y"; "w" **{\dir{-}}?/0pt/*{\dir{>}}+(1,-1.5)*{1} ,
"x"; "z" **{\dir{-}}?/0pt/*{\dir{<}}+(-1,1.5)*{1},
"z"; "w" **{\dir{-}}?/0pt/*{\dir{<}}+(-1,1.5)*{b},
"z"; "w" **\crv{+(-10,-5)}?/0pt/*{\dir{<}}+(-1,-1.5)*{c},
"z"; "w" **\crv{+(-10,5)}?/0pt/*{\dir{>}}+(-1,1.5)*{a}
\end{xy}} + {\begin{xy}
(-10,10) *{\bullet}="x",
(10,10) *{\bullet}= "y",
(-10, -10) *{\bullet}= "z",
(10, -10) *{\bullet}= "w",
"x"; "y" **{\dir{-}}?/0pt/*{\dir{<}}+(1,-1.5)*{c},
"y";"x"  **\crv{+(10,5)}?/0pt/*{\dir{>}}+(-1,1.5)*{d},
"y"; "w" **{\dir{-}}?/0pt/*{\dir{<}}+(1,-1.5)*{1} ,
"x"; "z" **{\dir{-}}?/0pt/*{\dir{>}}+(-1,1.5)*{1},
"z"; "w" **{\dir{-}}?/0pt/*{\dir{<}}+(-1,1.5)*{b},
"z"; "w" **\crv{+(-10,-5)}?/0pt/*{\dir{>}}+(-1,-1.5)*{1},
"z"; "w" **\crv{+(-10,5)}?/0pt/*{\dir{>}}+(-1,1.5)*{a}
\end{xy}}+  {\begin{xy}
(-10,10) *{\bullet}="x",
(10,10) *{\bullet}= "y",
(-10, -10) *{\bullet}= "z",
(10, -10) *{\bullet}= "w",
"x"; "y" **{\dir{-}}?/0pt/*{\dir{>}}+(1,-1.5)*{1},
"y";"x"  **\crv{+(10,5)}?/0pt/*{\dir{>}}+(-1,1.5)*{dc},
"y"; "w" **{\dir{-}}?/0pt/*{\dir{>}}+(1,-1.5)*{1} ,
"x"; "z" **{\dir{-}}?/0pt/*{\dir{<}}+(-1,1.5)*{1},
"z"; "w" **{\dir{-}}?/0pt/*{\dir{<}}+(-1,1.5)*{b},
"z"; "w" **\crv{+(-10,-5)}?/0pt/*{\dir{<}}+(-1,-1.5)*{c},
"z"; "w" **\crv{+(-10,5)}?/0pt/*{\dir{>}}+(-1,1.5)*{a}
\end{xy}} \emls \begin{comment} + {\begin{xy}
(-10,10) *{\bullet}="x",
(10,10) *{\bullet}= "y",
(-10, -10) *{\bullet}= "z",
(10, -10) *{\bullet}= "w",
"x"; "y" **{\dir{-}}?/0pt/*{\dir{>}}+(-1,2)*{1},
"y"; "w" **{\dir{-}}?/0pt/*{\dir{<}}+(-1.5,1)*{a} ,
"y"; "w"**\crv{+(5,10)}?/0pt/*{\dir{>}}+(1.5,1)*{b},
"x"; "z" **{\dir{-}}?/0pt/*{\dir{<}}+(2.5,1)*{1/d},
"x"; "z" **\crv{+(-5,10)}?/0pt/*{\dir{>}}+(-2.5,1)*{c},
"z"; "w" **{\dir{-}}?/0pt/*{\dir{>}}+(-1,1.5)*{1},
"z"; "w" **\crv{+(-10,-5)}?/0pt/*{\dir{<}}+(-1,-1.5)*{d},
\end{xy}}  + {\begin{xy}
(-10,10) *{\bullet}="x",
(10,10) *{\bullet}= "y",
(-10, -10) *{\bullet}= "z",
(10, -10) *{\bullet}= "w",
"x"; "y" **{\dir{-}}?/0pt/*{\dir{>}}+(-1,2)*{1},
"y"; "w" **{\dir{-}}?/0pt/*{\dir{<}}+(-1.5,1)*{b} ,
"y"; "w"**\crv{+(5,10)}?/0pt/*{\dir{>}}+(1.5,1)*{a},
"x"; "z" **{\dir{-}}?/0pt/*{\dir{>}}+(2.5,1)*{1/d},
"x"; "z" **\crv{+(-5,10)}?/0pt/*{\dir{<}}+(-2.5,1)*{c},
"z"; "w" **{\dir{-}}?/0pt/*{\dir{<}}+(-1,1.5)*{1},
"z"; "w" **\crv{+(-10,-5)}?/0pt/*{\dir{>}}+(-1,-1.5)*{d},
\end{xy}} \end{comment}
\end{eg}

It is highly likely that the classes defined by all of the above examples are related.  It would be very interesting to work out the precise dependencies.
%While there is no algorithm as yet for determining which collections of graphs lead to minimally decomposable sums, there are several notable characteristics of the examples given in this section. Following Goncharov, one expects generators of $H^0(\BsG)$ to correspond to multiple $\zeta$ values of the correct weight \cite{Gonch98, Gonch01}. This immediately points to the likelihood that many of the examples given above are either trivial or lead to the same cohomology class. We hope that graphically related minimally decomposable sums, such as those depicted in examples \ref{Herbert4} and \ref{slashedbox} will lead to related cohomology classes. We hope to explore this in future work.

%\begin{comment}\hlfix{It is also well known that there are missing relations between the multiple polylogarithm, starting at weight four \cite{Deligne2010}. It is possible that if some of the minimally decomposable sums are coboundaries, that these graphs will lead to some of these missing relations}{I feel like nixing the talk about missing relations as it is so completely conjectural. What do you think?} \end{comment}

These examples illustrate
%graphical techniques show
that even in the vastly simplified case of $\Asm$, there is a richness  and complexity amongst the minimally decomposable classes of $\BsG$.
%The graphical representation of cycles sees far more cycles than other combinatoral approaches \cite{GGL05, Souderes12}.
%In particular, it sees algebraic cycles which do not correspond to multiple zeta values.
By further studying these minimally decomposable sums of graphs, we hope to gain a better understanding of the structure of (our subcategory of) mixed Tate motives.

\subsubsection{The $n$-beaded necklace graph \label{necklaceeg}}
In this section, we introduce an infinite family of terms in $H^0(\BsG)$, which we refer to as necklace diagrams. In Section \ref{Hodgerealization}, we show that these correspond to trivial classes.

\begin{dfn} A necklace graph with $n$ beads is the graph of the form

\ba G^*(a_0, \ldots, a_n) = {\begin{xy}
(-10,5) *{\bullet}="x",
(10,5) *{\bullet}= "y",
(-10, -5) *{\bullet}= "z",
(10, -5) *{\bullet}= "w",
(0, 5) *{*}+(0,2)*{a_0},
"y"; "x" **{\dir{-}},
"x"; "z" **\crv{+(-5,5)}?/0pt/*{\dir{>}}+(-3.5,1)*{a_1},
"x"; "z" **\crv{+(5,5)}?/0pt/*{\dir{<}}+(3, -1)*{1},
"y"; "w" **\crv{+(-5,5)}?/0pt/*{\dir{>}}+(-2,1)*{1},
"y"; "w" **\crv{+(5,5)}?/0pt/*{\dir{<}}+(4, -1)*{a_n},
"z"; "w" **\crv{~**\dir{..}+(-10,-10)},
\end{xy}}
\label{necklacegraph} \ea with $* \in \{L(eft), R(ight)\}$ to indicate the orientation of the marked edge.
The ordering is given as follows: each edge labeled $a_i$ is in the $(2i+1)^{th}$ position, for $i >0$, the parallel edge labeled $1$ (that shares vertices with that labeled $a_i$) is in the $(2i)^{th}$ position. The signs associated to the edges are all positive.
\end{dfn}

When $n=0$, we write \ba G_0(a) = G^R_0(a)= G^L_0(a) = \Go{a} \;. \label{G0dfn}\ea

We consider the following linear combination of $n$ beaded necklace
graphs \ba \varepsilon^n (a_0, \ldots, a_n) = G^L(a_0,a_1 \ldots
a_n) - G^R(\frac{1}{a_0},a_1 \ldots a_n) = \\
{\begin{xy}
(-10,5) *{\bullet}="x",
(10,5) *{\bullet}= "y",
(-10, -5) *{\bullet}= "z",
(10, -5) *{\bullet}= "w",
"y"; "x" **{\dir{-}}?/0pt/*{\dir{>}}+(0,2)*{a_0},
"x"; "z" **\crv{+(-5,5)}?/0pt/*{\dir{>}}+(-3.5,1)*{a_1},
"x"; "z" **\crv{+(5,5)}?/0pt/*{\dir{<}}+(3, -1)*{1},
"y"; "w" **\crv{+(-5,5)}?/0pt/*{\dir{>}}+(-2,1)*{1},
"y"; "w" **\crv{+(5,5)}?/0pt/*{\dir{<}}+(4, -1)*{a_n},
"z"; "w" **\crv{~**\dir{..}+(-10,-10)},
\end{xy}} - {\begin{xy}
(-10,5) *{\bullet}="x",
(10,5) *{\bullet}= "y",
(-10, -5) *{\bullet}= "z",
(10, -5) *{\bullet}= "w",
"y"; "x" **{\dir{-}}?/0pt/*{\dir{<}}+(0,3)*{\frac{1}{a_0}},
"x"; "z" **\crv{+(-5,5)}?/0pt/*{\dir{>}}+(-3.5,1)*{a_1},
"x"; "z" **\crv{+(5,5)}?/0pt/*{\dir{<}}+(3, -1)*{1},
"y"; "w" **\crv{+(-5,5)}?/0pt/*{\dir{>}}+(-2,1)*{1},
"y"; "w" **\crv{+(5,5)}?/0pt/*{\dir{<}}+(4, -1)*{a_n},
"z"; "w" **\crv{~**\dir{..}+(-10,-10)},
\end{xy}} \;.\label{necklacedecompsum}\ea

To avoid extreme notational complexity in keeping track of labels of graphs, we introduce some notation.

\begin{dfn}
Define a set  $\bm{n} = \{1 \ldots n\}$. We define $\bm{a_n}$ to be the $n$-tuple $(a_1, \ldots, a_n)$, and for any $S \subset \bm{n}$, $\bm{a}_{\bm{n} \setminus S} = (a_1, \ldots, \hat{a}_S \ldots a_n)$ is the $n-|S|$-tuple with the elements labeled by $s \in S$ removed.
\label{setdfns}\end{dfn}

\begin{lem}
The sum of graphs $\varepsilon^n(a_0, \bm{a_n})$ is completely decomposable.
\label{epsiloncompdecomp}\end{lem}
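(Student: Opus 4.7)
The plan is to compute $\partial \varepsilon^n = \partial G^L(a_0, \bm{a_n}) - \partial G^R(1/a_0, \bm{a_n})$ directly using the graph differential of Theorem \ref{diffdef}, and verify it lies in the image of $\mu$. The necklace has $2n+1$ edges: the marked edge at ordinal $1$, and $2n$ bead edges at ordinals $2,\ldots,2n+1$. I treat the bead contractions and the marked-edge contraction separately.

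First, each bead $i$ consists of two parallel edges $e_i^{1}$ (label $1$) and $e_i^{a}$ (label $a_i$) with opposite orientations between adjacent necklace vertices. Contracting either edge (after applying the source-vertex rescaling of Definition \ref{rescaledfn}) merges its two endpoints and turns the other parallel edge into a self-loop with label $a_i$. By the splitting rule of Definition \ref{contractdef}, this graph decomposes at the articulation vertex into $G_0(a_i)$ times a smaller necklace with bead $i$ removed, yielding $\pm G_0(a_i) \cdot G^{L/R}_{n-1}(a_0, \bm{a}_{\bm{n} \setminus \{i\}})$. These terms are manifestly in the image of $\mu$.

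Second, the marked-edge contraction produces a connected (indecomposable) graph $H^L$ from $G^L$ and $H^R$ from $G^R$. The key technical claim is that $H^L \simv H^R$: both have the merged vertex $v = x = y$ and the same underlying topology, but the source-vertex rescaling by $a_0$ (in $G^L$, where $y$ is the source) modifies the labels of the $w$-bead, while the rescaling by $1/a_0$ (in $G^R$, where $x$ is the source) modifies the labels of the $z$-bead. Applying $v_{1/a_0}$ to $H^L$ at the merged vertex and checking the four edges incident to $v$ gives $v_{1/a_0}(H^L) = H^R$. Since the marked edge has ordinal $1$ in both summands, these indecomposable contributions appear with the same sign in $\partial G^L$ and $\partial G^R$ and therefore cancel in $\partial \varepsilon^n$.

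Combining these observations yields $\partial \varepsilon^n(a_0, \bm{a_n}) = -\mu(\xi^n)$ for an explicit $\xi^n = \sum_{i=1}^n \pm [G_0(a_i) \,|\, \varepsilon^{n-1}(a_0, \bm{a}_{\bm{n} \setminus \{i\}})] \in \BsG^2_1$, with signs determined by the ordinals $2i$ and $2i+1$ of the bead edges. Complete decomposability in the sense of Definition \ref{decompdfn} then follows by induction on $n$: the inductive hypothesis applied to each $\varepsilon^{n-1}(a_0, \bm{a}_{\bm{n}\setminus\{i\}})$ furnishes the next-level data, and the Leibniz rule together with $\partial G_0(a_i) = 0$ (contracting a self-loop yields $0$) propagates the decomposability tower. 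The base case $\varepsilon^0 = G_0(a_0) - G_0(1/a_0) = 0$ in $\sGoneL{}{}/\simori$ terminates the recursion. The principal technical obstacle is the sign bookkeeping, namely ensuring that the signed contributions from $e_i^1$ and $e_i^a$ assemble into a single product $G_0(a_i) \cdot \varepsilon^{n-1}$ and that the marked-edge contributions cancel exactly; a secondary obstacle is the careful verification of $v_{1/a_0}(H^L) = H^R$, which requires tracking all four simultaneous label changes around the merged vertex.
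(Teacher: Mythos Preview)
Your approach is the same as the paper's (direct computation of $\D\varepsilon^n$ followed by induction), and your treatment of the marked-edge contraction is correct: $H^L$ and $H^R$ really do differ by the vertex rescaling $v_{1/a_0}$, so those indecomposable pieces cancel.

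There are, however, two computational errors. First, your description of the bead contractions is incomplete. Contracting the $1$-edge of bead $i$ indeed yields $G_0(a_i)\cdot G^{*}(a_0,\bm{a}_{\bm{n}\setminus i})$, but contracting the $a_i$-edge does \emph{not}: the source-vertex rescaling by $a_i$ propagates along the necklace and, after splitting off $G_0(a_i)$, the remaining graph is $\simv$-equivalent to $G^{*}(a_0a_i,\bm{a}_{\bm{n}\setminus i})$ rather than $G^{*}(a_0,\bm{a}_{\bm{n}\setminus i})$. (One quick way to see this: the ``big'' loop through the marked edge and all the $a_j$-edges has coefficient $a_0a_1\cdots a_n$, and this loop coefficient is preserved under $\D_e$; in an $(n{-}1)$-bead necklace with bead labels $a_j$, $j\neq i$, the corresponding coefficient is $b_0\prod_{j\neq i}a_j$, forcing $b_0=a_0a_i$.) The paper's formula therefore reads
\[
\D\varepsilon^n(a_0,\bm{a_n})=\sum_{i=1}^n\bigl(\varepsilon^{n-1}(a_0,\bm{a}_{\bm{n}\setminus i})-\varepsilon^{n-1}(a_0a_i,\bm{a}_{\bm{n}\setminus i})\bigr)\cdot G_0(a_i),
\]
and your $\xi^n$ is missing the $a_0a_i$ terms. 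This does not break the induction, since $\varepsilon^{n-1}(a_0a_i,\bm{a}_{\bm{n}\setminus i})$ is covered by the inductive hypothesis with different parameters, but your stated formula is wrong.

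Second, your base case is incorrect: $G_0(a_0)\not\simori G_0(1/a_0)$ (reversing the orientation of a self-loop does not change its label, and the corresponding cycles $[1-1/a_0]$ and $[1-a_0]$ are distinct points), so $\varepsilon^0(a_0)\neq 0$ in general. The correct base case is simply $\D\varepsilon^0=0$, since $\D G_0(a)=0$ by Definition~\ref{contractdef}.
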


\begin{proof}
By direct calculation, \bas \D \varepsilon^n(a_0, \bm{a_n}) =
\sum_{i=1}^n \left( \varepsilon^{n-1}(a_0, \bm{a}_{\bm{n}\setminus i}) -
\varepsilon^{n-1}(a_0 a_i, \bm{a}_{\bm{n}\setminus i}) \right) \cdot
G_0(a_i).\eas The proof follows from induction.
\end{proof}

Therefore, we can construct the element of minimally decomposable element of $\BsG$ it defines.

Recall that \bas [a_1 | \ldots | a_n] \sha [b_1 |
  \ldots | b_m] \eas is the shuffle product of the ordered sets $(a_1,
\ldots , a_n)$ and $(b_1, \ldots, b_m)$.

In particular, for $a, b \in \sGoneL{}{|1}$, \bas a \sha b = a|b + b|a
\;.\eas The shuffle product $a \sha b \in \ker \mu$. That is \ba \mu(a
\sha b) = 0\;.  \label{shainker}\ea

\begin{lem}The element \bas \bm{\varepsilon^n}(a_0, \bm{a_n}) = \sum_{S
  \subset \bm{n} }(-1)^{|S|}\sum_{J \subseteq S} (-1)^{|J|}[\varepsilon^{n-|S|}(a_0\prod_{j \in J}a_j,  \bm{a}_{\bm{n}\setminus S})| \sha_{s \in S} \varepsilon^0(a_s) ] \eas
is in $H^0(\BsG)$. \label{HBGelement}\end{lem}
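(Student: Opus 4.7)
The plan is to verify directly that $(D + \mu)\bm{\varepsilon^n}(a_0, \bm{a_n}) = 0$ in the bar complex $\BsG$, which by Definition \ref{decompdfn} places the class in $H^0(\BsG)$.

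The main reorganisation is to collapse the inner alternating sum into a single symbol: set
\[
V^S \;:=\; \sum_{J\subseteq S}(-1)^{|J|}\,\varepsilon^{n-|S|}\!\bigl(a_0\textstyle\prod_{j\in J}a_j,\;\bm{a}_{\bm{n}\setminus S}\bigr),
\]
so that $\bm{\varepsilon^n} = \sum_{S\subseteq \bm{n}}(-1)^{|S|}[V^S\,|\,\sha_{s\in S}\varepsilon^0(a_s)]$. Applying Lemma \ref{epsiloncompdecomp} term by term and splitting each sum over $J'\subseteq S\cup\{i\}$ according to whether $i\in J'$, the two pieces of $\partial\varepsilon^{n-|S|}$ telescope (via $(-1)^{|J|+1} = (-1)^{|J\cup i|}$) into the clean recursion
\[
\partial V^S \;=\; \sum_{i \in \bm{n}\setminus S} V^{S\cup\{i\}}\cdot \varepsilon^0(a_i).
\]
The right-hand factor is $\varepsilon^0(a_i) = G_0(a_i)-G_0(1/a_i)$ rather than merely $G_0(a_i)$ because the $i$-th bead edges in $G^L$ and $G^R$ are oriented oppositely, so their contractions contribute self-loops with reciprocal labels, and these assemble into $\varepsilon^0(a_i)$ inside $\partial(G^L-G^R)$.

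Next, the shuffle $\sha_{s\in S}\varepsilon^0(a_s)$ is $\mu$-primitive in its internal positions: each $\varepsilon^0(a_s)\in \sGoneL{1}{1}$ has cycle-degree one, so graded commutativity forces $\varepsilon^0(a_s)\cdot\varepsilon^0(a_{s'}) = -\varepsilon^0(a_{s'})\cdot\varepsilon^0(a_s)$, and pairs of permutations differing by an adjacent transposition cancel under $\mu_j$ for $j\geq 2$. This is the many-factor generalisation of \eqref{shainker}. Hence $\mu\bm{\varepsilon^n}$ is computed entirely by merging the first tensor slot $V^S$ with the leading factor of the shuffle, producing a sum indexed by pairs $(S,\,s\in S)$ with summand $[V^S\cdot\varepsilon^0(a_s)\,|\,\sha_{s'\in S\setminus\{s\}}\varepsilon^0(a_{s'})]$.

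Finally, after reindexing $S\mapsto S' = S\cup\{i\}$ with $s=i$ in the sum coming from $D\bm{\varepsilon^n}$, the $D$- and $\mu$-contributions become sums with identical indexing whose prefactors differ by a sign, so that $D\bm{\varepsilon^n} + \mu\bm{\varepsilon^n} = 0$. The base case $n=0$ is immediate: $\bm{\varepsilon^0}(a_0) = [\varepsilon^0(a_0)]$ has a single tensor factor, and $\partial\varepsilon^0(a_0)=0$ because by Definition \ref{contractdef} the contraction of any self-loop vanishes. The main obstacle is the sign bookkeeping---matching the outer $(-1)^{|S|}$ against the bar-construction signs of \eqref{bardiff}--\eqref{barprod} and against the sign produced by promoting $(-1)^{|J|}\mapsto(-1)^{|J\cup i|}$ in the inclusion--exclusion---but once the recursion for $\partial V^S$ is in hand the cancellation is forced term by term.
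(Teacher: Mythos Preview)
Your overall strategy matches the paper's: apply Lemma~\ref{epsiloncompdecomp} to compute $\D$ on each tensor-degree piece, and use the shuffle property \eqref{shainker} to cancel it against $\mu$ of the next piece. The shorthand $V^S$ and the telescoping of the two terms of Lemma~\ref{epsiloncompdecomp} into a single sum over $J'\subseteq S\cup\{i\}$ is a clean way to package the paper's computation.

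There is, however, a concrete error in your recursion. You assert
\[
\partial V^S \;=\; \sum_{i\in\bm n\setminus S} V^{S\cup\{i\}}\cdot \varepsilon^0(a_i),
\]
with the justification that ``the $i$-th bead edges in $G^L$ and $G^R$ are oriented oppositely, so their contractions contribute self-loops with reciprocal labels.'' This is false. Inspecting \eqref{necklacedecompsum}, only the clasp edge (the one labelled $a_0$ in $G^L$ and $1/a_0$ in $G^R$) changes orientation between the two graphs; every bead is literally the same in $G^L$ and in $G^R$. Contracting either edge of bead $i$ therefore splits off the \emph{same} self-loop $G_0(a_i)$ in both $G^L$ and $G^R$, exactly as Lemma~\ref{epsiloncompdecomp} records. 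Redoing your telescoping with that lemma as actually stated gives
\[
\D V^S \;=\; \sum_{i\in\bm n\setminus S} V^{S\cup\{i\}}\cdot G_0(a_i),
\]
not $\varepsilon^0(a_i)$. (Even if your orientation claim were true, the pieces would not factor as you want: you would get $(\cdots)_L\cdot G_0(a_i)-(\cdots)_R\cdot G_0(1/a_i)$, which is not $(\cdots)\cdot\varepsilon^0(a_i)$.)

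With the corrected recursion your argument becomes essentially the paper's proof, line for line. You may notice that the paper itself writes the $\D$-side with $G_0(a_i)$ and the $\mu$-side with $\varepsilon^0(a_i)$ and passes between them without comment; if that step makes you uneasy you are right to probe it, but the resolution is not the bead-orientation mechanism you proposed.
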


\begin{proof}
Recall that since $\varepsilon^n \in \sGoneL{n+1}{1}$, it defines an element of degree $0$ in $\BsG$.

Consider the contribution to $\bm{\varepsilon^n}(a_0, \bm{a_n}) \in \BsG_{k+1}^0$. We compute $\D +\mu$ on this term. By Lemma \ref{epsiloncompdecomp}, \bmls \D \sum_{\begin{subarray}{c} |S|=k\\ J \subseteq S\end{subarray}} (-1)^{|J|}[\varepsilon^{n-k}(a_0\prod_{j \in J}a_j, \bm{a}_{\bm{n}\setminus S})| \sha_{s \in S} \varepsilon^0(a_s)] = \\  \sum_{\begin{subarray}{c} |S|=k \\ i \in n\setminus S \\ J \subseteq S \cup i \end{subarray}} (-1)^{|J|}[ \varepsilon^{n-k-1}(a_0\prod_{j \in J}a_j, \bm{a}_{\bm{n}\setminus S})
\cdot G_0(a_i) | \sha_{s \in S} \varepsilon^0(a_s)]\;.\emls

However, by equation \eqref{shainker}, this is \bas  \mu \sum_{\begin{subarray}{c} |S|=k+1 \\  J \subseteq S  \end{subarray}} (-1)^{|J|}[ \varepsilon^{n-k-1}(a_0\prod_{j \in J}a_j, \bm{a}_{\bm{n}\setminus S}) |\varepsilon^0(a_i)| \sha_{s \in S} \varepsilon^0(a_s)]
\;.\eas

Therefore \bas (\D + \mu)
(\bm{\varepsilon^n}(a_0, \bm{a_n})) = 0\;.\eas
\end{proof}

Therefore, $\bm{\varepsilon^n}(a_0, \bm{a_n})$ defines a class in
$H^0(\BsG)$, as stated in remark \ref{mindecomprmk}.

\begin{dfn}
We write $\epclass \in H^0(\BsG)$ to be the class defined by
$\bm{\varepsilon^n}(a_0, \bm{a_n})$. \label{epclass}
\end{dfn}

This choice of notation emphasises that this is the class in
$H^0(\BsG)$ associated to an element in $\sGoneL{\bullet}{1}$ with
completely decomposable boundary.

\section{Hodge realization \label{Hodgerealization}}

In this section we describe the Hodge realization for a number field $k$ for our category and compute some examples. We follow the approach to constructing a Hodge realization as described in \cite{BlochKriz} sections 7 and 8, and in \cite{Kimura}.  Namely, we first note that the Hodge realization as constructed in section 7 can be defined independently of choice.  However, as noted at the beginning of section 8, this construction is not very amenable to computation, and a second description of the Hodge realization functor is given.  In this paper we will restrict to this second description of the Hodge realization. Namely, we explicitly
construct a co-module $J$ of $\sH_T = H^0(\BsG)$ and construct a
natural mixed Tate Hodge
structure on $J$. This, as in  \cite{GGL05}, provides the Hodge realization
for our graphical structure as $J$ associates a natural mixed Tate Hodge structure
on any graded co-module $\sM$ of $\sH_T$.

In the context of the graphs, the $\Q$ mixed Tate Hodge structure is given by the rational lattice \bas H_{\Q} = H^0(\BsTsG) \;. \eas where $\sToneL{}{}$ is a right $\sGoneL{}{}$ module, and $\BsTsG$ is the corresponding cyclic bar construction. Both filtrations are induced from the weights of graphs (or the codimension of the corresponding cycles), as defined in Section \ref{algofgraphs}. These are introduced in detail in Section \ref{topaugadmisgraphs}.

\subsection{Topologically augmented admissible graphs\label{topaugadmisgraphs}}

 As in \cite{BlochKriz}, in order to create the construction outlined above, one must define a set of topologically supported cycles in $\square^n$.

\begin{dfn} Let $\sZ_{top}^\bullet(\Delta_\bullet, \square^{2\bullet - \star})$ be the free abelian group (vector space) generated by admissible algebraic cycles supported on the image of a smooth map $\sigma: \Delta_\bullet \rightarrow \bP(\C)^{2\bullet - \star}$ of codimension $\bullet$ and algebraic degree $\star$. Then define a vector space $\sZ_{top} = \bigoplus_{\bullet, \star} \Alt \sZ_{top}^\bullet(\Delta_\bullet, \square^{2\bullet - \star})$.  \label{Ztopdfn}\end{dfn}

 For ease of notation, we assume these are under the alternating projection. These topological cycles define a means to pass from the algebraic cycles they support to integrals. In particular, given a completely decomposable element, $\varepsilon \in B(\Asm)$,  $[\varepsilon] \in H^0(B(\Asm))$,  one considers the element  $1 \otimes \varepsilon \in B(\sZ_{top}, \Asm)$. This does not define a cohomology class. Namely, it is not completely decomposable. The task then is to find an element $\xi \in B(\sZ_{top}, \Asm)$ such that $1 \otimes \varepsilon + \xi$ is completely decomposable. That is \bas [1 \otimes \varepsilon + \xi] \in H^0(B(\sZ_{top}, \Asm)) \;.\eas It is worth noting that while the cohomology class thus defined is unique, the element $\xi$ need not be. In particular, in the example worked out in Section \ref{necklacehodge}, the given $\xi$ is by no means the only possible construction.

In the context of graphs, we parallel this construction by defining topologically augmented admissible graphs, which, under a natural extension of the DGA homomorphism $Z$ defined in Section \ref{homomorphism}, correspond to elements of $\sZ_{top}^\bullet(\Delta_\bullet, \square^{2\bullet - *})$. These topologically augmented graphs generate a $\sGoneL{}{}$ module, which we develop in this section. First we establish some notation.

Let $\Delta_n \subset \R^n$, be the standard real $n$ simplex.  Let $C^\infty(n,m)$ be the set of smooth maps from $\Delta_n$ to $(\bP(\C)^1)^N$ of dimension $m$. Here $N$ is an arbitrary integer $N \geq n$.

\begin{dfn}
We say that $m$ is the simplicial dimension of maps in $C^\infty(n, m)$.
\end{dfn}

Note that $\sigma$ need not be injective, that is $m$ may be less than $n$. In particular, $\C^\infty(n, 0)$ consists of all constant maps from $\Delta_n$. We view $C^\infty(n, m)$ as a chain complex, $C(n)_{m}$.

We parameterize $\Delta_n$ by an ordered  set  as usual $ 0  \leq t_1  \leq \ldots t_n
\leq 1 $, sometimes writing $0 = t_0$ and $1 = t_{n+1}$. Then any $\sigma \in C(n)_m$ is a continuous function of $\{t_1, \ldots t_n\}$.

\begin{dfn} Given the standard face and degeneracy maps on $\delta_n$, $s_i$ and $d_i$ respectively, for any subset $I \in \{ t_0, \ldots t_n\}$ of size $|I| = p$, we write $d_I$ to be the standard codimension $p$ face map. \end{dfn}

Let $\bm{n} = \{1, \ldots, n\}$ as before. Any continuous map $\sigma \in C(n)_m$ can be written in terms of codimension $n-m$ face maps. That is, there is a set $I \in \bm{n}$ and $\sigma' \in C(m)_m$ such that \bas \sigma = d_{I*} \sigma'\;.\eas The degeneracy maps define a differential on the chain complex $C(n)_m$. In particular, we write \bas \delta_i : C(m)_m \rightarrow C(m)_{m-1}   \\ \sigma \rightarrow  d_{i*}s_{i*} \sigma \;, \eas with $\delta = \sum_{i=0}^m (-1)^i \delta_i$. More generally, for $\sigma \in C(n)_m$, where $\sigma = d_I \sigma'$, write \ba \delta_i :  C(n)_m \rightarrow C(n)_{m-1}  \nonumber \\ \sigma \rightarrow  d_{I*}\delta_i \sigma'  \label{deltaidef} \;.\ea The degeneracy maps on $\Delta_n$ induce a differential on the chain complex $C(n)_m$, namely $\delta =  \sum_{i=0}^m (-1)^i \delta_i$. That is, $\delta \circ \delta = 0$ and \bas \delta: C(n)_m \rightarrow C(n)_{m-1}\;.\eas Therefore, we have shown

\begin{lem} For a fixed $n$, $(C(n)_*, \delta)$ is a chain complex. \end{lem}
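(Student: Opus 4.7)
The plan is to verify the two defining properties of a chain complex: that $\delta$ has degree $-1$, i.e.\ $\delta: C(n)_m \to C(n)_{m-1}$, and that $\delta \circ \delta = 0$. Both come down to standard (co)simplicial bookkeeping, once the definition of $\delta_i$ via the factorization $\sigma = d_{I*}\sigma'$ has been shown to be well-posed.

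First I would check that $\delta_i$ is well-defined on $C(n)_m$. Given $\sigma = d_{I*}\sigma'$ with $\sigma' \in C(m)_m$ and $|I| = n-m$, the formula $\delta_i\sigma := d_{I*}d_{i*}s_{i*}\sigma'$ a priori depends on the factorization. Since the simplicial dimension is an invariant of the image of $\sigma$, any two admissible factorizations $(I,\sigma')$ and $(I',\sigma'')$ differ by a reparametrization of $\Delta_m$ compatible with the face maps, and $d_{i*}s_{i*}$ is equivariant under such reparametrizations. For the degree drop, observe that $s_{i*}\sigma'$ factors through the degeneracy $s_i : \Delta_m \to \Delta_{m-1}$, so the image of $d_{i*}s_{i*}\sigma'$ has dimension at most $m-1$. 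Hence $\delta_i\sigma \in C(n)_{m-1}$.

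For the main identity $\delta \circ \delta = 0$, the computation
\bas
\delta^2 \sigma \;=\; \sum_{i,j} (-1)^{i+j} (d_{i*}s_{i*})(d_{j*}s_{j*})\sigma'
\eas
is handled by invoking the standard cosimplicial identities $s_i d_j = d_{j-1} s_i$ for $i < j$, $s_i d_j = d_j s_{i-1}$ for $i > j+1$, together with the degenerate cases $s_i d_i = s_i d_{i+1} = \mathrm{id}$. After commuting each interior $s_i d_j$ past one another, the off-diagonal terms $(i,j)$ with $i<j$ pair up with terms $(j-1,i)$ after re-indexing and cancel thanks to the alternating signs, while the near-diagonal cases collapse via the normalization identities. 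This is the same pattern as the proof that the alternating-sum boundary on the Moore complex of a simplicial object squares to zero.

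The main obstacle is purely combinatorial bookkeeping of indices and signs; there is no conceptual subtlety once well-definedness is established. As a cleaner alternative, one can identify $(C(n)_*,\delta)$ with a subcomplex of the Moore complex associated to the simplicial abelian group of smooth maps $\Delta_\bullet \to (\mathbb{P}^1(\C))^N$ equipped with its evident simplicial structure; the fact that Moore complexes are chain complexes then supplies $\delta^2 = 0$ for free, sidestepping the explicit simplicial-identity manipulations altogether.
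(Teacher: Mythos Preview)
Your proposal is correct and in fact supplies considerably more detail than the paper does. The paper's own ``proof'' is simply the sentence preceding the lemma: it asserts that $\delta:C(n)_m\to C(n)_{m-1}$ and $\delta\circ\delta=0$, then writes ``Therefore, we have shown'' --- i.e.\ it treats the result as an immediate consequence of the standard simplicial identities without spelling them out. Your approach via the simplicial identities (and the alternative observation that this is a Moore-complex computation) is exactly the standard justification the paper is implicitly invoking.
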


\begin{rem}Henceforth, we write elements of $C(m)_m$ as $\sigma_m$. For the rest of this paper, as in the prequel, the symbol $\bullet$ will always correspond to the codimension of a cycle (loop number of a graph). The symbol $\star$ will always correspond to the algebraic degree, and the symbol $*$ always the simplicial dimension of the graph.
\end{rem}

Given this notation, we define the right module of topologically augmented admissible graphs. Generators of this algebra are given by the pair $\sigma \in C(n)_*$ and an admissible graph $G \in \sGoneL{n}{i}$. In particular, the topologically augmented graph $(G, \sigma)$ has edges labeled not by elements of $k^\times$ as usual, but by the image of $\sigma$. For $t \in \Delta_n$, write $\sigma(t)$ as the $(2n-i)$-tuple $\sigma(t) = (\sigma_1(t), \ldots , \sigma_{2n-i}(t))$. The coordinate $\sigma_i(t)$ labeles the edge $e \in E(G)$ that is in the $i^{th}$ position, that is, such that $\omega(e) = i$. There is a natural extension of the vector space homomorpism $Z$ defined in Section \ref{homomorphism} to the topologically augmented admissible graphs, such that each graph maps to a topologically supported cycle in $\sZ_{top}$.

For each $\sigma \in C(\bullet)_*$, and $t \in \Delta_\bullet$ such that $\sigma_{\omega(e)}(t) \neq 0, \;\infty$ for any $e \in E(G)$, the pair $(G, \sigma(t))$ defines a graph in $\Gprg/(\simord,\simori,\simv)$. If $\sigma_{\omega(e)}(t) \neq 0, \;\infty$, we say that $(G, \sigma(t))$ is the trivial graph. As we show in Lemma \ref{Zextend}, graphs with such labels correspond to algebraic cycles with $1$ in the appropriate coordinate. In particular, for a general $\sigma$, the labels $\sigma(t)$ need not correspond to an admissible labeling of the underlying graph $G$. We wish to consider pairs $(G, \sigma(t))$ which evaluate to admissible graphs almost everywhere on $\Delta_n$. Such $\sigma \in C(\bullet)_*$ are called admissible simplices for $G$.

\begin{dfn}
A map $\sigma \in C(\bullet)_*$ is admissible for a graph $G$ if the following hold: \begin{enumerate} \item Let $\delta_J (\sigma)$ indicate the degeneracy map onto the face opposite that defined by $J$ in $\delta_\bullet$. For all $J$, each loop of the augmented graph $(G, \delta_J \sigma)$ does not have loop coefficient $1$ almost everywhere on $\Delta_\bullet$.
\item For all $e \in E(G)$, if there exists a $t \in \Delta_{|E(G)|}$
  such that $\sigma_{\omega(e)}(t)= 0$, there exists an $e' \in
  E(G)$ such that $\sigma_{\omega(e')}(t)= \infty$. Therefore, the
  cycle $Z(G, \sigma(t))$ is trivial. \item Writing $\delta \sigma = \sum_{i= 0}^\bullet (-1)^i \delta_i \sigma$, there is some $i$ for which no coordinate of $\delta_i \sigma$ is $\infty$.
\end{enumerate}
\label{topoadmisconds}
\end{dfn}

We are now ready to define the vector space of admissible topologically augmented graphs.

\begin{dfn}
Let $\sToneL{\bullet}{2\bullet- *}$ be the vector space of admissible codimension $\bullet$ graphs with edges labeled by an admissible labeling, $\sigma \in C(\bullet)_*$.
\end{dfn}

\begin{eg}
Consider the necklace graph $G^L(a_0, \ldots a_n) \in \sGoneL{n+1}{1}$, \bas G^L(a_0, \ldots a_n) = \begin{xy}
(-10,5) *{\bullet}="x",
(10,5) *{\bullet}= "y",
(-10, -5) *{\bullet}= "z",
(10, -5) *{\bullet}= "w",
"y"; "x" **{\dir{-}}?/0pt/*{\dir{>}}+(0,2)*{a_0},
"x"; "z" **\crv{+(-5,5)}?/0pt/*{\dir{>}}+(-3.5,1)*{a_1},
"x"; "z" **\crv{+(5,5)}?/0pt/*{\dir{<}}+(3, -1)*{1},
"y"; "w" **\crv{+(-5,5)}?/0pt/*{\dir{>}}+(-2,1)*{1},
"y"; "w" **\crv{+(5,5)}?/0pt/*{\dir{<}}+(4, -1)*{a_n},
"z"; "w" **\crv{~**\dir{..}+(-10,-10)},
\end{xy} \;. \eas

There is a constant map $\sigma \in C(n+1)_0$ of the form $\sigma_0(\Delta_{n+1}) = (a_0, 1, a_1, \ldots,1, a_{n})$. As this has $0$ dimensional topological support, this is the constant map. The pair $(G^L, \sigma_0) \in \sToneL{n+1}{2n+2}$ is a trivially topologically augmented graph. That is $(G^L, \sigma_0) = G^L \in \sGoneL{n+1}{1}$.

Consider a different map, $\sigma' \in C(n+1)_2$, of the form $\sigma'(\Delta_{n+1}) = (\frac{a_0}{t_n+1}, 1, a_1, \ldots,1, \frac{t_{n+1}a_{n}}{t_n})$. Then the pair \bas (G^L, \sigma') = {\begin{xy}
(-10,5) *{\bullet}="x",
(10,5) *{\bullet}= "y",
(-10, -5) *{\bullet}= "z",
(10, -5) *{\bullet}= "w",
"y"; "x" **{\dir{-}}?/0pt/*{\dir{>}}+(0,3)*{\frac{a_0}{t_n+1}},
"x"; "z" **\crv{+(-5,5)}?/0pt/*{\dir{>}}+(-3.5,1)*{a_1},
"x"; "z" **\crv{+(5,5)}?/0pt/*{\dir{<}}+(3, -1)*{1},
"y"; "w" **\crv{+(-5,5)}?/0pt/*{\dir{>}}+(-2,1)*{1},
"y"; "w" **\crv{+(5,5)}?/0pt/*{\dir{<}}+(5, -1)*{\frac{a_{n}t_{n+1}}{t_n}},
"z"; "w" **\crv{~**\dir{..}+(-10,-10)},
\end{xy}} \; \eas is an element of $\sToneL{n+1}{2n}$.

%Note that we may write $\sigma(n+1)_2$ as $d_{I*}\sigma'$, with $\sigma' \in C(2)_2$ the map $(\frac{a_0}{t_2}, 1, a_1, \ldots,1, \frac{t_{2}a_{n}}{t_1}) $, and $I= \{1, \ldots , n-1\}$.
\label{necklaceinT1L}\end{eg}

Note that $\sToneL{\bullet}{2\bullet - *}$ is not an algebra. In particular, there is no natural product structure on $C(n)_*$. For general $(G, \sigma),  \in \sToneL{n}{2n - m}$, and $(G', \sigma') \in \sToneL{n'}{2n'- m'}$, the product is given by the graph $(G G', \sigma\times \sigma')$. If $\sigma = d_{I*}\sigma_m$ and $\sigma'= d_{I'*}\sigma_{m'}$, then $\sigma\times \sigma' =d_{I*}\sigma_{m} \times d_{I'*}\sigma_{m'}$ which does not correspond to a face of $\Delta_{n +n'} $ unless either $m = 0$ or $m'=0$. Therefore, we consider $\sToneL{\bullet}{2\bullet-*}$ as a $\sGoneL{\bullet}{\star}$ module.

There is an inclusion of the algebra of admissible non-augmented graphs into $\sToneL{}{}$.

\begin{eg}
There is an inclusion $\sGoneL{\bullet}{\star} \hookrightarrow \sToneL{\bullet}{2 \bullet}$. Any graph $G \in \sGoneL{\bullet}{\star}$ can be written as $(G, d_{I*}\sigma_0)$ via the constant map \bas \sigma_0(\Delta_\bullet) = (a_1 \ldots
a_{|E(G)|}) \;,\eas where $a_{\omega(e)}$ is the label of edge $e \in
E(G)$.
\label{inclusioneg}
\end{eg}

\begin{prop}
The vector space $\sToneL{}{}$ is a
$\sGoneL{}{}$ module. \label{graphmodule}\end{prop}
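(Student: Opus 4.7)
The plan is to exhibit the right $\sGoneL{}{}$ action on $\sToneL{}{}$ explicitly using the inclusion from Example \ref{inclusioneg}, and then verify that the action preserves admissibility and satisfies the module axioms. Given $(G, \sigma) \in \sToneL{n}{2n-m}$ and $G' \in \sGoneL{n'}{\star'}$, let $\sigma_{G'} \in C(n')_0$ be the constant map whose coordinates are the labels of the edges of $G'$ in the order given by $\omega(G')$, so that $(G', \sigma_{G'})$ is the image of $G'$ under the inclusion $\sGoneL{}{} \hookrightarrow \sToneL{}{}$. I define the right action by
\bas
(G, \sigma) \cdot G' := (G \cdot G', \; \sigma \times \sigma_{G'}),
\eas
where $G \cdot G'$ is the product in $\sGoneL{}{}$ (inheriting both the ordering and signs on edges from Section \ref{algofgraphs}) and $\sigma \times \sigma_{G'}$ is the simplicial labelling of $E(G \cdot G')$ obtained by applying $\sigma$ to the edges coming from $G$ and the constant value $\sigma_{G'}$ to those coming from $G'$.

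The first step is to check that $\sigma \times \sigma_{G'}$ still lies in $C(n + n')_{m}$, so that the simplicial dimension does not increase. This is precisely where the hypothesis that the right factor comes from $\sGoneL{}{}$ (i.e.\ has constant labelling, simplicial dimension $0$) is essential: writing $\sigma = d_{I*}\sigma_m$ as in the discussion after Definition \ref{Ztopdfn}, one has $\sigma \times \sigma_{G'} = d_{I*}(\sigma_m \times \sigma_{G'})$, which still factors as a face pushforward of an $m$-dimensional simplex. This is exactly the obstruction noted immediately before the proposition, namely that $\sigma \times \sigma'$ fails to correspond to a face of $\Delta_{n+n'}$ unless one of the factors has simplicial dimension $0$.

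Next I verify admissibility in the sense of Definition \ref{topoadmisconds}. The loops of $G \cdot G'$ are the disjoint union of the loops of $G$ and of $G'$, so the loop coefficient of a loop of $G$ computed in $(G \cdot G', \sigma \times \sigma_{G'})$ equals the loop coefficient in $(G, \sigma)$ almost everywhere on $\Delta_{n+n'}$, while the loop coefficient of a loop of $G'$ equals its loop coefficient in $G'$, which is nontrivial because $G' \in \sGoneL{}{}$ is admissible by Theorem \ref{admissible}. The same identification extends to faces $\delta_J(\sigma \times \sigma_{G'}) = (\delta_J \sigma) \times \sigma_{G'}$, establishing condition (1). Conditions (2) and (3) are about the $0$- and $\infty$-loci of the coordinate functions, and since the new coordinates coming from $G'$ are nonzero constants (in $k^\times$) they never attain $0$ or $\infty$, so these conditions reduce to the corresponding ones for $(G, \sigma)$.

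Finally, the module axioms are straightforward: associativity $(G,\sigma) \cdot (G' G'') = ((G,\sigma) \cdot G') \cdot G''$ holds because the product on $\sGoneL{}{}$ is associative and $\sigma_{G' \cdot G''} = \sigma_{G'} \times \sigma_{G''}$ as constant labellings with the induced ordering; the empty graph (the unit of $\sGoneL{}{}$) acts by the identity since it contributes no edges and no coordinates. The main obstacle, the failure of $C(n)_*$ to carry a product sending a pair of simplices to a simplex of $\Delta_{n+n'}$, is circumvented precisely by restricting the right-hand factor to the image of the inclusion $\sGoneL{}{} \hookrightarrow \sToneL{}{}$ of Example \ref{inclusioneg}; this is why $\sToneL{}{}$ is naturally a module rather than a subalgebra.
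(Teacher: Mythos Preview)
Your proof is correct and follows essentially the same approach as the paper: use the inclusion of Example \ref{inclusioneg} to identify $G' \in \sGoneL{}{}$ with a constant-labelled augmented graph, and observe that multiplying by a simplicial-dimension-$0$ factor keeps the product inside $\sToneL{}{}$ because $\sigma \times \sigma_{G'}$ still factors through a face of the appropriate simplex. The paper's proof is terser---it only records that $(G\cdot G', d_{I''*}\sigma_m)$ lands in the correct graded piece---whereas you additionally verify the three admissibility conditions of Definition \ref{topoadmisconds} and the module axioms explicitly; these checks are routine and the paper leaves them implicit, but your inclusion of them is harmless and arguably more complete.
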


\begin{proof}
As done in example \ref{inclusioneg}, write $G \in
\sGoneL{\bullet}{\star}$, as $(G, \sigma) \in
\sToneL{\bullet}{2\bullet}$. For $n= 2\bullet - \star$, write $\sigma = d_{I*}\sigma_0 \in C(n)_0$. Consider and augmented graph $(G', \sigma') \in \sToneL{\bullet'}{2\bullet'-m}$ with $\sigma' = d_{I'*}\sigma_m \in C(2\bullet' - \star')_m$.

In general, we cannot write $(G, \sigma)(G', \sigma')= (G G', \sigma \times \sigma')$ as an element in $\sToneL{}{}$. However, since $\sigma \in C(n)_0$, we can rewrite this as $(G G', d_{(I'')*}\sigma_m)$, where $|I''|= |n+n'-m|$. This is an augmented graph.

Therefore, the product of a non-topologically augmented graph, $G \in \sGoneL{\bullet}{\star}$ with an augmented one $(G', \sigma') \in \sToneL{\bullet'}{2\bullet'-m}$, \bas (G,
\sigma_0) \cdot (G', \sigma') = (G \cdot G', (\sigma_0,\sigma'))
\in \sToneL{\bullet+\bullet'}{2(\bullet + \bullet') -m}\;. \eas

This gives the module structure.
\end{proof}

The vector space $\sToneL{\bullet}{2\bullet-*}$ is a bigraded vector space. We may write \bas \sToneL{}{} = \bigoplus_{\begin{subarray}{c}0 \leq \bullet \\ 0\leq  * \leq n\end{subarray}} \sToneL{\bullet}{2\bullet-*}\;. \eas

Finally, we consider $\sToneL{\bullet}{2\bullet - *}$ as a complex. The module
 has two natural differential structures on it, induced by the topological differential $\delta$ on $\Delta_n$, and the algebraic differential $\D$ on $\sGoneL{}{}$. Before defining these explicitly and the associated bicomplex structure on augmented graphs, it is necessary to introduce a shifted vector space, $\sTt$.

\begin{dfn} For $(G, \sigma) \in \sToneL{}{}$ we define a twisted module $\sTt$, where the grading of each element is shifted from that of $\sToneL{}{}$ by the dimension of the range of $\sigma$, i.e the number of edges of the graph $G$. That is, for $G \in \sGoneL{\bullet}{\star}$ and $(G, \sigma) \in \sToneL{\bullet}{2\bullet -*}$, the same element is in $\sToneLt{\bullet}{\star_t} : = \sToneL{\bullet}{2\bullet-*- n}$, for $n = 2\bullet -\star$. Henceforth define a topologically twisted degree $\star_t:= \star - *$ to be the difference between the algebraic degree and topological dimension. Write \bas \sTt= \bigoplus_{\bullet, \star_t} \sToneLt{\bullet}{\star_t}\;.\eas\end{dfn}

For $\sigma_m \in C(n)_m$, write $\sigma_m = d_{I*} \sigma'$ for some $\sigma' \in C(m)_m$. The topological
differential, $\delta$ is induced by the differential on the chain $C(n)_m$, defined in equation
\eqref{deltaidef}: \ba \delta: \sToneLt{\bullet}{\star_t} & \rightarrow
\sToneLt{\bullet}{\star_t +1} \nonumber \\ (G, \sigma_m) & \rightarrow \sum_{i=0}^m
(-1)^{i} (G, \delta_i\sigma_m) \;. \label{deltagraphs}\ea This is a degree one differential operator on $\sToneL{}{}$.

The algebraic differential $\D$ is induced from the differential $\D$ on $\sGoneL{}{}$. On $\sTt$, vertex rescaling is a direct generalization of rescaling on $\sGoneL{}{}$, allowing one to rescale by functions $\sigma \in C^\infty(|E(G)|, |E(G)|)$. For $s_e$ and $t_e$ the source and terminal vertices of $e \in G$, write \bas (\D_e \sigma)_{\omega(e')} = \begin{cases} 1 & \textrm{if } e = e'\\ \sigma_{\omega(e')} & \textrm{if } s_e \textrm{ not a vertex of } e'\\ \sigma_{\omega(e')}\sigma_{\omega(e)} & \textrm{if } s_e = t_{e'} \\\sigma_{\omega(e')}/\sigma_{\omega(e)}  & \textrm{if } s_e = s_{e'}\end{cases} \eas as one expects from vertex rescaling and definition \ref{Desourcedef}. Then \bas \D : \sToneLt{\bullet}{\star_t} & \rightarrow
\sToneLt{\bullet}{\star_t+1} \nonumber \\ (G, \sigma_m) & \rightarrow \sum_{e\in E(G)}
(-1)^{\omega(e)} (\D_eG, \D_e\sigma) \; \eas which is a degree one differential operator on $\sTt$.

The topologically augmented graphs correspond to the vector space of topologically supported admissible algebraic cycles $\mathcal{Z}_{top}^{\bullet}(\Delta_*, \square^{2\bullet - \star})$.

\begin{lem}The map $Z$ defined in Section \ref{homomorphism} extends to a module homomorphism \bas Z: \sToneLt{}{} \rightarrow \sZ_{top}\;, \eas as defined in Definition \ref{Ztopdfn}. \label{Zextend}\end{lem}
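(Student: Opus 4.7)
The plan is to extend the parametrization recipe of Definition \ref{maptocycle} fibrewise over the simplex, and then to verify that the resulting construction lands in $\sZ_{top}$ and respects the module structure.

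First I would define the extension on generators. Given $(G,\sigma) \in \sToneLt{\bullet}{\star_t}$ with $\sigma = d_{I*}\sigma_m \in C(2\bullet-\star)_m$, attach to each edge $e \in E(G)$ the rational function $\sigma_{\omega(e)}(t)$ on $\Delta_m$ in place of the scalar label $a_e$, and apply the same parametrization as in Definition \ref{maptocycle}. Concretely, this produces a smooth map $\tilde\sigma: \Delta_m \times \bP^{|V(G)|-1} \to (\bP^1)^{|E(G)|}$, whose $\omega(e)^{\textrm{th}}$ coordinate is $\bigl(1 - \tfrac{x_{s(e)}}{\sigma_{\omega(e)}(t)\, x_{t(e)}}\bigr)^{\sgn_{\omega(e)}}$. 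Intersecting the image with $\square^{|E(G)|}$ and alternating defines the cycle $Z(G,\sigma)$, supported on the image of the composite $\Delta_m \to \bP^{|V(G)|-1} \times \Delta_m \to (\bP^1)^{|E(G)|}$ obtained by fixing the $\bP$-component generically.

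Next I would verify that $Z(G,\sigma) \in \sZ_{top}$. Codimension follows exactly as in Theorem \ref{Zhomo}: since $G$ is connected with $h_1(G)=\bullet$, the parametric map cuts out $\bullet$ independent polynomial conditions by Theorem \ref{systemeqthm}, with the labels now functions on $\Delta_m$. Admissibility is the content of Definition \ref{topoadmisconds}: condition (1) guarantees that the augmented graph $(G,\delta_J\sigma)$ remains admissible after any degeneration to a face, so by Theorem \ref{admissible} each face intersection has the correct codimension; condition (2) ensures no spurious lower-dimensional locus arises from the $0/\infty$ behavior of the labels; and condition (3) guarantees the resulting topological chain meets the face structure of $\square^{2\bullet-\star}$ properly. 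The total degree book-keeping follows from the twist $\star_t = \star - *$, which matches the grading of $\sZ_{top}^\bullet(\Delta_*,\square^{2\bullet-\star})$ in Definition \ref{Ztopdfn}.

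Finally I would check that $Z$ respects the module action described in Proposition \ref{graphmodule}. For $G \in \sGoneL{}{}$ viewed as $(G,\sigma_0)$ with $\sigma_0$ constant, and $(G',\sigma') \in \sToneLt{}{}$, the product $(G,\sigma_0)\cdot(G',\sigma')$ has parametrizing map given by the disjoint union of the two parametrizations, so the pointwise construction factors through the product structure on $\sZ_{top}$ induced from $\Asm$. Equivalences $\simord$, $\simv$, $\simori$ pass to the topologically augmented setting without change, since the invariances proved in Theorem \ref{vertexscalesamecycle} and Theorem \ref{orihomo} are algebraic identities in the edge labels that hold verbatim when the labels are functions on $\Delta_m$. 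The main obstacle, and the only real content beyond bookkeeping, is verifying admissibility of the image cycle on every face; this is where Definition \ref{topoadmisconds} has been engineered precisely to make the graphical criterion of Theorem \ref{admissible} apply uniformly in the simplicial variable.
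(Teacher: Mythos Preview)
Your approach is essentially the same as the paper's: define $Z$ on generators by replacing the constant edge labels in the parametrization recipe of Definition~\ref{maptocycle} with the functions $\sigma_{\omega(e)}(t)$, and then invoke the graphical admissibility criterion (Theorem~\ref{admissible}) together with conditions (1) and (2) of Definition~\ref{topoadmisconds} to see that the image lands in $\sZ_{top}$. The paper's proof is briefer --- it does not spell out the module-homomorphism check or the passage of $\simord,\simv,\simori$ to the augmented setting --- so your extra paragraphs are additions rather than a different route. One small misattribution: condition (3) of Definition~\ref{topoadmisconds} is not used here for admissibility; it is reserved for the acyclicity statement in the lemma immediately following, so your gloss of it as ``meeting the face structure properly'' is off, though this does not affect the argument.
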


\begin{proof}
Each edge of the augmented graph $(G, \sigma_m)$ defines a coordinate $\phi_{\omega(e)} = \onem{s_e}{\sigma_{m, \omega(e)} t_e}$, where $s_e$ and $t_e$ are the source and the target vertices of the edge $e$ as usual. Then $\phi = \Alt (\phi_1, \ldots , \phi_n)$ parametrizes an algebraic cycles supported on an $m$ simplex in $\square^n$.

It remains to check that $Z(G, \sigma_m)$ is an admissible topologically supported cycle. By definition \ref{topoadmisconds}, the loop number of any loop in $(G, \sigma_m)$ is not one almost everywhere in $\Delta_m$ or on any of its faces. If $\sigma_{\omega(e)}(t) = 0$ for some $t \in \sigma_m$, then the cycle $Z(G, \sigma_m(t))$ is trivial, as the corresponding coordinate is $1$. Therefore, by condition 2 of definition \ref{topoadmisconds}, if there is some $t \in \sigma_m$ and an edge $e \in E(G)$, $Z(G, \sigma_m(t))$ is trivial. Therefore, by Theorem \ref{admissible}, $Z(G, \sigma_m)$ is admissible almost everywhere on $\Delta_m$.
\end{proof}

The third condition in definition \ref{topoadmisconds} gives rise to the following statement.

\begin{lem}
The image of $\sTt$ under $Z$ is an acyclic chain complex under $\delta$.
\end{lem}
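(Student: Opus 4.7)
The plan is to produce an explicit contracting chain homotopy on $Z(\sTt)$, from which acyclicity follows immediately.

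The first ingredient is a vanishing principle: because $\square = \bP^1 \setminus \{1\}$, the augmented cycle $Z(G, \sigma_m)$ vanishes as soon as some coordinate of $\sigma_m$ is identically $\infty$, since then the corresponding parametrizing factor collapses to $1 - s_e/(\infty \cdot t_e) = 1$, a value excluded from $\square$. Consequently, in the boundary
\[
\delta Z(G, \sigma_m) = \sum_{i=0}^{m} (-1)^{i} Z(G, \delta_i \sigma_m),
\]
only those faces on which no coordinate of $\sigma_m$ hits $\infty$ contribute nontrivially. Condition (3) of Definition \ref{topoadmisconds} guarantees that at least one such face always exists for any $(G,\sigma_m) \in \sTt$.

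Using this, I would define a contraction $h$ of simplicial degree $-1$ by the standard cone construction: given a generator $Z(G, \sigma_m)$, extend $\sigma_m$ to $\tilde{\sigma}_{m+1}$ whose face opposite a new cone vertex is $\sigma_m$ and whose new vertex is the constant admissible labeling of $G$ itself (the inclusion $\sGoneL{}{} \hookrightarrow \sTt$ of Example \ref{inclusioneg}), then set $h(Z(G, \sigma_m)) := Z(G, \tilde{\sigma}_{m+1})$. The usual cone identity for the simplicial boundary, combined with the vanishing principle above, then yields $\delta h + h\delta = \mathrm{id}$ in positive simplicial degrees (with a routine adjustment via augmentation in degree zero), which immediately implies that $(Z(\sTt),\delta)$ is acyclic.

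The principal technical obstacle is verifying that $\tilde{\sigma}_{m+1}$ genuinely lies in $\sTt$, i.e.\ remains admissible in the sense of Definition \ref{topoadmisconds}. This reduces to checking (a) that no loop of $(G, \tilde{\sigma}_{m+1})$ acquires a trivial loop coefficient on a dense open subset of $\Delta_{m+1}$, (b) that the zero-implies-$\infty$ condition persists on every face of the cone, and (c) that condition (3) is preserved. All three should follow from the fact that the cone vertex is a constant admissible labeling for $G$ and that condition (3) for $\sigma_m$ propagates to the face of $\tilde{\sigma}_{m+1}$ opposite the new vertex. Once admissibility of the cone is established, the contracting-homotopy identity is the standard computation, and acyclicity follows.
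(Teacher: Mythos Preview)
Your approach is genuinely different from the paper's. The paper does not build a contracting homotopy at all; it argues directly from condition~(3) of Definition~\ref{topoadmisconds}. The observation is exactly your vanishing principle: if any coordinate of $\delta_i\sigma$ is identically $\infty$, then $Z(G,\delta_i\sigma)$ is the trivial cycle. Condition~(3) was imposed precisely to guarantee that for every $(G,\sigma)\in\sTt$ at least one face term $Z(G,\delta_i\sigma)$ survives, so that $\delta Z(G,\sigma)\neq 0$ on every generator. The paper takes this nonvanishing of $\delta$ on generators as the content of acyclicity in the image (implicitly leaning on the Bloch--Kriz/Kimura framework for the full statement). So the paper's proof is essentially a one-line unpacking of why condition~(3) is there, rather than an independent homological argument.

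Your cone-to-the-constant-labeling strategy is more self-contained and would yield a cleaner acyclicity statement if it goes through, but the gap you flag is real and not merely cosmetic. Condition~(3) for the cone is indeed automatic, since the face at the cone vertex is the constant admissible labeling and has no $\infty$ coordinate. Condition~(2), however, is delicate: if at some $t\in\Delta_m$ the map $\sigma_m$ has one coordinate equal to $0$ and another equal to $\infty$, then along the cone direction toward the constant labeling there is no reason the $\infty$-coordinate and the $0$-coordinate leave their respective values simultaneously; a naive straight-line cone in $(\bP^1)^n$ can easily produce a point with a $0$ coordinate and no $\infty$ coordinate, violating~(2). Similarly, condition~(1) on loop coefficients can fail on a codimension-one slice of the cone even if it holds at both ends. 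So to make your argument work you would need a carefully engineered cone (e.g.\ reparametrising so that all $\infty$'s persist until every $0$ has disappeared), and verifying that such a construction stays inside $\sTt$ is essentially the whole content of the lemma. The paper sidesteps this by baking the needed property into the definition via condition~(3).
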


\begin{proof}
Equation \eqref{deltagraphs}, shows that $\sTt$ is a chain complex on under $\delta$. In particular \bas \delta: \sToneLt{\bullet}{\star_t} \rightarrow \sToneLt{\bullet}{\star_t+1} \;.\eas The third condition of definition \ref{topoadmisconds} imposes acyclicity. By Lemma \ref{Zextend}, if $\delta_i \sigma$ has a coordinate set at $\infty$, then $Z(G, \delta_i \sigma)$ is a trivial cycle. Requiring that there is some face of $\Delta_\bullet$ such that $(\delta_i \sigma)_{\omega(e)} \neq \infty$ for all $e \in E(G)$ implies that $\delta Z(G, \sigma) \neq 0$. That is, that the image of $Z(\sTt)$ is an acyclic chain complex under $\delta$.
\end{proof}

\begin{eg} \label{necklacediffeg}
In this example, we augment the sum of graphs $\varepsilon^n(a_0, \ldots a_n)$ defined in equation \ref{necklacedecompsum} by a $2$ dimensional support, as in in example \ref{necklaceinT1L}. First, recall notation from Definition \ref{setdfns}. Writing $\bm{n} = \{1\ldots n\}$, define an $n$-tuple $\bm{a_n} = (a_1, \ldots ,a_n)$. Similarly, for any $S \subset \bm{n}$, write $\bm{a}_{\bm{n} \setminus S} = (a_1, \ldots, \hat{a}_S, \ldots ,a_n)$ to be the same $n$-tuple with the elements $\{a_s| s \in S\}$ removed. Then write the augmented sum of graphs \bas (\varepsilon^n, \sigma(a_0, \bm{a_n})_2) = {\begin{xy}
(-10,5) *{\bullet}="x",
(10,5) *{\bullet}= "y",
(-10, -5) *{\bullet}= "z",
(10, -5) *{\bullet}= "w",
"y"; "x" **{\dir{-}}?/0pt/*{\dir{>}}+(0,3)*{\frac{a_0}{t_n+1}},
"x"; "z" **\crv{+(-5,5)}?/0pt/*{\dir{>}}+(-3,1)*{a_1},
"x"; "z" **\crv{+(5,5)}?/0pt/*{\dir{<}}+(3, -1)*{1},
"y"; "w" **\crv{+(-5,5)}?/0pt/*{\dir{>}}+(-2,1)*{1},
"y"; "w" **\crv{+(5,5)}?/0pt/*{\dir{<}}+(5, -1)*{\frac{a_{n}t_{n+1}}{t_n}},
"z"; "w" **\crv{~**\dir{..}+(-10,-10)},
\end{xy}} - {\begin{xy}
(-10,5) *{\bullet}="x",
(10,5) *{\bullet}= "y",
(-10, -5) *{\bullet}= "z",
(10, -5) *{\bullet}= "w",
"y"; "x" **{\dir{-}}?/0pt/*{\dir{<}}+(0,3)*{\frac{1}{a_0 t_n+1}},
"x"; "z" **\crv{+(-5,5)}?/0pt/*{\dir{>}}+(-3,1)*{a_1},
"x"; "z" **\crv{+(5,5)}?/0pt/*{\dir{<}}+(3, -1)*{1},
"y"; "w" **\crv{+(-5,5)}?/0pt/*{\dir{>}}+(-2,1)*{1},
"y"; "w" **\crv{+(5,5)}?/0pt/*{\dir{<}}+(5, -1)*{\frac{a_{n}t_{n+1}}{t_n}},
"z"; "w" **\crv{~**\dir{..}+(-10,-10)},
\end{xy}} \;.\eas Here $\sigma(a_0, \bm{a}_{\bm{n\setminus S}})_2 \in C(n -|S|+1 )_2$ is a labeling on the decomposable sum of $n -|S|$ beaded necklace.

Then the topological differential is \bas \delta(\varepsilon^n, \sigma(a_0, \bm{a_n})_2) =  (-1)^0({\begin{xy}
(-10,5) *{\bullet}="x",
(10,5) *{\bullet}= "y",
(-10, -5) *{\bullet}= "z",
(10, -5) *{\bullet}= "w",
"y"; "x" **{\dir{-}}?/0pt/*{\dir{>}}+(0,3)*{\frac{a_0}{t_n+1}},
"x"; "z" **\crv{+(-5,5)}?/0pt/*{\dir{>}}+(-3,1)*{a_1},
"x"; "z" **\crv{+(5,5)}?/0pt/*{\dir{<}}+(3, -1)*{1},
"y"; "w" **\crv{+(-5,5)}?/0pt/*{\dir{>}}+(-2,1)*{1},
"y"; "w" **\crv{+(5,5)}?/0pt/*{\dir{<}}+(5, -1)*{\frac{a_{n}t_{n+1}}{0}},
"z"; "w" **\crv{~**\dir{..}+(-10,-10)},
\end{xy}} - {\begin{xy}
(-10,5) *{\bullet}="x",
(10,5) *{\bullet}= "y",
(-10, -5) *{\bullet}= "z",
(10, -5) *{\bullet}= "w",
"y"; "x" **{\dir{-}}?/0pt/*{\dir{<}}+(0,3)*{\frac{1}{a_0t_n+1}},
"x"; "z" **\crv{+(-5,5)}?/0pt/*{\dir{>}}+(-3,1)*{a_1},
"x"; "z" **\crv{+(5,5)}?/0pt/*{\dir{<}}+(3, -1)*{1},
"y"; "w" **\crv{+(-5,5)}?/0pt/*{\dir{>}}+(-2,1)*{1},
"y"; "w" **\crv{+(5,5)}?/0pt/*{\dir{<}}+(5, -1)*{\frac{a_{n}t_{n+1}}{0}},
"z"; "w" **\crv{~**\dir{..}+(-10,-10)},
\end{xy}}) \\ (-1)^1( {\begin{xy}
(-10,5) *{\bullet}="x",
(10,5) *{\bullet}= "y",
(-10, -5) *{\bullet}= "z",
(10, -5) *{\bullet}= "w",
"y"; "x" **{\dir{-}}?/0pt/*{\dir{>}}+(0,3)*{\frac{a_0}{t_n+1}},
"x"; "z" **\crv{+(-5,5)}?/0pt/*{\dir{>}}+(-3,1)*{a_1},
"x"; "z" **\crv{+(5,5)}?/0pt/*{\dir{<}}+(3, -1)*{1},
"y"; "w" **\crv{+(-5,5)}?/0pt/*{\dir{>}}+(-2,1)*{1},
"y"; "w" **\crv{+(5,5)}?/0pt/*{\dir{<}}+(5, -1)*{a_{n}},
"z"; "w" **\crv{~**\dir{..}+(-10,-10)},
\end{xy}} - {\begin{xy}
(-10,5) *{\bullet}="x",
(10,5) *{\bullet}= "y",
(-10, -5) *{\bullet}= "z",
(10, -5) *{\bullet}= "w",
"y"; "x" **{\dir{-}}?/0pt/*{\dir{<}}+(0,3)*{\frac{1}{a_0t_n+1}},
"x"; "z" **\crv{+(-5,5)}?/0pt/*{\dir{>}}+(-3,1)*{a_1},
"x"; "z" **\crv{+(5,5)}?/0pt/*{\dir{<}}+(3, -1)*{1},
"y"; "w" **\crv{+(-5,5)}?/0pt/*{\dir{>}}+(-2,1)*{1},
"y"; "w" **\crv{+(5,5)}?/0pt/*{\dir{<}}+(5, -1)*{a_{n}},
"z"; "w" **\crv{~**\dir{..}+(-10,-10)},
\end{xy}})  \\ (-1)^2({\begin{xy}
(-10,5) *{\bullet}="x",
(10,5) *{\bullet}= "y",
(-10, -5) *{\bullet}= "z",
(10, -5) *{\bullet}= "w",
"y"; "x" **{\dir{-}}?/0pt/*{\dir{>}}+(0,3)*{a_0},
"x"; "z" **\crv{+(-5,5)}?/0pt/*{\dir{>}}+(-3,1)*{a_1},
"x"; "z" **\crv{+(5,5)}?/0pt/*{\dir{<}}+(3, -1)*{1},
"y"; "w" **\crv{+(-5,5)}?/0pt/*{\dir{>}}+(-2,1)*{1},
"y"; "w" **\crv{+(5,5)}?/0pt/*{\dir{<}}+(5, -1)*{\frac{a_{n}}{t_n}},
"z"; "w" **\crv{~**\dir{..}+(-10,-10)},
\end{xy}} - {\begin{xy}
(-10,5) *{\bullet}="x",
(10,5) *{\bullet}= "y",
(-10, -5) *{\bullet}= "z",
(10, -5) *{\bullet}= "w",
"y"; "x" **{\dir{-}}?/0pt/*{\dir{>}}+(0,3)*{\frac{1}{a_0}},
"x"; "z" **\crv{+(-5,5)}?/0pt/*{\dir{>}}+(-3,1)*{a_1},
"x"; "z" **\crv{+(5,5)}?/0pt/*{\dir{<}}+(3, -1)*{1},
"y"; "w" **\crv{+(-5,5)}?/0pt/*{\dir{>}}+(-2,1)*{1},
"y"; "w" **\crv{+(5,5)}?/0pt/*{\dir{<}}+(5, -1)*{\frac{a_{n}}{t_n}},
"z"; "w" **\crv{~**\dir{..}+(-10,-10)},
\end{xy}}) \;.\eas  The first two terms in this sum correspond to trivial graphs.

Recall from Section \eqref{G0dfn} that $G_0 (a)= \Go{a}$ is the graph with a single edge and a single loop labelled by $a$.

The algebraic differential on this graph is \bmls \D (\varepsilon^n, \sigma(a_0, \bm{a_n})_2) = \sum_{i=1}^{n-1} (\varepsilon^{n-1}, \sigma(a_0, \bm{a}_{\bm{n} \setminus i})_2) - (\varepsilon^{n-1}, \sigma(a_0a_i, \bm{a}_{\bm{n} \setminus i})_2)G_0(a_i) \\ + \left( {\begin{xy}
(-8,5) *{\bullet}="x",
(8,5) *{\bullet}= "y",
(-8, -5) *{\bullet}= "z",
(8, -5) *{\bullet}= "w",
"y"; "x" **{\dir{-}}?/0pt/*{\dir{>}}+(0,3)*{\frac{a_0}{t_n}},
"x"; "z" **\crv{+(-5,5)}?/0pt/*{\dir{>}}+(-3,1)*{a_1},
"x"; "z" **\crv{+(5,5)}?/0pt/*{\dir{<}}+(3, -1)*{1},
"y"; "w" **\crv{+(-5,5)}?/0pt/*{\dir{>}}+(-2,1)*{1},
"y"; "w" **\crv{+(5,5)}?/0pt/*{\dir{<}}+(5, -1)*{a_{n-1}},
"z"; "w" **\crv{~**\dir{..}+(-10,-10)},
\end{xy}} - {\begin{xy}
(-8,5) *{\bullet}="x",
(8,5) *{\bullet}= "y",
(-8, -5) *{\bullet}= "z",
(8, -5) *{\bullet}= "w",
"y"; "x" **{\dir{-}}?/0pt/*{\dir{<}}+(0,3)*{\frac{1}{a_0t_n}},
"x"; "z" **\crv{+(-5,5)}?/0pt/*{\dir{>}}+(-3,1)*{a_1},
"x"; "z" **\crv{+(5,5)}?/0pt/*{\dir{<}}+(3, -1)*{1},
"y"; "w" **\crv{+(-5,5)}?/0pt/*{\dir{>}}+(-2,1)*{1},
"y"; "w" **\crv{+(5,5)}?/0pt/*{\dir{<}}+(5, -1)*{a_{n-1}},
"z"; "w" **\crv{~**\dir{..}+(-10,-10)},
\end{xy}} \right. \\ \left. - {\begin{xy}
(-8,5) *{\bullet}="x",
(8,5) *{\bullet}= "y",
(-8, -5) *{\bullet}= "z",
(8, -5) *{\bullet}= "w",
"y"; "x" **{\dir{-}}?/0pt/*{\dir{>}}+(0,3)*{\frac{a_0a_n}{t_n}},
"x"; "z" **\crv{+(-5,5)}?/0pt/*{\dir{>}}+(-3,1)*{a_1},
"x"; "z" **\crv{+(5,5)}?/0pt/*{\dir{<}}+(3, -1)*{1},
"y"; "w" **\crv{+(-5,5)}?/0pt/*{\dir{>}}+(-2,1)*{1},
"y"; "w" **\crv{+(5,5)}?/0pt/*{\dir{<}}+(5, -1)*{a_{n-1}},
"z"; "w" **\crv{~**\dir{..}+(-10,-10)},
\end{xy}} + {\begin{xy}
(-8,5) *{\bullet}="x",
(8,5) *{\bullet}= "y",
(-8, -5) *{\bullet}= "z",
(8, -5) *{\bullet}= "w",
"y"; "x" **{\dir{-}}?/0pt/*{\dir{<}}+(0,3)*{\frac{1}{a_0a_nt_n}},
"x"; "z" **\crv{+(-5,5)}?/0pt/*{\dir{>}}+(-3,1)*{a_1},
"x"; "z" **\crv{+(5,5)}?/0pt/*{\dir{<}}+(3, -1)*{1},
"y"; "w" **\crv{+(-5,5)}?/0pt/*{\dir{>}}+(-2,1)*{1},
"y"; "w" **\crv{+(5,5)}?/0pt/*{\dir{<}}+(5, -1)*{a_{n-1}},
"z"; "w" **\crv{~**\dir{..}+(-10,-10)},
\end{xy}} \right)G_0(a_n) \;.\emls Due to the form of the augmentation, $\sigma(n+1)_2$, chosen in this example, we may write the second line above as \bas (\varepsilon^{n-1}, \delta_{n-1}\sigma(a_0, \bm{a_{n-1}})_2) - (\varepsilon^{n-1}, \delta_{n-1}(a_0a_n, \bm{a_{n-1}})_2)\;.\eas
\end{eg}

\subsection{The comodule $J$}

We are now ready to define the Hodge comodule $J$.

First we build the circular bar construction $B(\sTt,\sGoneL{}{}, \Q)$. In the sequel, we take the last entry as given, and simply write $\BsTsG$. As in \cite{BlochKriz, KM} and references
therein, we define the $\BsTsG$ on the tensor algebra $\sTt \otimes T(\sGoneL{}{})/ D(\sGoneL{}{})$ as in Definition \ref{Bardfn}.

Consider $(G_0, \sigma) \otimes G_1 \otimes \ldots \otimes G_k \in \BsTsG^k_w$ with $G_i \in \sGoneL{r_i}{w_i}$ for $0 \leq i \leq k$, and $\sigma \in C(r_0)_m$. The total degree of this bar element is $w = w_0 + \sum_{i=1}^k w_i -k-m$.

We define the bicomplex structure on it by  extending the
differentials \eqref{bardiff} and \eqref{barprod} for the bar
construction $(\BsG, \mu, \D)$.

As before, for $j >0$, write $\D_j$ to indicate the operator on $\BsTsG$ that acts as $\D$ on the $j^{th}$ tensor component of $T(\sGoneL{}{})$ and by $(-1)^{\bdeg G_i} (\id)$ on $\sTt$ and the first $j-1$ tensor components of $T(\sGoneL{}{})$ and by $\id$ on the rest. Here $\bdeg G_i$ refers to the \emph{graphical} bar degree of the component, excluding any topological considerations. Hence, for $(G_0, \sigma) \in \sToneLt{\bullet}{\star_t}$, with $\sigma \in C(\bullet)_*$, $\bdeg (G_0, \sigma) = \star_t + * -1 = \star -1$. Define $\D_0$ as $\D+ \delta$ on $\sTt$ and the identity on the other tensor components of the bar element. In this shifted notation, $\D_0$ is a degree one operator on $\sTt$.

For the product, with $(G_0, \sigma)$ as above, define $\mu_j$ as the degree one operator on $\BsTsG$ that acts by $(-1)^{\bdeg G_0 -m}\id$ on the $0^{th}$ tensor component and by $(-1)^{\bdeg G_i}\id$ on the next $(j-1)^{th}$ tensor components of $T(\sGoneL{}{})$, by $(-1)^{\bdeg G_j}\mu$ on the $j^{th}$ and $j+1^{th}$ components, and as the identity on the remaining elements. %In particular for $G_0 \in \sGoneL{\bullet}{\star}$ and $\sigma \in C(\bullet)_{m}$,  \bas \mu_0((G_0, \sigma)\otimes G_1 \otimes \ldots  \otimes G_n) = (-1)^{\bdeg G_0 + \bdeg G_1 - m}((G_0, \sigma_m)G_1 \otimes G_2 \otimes \ldots  \otimes G_n) \eas is the signed right module product structure on $\sTt \otimes \sGoneL{}{}$ and the identity on all other tensor components.

Then, in parallel to \eqref{barprod}, for $\sigma \in C(\bullet)_m$, write \bml \mu [(G_0, \sigma)| G_1 | \ldots |  G_n]  := \sum_{j=0} \mu_j  [ (G_0, \sigma)| G_1 | \ldots |  G_n] \\ = \sum_{j=0}^{n-1} (-1)^{(\sum_{i=0}^j \bdeg G_i)-m} [(G_0, \sigma)| G_1 | \ldots
  | G_j \cdot G_{j+1}| \ldots | G_n] \;. \label{barprodmod} \eml Similarly, in parallel to \eqref{bardiff}, write \bml \D [(G_0, \sigma)| G_1 | \ldots |  G_n]  := \sum_{j=0} \D_j  [ (G_0, \sigma)| G_1 | \ldots |  G_n] \\ = \sum_{j=0}^{n-1} (-1)^{\sum_{i=0}^{j-1} \bdeg G_i}  [(G_0, \sigma)| G_1 | \ldots
  | \D G_j | \ldots |  G_n] \;. \label{bardiffmod} \eml

In parallel to display {\eqref{barbit}}, we explicitly draw a few terms of  $(\BsTsG, \mu, \D)$. Here we use

\ba \xymatrix{ & \vdots & \vdots & \vdots & \vdots \\ \cdots \ar[r]^\mu &
  \BsTsG^3_0 \ar[r]^\mu \ar[u]^\D & \BsTsG^2_1 \ar[r]^\mu
  \ar[u]^\D & \BsTsG^1_2 \ar[r]^\mu \ar[u]^\D & \oplus_n\sToneLt{n}{3}\ar[u]^\D
  \\ \cdots \ar[r]^\mu & \BsTsG^3_{-1} \ar[r]^\mu \ar[u]^\D &
  \BsTsG^2_0 \ar[r]^\mu \ar[u]^\D & \BsTsG^1_1 \ar[r]^\mu
  \ar[u]^\D & \oplus_n\sToneLt{n}{2}\ar[u]^\D \\ \cdots \ar[r]^\mu & \BsTsG^3_{-2} \ar[r]^\mu
  \ar[u]^\D & \BsTsG^2_{-1} \ar[r]^\mu \ar[u]^\D & \BsTsG^1_0
  \ar[r]^\mu \ar[u]^\D & \oplus_n\sToneLt{n}{1} \ar[u]^\D \\ \cdots \ar[r]^\mu &
  \BsTsG^3_{-3} \ar[r]^\mu \ar[u]^\D & \BsTsG^2_{-2} \ar[r]^\mu
  \ar[u]^\D & \BsTsG^1_{-1} \ar[r]^\mu \ar[u]^\D & \oplus_n\sToneLt{n}{0} \ar[u]^\D\\ &
  \vdots \ar[u]^\D & \vdots \ar[u]^\D & \vdots
  \ar[u]^\D & \vdots
  \ar[u]^\D}  \label{cyclicbarbit}\ea

Recall that $\oplus_n\sToneLt{n}{i} = \BsTsG_{i}^0$.

\begin{dfn}
We may now define the comodule $J = H^0(\BsTsG)$.
\end{dfn}

The weight filtration, $W_{2r}= W_{2r-1}$, is induced by the algebraic weight (codimension) filtration on $\BsTsG$. Write \bas \BsTsG(r)= \bigoplus_{k \geq 0}\bigoplus_{\sum_{i=1}^k r_i = r}\sToneLt{s}{\star_t} \otimes \sGoneL{r_1}{\star} \otimes \ldots \otimes \sGoneL{r_k}{\star}\eas to be the tensor product of unaugemted graphs with total codimension $r$. That is, we may write \bas W_r (\BsTsG) =  \bigoplus_{q \leq r} \BsTsG(q) \;.\eas This induces the weight filtration on $J$ in the usual way, $gr_{2r}^W J = gr_{2r-1}^W J = H^0(\BsTsG(r))$. The Hodge filtration is given by $F^k J = \bigoplus_{r \geq k} H^0(\BsTsG(r))$.

Before continuing, we recall the evaluation map on graphs that is at the heart of the Hodge realization functor.

\begin{dfn}Define $\omega_n=\frac{1}{(2\pi
  i)^n} \frac{dz_1}{z_1} \wedge \ldots \wedge \frac{dz_n}{z_n}$  be the logarithmic $n$-form on $\square^N$. \label{formdef} \end{dfn}

\begin{dfn} For $(G,\sigma)\in
\sToneLt{\bullet}{\star_t}$, and $\sigma \in C(\bullet)_*$, we define an evaluation map $\sI$  \bas
\sI:\sToneL{}{} &\rightarrow \C \nonumber \\ (G,\sigma) &\rightarrow
 \int_{(G,\sigma)}\omega_{2\bullet - \star}  \;.\eas \label{evalr} \end{dfn}

This integral is only well defined if, for $\sigma \in C(\bullet)_m$, $m = 2 \bullet - \star$. However, since $ m \leq \bullet \leq 2 \bullet - \star$, this implies that $m = \bullet = \star$.

Explicitly, \bas \int_{(G,\sigma)}\omega_n = \int_{\Delta_n} \sigma_*(\omega_m) = \frac{1}{(2\pi i)^{m}}\int_{\Delta_m}\frac{d(1-\frac{1}{\sigma_1})}{1-\frac{1}{\sigma_1}} \wedge \ldots \wedge  \frac{d(1-\frac{1}{\sigma_m})}{1-\frac{1}{\sigma_m}}
\;, \eas where the ordering of the coordinates of $\sigma$ are given by the ordering of the edges of $G$.

We call $\sI (G,\sigma_m)$ the period associated to $(G,\sigma_m)$. The evaluation map induces a quasiisomorphism between $\BsTsG$ and $\BsG$: \ba \sI \otimes \id : \BsTsG & \rightarrow \C \otimes \BsG \nonumber \\ [(G_0, \sigma) | G_1 | \ldots | G_n ] & \rightarrow \sI (G_0, \sigma) [ G_1 | \ldots | G_n ] \;. \label{Iextend}\ea

Under this quasiisomorphism, the weight graded quotient $gr_{2r}^WJ = gr_{2r-1}^WJ$ is canonically isomorphic to $H^0(B(\sGoneL{}{}))(r)$.

\begin{rem}
The realization functor $\sI$ appears to depend on choices of simplices. However, in $H^(\BsTsG)$,it is well defined and independent of choice, as our complex is isomorphic to a subcomplex (via the equivalence with algebraic cycles) of the full realization map on the category of mixed Tate motives as defined in section 7 of \cite{BlochKriz}.
\end{rem}

\subsection{Hodge realization for necklace diagrams \label{necklacehodge}}

For the remainder of this paper, we study the Hodge realization of the
specific class $ \epclass \in H^0(\BsG)$. This is defined in Definition
\ref{epclass} by the sum of graphs
\bas \varepsilon^n (a_0, \bm{a_n}) =
{\begin{xy} (-10,5)
    *{\bullet}="x", (10,5) *{\bullet}= "y", (-10, -5) *{\bullet}= "z",
    (10, -5) *{\bullet}= "w", "y"; "x"
    **{\dir{-}}?/0pt/*{\dir{>}}+(0,2)*{a_0}, "x"; "z"
    **\crv{+(-5,5)}?/0pt/*{\dir{>}}+(-3.5,1)*{a_1}, "x"; "z"
    **\crv{+(5,5)}?/0pt/*{\dir{<}}+(3, -1)*{1}, "y"; "w"
    **\crv{+(-5,5)}?/0pt/*{\dir{>}}+(-2,1)*{1}, "y"; "w"
    **\crv{+(5,5)}?/0pt/*{\dir{<}}+(4, -1)*{a_n}, "z"; "w"
    **\crv{~**\dir{..}+(-10,-10)},
\end{xy}} - {\begin{xy}
(-10,5) *{\bullet}="x",
(10,5) *{\bullet}= "y",
(-10, -5) *{\bullet}= "z",
(10, -5) *{\bullet}= "w",
"y"; "x" **{\dir{-}}?/0pt/*{\dir{<}}+(0,3)*{\frac{1}{a_0}},
"x"; "z" **\crv{+(-5,5)}?/0pt/*{\dir{>}}+(-3.5,1)*{a_1},
"x"; "z" **\crv{+(5,5)}?/0pt/*{\dir{<}}+(3, -1)*{1},
"y"; "w" **\crv{+(-5,5)}?/0pt/*{\dir{>}}+(-2,1)*{1},
"y"; "w" **\crv{+(5,5)}?/0pt/*{\dir{<}}+(4, -1)*{a_n},
"z"; "w" **\crv{~**\dir{..}+(-10,-10)},
\end{xy}}\; \;.\eas  As always, $\bm{a_n}$ is the $n$-tuple $(a_1, \ldots , a_n)$ that labels the beads of the completely decomposable sum of necklace graphs. Section \ref{necklacehodge} calculates the period of $ [\bm{\varepsilon}^n(a_0, \bm{a_n})] \in H^0(\BsG)$ defined by this graph. In Section \ref{topobarelement}, we construct an element $[\bm{\xi}^n(a_0, \bm{a_n}) + 1 \otimes\bm{\varepsilon}^n(a_0, \bm{a_n})] \in H^0(\BsTsG)$ that defines the period. For ease
of notation, we drop the arguments $(a_0, \bm{a_n})$ whenever possible.

The current state of art for Hodge realization functor calculates the periods associated to elements of $H^0(B(A_{1L}))$ that can be represented by binary trees. See \cite{BlochKriz, Kimura} for cycles that map to classical polylogarithms, and \cite{GGL05, GGL07} for cycles that map to multiple polylogarithms. In this section, we compute the period associated to an algebraic cycle that is not in this small family of $\bP^1$ linear cycles.

\subsubsection{Corresponding element of $\BsTsG$\label{topobarelement}}

By Lemma \ref{epsiloncompdecomp}, the sum of graphs, $\varepsilon^n$, is completely decomposable. Therefore, by Lemma \ref{HBGelement}, the sum
  \bas \bm{\varepsilon}^n = \sum_{S
  \subset \bm{n} }(-1)^{|S|}\sum_{J \subseteq S} (-1)^{|J|}[\varepsilon^{n-|S|}(a_0\prod_{j \in J}a_j,  \bm{a}_{n\setminus S})| \sha_{s \in S} \varepsilon^0(a_s) ] \eas is a representative element
defining the class $[\epclass]\in H^0(\BsG)$.

In this section, we define an element $\bm{\xi^n} \in \bigoplus_{i=1}^{n+1}\BsTsG^i_{0}$ such that $\bm{\xi^n} + 1 \otimes \bm{\varepsilon^n}$ defines a class in $H^0(\BsTsG)$. Since $(\mu + \D) \bm{\varepsilon^n} = 0 $ in $\BsG$, we see that $(\mu + \D) 1 \otimes \bm{\varepsilon^n} = \bm{\varepsilon^n}$ seen as an element in $\bigoplus_{i=1}^{n}\BsTsG^i_{1}$. Here, as in example \ref{inclusioneg}, we write \bas \varepsilon^n = (\varepsilon^n, \sigma(a_0, \bm{a_n})_0) \in \sTt(n)^1 \;. \eas Therefore, it is sufficient to identify an element $ \bm{\xi^n}\in \bigoplus_{i=1}^{n+1}\BsTsG^i_{0}$ such that \ba (\D + \mu) \bm{\xi^n} = - \bm{\varepsilon^n} \;. \label{circcohomcond}\ea

The remainder of this section is devoted to identifying $\bm{\xi^n}$, which is a complicated sum of elements in the circular bar construction. We introduce it in stages, starting with the easiest to state, then breaking each sum into component pieces in order to demonstrate the appropriate properties. We state what criteria these summands need to satisfy, and provide proofs along the way.

Write $ \bm{\xi^n} = \sum_{k=0}^n (-1)^{k} \xi^{n-k}$ with $\xi^{n-k} \in \BsTsG^k_{0}$ defined as \bas \xi^{n-k} = \sum_{\begin{subarray}{c} S \subset \bm{n} \\ |S| = k\end{subarray}} \xi_{top}^{n-k}(a_0, \bm{a}_{\bm{n}\setminus S}) \otimes \sha_{i \in S} G_0(a_i) \;.\eas Here $\xi_{top}^{n-k}(a_0, \bm{a}_{\bm{n}\setminus S}) $ is a topologically augmented graph in $\sToneLt{n-k+1}{0}$ such that \bml (\D + \delta) \D\xi_{top}^{n-k}(a_0, \ldots, \hat{a}_S, \ldots  a_n) + \mu (\sum_{i \in \bm{n} \setminus S} \xi_{top}^{n-k-1}(a_0, \bm{a}_{\bm{n}\setminus S \cup i}) \otimes G_0(a_i)) \\ = - \varepsilon^{n-k} (a_0, \bm{a}_{\bm{n}\setminus S}) \;.\label{kthcohomcond}\eml This is the key condition that we prove explicitly in Theorem \ref{correctelement}.

In order to define $\xi_{top}^{n}$, we begin with a family of disconnected sums of unaugmented graphs \bas \xi^n_m (a_0, \bm{a_n})=  \varepsilon^{n-m}(a_0, \bm{a_{n-m}}) G_0(a_{n-m+1}) \ldots G_0(a_{n})\;.\eas  Each graph $\xi^n_m \in \sGoneL{n+1}{m+1}$ consists of $m+1$ connected components, with graphical degree $m+1$. We impose upon this family of graphs two topological augmentations $\sigma(a_0, \bm{a_n})_{m+1}$ and  $\rho(a_0, \bm{a_n})_{m+1} \in C(n+1)_{m+1}$ of the form  \bmls (\xi^n_m, \sigma(a_0, \bm{a_n})_{m+1}) = \left( {\begin{xy}
(-10,5) *{\bullet}="x",
(10,5) *{\bullet}= "y",
(-10, -5) *{\bullet}= "z",
(10, -5) *{\bullet}= "w",
"y"; "x" **{\dir{-}}?/0pt/*{\dir{>}}+(0,3)*{\frac{a_0}{t_{n-m+1}}},
"x"; "z" **\crv{+(-5,5)}?/0pt/*{\dir{>}}+(-3.5,0)*{a_1},
"x"; "z" **\crv{+(5,5)}?/0pt/*{\dir{<}}+(3, -1)*{1},
"y"; "w" **\crv{+(-5,5)}?/0pt/*{\dir{>}}+(-2,0)*{1},
"y"; "w" **\crv{+(5,5)}?/0pt/*{\dir{<}}+(10, -1)*{\frac{t_{n-m+1}a_{n-m}}{t_{n-m}}},
"z"; "w" **\crv{~**\dir{..}+(-10,-10)},
\end{xy}} - {\begin{xy}
(-10,5) *{\bullet}="x",
(10,5) *{\bullet}= "y",
(-10, -5) *{\bullet}= "z",
(10, -5) *{\bullet}= "w",
"y"; "x" **{\dir{-}}?/0pt/*{\dir{<}}+(0,4)*{\frac{t_{n-m+1}}{a_0 }},
"x"; "z" **\crv{+(-5,5)}?/0pt/*{\dir{>}}+(-3.5,0)*{a_1},
"x"; "z" **\crv{+(5,5)}?/0pt/*{\dir{<}}+(3, -1)*{1},
"y"; "w" **\crv{+(-5,5)}?/0pt/*{\dir{>}}+(-2,0)*{1},
"y"; "w" **\crv{+(5,5)}?/0pt/*{\dir{<}}+(10, -1)*{\frac{t_{n-m+1}a_{n-m}}{t_{n-m}}},
"z"; "w" **\crv{~**\dir{..}+(-10,-10)},
\end{xy}}\right)  \\ \Go{\frac{a_{n-m+1}t_{n-m+2}}{t_{n-m+1}}} \ldots \Go{\frac{a_n}{t_n} }\;,\emls and \bmls \left(\xi^n_m, \rho(a_0, \bm{a_n})_{m+1}) = ( {\begin{xy}
(-10,5) *{\bullet}="x",
(10,5) *{\bullet}= "y",
(-10, -5) *{\bullet}= "z",
(10, -5) *{\bullet}= "w",
"y"; "x" **{\dir{-}}?/0pt/*{\dir{>}}+(0,3)*{\frac{a_0}{t_{n-m+1}}},
"x"; "z" **\crv{+(-5,5)}?/0pt/*{\dir{>}}+(-3.5,0)*{a_1},
"x"; "z" **\crv{+(5,5)}?/0pt/*{\dir{<}}+(3, -1)*{1},
"y"; "w" **\crv{+(-5,5)}?/0pt/*{\dir{>}}+(-2,0)*{1},
"y"; "w" **\crv{+(5,5)}?/0pt/*{\dir{<}}+(6, -1)*{a_{n-m}},
"z"; "w" **\crv{~**\dir{..}+(-10,-10)},
\end{xy}} - {\begin{xy}
(-10,5) *{\bullet}="x",
(10,5) *{\bullet}= "y",
(-10, -5) *{\bullet}= "z",
(10, -5) *{\bullet}= "w",
"y"; "x" **{\dir{-}}?/0pt/*{\dir{<}}+(0,4)*{\frac{t_{n-m+1}}{a_0 }},
"x"; "z" **\crv{+(-5,5)}?/0pt/*{\dir{>}}+(-3.5,0)*{a_1},
"x"; "z" **\crv{+(5,5)}?/0pt/*{\dir{<}}+(3, -1)*{1},
"y"; "w" **\crv{+(-5,5)}?/0pt/*{\dir{>}}+(-2,0)*{1},
"y"; "w" **\crv{+(5,5)}?/0pt/*{\dir{<}}+(6, -1)*{a_{n-m}},
"z"; "w" **\crv{~**\dir{..}+(-10,-10)},
\end{xy}}\right)  \\ \Go{\frac{a_{n-m+1}t_{n-i+2}}{t_{n-m}}} \Go{\frac{a_{n-m+2}t_{n-m+3}}{t_{n-m+2}}} \ldots \Go{\frac{a_n}{t_n} }\;,\emls

Note that the only difference between the labeling $\sigma(a_0, \bm{a_n})_m$ and $\rho(a_0, \bm{a_n})_m$ is the label on the last bead of the first connected component, $\varepsilon^{n-m}(a_0, \bm{a_{n-m}})$, and that of the second connected component. This distinction is necessary for the appropriate cancellations between algebraic and topological differentials needed to satisfy condition \eqref{kthcohomcond}. Before writing down the expression for $\xi^{n-k}_{top}$, we introduce some further notation to simplify the expression.

\begin{comment}
By abuse of notation, we write these augmented graphs \bmls (\xi^n_m, \sigma(n+1)_{m+1}) (a_0, \ldots , a_n) = \varepsilon^{n-m} (\frac{a_0}{t_{n-m+1}}, a_1, \ldots , a_{n-m-1}, a_{n-m}\frac{t_{n-m+1}}{t_{n-m}}) \\ G_0(a_{n-m+1}\frac{t_{n-m+2}}{t_{n-m+1}}) \ldots G_0(\frac{a_n}{t_n})\;,\emls and \bmls (\xi^n_m, \rho(n+1)_{m+1}) (a_0, \ldots , a_n) = \varepsilon^{n-m} (\frac{a_0}{t_{n-m+1}}, a_1, \ldots , a_{n-m}) \\ G_0(a_{n-m+1}\frac{t_{n-m+2}}{t_{n-m}}) G_0(a_{n-m+2}\frac{t_{n-m+3}}{t_{n-m+2}}) \ldots G_0(\frac{a_n}{t_n})\;.\emls Define \bmls \xi^n^{top}(a_0, \ldots , a_n) = \sum_{m=0}^{n}\sum_{ J \subset \{n-m+1, \ldots n\}}(-1)^{|J|} (\xi^n_m, \sigma(n+1)_{m+1})(a_0\prod_{j \in J}a_j, \ldots , a_n) - \\ \sum_{m=1}^{n}\sum_{ I \subset \{n-m+2, \ldots n\}}(-1)^{|I|} (\xi^n_m, \rho(n+1)_{m+1})(a_0 \prod_{i \in I}a_i , \ldots , a_n)\;. \emls
\end{comment}

We define two new terms as sums of $\xi^n_m$ with variants of $\sigma$ and $\rho$: \ba \lambda^n_m(a_0, \bm{a_n}) = \sum_{ J \subset \{n-m+1, \ldots n\}}(-1)^{|J|} (\xi^n_m, \sigma(a_0\prod_{j \in J}a_j, \bm{a_n})_{m+1}) \label{lambdadef}\ea and \ba \chi^n_m(a_0,\bm{a_n}) = \sum_{ I \subset \{n-m+2, \ldots n\}}(-1)^{|I|} (\xi^n_m, \rho(a_0 \prod_{i \in I}a_i , \bm{a_n})_{m+1}) \label{chidef}\;.\ea Under this notation, we write \bas \xi^n_{top} = \sum_{m=0}^{n} \lambda^n_m - \sum_{m=1}^n \chi^n_m \;.\eas

Note that sum for $\chi^n_m$ starts at $m=1$ whilst the sum for $\lambda^n_m$ starts at $m=0$. Furthemore, the sets $I$ and $J$ differ. Namely, the first argument for $\rho$, augmenting $\chi^n_m$, never contains $a_{n-m+1}$, which this label appears in the first argument of $\sigma$ summands of $\lambda^n_m$. The terms $\lambda^n_m$, $\chi^n_m$ and $\chi^n_{top}$ are constructed such that the summands of the differentials of $\lambda^n_m$ cancel with terms in the differentials of $\chi^n_{m+1}$, and terms of the form $\sum_{i \in \bm{n}}\xi^{n-1}_{top}(a_0, \bm{a}_{\bm{n}\setminus i})$ leaving the term $\varepsilon^n$. This is how $\chi_{top}^{n-k}$ satisfies equation \eqref{kthcohomcond}. We show this cancellation explicitly in Theorem \ref{correctelement}.

The unaugmented graphs $\xi^n_m$ are in $\sGoneL{n+1}{m+1}$. Therefore the augmented graphs $\lambda^n_m$ and $\chi_n^m$ are in $\sTt(n+1)^{0}$. In  particular, $\xi^n_{top}$ is a sum of admissible augmented graphs. If $t_{n-m+k} = 0$, then $t_{n-m+i}=0$ for all $i <k$. Therefore, the edges labeled $\frac{a_0}{t_{n-m+1}}$ and $\frac{1}{a_0t_{n-m+1}}$ are labeled by $\infty$, making the graph $(\xi^n_m, \sigma(n+1)_{m+1})(a_0, \ldots a_n)$ trivial at this point.

It remains to check that $\bm{\xi^n}$ defined above satisfies the necessary conditions.

\begin{thm}
The element $\bm{\xi^n} + 1 \otimes \bm{\varepsilon^n} \in \bigoplus_{i=1}^{n+1}\BsTsG^i_{0}$ defines a class in $H^0(\BsTsG)$.
\label{correctelement}\end{thm}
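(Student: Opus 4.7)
The plan is to verify the cohomological identity~\eqref{kthcohomcond} for every $S\subset\bm{n}$ and every $k$, from which the theorem follows once the shuffle structure is accounted for.

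First I would use the fact, established in Lemma~\ref{HBGelement}, that $\bm{\varepsilon^n}$ is closed in $\BsG$, together with the shuffle identity $\mu(a\sha b)=0$ from~\eqref{shainker}, to reduce $(\D+\mu)(\bm{\xi^n}+1\otimes\bm{\varepsilon^n})=0$ to~\eqref{kthcohomcond}. The key observation is that inside the shuffle $\sha_{i\in S}G_0(a_i)$, all multiplicative differentials $\mu_j$ with $j\geq 1$ vanish, since each $G_0(a_i)$ has graphical degree one (hence zero bar-degree) and the algebra $\sGoneL{}{}$ is graded commutative. Therefore only $\mu_0$, linking $\xi^{n-k}_{top}$ with the leftmost shuffle factor, survives. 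Collecting terms by the subset that remains after one bead $G_0(a_i)$ is pulled from the shuffle, the global cocycle condition reduces, summand by summand, to~\eqref{kthcohomcond}.

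Next I would verify~\eqref{kthcohomcond} for each fixed $S$, writing $n':=n-|S|$ and expanding $\xi^{n'}_{top}=\sum_{m=0}^{n'}\lambda^{n'}_m-\sum_{m=1}^{n'}\chi^{n'}_m$. The algebraic differential $\D\lambda^{n'}_m$ is given by the augmented analogue of Lemma~\ref{epsiloncompdecomp}, as illustrated in Example~\ref{necklacediffeg}, plus the contributions from contracting edges of the $G_0$-beads (which vanish, since $\D G_0(a)=0$). The topological differential $\delta$ produces boundary contributions from the faces of the $(m+1)$-simplex. Most of these boundary faces are trivial by condition~(2) of Definition~\ref{topoadmisconds}: setting $t_{n'-m+1}=0$ makes the label $a_0/t_{n'-m+1}$ infinite, so the first connected component of $\xi^{n'}_m$ becomes trivial. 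The surviving boundary contributions come from the internal faces where consecutive simplex coordinates coincide, together with the top face where $t_{n'+1}=1$.

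The heart of the proof is a telescoping cancellation. The labels on $\sigma$ and $\rho$ differ only in the factor $t_{n'-m+1}/t_{n'-m}$ attached to the last bead of the $\varepsilon$-component versus the first $G_0$-bead; this difference is tailored so that the internal face of $\lambda^{n'}_m$ identifying $t_{n'-m+2}=t_{n'-m+1}$ cancels the corresponding face of $\chi^{n'}_{m+1}$, while the outer face of $\chi^{n'}_m$ cancels the multiplicative contribution $\mu_0\bigl(\xi^{n'-1}_{top}(a_0,\bm{a}_{\bm{n}\setminus(S\cup\{i\})})\otimes G_0(a_i)\bigr)$ coming from pairing with the rightmost $G_0$-factor. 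The alternating sums $\sum_J(-1)^{|J|}$ and $\sum_I(-1)^{|I|}$ built into the definitions of $\lambda^{n'}_m$ and $\chi^{n'}_m$ telescope under these cancellations, collapsing all intermediate contributions and leaving only the $J=\emptyset$ piece of $\lambda^{n'}_0$, whose simplex has degenerated to the constant map; this residual piece is exactly $-\varepsilon^{n'}(a_0,\bm{a}_{\bm{n}\setminus S})$, completing~\eqref{kthcohomcond}. The main obstacle will be the combinatorial sign bookkeeping: tracking the signs produced by the edge-orderings $\omega$, the $(-1)^{\bdeg}$ factors in the bar differentials~\eqref{bardiffmod} and~\eqref{barprodmod}, the face operators $\delta_i$ of~\eqref{deltaidef}, and the alternating sums over $I$, $J$, $S$, and $k$. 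I would organize the verification by downward induction on $k$: the base case $k=n$ reduces to the one-bead identity $(\D+\delta)\xi^0_{top}(a_0)=-\varepsilon^0(a_0)$, which is essentially the Totaro-cycle computation of Example~\ref{Totaroeg} transcribed into the graphical language via~\eqref{G0dfn}, and the inductive step propagates the telescoping outlined above through each level of the bar filtration.
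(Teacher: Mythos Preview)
Your approach is essentially the paper's: reduce the global cocycle condition to~\eqref{kthcohomcond}, then verify~\eqref{kthcohomcond} by computing $(\D+\delta)$ on $\lambda^n_m$ and $\chi^n_m$ separately and exhibiting the telescoping cancellations between them. The paper carries this out by writing down four explicit formulas, one for each of $\D\lambda^n_m$, $\D\chi^n_m$, $\delta\lambda^n_m$, $\delta\chi^n_m$ (equations~\eqref{algderiv1}--\eqref{topoderiv2}), and summing; your verbal description of which faces cancel which is the same mechanism.

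Two small organizational points. First, your downward induction on $k$ is unnecessary: the statement~\eqref{kthcohomcond} for a given $S$ with $|S|=k$ is literally the $k=0$ statement for the $(n-k)$-bead necklace with arguments $(a_0,\bm{a}_{\bm{n}\setminus S})$, so it suffices to prove~\eqref{kthcohomcond} once, for $k=0$ and arbitrary $n$. The paper simply notes this and proceeds directly. Second, your base case analogy is off: the $0$-bead necklace $\varepsilon^0(a_0)=G_0(a_0)-G_0(1/a_0)$ is a difference of two one-edge self-loops, not the Totaro cycle of Example~\ref{Totaroeg}, which lives in $\sA^2_1$ and involves a genuine parametrizing variable. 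The one-bead check you need is elementary, but it is not that example.
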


\begin{proof}
By the arguments presented in this section, it is sufficient to check that $\xi_{top}^n$ satisfies equation \eqref{kthcohomcond}. It is enough to show this for $k=0$.

We proceed by computing the four terms of $(\delta + \D) (\lambda^n_m - \chi^n_m)$ to show that \bas (\delta + \D)\xi_{top}^n = -\varepsilon^n -  \mu \left(\sum_{i \in \bm{n}} \xi^{n-1}_{top}(a_0, \bm{a}_{\bm{n}\setminus i}) \otimes G_0(a_i) \right)\;, \eas as required.

When $m = 0$, the graph $(\xi^n_0, \sigma(a_0, \bm{a_n})_1) = \lambda^n_0$ is augmented by a $1$ simplex with topological boundary \bas \delta \lambda^n_0 = -\delta^1\lambda^n_0 =  -\varepsilon^n \;. \eas

For more general $m$, the algebraic boundary of the augmented sum of graphs $\lambda^n_m$ is \bml \D \lambda^n_m = -\mu \left( \sum_{i = 1}^{n-m-1} \lambda^{n-1}_{m}(a_0, \bm{a}_{\bm{n}\setminus i})   - \lambda^{n-1}_{m}(a_0a_i, \bm{a}_{\bm{n}\setminus i}) \otimes G_0(a_i)\right) \\ + \delta^2 \chi^n_{m+1} (a_0, \bm{a_n}) - \delta^1 \chi^n_{m+1} (a_0a_{n-m}, \bm{a_n})  \label{algderiv1} \;.\eml The algebraic boundary of the augmented sum of graphs $\chi^n_m$ is \ba - \D \chi^n_m = \mu \left( \sum_{i = 1}^{n-m} \chi^{n-1}_{m}(a_0,\bm{a}_{\bm{n}\setminus i})  - \chi^{n-1}_{m}(a_0a_i, \bm{a}_{\bm{n}\setminus i}) \otimes G_0(a_i) \right) \label{algderiv2} \;.\ea

For $m \geq 1$, the topological boundary of the augmented sum of graphs $\lambda^n_m$ is \bml\delta \lambda^n_m =  - \delta^1 \chi^n_{m} (a_0, \bm{a_n}) + \delta^1 \chi_n^{m} (a_0a_{n-m+1}, \bm{a_n})  \\ - \mu \left( \sum_{i = n-m+1}^{n} \lambda_{n-1}^{m-1}(a_0, \bm{a}_{\bm{n}\setminus i}) \otimes G_0(a_i) \right) \label{topoderiv1} \;.\eml The topological boundary of the augmented sum of graphs $\chi_n^m$ is \bml  - \delta \chi_n^m = \delta^1 \chi_n^{m} (a_0, \bm{a_n}) - \delta^2 \chi_n^{m+1} (a_0, \bm{a_n}) \\ +\mu \left( \sum_{i = n-m+1}^{n} \chi_{n-1}^{m-1}(a_0, \bm{a}_{\bm{n}\setminus i})  \otimes G_0(a_i) \right) \label{topoderiv2} \;. \eml

Adding up equations \eqref{algderiv1}, \eqref{algderiv2}, \eqref{topoderiv1} and \eqref{topoderiv2}, we see that \bas (\delta + \D) \xi^n_{top} = -\varepsilon_n -  \mu \left(\sum_{i \in \bm{n}} \xi^{n-1}_{top}(a_0, \bm{a}_{\bm{n}\setminus i}) \otimes G_0(a_i) \right)\;, \eas which matches equation \eqref{kthcohomcond}.
\end{proof}

\subsubsection{Integrals associated to necklace diagrams}

This section is devoted to calculating the period associate to $\bm{\varepsilon}_n$. We show that for this is $0$ for $n \geq 1$.

By abuse of notation, in this section we write the augmented graphs $\lambda^n_m$ and $\chi^n_m$ as \bas \lambda^n_m= \varepsilon^{n-m} (\frac{a_0}{t_{n-m+1}}, \bm{a_{n-m-1}}, a_{n-m}\frac{t_{n-m+1}}{t_{n-m}}) G_0(a_{n-m+1}\frac{t_{n-m+2}}{t_{n-m+1}}) \ldots G_0(\frac{a_n}{t_n})\;,\eas and \bas \chi^n_m = \varepsilon^{n-m} (\frac{a_0}{t_{n-m+1}}, \bm{a_{n-m}}) G_0(a_{n-m+1}\frac{t_{n-m+2}}{t_{n-m}}) G_0(a_{n-m+2}\frac{t_{n-m+3}}{t_{n-m+2}}) \ldots G_0(\frac{a_n}{t_n})\;.\eas

\begin{comment}
We apply the map $\sI \otimes \id$ from equation \ref{Iextend} to the element $\bm{\xi}_n + 1 \otimes\bm{\varepsilon}_n$. This integral is only well defined when $m$, the simplicial dimension of the augmented graph is equal to $n$, the loop number of the graph. Therefore, \bas \sI \bm{\xi}_n = \sum_{k=0}^n (-1)^k\sum_{\begin{subarray}{c} S \subset \bm{n} \\ |S|=k\end{subarray}} \sI \left(\lambda_{n-k}^{n-k}(a_0 \ldots \hat{a}_S \ldots a_n) - \chi_{n-k}^{n-k}(a_0 \ldots \hat{a}_S \ldots a_n)\right)[\sha_{s \in S} G_0(a_s)] \;.\eas Since $\sI(1) = 1$, the evaluation map $(\sI \otimes \id )(1 \otimes\bm{\varepsilon}_n) = \bm{\varepsilon}_n$. \end{comment}

\begin{thm} The period associated to $\bm{\xi^n} + 1 \otimes \bm{\varepsilon^n}$ is $0$ for all $n$. Therefore, $\epclass \in H^0(\BsG)$ defines a trivial cohomology class. \end{thm}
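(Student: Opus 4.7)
The plan is to exploit the fact that the period of a weight-$(n+1)$ class in $H^0(\BsTsG)$ is computed by applying $\sI$ to those summands whose first-slot augmentation has simplicial dimension $n+1$. Since $1 \otimes \bm{\varepsilon^n}$ carries simplicial dimension zero, it contributes nothing. Within $\bm{\xi^n} = \sum_{k=0}^n (-1)^k \xi^{n-k}$, only the $k = 0$ term can have first-slot simplicial dimension $n+1$: for $k \geq 1$, the augmented graph $\xi^{n-k}_{top}$ is built from $\lambda^{n-k}_m$ and $\chi^{n-k}_m$ whose simplicial dimensions are bounded above by $n - k + 1 < n + 1$. Within $\xi^n_{top} = \sum_{m=0}^n \lambda^n_m - \sum_{m=1}^n \chi^n_m$, only the pieces with $m = n$ reach the required dimension.

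First I would write out explicitly the pullback of the logarithmic form $\omega_{2n+1}$ along the parametrizations encoded by the augmentations $\sigma(a_0 \prod_{j \in J} a_j, \bm{a_n})_{n+1}$ and $\rho(a_0 \prod_{i \in I} a_i, \bm{a_n})_{n+1}$, obtaining explicit integrands on $\Delta_{n+1}$. The structure $\varepsilon^0(b) = \Go{b} - \Go{1/b}$ means that each contribution is a difference of two terms related by inverting the parametrizing variable sitting at the $\varepsilon^0$-component; by Corollary \ref{equivcycles} this change of variables preserves the underlying cycle but reverses the sign of the corresponding logarithmic factor, producing an antisymmetrization of the integrand on $\Delta_{n+1}$.

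The second step would be to show that the alternating sums over $J \subset \{1,\ldots,n\}$ in $\lambda^n_n$ and $I \subset \{2,\ldots,n\}$ in $\chi^n_n$ combine so that this antisymmetrization forces the total integral to vanish. Since $\lambda^n_n$ and $\chi^n_n$ differ only in the augmentation on one edge (the last bead of the $\varepsilon^0$ component versus the first $G_0$ factor) and in whether $a_1$ is an allowed summation index, pairing corresponding summands of $\lambda^n_n - \chi^n_n$ produces a telescoping structure which, combined with the antisymmetrization above, yields zero.

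The main obstacle will be managing the combinatorics of the alternating sums over the index sets $J$ and $I$ together with the iterated-integral evaluations on $\Delta_{n+1}$. The cleanest path is likely induction on $n$: the base case $n = 1$ is a direct verification that the weight-$2$ period integral vanishes by symmetry under $a_0 \leftrightarrow 1/a_0$, while for the inductive step the coboundary identities established in Theorem \ref{correctelement} reduce the period of $\bm{\varepsilon^n}$ to boundary contributions whose vanishing follows either from the inductive hypothesis (applied to lower-weight necklace classes) or from direct cancellation via the same antisymmetrization mechanism.
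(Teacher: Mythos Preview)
Your reduction to the $m=n$ pieces of $\xi^n_{top}$ is correct for the scalar period, and the observation that $1\otimes\bm{\varepsilon^n}$ contributes nothing is fine. Two things go wrong after that.

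First, the mechanism you propose is not the one that makes the integral vanish. Corollary~\ref{equivcycles} concerns $G$ versus $\bar G$ (all orientations reversed) and is irrelevant to the pair $G_0(b)$, $G_0(1/b)$: these are genuinely different one-edge graphs with different loop coefficients, not two parametrizations of the same cycle. There is no ``sign reversal of the logarithmic factor'' here. What actually happens is a direct computation on the integrand: for the $\varepsilon^0$-component with label $b/t$ one has
\[
d\log\Bigl(1-\tfrac{t}{b}\Bigr)-d\log\Bigl(1-\tfrac{b}{t}\Bigr)=\frac{dt}{t},
\]
which is \emph{independent of} $b$. Propagating through the remaining $G_0$-factors gives $\sI(\xi^n_n,\sigma(a_0,\bm{a_n})_{n+1})=\sI(\xi^n_n,\rho(a_0,\bm{a_n})_{n+1})=(-1)^n\Li_2(1/a_1)\prod_{i\ge 2}\Li_1(1/a_i)$, again independent of $a_0$. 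Once you know this, no telescoping or pairing of $\lambda^n_n$ against $\chi^n_n$ is needed: each of $\sI(\lambda^n_n)$ and $\sI(\chi^n_n)$ is an alternating sum $\sum_{J}(-1)^{|J|}$ (over $J\subset\{1,\dots,n\}$, resp.\ $\{2,\dots,n\}$) of the \emph{same} number, hence is $(1-1)^{|\cdot|}=0$ separately for $n\ge 1$. Your proposed base case via ``symmetry under $a_0\leftrightarrow 1/a_0$'' does not work for the same reason: the two summands of $\varepsilon^1$ are not exchanged by that substitution, and no induction is required anyway.

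Second, ``period $=0$'' alone does not give the triviality conclusion; you need $\sI\otimes\id$ to kill the entire element, i.e.\ $\sI(\xi^{n-k}_{top})=0$ for every $k$, so that all tensor components of $\sI\otimes\id(\bm{\xi^n}+1\otimes\bm{\varepsilon^n})$ vanish. Your dimension count only addressed $k=0$. Fortunately the same $a_0$-independence argument applies verbatim to $\lambda^{n-k}_{n-k}$ and $\chi^{n-k}_{n-k}$ for each $k$, so once the correct mechanism is in hand this is immediate.
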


\begin{proof}
We apply the map $\sI \otimes \id$ from equation \ref{Iextend} to the element $\bm{\xi}^n + 1 \otimes\bm{\varepsilon}Tn$. This integral is only well defined when $m$, the simplicial dimension of the aumented graph is equal to $n$, the loop number of the graph. Therefore, \bas \sI \bm{\xi}^n = \sum_{k=0}^n (-1)^k\sum_{\begin{subarray}{c} S \subset \bm{n} \\ |S|=k\end{subarray}} \sI\otimes \id  \left(\lambda_{n-k}^{n-k}(a_0, \bm{a}_{\bm{n} \setminus S}) - \chi_{n-k}^{n-k}(a_0, \bm{a}_{\bm{n} \setminus S}) \right)[\sha_{s \in S} G_0(a_s)] \;.\eas Since $\sI(1) = 0$, the evaluation map $(\sI \otimes \id )(1 \otimes\bm{\varepsilon}^n) = 0$.

Recall that $\varepsilon_0(a_0) = G_0(a_0) - G_0(1/a_0)$. Therefore, from equations \eqref{lambdadef} and \eqref{chidef}, we have \bas \lambda_{n}^{n}(a_0, \bm{a_n}) = \sum_{J \subset \{1 \ldots n\}} (-1)^{|J|}\epsilon_0(a_0 \prod_{j \in J} a_j \frac{1}{t_0})G_0(a_1\frac{t_2}{t_1}) \ldots G_0(a_n\frac{1}{t_n})  \eas and \bas \chi_{n}^{n}(a_0, \bm{a_n}) = \sum_{I \subset \{2 \ldots n\}} (-1)^{|I|}\epsilon_0(a_0 \prod_{i \in I} a_i \frac{1}{t_1})G_0(a_1\frac{t_2}{t_0}) \ldots G_0(a_n\frac{1}{t_n})  \;.\eas Collecting like terms, we write \bml \lambda_{n}^{n} - \chi^n_n = \left(\sum_{J \subset \{1\ldots n\}} (-1)^{|J|}  \epsilon_0(a_0 \prod_{j \in J} a_j \frac{1}{t_0})G_0(a_1\frac{t_2}{t_1}) -  \sum_{I \subset \{2\ldots n\}} (-1)^{|I|}\epsilon_0(a_0 \prod_{i \in I} a_i \frac{1}{t_1})G_0(a_1\frac{t_2}{t_0}) \right) \\
G_0(a_2\frac{t_3}{t_2}) \ldots G_0(a_n\frac{1}{t_n}) \;. \label{Ixitopexpression}\eml

To evaluate this integral, we recall a few facts about the iterated integrals associated to multiple polylogarithms. First of all, for a constant cycle supported on a $1$ simplex, \bas \sI(G_0(a/t)) = \int_0^1 \frac{d (1 - \frac{t}{a})}{1 - \frac{t}{a}} = - \int_0^1 \frac{dt}{a-t} = - \Li(\frac{1}{a}) \;.\eas Inverting the label of the edge gives \bas \sI(G_0(t/a)) = \int_0^1 \frac{d (1 - \frac{a}{t})}{1 - \frac{a}{t}} = \int_0^1 \frac{adt}{t^2-at} =- \int_0^1 \frac{dt}{t} - \int_0^1 \frac{dt}{a-t} \;.\eas Subtracting the second expression from the first gives $\sI(\epsilon_0(a/t)) = \int_0^1 \frac{dt}{t}$.  We may write this as $\Li_1(1) = 0$, by standard renormalization of polylogarithms, \cite{Gonch01}.

Similarly, for the cycle supported on a two simplex, \bas \sI(\varepsilon^0(a/t_0) G_0(b/t_1)) =  - \int_0^1 \frac{1}{b-t_1} \left(\int_0^{t_1} \frac{dt_0 }{t_0}\right) dt_1  =   \Li_2(\frac{1}{b}) \;.\eas The last equality in this equation comes from the shuffle product on iterated integrals: \bas
\left(\int_0^z\frac{dt}{b-t}\right)\left(\int_0^z\frac{ds}{s} \right)= \int_0^z\frac{1}{b-t}\left(\int_0^t \frac{ds}{s}\right) dt + \int_0^z\frac{1}{s}\left(
\int_0^s\frac{dt}{b-t} \right)ds
\;.\eas By the standard regularization arguments above, the left hand side is $0$. Therefore,
\ba \int_0^z\frac{1}{t-b}\left(\int_0^t\frac{ds}{s}\right)dt =
\Li_2(\frac{1}{b}) \;. \label{renorm}\ea This does not depend on the first argument, $a$. Therefore, the alternating signs in the sums for $\lambda_1^1$ and $\chi_1^1$ force both $\sI(\lambda_1^1(a, b)) = \sI(\chi_1^1(a, b)) = 0$.

For cycles supported on a three simplex, there are two terms to check: \bas \sI(\lambda^2_2(a,b,c)) =  (-1)^2\int_0^1\frac{1}{c-t_2}  \left( \int_0^{t_2} \frac{1}{bt_2-t_1} \left( \int_0^{t_1}  \frac{dt_0}{t_0} \right) dt_1 \right) dt_2  =  \Li_{1}(\frac{1}{c})  \Li_2(\frac{1}{b}) \;, \eas and \bas \sI(\chi^2_2(a,b,c)) =  (-1)^2 \int_0^1\frac{1}{c-t_2}  \left( \int_0^{t_2}\frac{1}{t_1}  \left(\int_0^{t_1} \frac{dt_0}{bt_2-t_0}\right)dt_1 \right) dt_2 = \Li_{1}(\frac{1}{c}) \Li_2(\frac{1}{b}) \;. \eas As before, since neither integral depends on $a$, the alternating signs in the sums for $\lambda_2^2$ and $\chi_2^2$ force both $\sI(\lambda_2^2(a, b,c)) = \sI(\chi_2^2(a, b, c)) = 0$.

For a general $n+1$ simplex, we have \bas \sI(\xi^n_n, \sigma(a_0, \bm{a_n})_{n+1}) = (-1)^{n}\prod_{i=2}^n\Li_{1}(\frac{1}{a_i}) \Li_2(\frac{1}{a_1})  \;.\eas Similarly, \bas \sI(\xi^n_n, \rho(a_0, \bm{a_n})_{n+1}) = (-1)^{n}\prod_{i=2}^{n}\Li_{1}(\frac{1}{a_i}) \Li_2(\frac{1}{a_1})  \;.\eas Since neither of these expressions depend on $a_0$ we have that both $\sI(\lambda_n^n) (a_0, \bm{a_n})=0=\sI(\chi_n^n) (a_0, \bm{a_n})=0$. Therefore, $\sI(\xi^{top}_n) = 0$ for all $n$.  This is the period associated to $\bm{\varepsilon}^n$.

This implies that \bas \sI\otimes  \id (\bm{\xi}^n + 1 \otimes\bm{\varepsilon}^n) = 0 \;,\eas for all $n$. Therefore this defines a trivial class in $H^0(\BsG)$.  \end{proof}

\section{Outlook and future work}
This paper is a first step in a program to understand the cohomology of (part of) the Bloch Kriz cycle complex, and by extension to understand the motives associated to these cycles. Historically, there has been a long list and variety of tools used to tackle the problem of understanding mixed Tate motives, the algebra of multiple Zeta values, and the relations between such values. In this paper, by introducing a graphical representation of certain cycles, we pave the way for graph theoretic tools, such as matroids to be added to this list.

A few benchmarks of this program include:

\begin{enumerate}
\item As we show in Section \ref{examples}, there is a surprising fact, that various closely related cycles, or indeed the same algebraic cycle, can contribute as a summand of multiple different minimally decomposable sums. This immediately raises the question of how the different cohomology classes defined by these sums may be related. If both sums of graphs define the same cohomology class, then this gives insights about trivial classes in $H^0(\BsG)$. If not, it gives insight into how varying the cycle in certain well defined ways corresponds to changing classes in $H^0(\BsG)$. In the best case scenario, studying the relationships between these types of related sums of algebraic cycles may lead to more relations among multiple Zeta values.
\item One of the major shortcomings of this paper is our inability to include graphs with edges labeled in by $0$, i.e. precisely the graphs needed to correspond to the classical polylogarithms. There is an unwritten conjecture of Brown and Gangl that only the multiple logarithms are necessary to generate the entire space of multiple polylogarithms (including the standard polylogarithms). If one assumes this conjecture, then our inability to label our edges with $0$ is not a significant setback. However, we hope than in the near future, we will devise a way of encoding edges labeled by $0$s, possibly by including colored, unoriented edges, such that all the results of this paper hold in the new general setting.
\item The graphs we study lend themselves easily to study via the language of matroids. Roughly speaking, a matroid is a combinatorial way of encoding the independence data of a matrix or graph (in this case, the subtrees of a graph). While simple to define, this is a powerful tool when it comes to studying boundaries of geometric objects. We hope that this will lead to some insight for an algorithm for finding, or a classification of sums of algebraic cycles that lead to elements with completely decomoposable boundary.
\item The Hodge realization functor, while known to be well defined, is admitted to be difficult to compute. The algebra of graphs that we study is rich enough that there are many completely decomposable elements in it, while being simple enough to allow for a clear visual and combinatoric representation. We hope this this combination of simplicity and breadth of examples provides enough flexibility for one to be able to construct an algorithm for computing the Hodge realization functor, at least when restricted to $\Asm$, if not in full generality.
\end{enumerate}

{\bf{Acknowledgements}}

This research was partially supported by various ICMAT grants, the Heilbronn Institute for Mathematical Research, GRK 1670 and EPSRC grant EP/J019518/1.

The authors thank Tom McCourt, and Karen Yeats for many helpful comments and discussions.  The authors give an especially warm thank you to Ismael Souderes for many helpful discussions and comments along the way.  Without his help this paper wouldn't exist.
The authors also thank ICMAT and especially Kurusch Ebrahimi-Fard and Jose Burgos Gil for their hospitality in providing a place for us to meet, work on the project, and present our results to interested audiences. The second author also thanks Michael Collins for a helpful discussion and suggestion.

\bibliographystyle{plain}
%\bibliography{c:Documents/PowerbookG4/Research/Research\ Bristol\ 09/Tate\ Motives/Rational\ MTMs/MTM\ graphs/2016/Bibliography}
\bibliography{c:/users/S/Dropbox/bibliography/Bibliography}
%\bibliography{Bibliography}

\begin{comment}
%\renewcommand\refname{References}

\end{comment}

\hfill

Owen Patashnick

Bristol University and Kings College, London.

o.patashnick@bristol.ac.uk

Susama Agarwala

University of Nottingham

susama.agarwala@nottingham.ac.uk
\end{document}